\newtheorem{thm}{Theorem}[subsection]
\newtheorem{cor}[thm]{Corollary}
\newtheorem{lem}[thm]{Lemma}
\newtheorem{prop}[thm]{Proposition}
\newtheorem{athm}{Theorem}[section]
\newtheorem{alem}[athm]{Lemma}
\newtheorem*{resprime}{Theorem \ref{thm:resprime}}
\newtheorem*{henselhelper}{Lemma \ref{lem:henselhelper}}
\newtheorem*{AKT}{Theorem \ref{thm:AKT}}
\newtheorem*{AKP}{Theorem \ref{thm:AKP}}
\newtheorem*{CH}{Continuum Hypothesis}
\newtheorem*{GCH}{Generalized Continuum Hypothesis}
\theoremstyle{definition}
\newtheorem{defin}[thm]{Definition}
\newtheorem{exa}[thm]{Example}
\newtheorem{rmk}[thm]{Remark}
\newtheorem{adefin}[athm]{Definition}
\begin{document}

\title{The Ax-Kochen Theorem: An Application of Model Theory to Algebra}

\author{Alex Kruckman}

\begin{abstract}
The Ax-Kochen Theorem is a purely algebraic statement about the zeros of homogeneous polynomials over the $p$-adic numbers, but it was originally proved using techniques from mathematical logic. This document, the author's undergraduate honors thesis, provides an exposition of the theorem and its proof via model theory, assuming no previous experience with logic.
\end{abstract}

\maketitle

\tableofcontents

\section{An Opening Remark}\label{sec:NewIntro}

This document is the author's undergraduate honors thesis, completed at Brown University in Spring 2010. It is an exposition of a direct route to the proof of the Ax-Kochen Theorem, requiring no previous experience with mathematical logic (indeed the author knew very little model theory when he wrote it!), or with valued fields. 

As such, it is a bit old fashioned, and it omits proofs of some of the key ingredients from the theory of valued fields (Lemmas ~\ref{lem:unique_extension} through ~\ref{lem:transcendental_value_group}). A reader looking for a more sophisticated approach could consult the lecture notes by van den Dries \cite{vddries}. In these notes, the theory of valued fields is central, and the Ax-Kochen Principle is viewed as a consequence of a relative quantifier elimination result for Henselian valued fields of equicharacteristic $0$, in which satisfaction of sentences in the language of valued fields is reduced to satisfaction of sentences in the language of the residue field and the language of the value group, respectively.

\section{Introduction}\label{sec:Intro}

Certain fields have the property, called $C_i$, that every homogeneous polynomial with enough variables relative to its degree (specifically, $n>d^i$, where $n$ is the number of variables and $d$ is the degree) has a nontrivial zero.

Emil Artin conjectured that for all primes $p$, the $p$-adic field $\mathbb{Q}_p$ is $C_2$. This conjecture turned out to be false; in fact, $\mathbb{Q}_p$ is not $C_2$ for any $p$. However, in their paper \emph{Diophantine problems over local fields} \cite{Ax}, Ax and Kochen provided a partially positive result.

\begin{AKT}[Ax-Kochen Theorem]
For each degree $d\geq 1$, there exists a finite set of primes $P(d)$ such that for all $p\notin P(d)$, if $f$ is a homogeneous polynomial over $\mathbb{Q}_p$ of degree $d$ in $n$ variables such that $n>d^2$, then $f$ has a nontrivial zero in $\mathbb{Q}_p^n$. 
\end{AKT}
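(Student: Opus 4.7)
The plan is to reduce the Ax-Kochen Theorem to a single application of the Ax-Kochen Principle (Theorem~\ref{thm:AKP}). The Principle compares $\mathbb{Q}_p$ with the Laurent series field $\mathbb{F}_p((t))$, and combined with the classical fact that $\mathbb{F}_p((t))$ is $C_2$ for every prime $p$, it should yield the desired finite exceptional set $P(d)$. The one subtlety is that the $C_2$ condition for a fixed degree $d$ is \emph{a priori} an infinite schema of sentences (one for each $n > d^2$), so I first need to collapse this schema to a single first-order sentence in order to apply the Principle just once.

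To do so, I would show that over any field $K$, the assertion that every homogeneous polynomial of degree $d$ in $n > d^2$ variables has a nontrivial zero follows from the single instance $n = d^2 + 1$. Given a form $f$ in $n > d^2 + 1$ variables, consider the specialization $g(x_1, \ldots, x_{d^2+1}) = f(x_1, \ldots, x_{d^2+1}, 0, \ldots, 0)$. If $g$ is the zero polynomial, then $(1, 0, \ldots, 0) \in K^n$ is a nontrivial zero of $f$; otherwise $g$ is itself a homogeneous degree-$d$ form in $d^2 + 1$ variables, whose nontrivial zero extends by zeros to a nontrivial zero of $f$. Since the space of homogeneous degree-$d$ polynomials in $d^2 + 1$ variables has finite dimension $\binom{d^2 + d}{d}$, the $n = d^2 + 1$ case is encoded by a single first-order sentence $\psi_d$ in the language of fields, which quantifies universally over the tuple of coefficients and existentially over the tuple of variables, with a disjunctive clause enforcing nontriviality.

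With $\psi_d$ in hand, the theorem follows at once. Lang's theorem (with Chevalley-Warning as the base case) establishes that $\mathbb{F}_p((t))$ is $C_2$ for every prime $p$, so $\mathbb{F}_p((t)) \models \psi_d$ unconditionally. The Ax-Kochen Principle applied to $\psi_d$ then produces a finite set of primes $P(d)$ outside of which $\mathbb{Q}_p \models \psi_d$, and the reduction above upgrades this to the full $C_2$ assertion for degree-$d$ forms over $\mathbb{Q}_p$. The main obstacle is, of course, the Ax-Kochen Principle itself, whose proof presumably occupies the bulk of the thesis; by contrast, the reduction to $\psi_d$ is an elementary combinatorial observation and Lang's theorem is classical. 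The Principle will likely rest on Hensel's Lemma together with a model-theoretic transfer result that reduces sentences about a henselian valued field of equicharacteristic zero to corresponding sentences about its residue field and its value group.
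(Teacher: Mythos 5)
Your proposal is correct and follows essentially the same route as the paper: reduce the schema to the single case $n = d^2+1$ by zero-padding, encode that case as one first-order sentence (your $\psi_d$ is the paper's $\phi_d$, with your $\binom{d^2+d}{d}$ being the paper's $\theta(d)$), invoke the $C_2$ property of $\mathbb{F}_p((t))$, and apply the Ax-Kochen Principle once. The only cosmetic difference is that you describe $\psi_d$ as a sentence in the language of fields, while the paper works in $\mathcal{L}_{VF}$; since the field language is a sublanguage, this changes nothing.
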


The methods used by Ax and Kochen come from model theory, a branch of mathematical logic. They were able to prove a much more general result, known as the Ax-Kochen Principle, which allows theorems about the fields $\mathbb{F}_p((t))$ of formal Laurent series over the finite fields $\mathbb{F}_p$ to be transferred to theorems about the fields $\mathbb{Q}_p$. 

\begin{AKP}[Ax-Kochen Principle]
Any first-order logical statement about valued fields which is true of all but finitely many of the fields $\mathbb{F}_p((t))$ is true of all but finitely many of the fields $\mathbb{Q}_p$.
\end{AKP}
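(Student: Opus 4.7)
The plan is to prove the contrapositive via ultraproducts. Suppose, for contradiction, that $\sigma$ is a first-order sentence in the language of valued fields which holds in all but finitely many of the fields $\mathbb{F}_p((t))$ yet fails in infinitely many of the fields $\mathbb{Q}_p$. Choose a non-principal ultrafilter $\mathcal{U}$ on the set of primes which contains the (infinite) set of primes $p$ on which $\sigma$ is false in $\mathbb{Q}_p$. By {\L}o\'s's theorem, $\sigma$ is then false in the ultraproduct $\mathbb{Q}^{*} := \prod_{\mathcal{U}} \mathbb{Q}_p$ while remaining true in $\mathbb{F}^{*} := \prod_{\mathcal{U}} \mathbb{F}_p((t))$. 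A contradiction will follow once we establish that $\mathbb{Q}^{*}$ and $\mathbb{F}^{*}$ are elementarily equivalent as valued fields.

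To prove this elementary equivalence, I would invoke an Ax--Kochen--Ershov style transfer principle: two Henselian valued fields of equicharacteristic $0$ are elementarily equivalent as valued fields if and only if their residue fields are elementarily equivalent as fields and their value groups are elementarily equivalent as ordered abelian groups. The hypotheses are straightforward to verify for $\mathbb{Q}^{*}$ and $\mathbb{F}^{*}$. Both are Henselian, since Hensel's lemma can be formulated as a scheme of first-order sentences satisfied by every $\mathbb{Q}_p$ and every $\mathbb{F}_p((t))$, and the ultraproducts inherit it by {\L}o\'s. Both are of equicharacteristic $0$: for $\mathbb{F}^{*}$ the underlying field has characteristic $0$ because any fixed prime $q$ is invertible in all but finitely many $\mathbb{F}_p((t))$, and in both cases the residue field is visibly $\prod_{\mathcal{U}} \mathbb{F}_p$, which is of characteristic $0$ by the same reasoning. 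Thus the residue fields are literally isomorphic, and the value groups of both ultraproducts are literally $\prod_{\mathcal{U}} \mathbb{Z}$, so each pair is certainly elementarily equivalent.

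The main obstacle, and the real substance of the thesis, will be proving the Ax--Kochen--Ershov transfer principle itself. I would approach this via a saturation and back-and-forth argument: pass to sufficiently saturated elementary extensions of the two Henselian valued fields in question and build an isomorphism between them in stages, at each stage extending a partial valued-field embedding by using Hensel's lemma to lift approximate roots and by leveraging the elementary equivalence of the residue fields and of the value groups to match up new generators. The delicate points will be handling the three kinds of valued-field extension separately, namely those which enlarge only the residue field, those which enlarge only the value group, and immediate extensions (which enlarge neither); this is precisely the role played by the structural lemmas on extensions of valuations mentioned in the opening remark. Once the transfer principle is in place, the ultraproduct argument above closes the proof.
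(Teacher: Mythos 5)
Your proposal follows the paper's route essentially verbatim: pass to ultraproducts over a nonprincipal ultrafilter, invoke {\L}o\'s, reduce to the elementary equivalence of $\prod_{\mathcal{U}}\mathbb{Q}_p$ and $\prod_{\mathcal{U}}\mathbb{F}_p((t))$, and obtain this from the Ax--Kochen--Ershov transfer theorem for Henselian valued fields of residue characteristic zero (Theorem~\ref{thm:el_eq}), itself proved by the saturation/back-and-forth argument you sketch. The only cosmetic difference is in moving back from the ultraproducts to the ``all but finitely many'' statement: you fix a single ultrafilter chosen to witness a hypothetical counterexample, whereas the paper (Theorem~\ref{thm:template}) argues uniformly over all nonprincipal ultrafilters via the fact that their intersection is the Fr\'echet filter --- the two formulations are interchangeable.
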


This thesis provides an exposition of the algebra and model theory necessary to understand the Ax-Kochen Theorem and its proof. It should be accessible to any reader with a firm grasp of abstract algebra.

We begin in Section~\ref{subsec:ACF} by introducing homogeneous polynomials and the $C_i$ properties. In Sections~\ref{subsec:FF} and \ref{subsec:EF}, we prove $C_i$ properties for finite fields and algebraic and transcendental extension fields. In Section~\ref{subsec:VF}, we introduce valued fields and the completion of a discrete valued field, constructing the $p$-adic fields along the way. Finally, we prove that $\mathbb{F}_p((t))$ is $C_2$ for all $p$, the result that will be transferred to the $p$-adics to complete the proof of the Ax-Kochen Theorem. For the material in Chapter~\ref{sec:OQAC}, I have followed Greenberg \cite{Greenberg} closely.

In Chapter~\ref{sec:MT}, we introduce the reader to model theory, with a focus on those techniques and examples relevant to the Ax-Kochen Principle. I have modeled my notation and exposition after that in Marker \cite{Marker}, but some of the details (for example, the material on ultraproducts and the model theory of valued fields) come from Chang and Keisler \cite{Chang}.

Chapter~\ref{sec:AKP} is devoted to the proof of of the Ax-Kochen Principle. The proof relies on the result that the $\mathbb{F}_p((t))$ and $\mathbb{Q}_p$ are Henselian valued fields, and we introduce Hensel's Lemma and some of its consequences in Section~\ref{subsec:HL}. We give the proof of the Ax-Kochen Principle in Section~\ref{subsec:EEE}, the cornerstone of which is Theorem~\ref{thm:el_eq}, which implies that the ultraproducts of the fields $\mathbb{F}_p((t))$ and $\mathbb{Q}_p$ are elementarily equivalent. Finally, we derive the Ax-Kochen Theorem as a corollary in Section~\ref{subsec:AKT}. Again, the main reference for the proof is Chang and Keisler \cite{Chang}.

Some sections require a familiarity with the transfinite numbers. Their properties are covered in Appendix~\ref{sec:OCTI}. We will also use the resultant, an algebraic tool for comparing the roots of two polynomials. It is introduced in Appendix~\ref{sec:TR}. For simplicity, the proof of the Ax-Kochen Principle as given relies on the Continuum Hypothesis. Appendix~\ref{sec:SM} describes a method for eliminating the Continuum Hypothesis from the argument.

This thesis was written in partial fulfillment of the requirements for the degree of Bachelor of Science with Honors in Mathematics at Brown University. I would like to express my eternal gratitude to my advisors Dan Abramovich and Michael Rosen, who have been extremely generous with their time, suggestions, and support, and to my parents, for their devotion to my education.

\newpage 

\section{On Quasi-Algebraic Closure}\label{sec:OQAC}

\subsection{The $C_i$ Properties and Algebraically Closed Fields}\label{subsec:ACF}

\begin{defin}\label{def:hompoly}
A polynomial $f$ over a field $F$ is \emph{homogeneous} of degree $d\geq1$ in $n$ variables, $x_1,\hdots,x_n$, if all monomials of $f$ have degree $d$, that is, if it can be written in the form $ f(x_1,\hdots,x_n)=\sum_i a_ix_1^{b_{i,1}}\hdots x_n^{b_{i,n}}$ such that for all $i$, $\sum_{j=1}^n b_{i,j} = d$. 
\end{defin}

\begin{rmk}\label{rmk:homconst}
If $f$ is a homogeneous polynomial of degree $d$ in $n$ variables over $F$, then for all $c\in F$, $f(cx_1,\hdots,cx_n) = c^d f(x_1,\hdots,x_n)$.
\end{rmk}

\begin{exa}\label{exa:det}
The function which computes the determinant of an $n\times n$ matrix is a homogeneous polynomial of degree $n$ in $n^2$ variables, the matrix entries.
\end{exa}

Since a homogeneous polynomial cannot have a constant term, all homogeneous polynomials have the trivial zero $(0,\hdots,0)$. It is of interest to explore when homogeneous polynomials have nontrivial zeros.

\begin{exa}\label{exa:hompoly}
Let $f_n$ be the polynomial $x_1^2 + x_2^2 + \hdots + x_n^2$. For all $n>0$, $f_n$ is a homogeneous polynomial of degree 2 in $n$ variables. Over $\mathbb{R}$, $f_n$ has only the trivial zero for all $n$. But over $\mathbb{C}$, $f_n$ has nontrivial zeros (for example, $(1,i,0,\hdots,0)$) for all $n>1$. It is easy to check that over $\mathbb{F}_7$, $f_n$ has nontrivial zeros for all $n>2$, and in Section~\ref{subsec:FF} we will show that this is the case for all finite fields. The $2$ comes from the degree of $f_n$.
\end{exa}

\begin{defin}\label{def:C_i}
A field $F$ is called $C_i$ for $i\in\mathbb{N}$ if every homogeneous polynomial over $F$ of degree $d$ in $n$ variables such that $n>d^i$ has a nontrivial zero in $F^n$.
\end{defin}

We can easily characterize the $C_0$ fields.

\begin{thm}\label{thm:ACF}
A field is $C_0$ if and only if it is algebraically closed.
\end{thm}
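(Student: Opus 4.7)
Plan of proof.

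For the forward direction, I would \emph{homogenize}. Suppose $F$ is $C_0$, and let $p(x)=c_0+c_1x+\cdots+c_dx^d\in F[x]$ be a non-constant polynomial, so $d\ge 1$ and $c_d\ne 0$. Form the two-variable polynomial
\[
\tilde p(x,y)=\sum_{i=0}^d c_i x^i y^{d-i},
\]
which is homogeneous of degree $d$ in $n=2$ variables. Since $n=2>1=d^0$, the $C_0$ hypothesis yields a nontrivial zero $(a,b)\in F^2$. If $b\ne 0$, then $a/b$ is a root of $p$, because $\tilde p(a,b)=b^d p(a/b)$. If $b=0$, then $a\ne 0$ and $\tilde p(a,0)=c_d a^d=0$, forcing $c_d=0$, contradicting $\deg p=d$. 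So $p$ has a root in $F$, proving that $F$ is algebraically closed.

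For the backward direction, assume $F$ is algebraically closed and let $f(x_1,\dots,x_n)$ be a homogeneous polynomial of degree $d\ge 1$ with $n\ge 2$. The plan is to reduce to the case $n=2$, where algebraic closure lets me factor a homogeneous polynomial into linear forms. Set $x_3=\cdots=x_n=0$ and consider
\[
g(x_1,x_2)=f(x_1,x_2,0,\dots,0).
\]
If $g$ is identically zero, then $f(1,0,\dots,0)=0$ is already a nontrivial zero. Otherwise $g$ is a nonzero homogeneous polynomial of degree $d$ in two variables. The single-variable polynomial $g(x_1,1)$ has some degree $0\le d'\le d$; if $d'\ge 1$, algebraic closure provides a root $r$, whence $(r,1,0,\dots,0)$ is a nontrivial zero of $f$. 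If $d'=0$, then $g(x_1,x_2)=cx_2^d$ for some nonzero $c$ (only the monomial $x_2^d$ survives in $g$), and $(1,0,0,\dots,0)$ is a nontrivial zero of $f$.

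The main obstacle is really just bookkeeping: one must check that in the forward direction the degree-$d$ homogenization actually gives back a root of $p$ rather than a spurious solution at infinity, and in the backward direction one must handle the possibility that restricting to two variables collapses $f$ to zero or to a pure power of one variable. Both are resolved by exhibiting the coordinate-axis solution $(1,0,\dots,0)$ as a fallback.
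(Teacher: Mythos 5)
Your forward direction is essentially the paper's: you homogenize $p$ to a degree-$d$ form in two variables, apply $C_0$, observe that a zero with $b=0$ would be trivial, and dehomogenize the nontrivial zero to a root $a/b$. That part matches the paper line for line.

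Your backward direction is correct but takes a genuinely different route of reduction. The paper keeps all $n$ variables, writes $f = \sum_i f_i(x_2,\dots,x_n)\,x_1^i$, and (when the top coefficient $f_{d'}$ is a nonzero polynomial) invokes the infiniteness of algebraically closed fields to choose a point $(\alpha_2,\dots,\alpha_n)$ with some $\alpha_j\neq 0$ at which $f_{d'}$ does not vanish; it then solves the resulting one-variable polynomial. You instead set $x_3=\cdots=x_n=0$ outright, reducing to a \emph{binary} form $g(x_1,x_2)$, and dehomogenize to $g(x_1,1)$, with the edge cases ($g\equiv 0$, or $g$ a pure power of $x_2$) handled by the coordinate-axis zero $(1,0,\dots,0)$. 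Your route is slightly more elementary: it never needs the fact that algebraically closed fields are infinite, nor the slightly delicate point that a nonzero polynomial in several variables over an infinite field fails to vanish identically (with at least one coordinate nonzero). The price is an extra case split, but each case is trivial. Both arguments are valid; yours is a cleaner reduction, while the paper's is perhaps more in the spirit of the later extension-field arguments where one specializes to a ``generic'' point.
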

\begin{proof}
Suppose $F$ is an algebraically closed field. Let $f(x_1,\hdots,x_n)$ be a homogeneous polynomial over $F$ of degree $d$ in $n>d^0=1$ variables. Write $f$ as a polynomial in one variable, $x_1$, with coefficients in $F[x_2,\hdots,x_n]$, $f = \sum_{i=1}^{d'} f_i(x_2,\hdots,x_n) x_1^i$. The degree of this polynomial, $d'$, is the highest power of $x_1$ appearing in any term of $f$.

If $d' = 0$, then no nonzero power of $x_1$ appears in $f$, so $f(1,0,\hdots,0) = f(0,0,\hdots,0) = 0$, and $(1,0,\hdots,0)$ is a nontrivial zero of $f$. Otherwise, if $d' > 0$, consider the leading coefficient, $f_{d'}(x_2,\hdots,x_n)$. We would like to find nontrivial $(\alpha_2,\hdots,\alpha_n)\in F^{n-1}$ which is \emph{not} a zero of $f_{d'}$. All algebraically closed fields are infinite, and a nonzero polynomial cannot have infinitely many zeros, so there exists $(\alpha_2,\hdots,\alpha_n)\in F^{n-1}$ such that $\alpha_j\neq 0$ for some $j$ and $f_{d'}(\alpha_2,\hdots,\alpha_n)\neq 0$. 

Let $\overline{f} = f(x_1,\alpha_2,\hdots,\alpha_n)$. We have simply evaluated the coefficients $f_i$ at $(\alpha_2,\hdots,\alpha_n)$, and the leading coefficient is nonzero, so $\overline{f}$ is a polynomial of degree $d'>0$ in one variable, $x_1$. Since $F$ is algebraically closed, $\overline{f}$ has a zero, $\alpha_1$. Then $(\alpha_1,\alpha_2,\hdots,\alpha_n)$ is a zero of $f$, and this zero is nontrivial, since $\alpha_j \neq 0$.

Conversely, suppose $F$ is a $C_0$ field. Let $f(x)=a_dx^d + a_{d-1}x^{d-1} + \hdots + a_0$ be a polynomial of degree $d\geq1$ over $F$. We would like to show that $f$ has a root in $F$. Let $\widehat{f}(x_1,x_2) = a_dx_1^d + a_{d-1}x_1^{d-1}x_2 + \hdots + a_1x_1x_2^{d-1} + a_0x_2^d$. Now $\widehat{f}$ is a homogeneous polynomial over $F$ of degree $d$ in $2$ variables, and $2>d^0=1$, so $\widehat{f}$ has a nontrivial zero $(\alpha_1,\alpha_2)\in F^2$. Note that $\alpha_2\neq0$, since otherwise $\widehat{f}(\alpha_1,\alpha_2) = \widehat{f}(\alpha_1,0) = a_d\alpha_1^d = 0$, so $\alpha_1=0$, and $(\alpha_1,\alpha_2)$ is trivial. 

We have $\widehat{f}(\alpha_1,\alpha_2) = 0$, so by Remark~\ref{rmk:homconst}, $\widehat{f}(\alpha_1\alpha_2^{-1},1) = (\alpha_2^{-1})^d \widehat{f}(\alpha_1,\alpha_2) = 0$. But substituting $1$ for $x_2$ in $\widehat{f}$, $\widehat{f}(x_1,1) = f(x_1)$, so $f(\alpha_1\alpha_2^{-1}) = \widehat{f}(\alpha_1\alpha_2^{-1},1) = 0$, and $\alpha_1\alpha_2^{-1}$ is a root of $f$ in $F$. Thus every polynomial over $F$ of nonzero degree has a root in $F$, and hence $F$ is algebraically closed.
\end{proof}

This theorem suggests that the $C_i$ properties can be seen as generalizations of the property of algebraic closure. For this reason, $C_1$ fields are called quasi-algebraically closed. In the following sections, we will show that many frequently encountered fields are $C_i$ for some $i$. As a special case, we will obtain our first main result: for all primes $p$, $\mathbb{F}_p((t))$, the field of formal Laurent series in one variable over the finite field with $p$ elements, is $C_2$.

\subsection{Finite Fields}\label{subsec:FF}

Throughout this section, let $K$ be a finite field of characteristic $p$ with $|K|=q$. Recall that
\begin{itemize}
\item $p$ is prime,
\item $q = p^v$ for some $v>0$, and
\item the multiplicative group $K^* = K\backslash\{0\}$ is cyclic of order $q-1$.
\end{itemize}

We will show that all finite fields are $C_1$. We begin with a simple but useful lemma. 

\begin{lem}\label{lem:C-W}
For $m>0$, 
\[
\sum_{a\in K}a^m=\begin{cases}-1 & \mbox{if}\,\, (q-1)\,|\,m\\0 & \mbox{otherwise}\end{cases}.
\]
\end{lem}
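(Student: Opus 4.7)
The plan is to split into two cases based on whether $(q-1) \mid m$, using the fact that $K^*$ is a cyclic group of order $q-1$. Note first that since $m > 0$, the term corresponding to $a = 0$ contributes $0^m = 0$ to the sum, so it suffices to compute $\sum_{a \in K^*} a^m$.

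In the case $(q-1) \mid m$, I would invoke Lagrange's theorem (or equivalently Fermat's little theorem for finite fields): every $a \in K^*$ satisfies $a^{q-1} = 1$, so $a^m = 1$. Therefore $\sum_{a \in K^*} a^m = q - 1$, and since $K$ has characteristic $p$ with $q = p^v$, we have $q = 0$ in $K$, so the sum equals $-1$.

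In the case $(q-1) \nmid m$, I would fix a generator $g$ of $K^*$, so that $\{g^0, g^1, \dots, g^{q-2}\}$ enumerates $K^*$. Then
\[
\sum_{a \in K^*} a^m = \sum_{i=0}^{q-2} (g^m)^i.
\]
Because $g$ has order exactly $q-1$ and $(q-1) \nmid m$, we know $g^m \neq 1$, so this is a finite geometric series with ratio $g^m \neq 1$ and sums to $\frac{(g^m)^{q-1} - 1}{g^m - 1}$. The numerator is $(g^{q-1})^m - 1 = 1 - 1 = 0$, and the denominator is nonzero, so the sum is $0$.

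There is no real obstacle here; the only minor subtlety is making sure to handle $a = 0$ separately (which requires $m > 0$) and to verify that $g^m \neq 1$ precisely when $(q-1) \nmid m$, which follows directly from $g$ having order $q-1$.
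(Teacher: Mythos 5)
Your proof is correct. The first case is identical to the paper's argument, and the handling of the $a=0$ term (using $m>0$) matches as well. In the second case, where $(q-1)\nmid m$, you take a slightly different route: you enumerate $K^*$ as powers of a generator $g$ and sum the geometric series $\sum_{i=0}^{q-2}(g^m)^i = \frac{(g^m)^{q-1}-1}{g^m-1}$, whose numerator vanishes because $g^{q-1}=1$. The paper instead uses the ``permutation trick'': with $S=\sum_{a\in K^*}a^m$ and $b$ a generator, multiplication by $b$ permutes $K^*$, so $S=\sum_{a\in K^*}(ba)^m=b^m S$, hence $(b^m-1)S=0$ and $S=0$ since $b^m\neq 1$. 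The two arguments are equally elementary and rest on the same underlying facts ($K^*$ cyclic of order $q-1$, $g^m\neq 1$). Yours is more explicitly computational and uses the closed form of the geometric series; the paper's is a bit slicker in that it avoids any division and generalizes more readily to settings where one does not want to write down a generator explicitly (only the fact that multiplication by a fixed nonidentity element permutes the group). Both are standard and fully rigorous.
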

\begin{proof}
Suppose $q-1\,|\,m$. Then for all $a\in K^*$, $a^m = 1$, so 
\begin{eqnarray*}
\sum_{a\in K}a^m &=& 0^m + \sum_{a\in K^*}a^m\\
&=& \sum_{a\in K^*}1\\
&=& -1
\end{eqnarray*} 
since $q-1\equiv-1 \,\,(\mbox{mod}\,\,p)$.

Otherwise, if $(q-1)\nmid m$, let $b$ be a generator of the cyclic group $K^*$. Then $b^m\neq 1$, since $|K^*| = q-1$. Let $S = \sum_{a\in K}a^m = \sum_{a\in K^*}a^m$. Multiplication by $b$ permutes the elements of $K^*$, so
\begin{eqnarray*}
S &=& \sum_{a\in K^*}(ba)^m\\
&=& b^m \sum_{a\in K^*}a^m\\
&=& b^m S,
\end{eqnarray*}
and thus $(b^m - 1)S = 0$. But $b^m-1\neq 0$, so $S = 0$, as was to be shown.
\end{proof}

The next theorem implies that finite fields are $C_1$, but it is actually a stronger result about the number of zeros of any polynomial (not necessarily homogeneous) with more variables than its degree.

\begin{thm}[Chevalley-Warning, {\cite[Theorem 2.3]{Greenberg}}]\label{thm:C-W}
Let $f$ be a polynomial over $K$ of degree $d$ in $n$ variables, $x_1,\hdots,x_n$. If $n>d$, then the number of zeros of $f$ in $K^n$ is divisible by $p$.
\end{thm}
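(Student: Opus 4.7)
The plan is to use the classical indicator-function trick: express the number of zeros of $f$ modulo $p$ as a single sum over $K^n$, and then show that sum vanishes using Lemma~\ref{lem:C-W}.

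First I would define the auxiliary polynomial $g(x_1,\ldots,x_n) = 1 - f(x_1,\ldots,x_n)^{q-1}$. Since $K^*$ has order $q-1$, every nonzero element of $K$ satisfies $a^{q-1}=1$, while of course $0^{q-1}=0$. Thus $g$ takes the value $1$ at each zero of $f$ and $0$ at every other point of $K^n$. Writing $N$ for the number of zeros of $f$ and interpreting it as an element of $K$, this gives
\[
N \cdot 1_K \;=\; \sum_{x \in K^n} g(x),
\]
so proving $p \mid N$ reduces to showing the right-hand side equals $0$ in $K$.

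Second, I would expand $g$ as a $K$-linear combination of monomials $x_1^{b_1}\cdots x_n^{b_n}$. Each such monomial has total degree at most $d(q-1)$, since $\deg f = d$ and we raised $f$ to the $(q-1)$st power (and the constant term $1$ contributes a monomial of degree $0$). Distributing the sum over $K^n$ across the linear combination, the contribution of the monomial $x_1^{b_1}\cdots x_n^{b_n}$ factors as $\prod_{j=1}^n \sum_{a \in K} a^{b_j}$.

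Finally, I would analyze these factors using Lemma~\ref{lem:C-W}: the inner sum is $0$ whenever $b_j > 0$ and $(q-1)\nmid b_j$, and it equals $q \cdot 1_K = 0$ when $b_j=0$ (since $p \mid q$). So the only way a monomial can make a nonzero contribution is if every $b_j$ is a strictly positive multiple of $q-1$, which forces $\sum_j b_j \geq n(q-1)$. But every monomial of $g$ satisfies $\sum_j b_j \leq d(q-1) < n(q-1)$ by the hypothesis $d < n$, so no monomial contributes. The whole sum therefore vanishes, giving $p \mid N$. The only step that needs genuine care is the bookkeeping for the $b_j=0$ case — the lemma only covers $m>0$, but $q \equiv 0 \pmod p$ handles it — and matching the combinatorial bound $d(q-1) < n(q-1)$ to the hypothesis; otherwise the argument is purely mechanical once the indicator polynomial $g$ is in hand.
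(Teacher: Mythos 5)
Your proof is correct and follows essentially the same Chevalley--Warning argument as the paper: you use the indicator polynomial $1-f^{q-1}$, reduce to a monomial-by-monomial sum that factors over coordinates, and kill every factor using Lemma~\ref{lem:C-W} together with $q\equiv 0\pmod p$. The only cosmetic difference is that the paper peels off the constant term explicitly (computing $q^n=0$ first) and then exhibits one exponent $\mu_j<q-1$ whose factor vanishes, whereas you fold the constant into $g$ and argue contrapositively that all $b_j\geq q-1$ is impossible, which handles the degree-zero monomial uniformly.
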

\begin{proof}
For $(\alpha_1,\hdots,\alpha_n)\in K^n$, 
\[
1-f(\alpha_1,\hdots,\alpha_n)^{q-1} = \begin{cases}1&\mbox{if}\,\,f(\alpha_1,\hdots,\alpha_n)=0\\0 & \mbox{otherwise}\end{cases}.
\]
We will count the number of zeros (mod $p$) of $f$ by summing the values of this expression over all $(\alpha_1,\hdots,\alpha_n)\in K^n$. There are $q^n$ such $n$-tuples.
\begin{eqnarray*}
\sum_{(\alpha_1,\hdots,\alpha_n)\in K^n} (1-f(\alpha_1,\hdots,\alpha_n)^{q-1}) &=& q^n - \sum_{(\alpha_1,\hdots,\alpha_n)\in K^n}f(\alpha_1,\hdots,\alpha_n)^{q-1} \\
&=& 0 - \sum_{(\alpha_1,\hdots,\alpha_n)\in K^n}f(\alpha_1,\hdots,\alpha_n)^{q-1}.
\end{eqnarray*}

Now $f^{q-1}$ has degree $d(q-1)$, and we can write it as a linear combination of monomials of at most that degree. Let $\prod_{i=1}^n x_i^{\mu_i}$ be one such monomial. The degree of this monomial is $\sum_{i=1}^n \mu_i \leq d(q-1)$. By assumption, $n > d$, so for at least one $j$, $\mu_j < q-1$. Consider the sum $\sum_{(\alpha_1,\hdots,\alpha_n)\in K^n}\prod_{i=1}^n \alpha_i^{\mu_i} = \prod_{i=1}^n\sum_{\alpha_i\in K}\alpha_i^{\mu_i}$. The $j^{th}$ term of this product is $\sum_{\alpha_j\in K}\alpha_j^{\mu_j}$. If $\mu_j = 0$, this is $\sum_{\alpha_j\in K}1 = q = 0$. Otherwise, $0 < \mu_j < q-1$, so the sum is 0 by Lemma \ref{lem:C-W}. Hence the product is 0, and the sum over $(\alpha_1,\hdots,\alpha_n)\in K^n$ of each monomial of $f^{q-1}$ is 0, so $\sum_{(\alpha_1,\hdots,\alpha_n)\in K^n}f(\alpha_1,\hdots,\alpha_n)^{q-1} = 0$. 

Thus the number of zeros of $f$ in $K^n$ is $0$ mod $p$.
\end{proof}

\begin{cor}\label{cor:FF}
Finite fields are $C_1$.
\end{cor}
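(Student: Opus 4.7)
The plan is to deduce this immediately from the Chevalley--Warning theorem, using the observation that a homogeneous polynomial is forced to have at least one zero (the trivial one), so if its total zero count is divisible by $p$ it must in fact have more.

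In detail, let $K$ be a finite field of characteristic $p$, and let $f$ be a homogeneous polynomial over $K$ of degree $d$ in $n$ variables with $n > d^1 = d$. Applying Theorem~\ref{thm:C-W} to $f$ (which is a polynomial of degree $d$ in $n > d$ variables, homogeneous or not), the number $N$ of zeros of $f$ in $K^n$ satisfies $p \mid N$. Since $f$ is homogeneous, $(0,\hdots,0)$ is a zero of $f$, so $N \geq 1$. Combined with $p \mid N$ and $p \geq 2$, this forces $N \geq p \geq 2$. Therefore $f$ has at least one zero in $K^n$ other than $(0,\hdots,0)$, that is, a nontrivial zero. Hence $K$ is $C_1$.

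There is no real obstacle here: once Chevalley--Warning is in hand, the only content is the parity-style remark that the trivial zero cannot account for a count divisible by $p$ on its own. The key conceptual point worth emphasizing is precisely the gain obtained by passing from the generic Chevalley--Warning statement to the homogeneous setting: homogeneity guarantees a free zero at the origin, which converts a divisibility statement into an existence statement for a \emph{nontrivial} zero.
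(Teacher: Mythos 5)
Your argument is correct and is essentially identical to the paper's own proof: both apply Chevalley--Warning to get $p \mid N$, observe that the trivial zero gives $N \geq 1$, and conclude $N \geq p$, hence a nontrivial zero exists. No differences worth noting.
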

\begin{proof}
Let $f$ be a homogeneous polynomial over the finite field $K$ of degree $d$ in $n$ variables, where $n>d$. By Theorem \ref{thm:C-W}, the number of zeros of $f$ is divisible by $p$. Now $f$ has at least one zero (the trivial zero), so it has at least $p$ zeros, and in particular it has at least $p-1$ nontrivial zeros. Thus $K$ is $C_1$. 
\end{proof}

\subsection{Extension Fields}\label{subsec:EF}

In this section, we will show that an extension field of a $C_i$ field of finite transcendence degree $j$ is $C_{i+j}$. The main idea is to expand a homogeneous polynomial according to a basis for the extension field into a vector of homogeneous polynomials over the base field. So we will need a tool (Theorem \ref{thm:L-N}) for finding nontrivial common zeros of sets of homogeneous polynomials. The proof of this theorem relies on the concept of a normic form. 

\begin{defin}\label{def:normicform}
A \emph{normic form} is a homogeneous polynomial $\phi$ of degree $d$ in $n$ variables such that $n=d$ and $\phi$ has only the trivial zero.
\end{defin}

The name normic form comes from the following example. 

\begin{defin}\label{def:norm}
Let $E$ be a finite algebraic extension of a field $F$. For all $x\in E$, let $m_x: E\rightarrow E$ be the linear transformation $m_x(y) = xy$. The \emph{norm} of $x$, denoted $N(x)$, is the determinant of $m_x$.
\end{defin}

\begin{exa}\label{exa:norm}
Consider $\mathbb{C}$ as an algebraic extension of $\mathbb{R}$ of degree $2$. Take $\{1,i\}$ as a basis for $\mathbb{C}$ over $\mathbb{R}$. For any complex numbers $a$ and $b$, we can write $a = a_1 + a_2 i$ and $b = b_1 + b_2 i$ according to this basis. Then $ab = (a_1 b_1 - a_2 b_2) + (a_1 b_2 + a_2 b_1)i$. 

Representing $b$ as a vector and multiplication by $a$ as a matrix, we have
\[
m_a(b) =
\left(
\begin{array}{cc}
a_1 & -a_2 \\
a_2 & a_1 \\
\end{array}
\right)
\left(\begin{array}{c}
b_1\\
b_2\\
\end{array}\right) = 
\left(
\begin{array}{c}
a_1 b_1 - a_2 b_2 \\
a_1 b_2 + a_2 b_1 \\
\end{array}
\right).
\]

Then $N(a) = |m_a| = a_1^2 + a_2^2$. Taking the coordinates $a_1$ and $a_2$ as variables, $N$ is a homogeneous polynomial of degree $2$ in $2$ variables over $\mathbb{R}$, and it has only the trivial zero in $\mathbb{R}^2$, so $N$ is a normic form.
\end{exa}

\begin{lem}[{\cite[Lemma 3.1]{Greenberg}}]\label{lem:norm}
If $E$ is a finite algebraic extension of $F$ of degree $d>1$, then the norm $N(x)$ is a normic form over $F$ of degree $d$, whose variables are the $d$ coordinates of $x$ after choosing a basis for $E$ as a vector space over $F$.
\end{lem}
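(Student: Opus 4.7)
The plan is to exhibit $N$ explicitly as a polynomial in the coordinates of $x$, verify that it is homogeneous of the correct degree, and then use the fact that $E$ is a field (hence has no zero divisors) to rule out nontrivial zeros. Fix a basis $e_1,\hdots,e_d$ of $E$ over $F$ and, for $x = x_1 e_1 + \cdots + x_d e_d$, let $M(x)$ denote the matrix of $m_x$ with respect to this basis, so that $N(x) = \det M(x)$.

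The first step is to observe that each entry of $M(x)$ is a linear form in $x_1,\hdots,x_d$ with coefficients in $F$. This follows from the $F$-linearity of the assignment $x \mapsto m_x$: extracting the $(k,j)$-entry in the chosen basis gives an $F$-linear map $F^d \to F$, i.e.\ a linear form. Explicitly, if one writes $e_ie_j = \sum_k c_{ijk}e_k$ for structure constants $c_{ijk}\in F$, then the $(k,j)$-entry of $M(x)$ is $\sum_i c_{ijk}x_i$. Expanding $\det M(x) = \sum_\sigma \mathrm{sgn}(\sigma)\prod_k M(x)_{k,\sigma(k)}$ then writes $N(x)$ as a sum of products of $d$ linear forms in $x_1,\hdots,x_d$, so $N$ is a homogeneous polynomial of degree $d$ in $d$ variables. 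To see that $N$ actually has degree $d$ (rather than being the zero polynomial), note that for $x = 1$ the matrix $M(x)$ is the identity, so $N$ evaluates to $1$ at the coordinates of $1$ in our basis.

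It remains to show that $N$ has only the trivial zero in $F^d$. If $N(x_1,\hdots,x_d) = 0$ for some tuple, then $m_x$ is a singular $F$-linear map on $E$, so there exists a nonzero $y \in E$ with $xy = 0$. Since $E$ is a field and therefore has no zero divisors, this forces $x = 0$, which by linear independence of $e_1,\hdots,e_d$ means $(x_1,\hdots,x_d) = (0,\hdots,0)$. There is no serious obstacle in this argument; the most delicate point is confirming that $N$ really has degree $d$ and is not identically zero, which is handled cleanly by the evaluation at $x = 1$ together with the homogeneity established above.
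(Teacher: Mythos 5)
Your proof is correct and takes essentially the same route as the paper's: write the matrix of $m_x$ via the structure constants, observe its entries are linear forms so the determinant is homogeneous of degree $d$, and rule out nontrivial zeros using that $E$ is a field (the paper phrases this as ``$a\neq 0$ implies $m_a$ invertible,'' which is the contrapositive of your zero-divisor argument). The extra check that $N(1)=1$ so $N$ is not identically zero is a nice touch that the paper leaves implicit.
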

\begin{proof}
Let $w_1,\hdots,w_d$ be a basis for $E$. We define the constants $c^{k,l}_j$ by
\[
w_kw_l = \sum_{j=1}^d c^{k,l}_jw_j
\]
for all $1\leq k,l \leq d$. That is, $c^{k,l}$ is $w_kw_l$ expressed as a vector.

We will write the variable $x$ as a vector in terms of this basis, $x=\sum_{k=1}^d x_kw_k$. Then for any $b\in E$, writing $b=\sum_{l=1}^d b_lw_l$,
\begin{eqnarray*}
m_x(b) &=& \left(\sum_{k=1}^d x_kw_k\right)\left(\sum_{l=1}^d b_lw_l\right)\\
&=& \sum_{k=1}^d \sum_{l=1}^d x_kb_lw_kw_l\\
&=& \sum_{j=1}^d \sum_{l=1}^d \sum_{k=1}^d x_kb_lc^{k,l}_jw_j\\
&=& \left(
\begin{array}{c}
\sum_{l=1}^d \sum_{k=1}^d x_kb_lc^{k,l}_1 \\
\vdots\\
\sum_{l=1}^d \sum_{k=1}^d x_kb_lc^{k,l}_d
\end{array}\right)\\
&=&
\left(
\begin{array}{ccc}

\sum_{k=1}^d x_kc^{k,1}_1 & \hdots & \sum_{k=1}^d x_kc^{k,d}_1 \\

\vdots & \ddots & \vdots \\

\sum_{k=1}^d x_kc^{k,1}_d & \hdots & \sum_{k=1}^d x_kc^{k,d}_d 

\end{array}
\right)
\left(
\begin{array}{c}
b_1\\
\vdots\\
b_d
\end{array}
\right),
\end{eqnarray*}
and we have determined the matrix representation of $m_x$.

The determinant of this matrix, $N(x)$, is a homogeneous polynomial of degree $d$ in the variables $x_1,\hdots,x_d$. For $a\in E$, if $a\neq 0$, $a$ has an inverse in $E$, so multiplication by $a$ is invertible, $m_a$ is an invertible matrix, and $N(a)\neq 0$. Thus $N$ has only the trivial zero, and $N$ is a normic form.
\end{proof}

\begin{lem}[{\cite[Lemma 3.2]{Greenberg}}]\label{lem:normic}
If a field $F$ is not algebraically closed, then there exist normic forms over $F$ of arbitrarily large degree.
\end{lem}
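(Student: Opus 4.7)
The plan is to produce a single normic form of some degree $d>1$ and then iterate a self-composition construction to obtain normic forms of degrees $d, d^2, d^3, \ldots$, which are arbitrarily large.

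First I would produce an initial normic form. Since $F$ is not algebraically closed, there exists an irreducible polynomial $p(x) \in F[x]$ of degree $d > 1$. Then $E = F[x]/(p(x))$ is a finite algebraic extension of $F$ of degree $d$. Applying Lemma~\ref{lem:norm}, the norm $N : E \to F$ gives, after fixing a basis of $E$ over $F$, a normic form $\phi(x_1,\ldots,x_d)$ over $F$ of degree $d$ in $d$ variables.

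Next I would iterate. Given a normic form $\phi(x_1,\ldots,x_d)$, introduce $d^2$ new variables $y_{i,j}$ for $1 \le i,j \le d$ and define
\[
\Phi(y_{1,1},\ldots,y_{d,d}) \;=\; \phi\bigl(\phi(y_{1,1},\ldots,y_{1,d}),\,\phi(y_{2,1},\ldots,y_{2,d}),\,\ldots,\,\phi(y_{d,1},\ldots,y_{d,d})\bigr).
\]
Since $\phi$ is homogeneous of degree $d$, each inner substitution yields a polynomial homogeneous of degree $d$ in its row of variables, and plugging these into $\phi$ produces a polynomial homogeneous of degree $d^2$ in $d^2$ variables. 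I then check that $\Phi$ has only the trivial zero: if $\Phi(y_{1,1},\ldots,y_{d,d}) = 0$, then because $\phi$ has only the trivial zero, each inner value $\phi(y_{i,1},\ldots,y_{i,d})$ must equal $0$, and applying the same property of $\phi$ again forces each $y_{i,j} = 0$. Hence $\Phi$ is a normic form of degree $d^2$.

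Repeating this construction inductively gives normic forms of degrees $d, d^2, d^3, \ldots$ over $F$, and since $d > 1$ these degrees grow without bound. The only real content of the proof is the verification in the previous paragraph; producing the initial normic form is immediate from Lemma~\ref{lem:norm}, and the iteration step is a direct unraveling of the defining property of a normic form, so I do not anticipate a serious obstacle.
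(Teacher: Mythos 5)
Your proposal is correct and follows essentially the same approach as the paper: obtain an initial normic form of degree $d>1$ from the norm of a finite algebraic extension via Lemma~\ref{lem:norm}, then iterate the self-substitution $\phi^{(m)} = \phi^{(m-1)}(\phi_1,\ldots,\phi_{d^{m-1}})$ to obtain normic forms of degree $d^m$ for all $m$.
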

\begin{proof}
$F$ is not algebraically closed, so it has some finite algebraic extension of degree $d>1$. By Lemma \ref{lem:norm}, there is a normic form $\phi(x_1,\hdots,x_d)$ of degree $d$ over $F$. Let $\phi_1(x_{1,1},\hdots,x_{1,d}), \hdots, \phi_d(x_{d,1},\hdots,x_{d,d})$ be $d$ copies of $\phi$, each with a set of $d$ distinct variables. By substituting each $\phi_i$ for $x_i$ in $\phi$, we obtain $\phi^{(2)} = \phi(\phi_1,\hdots,\phi_d)$, which is a homogeneous polynomial of degree $d^2$ in $d^2$ variables.

Now $\phi$ has only the trivial zero, so at any zero of $\phi^{(2)}$, each $\phi_i$ must also take the value $0$. But each $\phi_i$ has only the trivial zero, so $\phi^{(2)}$ has only the trivial zero, and thus $\phi^{(2)}$ is a normic form of degree $d^2$.

For all $m>2$, we inductively define $\phi^{(m)} = \phi^{(m-1)}(\phi_1,\hdots,\phi_{d^{m-1}})$, where $\phi_1,\hdots,\phi_{d^{m-1}}$ are copies of $\phi$, each with a distinct set of $d$ variables. The same argument shows that $\phi^{(m)}$ is a normic form of degree $d^m$. Taking $m$ arbitrarily large produces normic forms of arbitrarily large degree.
\end{proof}

We will assume the following theorem. It is not necessary to prove the Ax-Kochen Theorem, but it will allow us to state Theorems \ref{thm:L-N} and \ref{thm:trans_ext} so that they also cover the $C_0$ case. The proof can be easily located in a book on algebraic geometry, for example Hartshorne~\cite{Hartshorne} Chapter 1, Theorem 7.2 is an equivalent statement.

\begin{thm}\label{thm:common_zero}
Let $F$ be an algebraically closed field. If $f_1,\hdots,f_r$ are homogeneous polynomials over $F$ in $n$ variables, where $n>r$, then they have a common nontrivial zero in $F^n$.
\end{thm}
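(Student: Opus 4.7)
The plan is to interpret the common zero set as an affine algebraic variety and use dimension theory to show that it is too large to consist only of the origin.

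Set $V = V(f_1,\dots,f_r) \subseteq F^n$, the common zero locus of the $f_i$ in affine space. Because each $f_i$ is homogeneous, $V$ is stable under scalar multiplication (a cone through the origin): if $(\alpha_1,\dots,\alpha_n) \in V$ and $c \in F$, then $(c\alpha_1,\dots,c\alpha_n) \in V$ as well. The origin is always in $V$, and a common nontrivial zero is precisely an element of $V \setminus \{0\}$. So it suffices to prove $V \ne \{0\}$, and for this it is enough to show $\dim V \geq 1$.

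The dimension bound would come from Krull's Hauptidealsatz (the generalized principal ideal theorem): in the polynomial ring $F[x_1,\dots,x_n]$, which has Krull dimension $n$, every minimal prime over an ideal generated by $r$ elements has height at most $r$. Applied to $I = (f_1,\dots,f_r)$, this yields $\dim V \geq n - r \geq 1$, whereas $\{0\}$ is the zero set of the maximal ideal $(x_1,\dots,x_n)$ and has dimension $0$. Hence $V$ strictly contains the origin, and any point of $V \setminus \{0\}$ is a common nontrivial zero of the $f_i$.

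The main obstacle is importing (or establishing) the dimension inequality. A fully self-contained argument would require building up Noether normalization, the identification of Krull dimension with transcendence degree of the function field, and Krull's height theorem itself: a substantial detour in commutative algebra that the author sensibly avoids by citing Hartshorne. A slightly different packaging of the same content is projective: in $\mathbb{P}^{n-1}(F)$ the $f_i$ cut out $r$ hypersurfaces, and the projective dimension theorem guarantees their intersection is nonempty (of dimension $\geq n-1-r$) as soon as $r \leq n-1$; but this route invokes exactly the same underlying machinery. A more elementary inductive attempt (base case $r=1$ is the $C_0$ property of algebraically closed fields, inductive step by adjoining one hypersurface at a time and parametrizing a curve in the previous variety) would end up re-proving the dimension drop by hand, so the economical plan is to use the dimension bound as a black box, as the author does.
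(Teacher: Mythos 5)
The paper does not prove this theorem at all; it explicitly cites Hartshorne Chapter 1, Theorem 7.2 (the projective dimension theorem) as a black box, noting the result is not even needed for the Ax-Kochen Theorem. Your sketch correctly identifies and unpacks the content of that black box (Krull's Hauptidealsatz giving $\dim V \geq n - r \geq 1$ for the affine cone, or equivalently nonemptiness of the intersection of $r$ hypersurfaces in $\mathbb{P}^{n-1}$), and you arrive at the same conclusion the author does: the economical move is to cite the dimension bound rather than develop it, so your approach is essentially the paper's.
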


When $F$ is not algebraically closed, but is $C_i$ for $i>0$, we can use normic forms to demonstrate the existence of nontrivial common zeros.

\begin{thm}[Lang-Nagata, {\cite[Theorem 3.4]{Greenberg}}]\label{thm:L-N}
Let $F$ be a $C_i$ field. Let $f_1,\hdots,f_r$ be homogeneous polynomials over $F$ of degree $d$ in $n$ variables. If $n>rd^i$, then they have a nontrivial common zero in $F^n$.
\end{thm}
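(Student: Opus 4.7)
The plan is to reduce the multi-polynomial problem to a single-polynomial application of the $C_i$ hypothesis, using the normic forms built in Lemma~\ref{lem:normic}. First I would dispose of the case where $F$ is algebraically closed: there $n > rd^i \geq r$, so Theorem~\ref{thm:common_zero} immediately produces a nontrivial common zero of $f_1, \ldots, f_r$.

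So suppose $F$ is not algebraically closed. Then by Lemma~\ref{lem:normic}, $F$ admits normic forms of arbitrarily large degree. I would pick a normic form $\phi$ of some degree $N$ in $N$ variables with $N \geq r$ (to be chosen), and set
\[
\Phi(x_1, \ldots, x_n) \;=\; \phi\bigl(f_1(x), \ldots, f_r(x), 0, \ldots, 0\bigr),
\]
padding the remaining $N - r$ slots of $\phi$ with zeros. Because $\phi$ has only the trivial zero in $F^N$, an equation $\Phi(\alpha) = 0$ forces $f_1(\alpha) = \cdots = f_r(\alpha) = 0$, and conversely any common zero of the $f_j$'s makes $\Phi$ vanish. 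The polynomial $\Phi$ is homogeneous of degree $Nd$ in $n$ variables, so applying the $C_i$ hypothesis to $\Phi$ yields a nontrivial zero as soon as $n > (Nd)^i$; this nontrivial zero is then a nontrivial common zero of the $f_j$'s.

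The main obstacle is the quantitative mismatch between the hypothesis $n > rd^i$ and the inequality $n > (Nd)^i$ that the substitution demands. When $i = 1$ and a normic form of degree exactly $r$ happens to be available, the choice $N = r$ makes the two bounds coincide. In general, Lemma~\ref{lem:normic} only guarantees normic forms at degrees that are powers of a fixed base $d_0$, so real care is needed to pick the smallest such $N \geq r$ and verify $(Nd)^i < n$. For $i \geq 2$ the naive substitution appears too wasteful, and I expect one must proceed inductively on $r$, combining the $f_j$'s two at a time through a small-degree normic form so that the slack $n - rd^i$ is spent one polynomial at a time rather than all at once. I anticipate that this combinatorial bookkeeping about available degrees of normic forms is the technical heart of the proof, whereas the substitution trick itself is the conceptual core.
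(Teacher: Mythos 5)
Your opening reduction to the algebraically closed case and the basic substitution-into-a-normic-form idea both match the paper, and you have correctly pinned down the real obstacle: a single substitution produces a polynomial of degree $Nd$ in only $n$ variables, and $n > (Nd)^i$ fails in general. However, the proposal as written does not resolve this, and the tentative fix you sketch (inducting on $r$, combining the $f_j$ two at a time) is not the paper's route and would not obviously close the gap either.

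The missing idea is to substitute \emph{several independent copies} of the tuple $(f_1,\ldots,f_r)$, each written in its own fresh set of $n$ variables, into the normic form $\phi$ of degree $l \geq r$, rather than a single copy padded with $l-r$ zeros. Concretely, $\phi^{(1)} = \phi(f_{1,1},\ldots,f_{1,r},f_{2,1},\ldots,f_{\lfloor l/r\rfloor, r},0,\ldots,0)$ where $f_{j,k}$ is $f_k$ in the variables $x_{j,1},\ldots,x_{j,n}$. This yields a homogeneous polynomial of degree $D_1 = ld$ in $N_1 = n\lfloor l/r\rfloor$ variables — crucially, the variable count has grown by a factor of roughly $l/r$ while the degree only grew by $l$. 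One then \emph{iterates}: $\phi^{(m)}$ is obtained by feeding fresh copies of the $f_{j,k}$ into $\phi^{(m-1)}$, giving $D_m = d D_{m-1}$ and $N_m = n\lfloor N_{m-1}/r\rfloor$. The ratio $N_m/D_m^i$ is multiplied by at least $n/(rd^i) > 1$ at each step, so for $m$ large enough $N_m > D_m^i$ and the $C_i$ hypothesis produces a nontrivial zero of $\phi^{(m)}$. Finally, since $\phi$ (and hence each $\phi^{(m)}$) has only the trivial zero when all its arguments vanish, a simple induction shows that a nontrivial zero of $\phi^{(m)}$ yields a nontrivial common zero of $f_1,\ldots,f_r$ in some block of variables. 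Your single-copy version wastes the extra slots of $\phi$ on zeros and gains no variables, which is precisely why the inequality cannot be made to work; the geometric growth from multiple copies and iteration is what lets the slack $n > rd^i$ accumulate.
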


\begin{proof}

If $F$ is algebraically closed, then $F$ is $C_0$ by Theorem \ref{thm:ACF}. So we have $n>rd^0=r$, and by Theorem \ref{thm:common_zero}, the polynomials have a nontrivial common zero in $F^n$.

Otherwise, there is a normic form $\phi$ over $F$ of degree $l \geq r$ in $l$ variables by Lemma \ref{lem:normic}. For all $m\geq 1$, we define $\phi^{(m)}$ inductively, and we define $D_m$ and $N_m$ to be the degree and number of variables of $\phi^{(m)}$ respectively:

\begin{eqnarray*}
\phi^{(1)} &=& \phi(f_{1,1},\hdots,f_{1,r},f_{2,1},\hdots,f_{2,r},\hdots,f_{\lfloor\frac{l}{r}\rfloor,1},\hdots,f_{\lfloor\frac{l}{r}\rfloor,r},0,\hdots,0)\\
\phi^{(m)} &=& \phi^{(m-1)}(f_{1,1},\hdots,f_{1,r},f_{2,1},\hdots,f_{2,r},\hdots,f_{\lfloor\frac{N_{m-1}}{r}\rfloor,1},\hdots,f_{\lfloor\frac{N_{m-1}}{r}\rfloor,r},0,\hdots,0),
\end{eqnarray*}
where each set of polynomials $f_{j,1},\hdots,f_{j,r}$ is a copy of the set $f_1,\hdots,f_r$ with a distinct set of $n$ variables, $x_{j,1},\hdots,x_{j,n}$. That is, for all $j$ and $k$, $f_{j,k} = f_k(x_{j,1},\hdots,x_{j,n})$. Note that we substitute as many complete sets as possible before padding with $0$s.  

We will prove by induction that for all $m\geq 1$, if $\phi^{(m)}$ has a nontrivial zero, then the $f_1,\hdots,f_r$ have a nontrivial common zero. For the base case, suppose that $\phi^{(1)}$ has a nontrivial zero, $\boldsymbol{\alpha}\in F^{N_1}$. We will denote by $\alpha_{j,k}$ the $x_{j,k}$-coordinate of $\boldsymbol{\alpha}$. Consider the values of the polynomials $f_{j,k}$ substituted into $\phi$ in the definition of $\phi^{(1)}$ at $\boldsymbol{\alpha}$. Since $\phi$ is normic, it has only the trivial zero, and thus all of the $f_{j,k}$ are $0$. This means that for all $j$, $\alpha_{j,1},\hdots,\alpha_{j,n}$ is a common zero for $f_1,\hdots,f_r$. Since $\boldsymbol{\alpha}$ is nontrivial, at least one of the $\alpha_{j,k}$ is nonzero, so for at least one $j$, $\alpha_{j,1},\hdots,\alpha_{j,n}$ is a nontrivial common zero of $f_1,\hdots,f_r$.

Now suppose that for $m>1$, $\phi^{(m)}$ has a nontrivial zero, $\boldsymbol{\alpha}\in F^{N_m}$. Consider the values of the polynomials $f_{j,k}$ substituted into $\phi^{(m-1)}$ at $\boldsymbol{\alpha}$. If they are all $0$, then for at least one $j$, $\alpha_{j,1},\hdots,\alpha_{j,n}$ is a nontrivial common zero of $f_1,\hdots,f_r$. If the values of the $f_{j,k}$ are not all $0$, then these values constitute a nontrivial zero for $\phi^{(m-1)}$, and by induction we have a nontrivial common zero for $f_1,\hdots,f_r$.

Since $F$ is $C_i$, it remains to show that for some $m$, $N_m>(D_m)^i$, since then $\phi^{(m)}$ has a nontrivial zero. We have $D_1 = dl$ and $N_1 = n\lfloor\frac{l}{r}\rfloor$, and for all $m > 1$, $D_m=dD_{m-1}$ and $N_m=n\lfloor\frac{N_{m-1}}{r}\rfloor$. Now,

\begin{eqnarray*}
\frac{N_m}{(D_m)^i} &\geq& \frac{n(\frac{N_{m-1}}{r})}{(dD_{m-1})^i}\\
&\geq& \left(\frac{n}{rd^i}\right)\left(\frac{N_{m-1}}{(D_{m-1})^i}\right)
\end{eqnarray*}

Expanding inductively,

\begin{eqnarray*}
\frac{N_m}{(D_m)^i} &\geq& \left(\frac{n}{rd^i}\right)^{m-1}\left(\frac{N_1}{(D_1)^i}\right)\\
&\geq& \left(\frac{n}{rd^i}\right)^{m-1}\left(\frac{n\left(\frac{l}{r}\right)}{(dl)^i}\right)\\
&\geq& \left(\frac{n}{rd^i}\right)^ml^{1-i}.
\end{eqnarray*}

By assumption, $n>rd^i$, so $\lim_{m\rightarrow\infty}\frac{N_m}{(D_m)^i} = \infty$, and in particular $N_m>(D_m)^i$ for $m$ large enough, as was to be shown.
\end{proof}

We are now in a position to prove our results about extension fields.

\begin{thm}[{\cite[Theorem 3.5]{Greenberg}}]\label{thm:alg_ext}
If $F$ is a $C_i$ field, then every algebraic extension of $F$ is $C_i$.
\end{thm}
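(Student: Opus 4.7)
The plan is as follows. Let $E$ be an algebraic extension of the $C_i$ field $F$, and suppose $f$ is a homogeneous polynomial over $E$ of degree $d$ in $n$ variables with $n>d^i$. Since $f$ has only finitely many coefficients, each algebraic over $F$, we may replace $E$ by the subfield $F(\text{coefficients of }f)$, which is a finite algebraic extension of $F$. So without loss of generality assume $[E:F]=r<\infty$, and fix a basis $w_1,\ldots,w_r$ for $E$ over $F$.

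The key idea is to expand each variable of $f$ in terms of this basis. For each $j=1,\ldots,n$, introduce $r$ new variables $x_{j,1},\ldots,x_{j,r}$ (so $nr$ new variables total), and consider the formal substitution $x_j = \sum_{k=1}^r x_{j,k}w_k$ into $f$. After expanding using the multiplication table of the basis (i.e., the constants $c^{k,l}_j$ from the proof of Lemma~\ref{lem:norm}), the resulting expression $f(\sum_k x_{1,k}w_k,\ldots,\sum_k x_{n,k}w_k)$ can be written uniquely as $\sum_{k=1}^r f_k\, w_k$, where each $f_k$ is a polynomial in the $nr$ variables $x_{j,k}$ with coefficients in $F$. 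Since the substitution is linear in the new variables and $f$ is homogeneous of degree $d$, each $f_k$ is either zero or homogeneous of degree $d$ over $F$.

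Now I would apply the Lang-Nagata theorem (Theorem~\ref{thm:L-N}) to the system $f_1,\ldots,f_r$ over the $C_i$ field $F$. These are $r$ homogeneous polynomials of degree $d$ in $nr$ variables, and the Lang-Nagata hypothesis $nr>rd^i$ is exactly our assumption $n>d^i$. Hence there is a nontrivial common zero $(\alpha_{j,k})\in F^{nr}$. Setting $\alpha_j = \sum_k \alpha_{j,k}w_k \in E$ gives $f(\alpha_1,\ldots,\alpha_n) = \sum_k f_k(\alpha_{j,k})\, w_k = 0$.

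The main thing to check carefully is that this zero is nontrivial as an element of $E^n$: since some $\alpha_{j,k}$ is nonzero and the $w_k$ are linearly independent over $F$, the corresponding $\alpha_j = \sum_k \alpha_{j,k}w_k$ is nonzero in $E$. I expect the only real obstacle is the bookkeeping in showing that each $f_k$ is genuinely homogeneous of degree $d$ over $F$, but this follows immediately from the fact that substituting $x_j \mapsto c\,x_j$ in the original variables corresponds to substituting $x_{j,k}\mapsto c\,x_{j,k}$ in the new variables for any $c\in F$, combined with Remark~\ref{rmk:homconst}. (If all the $f_k$ happen to be identically zero, the statement is vacuous, but this cannot happen when $f\ne 0$.) The degree-counting $nr>rd^i \iff n>d^i$ and the fact that Lang-Nagata is applicable to the $C_0$ case as well mean the argument is uniform in $i\geq 0$.
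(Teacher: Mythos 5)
Your proof is correct and follows essentially the same route as the paper's: reduce to a finite extension, expand each variable over a basis, collect the resulting vector of homogeneous polynomials $f_1,\ldots,f_r$ over $F$, and invoke Lang--Nagata (Theorem~\ref{thm:L-N}) with the identical degree count $nr > rd^i$. Your extra care in verifying nontriviality via linear independence of the basis and in checking the homogeneity of each $f_k$ fills in details that the paper leaves implicit.
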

\begin{proof}
It suffices to prove the theorem for any finite extension of $F$, since the coefficients of any polynomial lie in a finite extension.

Let $E$ be a finite algebraic extension of $F$ of degree $e$, and let $w_1,\hdots,w_e$ be a basis for $E$ over $F$. Let $f$ be a homogeneous polynomial over $E$ of degree $d$ in $n$ variables, $x_1,\hdots,x_n$, where $n>d^i$. We will write each variable in terms of the basis for $E$, substituting $\sum_{k=1}^e x_{j,k}w_k$ for each $x_j$ and letting the $x_{j,k}$ vary over $F$.

Expanding, and writing $f$ in terms of the basis for $E$, $f = \sum_{k=1}^e f_kw_k$, where the $f_k$ are polynomials in the variables $x_{j,k}$. Each $f_k$ is a linear combination of monomials of degree $d$, so the $f_k$ are homogeneous polynomials of degree $d$ in $en$ variables over $F$.

Now $f$ has a nontrivial zero in $E^n$ if and only if the $f_k$ have a nontrivial common zero in $F^{en}$. Such a zero exists by Theorem \ref{thm:L-N}, since $en>ed^i$.
\end{proof}

\begin{thm}[{\cite[Theorem 3.6]{Greenberg}}]\label{thm:trans_ext}
If $F$ is a $C_i$ field, and $E$ is an extension of $F$ of finite transcendence degree $j$, then $E$ is $C_{i+j}$.
\end{thm}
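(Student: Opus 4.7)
The plan is to reduce to the crucial case of adjoining a single transcendental element and then apply the algebraic extension theorem. Choose a transcendence basis $t_1,\ldots,t_j$ for $E$ over $F$, so that $E$ is algebraic over $F(t_1,\ldots,t_j)$. By Theorem \ref{thm:alg_ext} it suffices to show that $F(t_1,\ldots,t_j)$ is $C_{i+j}$, and by induction on $j$, this reduces to the key lemma: \emph{if $F$ is $C_i$, then $F(t)$ is $C_{i+1}$ for any transcendental $t$ over $F$.} The base case $j=0$ is Theorem~\ref{thm:alg_ext} applied directly.

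To prove the key lemma, let $f(x_1,\ldots,x_n)$ be a homogeneous polynomial over $F(t)$ of degree $d$ with $n>d^{i+1}$. After clearing denominators, we may assume the coefficients of $f$ lie in $F[t]$ with $t$-degree at most some integer $s$. The idea is to look for a zero in which each coordinate is a polynomial in $t$ of degree at most $r$, where $r$ is a parameter we will choose large. Concretely, substitute
\[
x_j = \sum_{k=0}^{r} a_{j,k}\,t^k \quad (1\le j\le n),
\]
where the $a_{j,k}$ are $n(r+1)$ new indeterminates ranging over $F$. The result is a polynomial in $t$ of degree at most $rd+s$, whose coefficients $g_0,\ldots,g_{rd+s}$ are polynomials in the $a_{j,k}$ over $F$. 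Since the substitution is linear in the $a_{j,k}$ and $f$ is homogeneous of degree $d$ in the $x_j$, each $g_l$ is a homogeneous polynomial of degree $d$ in the $a_{j,k}$. A nontrivial common zero of the $g_l$ in $F^{n(r+1)}$ then yields polynomials $\xi_j(t)\in F[t]$, not all zero, with $f(\xi_1,\ldots,\xi_n)=0$ in $F(t)$, giving the desired nontrivial zero of $f$.

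By the Lang-Nagata theorem (Theorem~\ref{thm:L-N}) applied over the $C_i$ field $F$, such a common zero exists provided the number of variables exceeds the number of polynomials times $d^i$, i.e.
\[
n(r+1) \;>\; (rd+s+1)\,d^i,
\]
which rearranges to $r(n-d^{i+1}) > (s+1)d^i - n$. Because $n-d^{i+1}>0$ by hypothesis, this inequality holds for all sufficiently large $r$, completing the proof of the key lemma.

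The main obstacle is balancing the count of equations against the count of unknowns: the expansion produces roughly $rd$ equations in roughly $nr$ unknowns, and Lang-Nagata costs an extra factor of $d^i$ on the equations, so the whole strategy is viable only when $n/d > d^i$, i.e. $n > d^{i+1}$. This is exactly the hypothesis, so the slack it provides — and the fact that the $g_l$ turn out to be \emph{homogeneous} of the \emph{same} degree $d$, which is what makes Lang-Nagata applicable at all — is what drives the argument through.
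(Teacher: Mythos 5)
Your proof is correct and follows essentially the same route as the paper: reduce via Theorem~\ref{thm:alg_ext} to the purely transcendental case, induct on the transcendence degree, clear denominators to get coefficients in $F[t]$, substitute truncated power series $x_j = \sum_k a_{j,k}t^k$ to turn $f$ into a system of degree-$d$ homogeneous forms $g_l$ over $F$, and invoke Lang--Nagata together with the inequality $n > d^{i+1}$ to make the variable count beat the equation count for large enough truncation. The only differences from the paper's proof are cosmetic (you swap the roles of the letters $r$ and $s$, and your clearing-of-denominators remark is stated a bit more directly).
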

\begin{proof}
By Theorem \ref{thm:alg_ext}, we can reduce to the case in which $E$ is a purely transcendental extension. Any purely transcendental extension of $F$ of degree $j$ is isomorphic to the field of rational functions in $j$ variables, $F(t_1,\hdots,t_j)$. We will show that when $E=F(t)$, $E$ is $C_{i+1}$. A simple induction on $j$ then completes the proof.

The coefficients of homogeneous polynomials over $F(t)$ are, in general, rational functions. However, it suffices to consider homogeneous polynomials with coefficients in $F[t]$, the ring of polynomials, since we can clear denominators. That is, for $f\in F(t)[x_1,\hdots,x_n]$, if $g$ is the product of the denominators of the coefficients of $f$, then $g^df\in F[t][x_1,\hdots,x_n]$. But if $(a_1,\hdots,a_n)$ is a nontrivial zero of $g^df$, then $(ga_1,\hdots,ga_n)$ is a nontrivial zero of $f$ by Remark \ref{rmk:homconst}.

Let $f$ be a homogeneous polynomial over $F[t]$ of degree $d$ in $n$ variables, $x_1,\hdots,x_n$, where $n>d^{i+1}$. For some $s>0$, which we leave unspecified for now, substitute $\sum_{k=0}^s x_{j,k}t^k$ for each $x_j$, where the $x_{j,k}$ vary over $F$. If $r$ is the highest degree (in terms of $t$) of the coefficients of $f$, then combining like powers of $t$, $f = \sum_{k=0}^{ds+r}f_kt^k$, where the $f_k$ are polynomials in the variables $x_{j,k}$. Each $f_k$ is a linear combination of monomials of degree $d$, so the $f_k$ are homogeneous polynomials of degree $d$ in $n(s+1)$ variables over $F$.

We can apply Theorem \ref{thm:L-N} if $n(s+1)>(ds+r+1)d^i$, or equivalently, if $(n-d^{i+1})s>(r+1)d^i-n$. By assumption, $n>d^{i+1}$, so this inequality is satisfiable by picking $s$ large enough. Then the theorem gives us a nontrivial common zero for the $f_k$ in $F^{n(s+1)}$, which supplies a nontrivial zero of $f$ in $E^n$.
\end{proof}

\subsection{Valued fields}\label{subsec:VF}

The goal of this section is to prove that the field of formal Laurent series over any finite field is $C_{2}$. Along the way we will develop the theory of discrete valued fields and their completions, which will allow us to define the $p$-adic fields. We begin with some definitions.

\begin{defin}\label{def:LOG}
A \emph{linearly ordered abelian group} is an abelian group $G$, together with an order relation $\leq$, such that for all $a,b,c\in G$,
\begin{enumerate}
\item $\leq$ is a linear order on $G$, that is,
\begin{enumerate}
\item$a \leq b$ or $b \leq a$,
\item if $a\leq b$ and $b\leq a$, then $a=b$,
\item if $a\leq b$ and $b\leq c$, then $a\leq c$, and
\end{enumerate}
\item if $a \leq b$, then $a + c \leq b + c$. 
\end{enumerate}
\end{defin}

We will sometimes write $b\geq a$ instead of $a\leq b$, and we will write $a<b$ to mean $a\leq b$ and $a\neq b$.

\begin{defin}\label{def:VF}
Let $G$ be a linearly ordered abelian group, where we extend the order and group operation on $G$ to include $\infty$, so that for all $b\in G\cup\{\infty\}$, $b\leq\infty$ and $b + \infty = \infty$. Given a field $F$ and a map $\mathfrak{v}:F\rightarrow G \cup \{\infty\}$, such that for all $a,b\in F$,
\begin{enumerate}
\item $\mathfrak{v}(a)=\infty$ if and only if $a=0$,
\item $\mathfrak{v}(ab)=\mathfrak{v}(a) + \mathfrak{v}(b)$, and
\item $\mathfrak{v}(a+b)\geq \min(\mathfrak{v}(a),\mathfrak{v}(b))$,
\end{enumerate}
we call $F$ a \emph{valued field} and $\mathfrak{v}$ a \emph{valuation} on $F$.
\end{defin}

\begin{exa}\label{exa:TrivialVF}
Given a field $F$ and a linearly ordered abelian group $G$, $F$ can be equipped with the trivial valuation $\mathfrak{v}:F\rightarrow G\cup\{\infty\}$ which maps $0_F$ to $\infty$ and all other elements to $0_G$.
\end{exa}

\begin{exa}\label{exa:v_p}
For any prime $p$, define $\mathfrak{v}_p:\mathbb{Z}\setminus\{0\}\rightarrow\mathbb{N}$ by $\mathfrak{v}_p(a) = k$, where $k$ is the maximum integer such that $p^k\,|\,a$. We can extend $\mathfrak{v}_p$ to a function $\mathbb{Q}\rightarrow\mathbb{Z}\cup\{\infty\}$ by setting $\mathfrak{v}_p(0)=\infty$ and $\mathfrak{v}_p(\frac{a}{b}) = \mathfrak{v}_p(a) - \mathfrak{v}_p(b)$. It is easy to verify that this extension is well-defined, and that $\mathbb{Q}$ and $\mathfrak{v}_p$ satisfy the conditions given in Definition \ref{def:VF}.
\end{exa}

\begin{exa}\label{exa:v_t}
For any field $F$, we can define a similar valuation on the field of rational functions over $F$. Define $\mathfrak{v}_t:F[t]\setminus\{0\}\rightarrow\mathbb{N}$ by $\mathfrak{v}_t(f(t)) = k$, where $k$ is the maximum integer such that $t^k\,|\,f(t)$. If $f(t) = a_nt^n + \hdots + a_0$, $\mathfrak{v}_t(f(t))$ is the minimum $k$ such that $a_k\neq 0$. We can extend $\mathfrak{v}_t$ to a function $F(t)\rightarrow\mathbb{Z}\cup\{\infty\}$ by setting $\mathfrak{v}_t(0)=\infty$ and $\mathfrak{v}_t(\frac{f}{g}) = \mathfrak{v}_t(f) - \mathfrak{v}_t(g)$. It is easy to verify that this extension is well-defined, and that $F(t)$ and $\mathfrak{v}_t$ satisfy the conditions given in Definition \ref{def:VF}.
\end{exa}

We can immediately establish some simple facts about valuations.

\begin{lem}\label{lem:VF}
Let $F$ be a valued field with valuation $\mathfrak{v}:F\rightarrow G\cup\{\infty\}$. For all $a,b\in F$,
\begin{enumerate}
\item $\mathfrak{v}(1)=0$,
\item $\mathfrak{v}(a^{-1}) = -\mathfrak{v}(a)$
\item $\mathfrak{v}(-a)=\mathfrak{v}(a)$,
\item if $\mathfrak{v}(a)\neq \mathfrak{v}(b)$, then $\mathfrak{v}(a+b)=\min(\mathfrak{v}(a),\mathfrak{v}(b))$.
\end{enumerate}
\end{lem}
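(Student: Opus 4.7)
The plan is to handle the four parts in order, using each established part as a tool for the next, exactly as one would for a valuation on a field with values in $\mathbb{Z}$.

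For part (1), I would apply condition (2) of Definition~\ref{def:VF} to the identity $1 = 1 \cdot 1$. This gives $\mathfrak{v}(1) = \mathfrak{v}(1) + \mathfrak{v}(1)$, an equation taking place in $G$ (not at $\infty$, since $1 \neq 0$), from which cancellation in the abelian group $G$ forces $\mathfrak{v}(1) = 0$. Part (2) then follows immediately: for $a \neq 0$, applying $\mathfrak{v}$ to $a \cdot a^{-1} = 1$ gives $\mathfrak{v}(a) + \mathfrak{v}(a^{-1}) = 0$, so $\mathfrak{v}(a^{-1}) = -\mathfrak{v}(a)$. (If $a = 0$ the statement is vacuous, since $a^{-1}$ is undefined.)

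For part (3), the key observation is that a linearly ordered abelian group is torsion-free: any strictly positive element $g$ satisfies $2g > g > 0$ by translation-invariance of the order, so no nonzero element has finite order. Since $(-1)^2 = 1$, applying $\mathfrak{v}$ and using part (1) yields $2\mathfrak{v}(-1) = 0$, and torsion-freeness forces $\mathfrak{v}(-1) = 0$. Then $\mathfrak{v}(-a) = \mathfrak{v}(-1) + \mathfrak{v}(a) = \mathfrak{v}(a)$.

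Part (4) is the only part with any real content. Assume without loss of generality that $\mathfrak{v}(a) < \mathfrak{v}(b)$, so that $\min(\mathfrak{v}(a), \mathfrak{v}(b)) = \mathfrak{v}(a)$. Condition (3) of Definition~\ref{def:VF} gives $\mathfrak{v}(a+b) \geq \mathfrak{v}(a)$; the task is to rule out strict inequality. I would argue by contradiction: if $\mathfrak{v}(a+b) > \mathfrak{v}(a)$, then writing $a = (a+b) + (-b)$ and applying condition (3) together with part (3) of this lemma gives
\[
\mathfrak{v}(a) \geq \min(\mathfrak{v}(a+b), \mathfrak{v}(-b)) = \min(\mathfrak{v}(a+b), \mathfrak{v}(b)) > \mathfrak{v}(a),
\]
a contradiction. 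Hence $\mathfrak{v}(a+b) = \mathfrak{v}(a)$. The only mild subtlety, and hence the main thing to be careful about, is the torsion-free step in part (3); everything else is direct manipulation of the axioms.
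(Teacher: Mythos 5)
Your proof is correct and follows essentially the same route as the paper's: parts (1) and (2) come from $\mathfrak{v}$ being a group homomorphism $F^* \to G$, part (3) from the observation that $2\mathfrak{v}(-1)=0$ forces $\mathfrak{v}(-1)=0$ (the paper does this by a direct two-case argument rather than invoking torsion-freeness, but the content is identical), and part (4) from writing $a = (a+b) + (-b)$ and comparing valuations, which you phrase as a contradiction and the paper phrases as a sandwiched chain of inequalities.
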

\begin{proof}
Property (2) in Definition \ref{def:VF} shows that $\mathfrak{v}$ is a homomorphism from $F^*$ to $G$. Thus it maps the identity of $F^*$ to the identity of $G$: $\mathfrak{v}(1) = 0$. Also, $\mathfrak{v}$ maps inverses in $F^*$ to inverses in $G$: $\mathfrak{v}(a^{-1}) = -\mathfrak{v}(a)$.

Now by property (1a) in Definition~\ref{def:LOG}, either $0\leq \mathfrak{v}(-1)$ or $\mathfrak{v}(-1)\leq 0$. Suppose $0\leq \mathfrak{v}(-1)$. Then by property (2), $\mathfrak{v}(-1) \leq \mathfrak{v}(-1)+\mathfrak{v}(-1) = \mathfrak{v}(-1\cdot-1) = \mathfrak{v}(1) = 0$, so $\mathfrak{v}(-1) = 0$ by property (1b). The same argument holds if we assume $\mathfrak{v}(-1)\leq 0$, in which case $\mathfrak{v}(-1) \geq \mathfrak{v}(-1) + \mathfrak{v}(-1) = 0$, so $\mathfrak{v}(-1) = 0$. Hence for all $a\in F$, $\mathfrak{v}(-a) = \mathfrak{v}(-1) + \mathfrak{v}(a) = \mathfrak{v}(a)$.

If $\mathfrak{v}(a)\neq \mathfrak{v}(b)$, say $\mathfrak{v}(a)< \mathfrak{v}(b)$, then $\mathfrak{v}(a+b) \geq \min(\mathfrak{v}(a),\mathfrak{v}(b)) = \mathfrak{v}(a) = \mathfrak{v}(a + b + -b) \geq \min(\mathfrak{v}(a+b),\mathfrak{v}(-b))$. But we have the strict inequality $\mathfrak{v}(a) < \mathfrak{v}(b) = \mathfrak{v}(-b)$, so $\min(\mathfrak{v}(a+b),\mathfrak{v}(-b)) = \mathfrak{v}(a+b)$, and thus we have equality above: $\mathfrak{v}(a+b) = \min(\mathfrak{v}(a),\mathfrak{v}(b))$. The same argument holds if $\mathfrak{v}(b) < \mathfrak{v}(a)$.
\end{proof}

All valued fields come with a few interesting structures:
\begin{itemize}
\item $\mathfrak{v}$ is a homomorphism from $F^*$ to $G$, so the image $\mathfrak{v}(F^*)$ is a linearly ordered subgroup of $G$, called the \emph{value group}. Note that since $\mathfrak{v}(0) = \infty$, $\mathfrak{v}(F) = \mathfrak{v}(F^*)\cup\{\infty\}$.
\item We define $\mathcal{O}_{F}=\{a\in F \,|\, \mathfrak{v}(a)\geq 0\}$. The set $\mathcal{O}_{F}$ contains $1$ and $0$, and is closed under addition, multiplication, and additive inverse, so it is a subring of $F$, called the \emph{valuation ring}. We will denote the valuation ring by $\mathcal{O}$ when there is no ambiguity. For all $a\in F$, $a\in\mathcal{O}$ or $a^{-1}\in\mathcal{O}$, since if $\mathfrak{v}(a) < 0$, then $\mathfrak{v}(a) + \mathfrak{v}(a^{-1}) < \mathfrak{v}(a^{-1})$, and thus $\mathfrak{v}(a^{-1}) > \mathfrak{v}(1) = 0$. Hence the field of fractions of $\mathcal{O}$ is $F$.
\item For $a\in \mathcal{O}$, if also $a^{-1}\in \mathcal{O}$, then $\mathfrak{v}(a) + \mathfrak{v}(a^{-1}) = \mathfrak{v}(a) + -\mathfrak{v}(a) = 0$, but both $\mathfrak{v}(a)\geq 0$ and $\mathfrak{v}(a^{-1})\geq 0$, so $\mathfrak{v}(a) = \mathfrak{v}(a^{-1}) = 0$. Conversely, if $\mathfrak{v}(a) = 0$, then $\mathfrak{v}(a^{-1}) = -\mathfrak{v}(a)  = 0$, and $a^{-1}\in \mathcal{O}$. Thus $\{a\in F\,|\,\mathfrak{v}(a)=0\}$ is $\mathcal{O}^*$, the group of units of $\mathcal{O}$.
\item We define $I_1=\{a\in \mathcal{O} \,|\, \mathfrak{v}(a)> 0\}$. The set $I_1$ is an ideal in $\mathcal{O}$, since it is closed under addition, and if $a\in I_1$, $b\in \mathcal{O}$, then $\mathfrak{v}(ab) = \mathfrak{v}(a) + \mathfrak{v}(b) > 0$, so $ab\in I_1$. Moreover, it is a maximal ideal, since if $I$ is an ideal in $\mathcal{O}$ properly containing $I_1$, then there is $a\in I$ with $\mathfrak{v}(a) = 0$, so $a$ is a unit, and thus $I=\mathcal{O}$.
\item We define $\overline{F} = \mathcal{O}/I_1$. Since $I_1$ is a maximal ideal, $\overline{F}$ is a field, called the \emph{residue class field}. The residue class of $a\in \mathcal{O}$ mod $I_1$ is denoted $\overline{a}$.
\end{itemize}

\subsubsection*{Discrete Valued Fields} 
The value group of a discrete valued field is isomorphic to $\mathbb{Z}$. The structure imposed by the valuation axioms allows us to complete such a field in a way analogous to how $\mathbb{R}$ is obtained by a completion of $\mathbb{Q}$. Our two main examples of valued fields, $\mathbb{F}_p(t)$ and $\mathbb{Q}$, are discrete valued fields with completions $\mathbb{F}_p((t))$ and $\mathbb{Q}_p$ respectively.

\begin{defin}\label{def:DVF}
A valued field $F$ with valuation $\mathfrak{v}$ is called \emph{discrete} if its value group $\mathfrak{v}(F^*)$ is isomorphic to $\mathbb{Z}$ with its usual ordering. Call the isomorphism $\phi$. An element $\pi\in F$ is called a \emph{prime element} if $\phi(\mathfrak{v}(\pi)) = 1$.
\end{defin} 

For notational convenience, we will suppress the isomorphism $\phi$ and identify the value groups of our discrete valued fields with the integers.

The fields $\mathbb{Q}$ and $F(t)$ with valuations $\mathfrak{v}_p$ and $\mathfrak{v}_t$ defined in Examples ~\ref{exa:v_p} and \ref{exa:v_t} are discrete valued fields.

\begin{lem}\label{lem:ideals}
Let $F$ be a discrete valued field with valuation $\mathfrak{v}$. Let $\pi$ be a prime element in $F$. Then for $n\in\mathbb{Z}$, any $a\in F^*$ with $\mathfrak{v}(a) = n$ can be written as $u \pi^n$ with $u\in \mathcal{O}^*$, and
for all $n\geq 1$, the set $I_n = \{a\in F \,|\, \mathfrak{v}(a) \geq n\}$ is a principal ideal of $\mathcal{O}$, generated by $\pi^n$.
\end{lem}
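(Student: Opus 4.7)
The plan is to handle the two assertions in sequence, leaning on Lemma~\ref{lem:VF} (in particular $\mathfrak{v}(a^{-1})=-\mathfrak{v}(a)$) and the identification of $\mathcal{O}^*$ with $\{a\in F\mid\mathfrak{v}(a)=0\}$ that was established in the bullet points following the definition of a valued field. Both parts come down to translating the additive statement about valuations into a multiplicative statement about the valuation ring via the prime element $\pi$.

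For the first claim, I would define $u = a\pi^{-n}$ for $a\in F^*$ with $\mathfrak{v}(a)=n$. Using multiplicativity of $\mathfrak{v}$ together with $\mathfrak{v}(\pi)=1$, one computes
\[
\mathfrak{v}(u) = \mathfrak{v}(a) + \mathfrak{v}(\pi^{-n}) = n + (-n) = 0,
\]
so $u\in\mathcal{O}^*$ by the characterization of the units. Multiplying both sides by $\pi^n$ gives $a = u\pi^n$, as required.

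For the second claim, I would first verify that $I_n$ is an ideal of $\mathcal{O}$: it lies in $\mathcal{O}$ because $n\geq 1 > 0$; closure under addition follows from the ultrametric inequality $\mathfrak{v}(a+b)\geq\min(\mathfrak{v}(a),\mathfrak{v}(b))\geq n$; and absorption holds because for $a\in I_n$ and $b\in\mathcal{O}$ we have $\mathfrak{v}(ab)=\mathfrak{v}(a)+\mathfrak{v}(b)\geq n+0=n$. Clearly $\pi^n\in I_n$, so $(\pi^n)\subseteq I_n$. Conversely, given a nonzero $a\in I_n$ with $\mathfrak{v}(a)=m\geq n$, the first part supplies a unit $u\in\mathcal{O}^*$ with $a=u\pi^m = (u\pi^{m-n})\pi^n$; since $m-n\geq 0$ we have $\pi^{m-n}\in\mathcal{O}$, so $a\in(\pi^n)$. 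The zero element is of course also in $(\pi^n)$. Therefore $I_n=(\pi^n)$ and in particular $I_n$ is principal.

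There is no real obstacle here beyond bookkeeping; the only subtlety worth flagging is that one must invoke part~(1) at the right moment to write a general element of $I_n$ as a product containing $\pi^n$, rather than trying to manipulate the valuation directly. Everything else is immediate from the valuation axioms.
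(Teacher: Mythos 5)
Your proof is correct and takes essentially the same route as the paper: define $u=a\pi^{-n}$, verify $\mathfrak{v}(u)=0$ to show $u\in\mathcal{O}^*$ for part one, then verify the ideal axioms and use part one to write any $a\in I_n$ as $u\pi^{\mathfrak{v}(a)-n}\cdot\pi^n$ for part two. You spell out a couple of routine checks (that $\pi^{m-n}\in\mathcal{O}$, that $0\in(\pi^n)$) that the paper leaves implicit, but the argument is the same.
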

\begin{proof}
For all $a\in F^*$, let $n = \mathfrak{v}(a)$. Then $\mathfrak{v}(a\pi^{-n}) = \mathfrak{v}(a) + \mathfrak{v}(\pi^{-n}) = n - n = 0$, so $u = a\pi^{-n}$ is a unit in $\mathcal{O}$. We can write $a = u\pi^n$, with $u\in\mathcal{O}^*$. 

Now for all $n\geq 1$, $I_n$ is closed under addition, and if $a\in I_n$, $y\in \mathcal{O}$, then $\mathfrak{v}(ay) = \mathfrak{v}(a) + \mathfrak{v}(y) \geq n + 0=n$, so $ay\in I_n$. Thus $I_n$ is an ideal. For all $a\in I_n$, $a=u\pi^{\mathfrak{v}(a)} = u \pi^n \pi^{\mathfrak{v}(a)-n}$ for some unit $u$, and $\pi^n\in I_n$, so $I_n$ is generated by $\pi^n$.
\end{proof}

For all $n\geq 0$, we define the ring $\mathcal{O}_n = \mathcal{O}/I_{n+1}$, the ring of cosets mod $\pi^{n+1}$. Note that $\mathcal{O}_0 = \mathcal{O}/I_{1} = \overline{F}$. Choose a set of coset representatives $A = \{\alpha_i\}\subset \mathcal{O}$ for the elements of $\overline{F}$. For any $a\in \mathcal{O}$, if $\overline{a} = \overline{\alpha_{i_0}}$, then $a-\alpha_{i_0} \in M$, so $a = \alpha_{i_0} + a_1\pi$ for some $a_1 \in \mathcal{O}$. Repeating this process, if $\overline{a_1} = \overline{\alpha_{i_1}}$, then $a_1 = \alpha_{i_1} + a_2\pi$ for some $a_2 \in \mathcal{O}$, so $a = \alpha_{i_0} + \alpha_{i_1}\pi + a_2\pi^2$. Thus $a \equiv \alpha_{i_0} +  \alpha_{i_1}\pi\,\,(\mbox{mod}\,\,\pi^2)$, and $\alpha_{i_0} +  \alpha_{i_1}\pi$ is the image of $a$ in $\mathcal{O}_1$.

Continuing in this way, we can represent any element of $\mathcal{O}_n$ uniquely as $\alpha_{i_0} + \alpha_{i_1}\pi  + \hdots +  \alpha_{i_n}\pi^n$ for $\alpha_{i_0},\hdots,\alpha_{i_n}\in A$. For all $n>0$, let $\phi_n$ be the canonical homomorphism $\mathcal{O}_n\rightarrow \mathcal{O}_{n-1}$ which maps an element of $\mathcal{O}_n$ to its coset mod $\pi^n$. Under this representation of $\mathcal{O}_n$, $\phi_n$ simply omits the leading term $ \alpha_{i_n}\pi^n$. 

\begin{defin}\label{def:completion}
Let $F$ be a discrete valued field. Define the rings $\mathcal{O}_n$ for all $n\geq 0$ and homomorphisms $\phi_n$ for all $n > 0$ as above. The \emph{completion} of $\mathcal{O}$, $\widehat{\mathcal{O}}$, is defined by 
\[
\widehat{\mathcal{O}} = \{(a_0,a_1,\hdots)\in\prod_{n\geq0}\mathcal{O}_n\,|\,\forall n>0, \phi_n(a_n)=a_{n-1}\}.
\]
$\widehat{\mathcal{O}}$ is a subring of the product ring $\prod_{n\geq0}\mathcal{O}_n$. The \emph{completion} of $F$, $\widehat{F}$, is defined to be the field of fractions of $\widehat{\mathcal{O}}$.
\end{defin}

Those familiar with category theory will recognize this as the inverse limit construction.

Using the representation of $\mathcal{O}_n$ as $\alpha_{i_0} + \alpha_{i_1}\pi  + \hdots + \alpha_{i_n}\pi^n$ for $\alpha_{i_0},\hdots,\alpha_{i_n}\in A$, an arbitrary element of the completion $a\in\widehat{\mathcal{O}}$ looks like $a = (\alpha_{i_0},\,\alpha_{i_0}+\alpha_{i_1}\pi,\,\alpha_{i_0}+\alpha_{i_1}\pi+\alpha_{i_2}\pi^2,\,\hdots)$ with $\alpha_{i_0},\alpha_{i_1},\hdots\in A$. For convenience, we will express this element as an infinite sum: $\alpha_{i_0}+\alpha_{i_1}\pi+\alpha_{i_2}\pi^2+\hdots$, which is well-defined, since the $k^{th}$ coordinate of $a$ provides the coefficient $\alpha_{i_k}$ of $\pi^k$, while agreeing with the previous coordinates on the coefficients $\alpha_{i_j}$ for all $j<k$. 

\begin{rmk}\label{rmk:absolute value}
The completion of $F$, which we have defined purely algebraically, is isomorphic to the analytic completion of $F$ under the metric induced by the absolute value $||a||_\mathfrak{v} = 2^{-\mathfrak{v}(a)}$. The elements of $\widehat{\mathcal{O}}$ correspond to equivalence classes of Cauchy sequences under this metric.
\end{rmk}

\begin{exa}\label{exa:F((t))}
For any field $F$, the field of rational functions $F(t)$ with the valuation $\mathfrak{v}_t$ defined in Example~\ref{exa:v_t} is a discrete valued field. We will see that its completion is $F((t))$, the field of formal Laurent series over $F$.

We have $\mathfrak{v}_p(t) = 1$, and we will choose $\pi = t$ as a prime element. Writing all fractions in lowest terms, we have $\mathcal{O} = \{\frac{f}{g}\in F(t)\,|\,t\nmid g\}$, with maximal ideal $I_1 = \{\frac{f}{g}\in F(t)\,|\,t\,|\,f, t\nmid g\}$. 

Now for any rational function $\frac{f}{g}\in F(t)$, with $t\nmid g$, let $h\in F$ be the inverse of the constant term of $g$. Then $hg \equiv 1$ (mod $t$). Let $l = \frac{f(hg-1)}{t}\in F[t]$. Then $\frac{f}{g} + \frac{tl}{g} = \frac{f + tl}{g} = \frac{f + f(hg - 1)}{g} = \frac{fhg}{g} = fh \in F[t]$. We chose $\frac{tl}{g}\in I_1$, so this shows that any element of $\mathcal{O}$ is congruent to an an element of $F[t]$ mod $I_1$. Since $t\in I_1$, any element of $\mathcal{O}$ is congruent to an element of $F$ mod $I_1$. 

Hence the residue class field $\overline{F(t)}$ is isomorphic to $F$, and we can take $F$ as our set of coset representatives.

Now we will take the completion of $\mathcal{O}$. The resulting ring is $\widehat{\mathcal{O}} = F[[t]]$, the field of formal power series over $F$. As we saw above, the elements of the completion can be uniquely represented in the form $\alpha_0 + \alpha_1t + \alpha_2t^2 + \hdots$, with each $\alpha_i\in F$.

The fraction field of $F[[t]]$ is the completion $\widehat{F(t)}$. Let $x = \frac{\alpha_0 + \alpha_1t + \hdots}{\beta_0 + \beta_1t + \hdots} \in \widehat{F(t)}$. Let $k$ be the least integer such that $\beta_k\neq 0$. Now factoring out the leading term $\beta_kt^k$, we can write 
\[
x = \left(\frac{1}{\beta_kt^{k}}\right)\left(\frac{\alpha_0 + \alpha_1t + \hdots}{1 + \gamma_{1}t + \hdots}\right),
\]
where $\gamma_i = \beta_{k+i}\beta_k^{-1}\in F$ for all $i\geq 1$.

We claim that the inverse of the denominator, $(1 + \gamma_{1}t + \hdots)^{-1}$, is an element of $F[[t]]$. We have \begin{eqnarray*}
\frac{1}{1 + \gamma_1t + \hdots} &=& \frac{1}{1 - (-\gamma_1t - \hdots)}\\
&=& 1 + (-\gamma_1t - \hdots) + (-\gamma_1t - \hdots)^2 + \hdots,
\end{eqnarray*}
applying the geometric series formula. Now for all $n\geq 0$, $t^n$ appears in only finitely many terms of the infinite sum, so the coefficient of each $t^n$ is well-defined, and this is a well-defined element of $F[[t]]$.

Letting $y = \frac{1}{1 + \gamma_1t + \hdots}\in F[[t]]$, we can write $x = \beta_k^{-1} t^{-k}y(\alpha_0 + \alpha_1t + \hdots)$, and this has the form $c_{-k}t^{-k} + \hdots + c_{-1}t^{-1} + c_0 + c_1t + \hdots$, with each $c_i\in F$. All elements of $\widehat{F(t)}$ can be uniquely represented in this form. We call the completion the field of formal Laurent series over $F$ and denote it by $F((t))$.
\end{exa}

\begin{exa}\label{exa:Q_p}
For all primes $p$, we define the field of p-adic numbers, $\mathbb{Q}_p$ to be the completion of $\mathbb{Q}$ according to the valuation $\mathfrak{v}_p$ defined in Example \ref{exa:v_p}. 

We have $\mathfrak{v}_p(p) = 1$, and we will choose $\pi = p$ as a prime element. Writing all fractions in lowest terms, we have $\mathcal{O} = \{\frac{a}{b}\in\mathbb{Q}\,|\,p\nmid b\}$, with maximal ideal $I_1 = \{\frac{a}{b}\in\mathbb{Q}\,|\,p\,|\,a, p\nmid b\}$. 

Now for any $\frac{a}{b}\in\mathbb{Q}$, with $p\nmid b$, there is some $d\in\mathbb{Z}$ such that $db \equiv 1$ (mod $p$). Let $c = \frac{a(db-1)}{p}\in\mathbb{Z}$. Then $\frac{a}{b} + \frac{pc}{b} = \frac{a + pc}{b} = \frac{a + a(db - 1)}{b} = \frac{adb}{b} = ad \in\mathbb{Z}$. We chose $\frac{pc}{b}\in I_1$, so this shows that any element of $\mathcal{O}$ is congruent to an integer mod $I_1$. Since all integer multiples of $p$ are in $I_1$, any element of $\mathcal{O}$ is congruent to one of $\{0,1,\hdots,p-1\}$ mod $I_1$. 

Thus the residue class field has $p$ elements, $\overline{\mathbb{Q}}\cong \mathbb{F}_p$, and we can take as our set of coset representatives $A = \{0,1,\hdots,p-1\}$.

Now we will take the completion of $\mathcal{O}$. The resulting ring is $\widehat{\mathcal{O}} = \mathbb{Z}_p$, the $p$-adic integers. Its elements can be uniquely represented in the form $\alpha_0 + \alpha_1p + \alpha_2p^2 + \hdots$, with each $\alpha_i\in A$.

The p-adic field $\mathbb{Q}_p$ is the field of fractions of $\mathbb{Z}_p$. By a similar argument to the one in Example~\ref{exa:F((t))}, elements of $\mathbb{Q}_p$ can be uniquely represented in the form $c_{-k}p^{-k} + \hdots + c_{-1}p^{-1} + c_0 + c_1p + \hdots$, with each $c_i\in A$.
\end{exa}

The elements of $\mathbb{Q}_p$ look very similar to the elements of $\mathbb{F}_p((t))$. They can be thought of as formal Laurent series in a single ``variable'', $p$, with coefficients in $\mathbb{F}_p$. The similarity between the two fields is significant to us because it was the motivation for Artin's conjecture that $\mathbb{Q}_p$ is $C_2$ (see Theorem~\ref{thm:F((t))}). However, the fields $\mathbb{Q}_p$ and $\mathbb{F}_p((t))$ are not isomorphic; their arithmetic is very different. In particular, $\mathbb{Q}_p$ has characteristic $0$, while $\mathbb{F}_p((t))$ has characteristic $p$. Informally speaking, elements of $\mathbb{Q}_p$ add and multiply with carries, while elements of $\mathbb{F}_p((t))$ do not. Nevertheless, the Ax-Kochen Principle demonstrates that the similarity between the fields is not just skin-deep.

\subsubsection*{Homogeneous Polynomials over Complete Discrete Valued Fields}
For any discrete valued field $F$ with valuation ring $\mathcal{O}$, there is a homomorphism $i: \mathcal{O}\rightarrow \widehat{\mathcal{O}}$ which maps $x\in \mathcal{O}$ to the images of $x$ in $\mathcal{O}_n$ for all $n\geq 0$. The only element of $\mathcal{O}$ divisible by all powers of $\pi$ is $0$, so $i$ is injective. Thus we regard $\mathcal{O}$ as a subring of $\widehat{\mathcal{O}}$ and $F$ as a subfield of $\widehat{F}$. If $i$ is surjective, then $\mathcal{O}\cong\widehat{\mathcal{O}}$ and $F\cong\widehat{F}$. In this case, we say that $\mathcal{O}$ and $F$ are complete.

\begin{lem}\label{lem:complete}
As one would hope, the completion of a discrete valued field is a complete discrete valued field.
\end{lem}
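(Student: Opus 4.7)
The plan is to (i) extend $\mathfrak{v}$ to a discrete valuation $\widehat{\mathfrak{v}}$ on $\widehat{F}$, and then (ii) verify that the canonical embedding $\widehat{\mathcal{O}} \hookrightarrow \widehat{\widehat{\mathcal{O}}}$ produced by running the completion construction on $(\widehat{F},\widehat{\mathfrak{v}})$ is surjective. For (i), I would use the unique representation $a = \alpha_{i_0} + \alpha_{i_1}\pi + \alpha_{i_2}\pi^2 + \cdots$ of elements of $\widehat{\mathcal{O}}$ (with $\alpha_{i_k}\in A$) derived just above the definition of $\widehat{F}$, and set $\widehat{\mathfrak{v}}(a)$ equal to the smallest $k$ with $\alpha_{i_k}\neq 0$ (and $\widehat{\mathfrak{v}}(0)=\infty$). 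This is then extended to $\widehat{F}=\mathrm{Frac}(\widehat{\mathcal{O}})$ by $\widehat{\mathfrak{v}}(a/b)=\widehat{\mathfrak{v}}(a)-\widehat{\mathfrak{v}}(b)$.

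The technical heart of step (i) is the claim that $a\in\widehat{\mathcal{O}}$ is a unit if and only if $\alpha_{i_0}\neq 0$. Since $\mathcal{O}$ is a discrete valuation ring with sole maximal ideal $\pi\mathcal{O}$, each quotient $\mathcal{O}_n$ is local with maximal ideal $\pi\mathcal{O}_n$, so an element of $\mathcal{O}_n$ is a unit precisely when its image in $\mathcal{O}_0=\overline{F}$ is nonzero. When $\alpha_{i_0}\neq 0$, the images of $a$ in the $\mathcal{O}_n$ therefore each admit inverses, which are forced to be compatible under the transition maps $\phi_n$ by uniqueness, and hence assemble into an inverse of $a$ in $\widehat{\mathcal{O}}$. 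Consequently every nonzero $a\in\widehat{\mathcal{O}}$ factors uniquely as $u\pi^n$ for a unit $u$ and $n=\widehat{\mathfrak{v}}(a)\geq 0$, exactly as in Lemma~\ref{lem:ideals}. From this the valuation axioms are routine to verify, and we see that $\widehat{F}$ is a discrete valued field: $\pi$ remains a prime element, the value group is $\mathbb{Z}$, the valuation ring is $\widehat{\mathcal{O}}$, and the residue field $\widehat{\mathcal{O}}/\pi\widehat{\mathcal{O}}$ is identified with $\overline{F}$ via projection to the first coordinate.

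For (ii), completeness of $\widehat{F}$ reduces to showing that the canonical map $\widehat{\mathcal{O}}\to\widehat{\widehat{\mathcal{O}}}$ is bijective. Since the residue field of $\widehat{F}$ is $\overline{F}$, the same coset representatives $A$ used to build $\widehat{\mathcal{O}}$ can be reused to build $\widehat{\widehat{\mathcal{O}}}$; moreover the quotients $\widehat{\mathcal{O}}/\pi^{n+1}\widehat{\mathcal{O}}$ are naturally identified with the original $\mathcal{O}_n$ by the projection onto the first $n+1$ coordinates of the inverse system. So every element of $\widehat{\widehat{\mathcal{O}}}$ is uniquely expressible as $\alpha_{i_0}+\alpha_{i_1}\pi+\alpha_{i_2}\pi^2+\cdots$ with $\alpha_{i_k}\in A$, but this is precisely the datum that determines an element of $\widehat{\mathcal{O}}$, so the embedding is surjective. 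The main obstacle I anticipate is the unit characterization in step (i), since it requires transferring the local structure of $\mathcal{O}$ through the inverse-limit construction; once that is in place, the remaining work is largely bookkeeping about the representation and the natural identifications of the quotient rings.
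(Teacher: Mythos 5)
Your proposal is correct and follows essentially the same route as the paper: extend the valuation coordinate-wise (equivalently, via the power-series representation), then observe that the inverse system built from $\widehat{\mathcal{O}}$ is canonically identified with the original system $(\mathcal{O}_n,\phi_n)$, so that the double completion $\widehat{\widehat{\mathcal{O}}}$ collapses back to $\widehat{\mathcal{O}}$. The paper compresses the verification of the valuation axioms into ``it is easy to verify,'' whereas you supply the missing substance (that $a\in\widehat{\mathcal{O}}$ is a unit iff $\alpha_{i_0}\neq 0$, because each $\mathcal{O}_n$ is local with maximal ideal $\pi\mathcal{O}_n$ and the inverses are forced to be compatible under the $\phi_n$), which is a useful elaboration rather than a different argument.
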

\begin{proof}
Let $F$ be a discrete valued field with valuation $\mathfrak{v}$, valuation ring $\mathcal{O}$, and prime element $\pi$. Let $\widehat{\mathcal{O}}$ be the completion of $\mathcal{O}$, and let $\widehat{F}$ be the completion of $F$ (the field of fractions of $\mathcal{O}$). First we must show that $\widehat{F}$ is a discrete valued field. 

Define a function $\widehat{\mathfrak{v}}:\widehat{O}\setminus\{0\}\rightarrow\mathbb{N}$ which takes an element of $\widehat{O}$, $(a_0,a_1,\hdots)$ to the least integer $k$ such that $a_k\neq 0$. Note that $\widehat{\mathfrak{v}}$ agrees with $\mathfrak{v}$ on the subring $\mathcal{O}$, since for $a\in\mathcal{O}$, the image of $a$ in $\widehat{\mathcal{O}}$ is $(a_0,a_1,\hdots)$ where $a_i$ is the image of $a$ mod $\pi^{i+1}$, and $k$ is the least integer such that $a_k \neq 0$ if and only if $k$ is the greatest integer such that $\pi^k\,|\,a$.

Now we can extend $\widehat{\mathfrak{v}}$ to a function $\widehat{F}\rightarrow\mathbb{Z}\cup\{\infty\}$ by setting $\widehat{\mathfrak{v}}(0) = \infty$ and $\widehat{\mathfrak{v}}(\frac{a}{b}) = \widehat{\mathfrak{v}}(a) - \widehat{\mathfrak{v}}(b)$. It is easy to verify that $\widehat{\mathfrak{v}}$ satisfies the valuation axioms. Thus $\widehat{F}$ is a discrete valuation field.

Since $\widehat{\mathfrak{v}}$ agrees with $\mathfrak{v}$ on elements of $\mathcal{O}$, we can choose the same element $\pi$ as a prime element of $\widehat{F}$. Then for each $n\geq 1$, the ideal $\widehat{I}_n$ consists of all elements which are $0$ in their first $n$ coordinates, and for $n\geq 0$, elements of the ring $\widehat{\mathcal{O}}_n = \widehat{\mathcal{O}}/\widehat{I}_{n+1}$ are cosets consisting of elements which agree on their first $n+1$ coordinates.

The inclusion $i:\mathcal{O}\rightarrow\widehat{\mathcal{O}}$ induces homomorphisms $i_n:\mathcal{O}_n\rightarrow\widehat{\mathcal{O}}_n$. Included in $\widehat{\mathcal{O}}$, the ideal $I_n$ consists of all elements which are $0$ in their first $n$ coordinates, and elements of the ring $\mathcal{O}_n$ are cosets consisting of elements which agree on their first $n+1$ coordinates, so the $i_n$ are bijections, and therefore isomorphisms.

Now $\widehat{\mathcal{O}}_n\cong\mathcal{O}_n$ for all $n>0$, and since the prime element $\pi$ is the same, $\widehat{\phi_n}$ and $\phi_n$ act on $\widehat{\mathcal{O}}_n$ and $\mathcal{O}_n$ in the same way. Hence the completions of $\widehat{\mathcal{O}}$ and $\mathcal{O}$ are isomorphic, that is, the completion of $\widehat{\mathcal{O}}$ is isomorphic to $\widehat{\mathcal{O}}$, and thus $\widehat{\mathcal{O}}$ and $\widehat{F}$ are complete.
\end{proof}

We now return to studying homogeneous polynomials and the $C_i$ properties. Let $f$ be a homogeneous polynomial over a discrete valued field $F$ of degree $d$, and suppose that all of the coefficients of $f$ are in the valuation ring $\mathcal{O}$. Fixing an $m\geq 0$ and a prime element $\pi$, we will denote by $\overline{f}$ the reduction of $f$ mod $\pi^{m+1}$, the coefficients of which are in the quotient ring $\mathcal{O}_m$. Note that $\overline{f}$ is either a homogeneous polynomial of degree $d$ or the zero polynomial, if all coefficients are divisible by $\pi^{m+1}$.

\begin{defin}\label{def:primitive_zero}
Let $f$ be a homogeneous polynomial in $n$ variables over a discrete valued field $F$ with prime element $\pi$, and suppose that all of the coefficients of $f$ are in the valuation ring $\mathcal{O}$. We will call a zero $(\alpha_1,\hdots,\alpha_n)\in\mathcal{O}^n$ of $f$ \emph{primitive} if for some $j$, $\pi\nmid\alpha_j$. Similarly, for $m\geq 0$, we call a zero  $(\overline{\alpha_1},\hdots,\overline{\alpha_n})\in\mathcal{O}_m^n$ of $\overline{f}$ \emph{primitive} if for some $j$, $\overline{\pi}\nmid \overline{\alpha_j}$, where $\overline{\pi}$ is the coset of $\pi$ mod $\pi^{m+1}$.
\end{defin}

Our goal is to reduce the problem of finding zeros of $f$ in $\mathcal{O}^n$ to the problem of finding zeros of $\overline{f}$ in $\mathcal{O}^n_m$ for all $m\geq 0$. The advantage of working with primitive zeros is that a primitive zero cannot become trivial upon reduction mod $\pi^{m+1}$.

\begin{thm}[{\cite[Theorem 4.7]{Greenberg}}]\label{thm:VF-common_zeros}
Let $F$ be a complete discrete valued field with prime element $\pi$. Suppose that the residue class field $\overline{F}$ is finite. Then a homogeneous polynomial $f$ over $\mathcal{O}$ of degree $d$ in $n$ variables has a primitive zero in $\mathcal{O}^n$ if and only if $\overline{f}$ has a primitive zero $\mathcal{O}_m^n$ for all $m\geq 0$.
\end{thm}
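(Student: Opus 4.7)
The forward direction ($\Rightarrow$) is immediate: given a primitive zero $(\alpha_1,\hdots,\alpha_n)\in\mathcal{O}^n$ of $f$, its image under reduction $\mathcal{O}\to\mathcal{O}_m$ is a zero of $\overline{f}$, and primitivity passes because if $\pi\nmid\alpha_j$ then $\overline{\alpha_j}\notin\overline{\pi}\mathcal{O}_m$ (otherwise $\alpha_j\in\pi\mathcal{O}+I_{m+1}=\pi\mathcal{O}$, a contradiction).

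For the reverse direction, my plan is a compactness argument via the inverse limit. For each $m\geq 0$, let $Z_m\subseteq\mathcal{O}_m^n$ be the set of primitive zeros of the reduction of $f$ modulo $\pi^{m+1}$. Since $\overline{F}$ is finite, the unique representation $\alpha_{i_0}+\alpha_{i_1}\pi+\hdots+\alpha_{i_m}\pi^m$ developed just before the definition of the completion shows that $\mathcal{O}_m$ is finite, hence so is $Z_m$, and by hypothesis each $Z_m$ is nonempty. The canonical projections $\phi_m\colon\mathcal{O}_m\to\mathcal{O}_{m-1}$ induce maps $Z_m\to Z_{m-1}$, since the reduction modulo $\pi^m$ of a primitive zero of $\overline{f}$ modulo $\pi^{m+1}$ is a primitive zero of $\overline{f}$ modulo $\pi^m$ (primitivity is preserved by the same argument as in the forward direction). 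I now want to extract a compatible sequence $(z_m)$ with $z_m\in Z_m$ projecting to $z_{m-1}$; this is the inverse limit of a sequential system of finite nonempty sets, which is nonempty. Concretely, setting $Y_m=\bigcap_{k\geq m}(\phi_{m+1}\circ\cdots\circ\phi_k)(Z_k)$, each $Y_m$ is the intersection of a decreasing chain of nonempty subsets of a finite set, hence nonempty; a pigeonhole argument shows that the induced map $Y_m\to Y_{m-1}$ is surjective; and one then recursively chooses $z_0\in Y_0$ and $z_m\in Y_m$ projecting to $z_{m-1}$.

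The compatible sequence $(z_m)$ defines an element $\alpha\in\widehat{\mathcal{O}}^n$, and because $F$ is complete, $\widehat{\mathcal{O}}=\mathcal{O}$, so $\alpha\in\mathcal{O}^n$. Evaluating, $f(\alpha)\in\mathcal{O}$ reduces to $\overline{f}(z_m)=0$ in each $\mathcal{O}_m$, so $f(\alpha)\in\bigcap_{m\geq 0}I_{m+1}=\{0\}$, giving $f(\alpha)=0$; and $\alpha$ is primitive because $z_0\in\overline{F}^n$ is nontrivial, which forces some $\alpha_j$ to be a unit in $\mathcal{O}$. The main obstacle is the inverse-limit step, and this is precisely where both hypotheses enter crucially: the finiteness of $\overline{F}$ supplies a finite pigeonhole at every level (without it the $Z_m$ could be infinite and a coherent choice could fail), while the completeness of $F$ is what lets the resulting compatible sequence assemble into an actual element of $\mathcal{O}^n$ rather than merely of its completion.
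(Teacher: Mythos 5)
Your proposal is correct and follows essentially the same strategy as the paper's proof: the forward direction by reduction and preservation of primitivity, and the reverse direction via the sets $Y_m$ (the paper's $T_k$) of primitive zeros at level $m$ that lift to all higher levels, using finiteness of $\mathcal{O}_m$ to make the decreasing chains stabilize, then recursively extracting a compatible sequence and invoking completeness to land in $\mathcal{O}^n$. The ``pigeonhole'' surjectivity of $Y_m\to Y_{m-1}$ is exactly the step the paper handles by choosing $a_i\in\phi_i^{-1}(a_{i-1})\cap T_i$, and both rely implicitly on the same stabilization argument for decreasing chains of finite nonempty sets.
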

\begin{proof}

Suppose $(\alpha_1,\hdots,\alpha_n)\in\mathcal{O}^n$ is a primitive zero of $f$. Then for all $m\geq 0$, reduction mod $\pi^{m+1}$ is a homomorphism $\mathcal{O}\rightarrow\mathcal{O}_m$, so $\overline{f}(\overline{\alpha_1},\hdots,\overline{\alpha_n}) = 0$. Since $(\alpha_1,\hdots,\alpha_n)$ is primitive, for some $j$, $\pi\,\nmid\,\alpha_j$. Suppose $\overline{\pi}\,|\,\overline{\alpha_j}$. Then there is some $\overline{d}\in\mathcal{O}_m$ such that $\overline{\pi}\overline{d} = \overline{\alpha_j}$, so lifting to $\mathcal{O}$, $\pi d - \alpha_j \in I_{m+1}\subset I_1$. Now $\pi d\in I_1$, so $\alpha_j\in I_1$, and thus $\pi$ divides $\alpha_j$. This is a contradiction, so $\overline{\pi}\,\nmid\,\overline{\alpha_j}$, and $(\overline{\alpha_1},\hdots,\overline{\alpha_n})$ is a primitive zero in $\mathcal{O}_m^n$.

Conversely, for all $m\geq 0$, define $S_m\subseteq\mathcal{O}_m^n$ to be the set of primitive zeros of $\overline{f}$ in $\mathcal{O}_m^n$, and suppose that $S_m$ is nonempty for all $m$. If $\alpha\in S_{m+1}$ is a primitive zero mod $\pi^{m+2}$, then its image mod $\pi^{m+1}$ is also a primitive zero; that is, $\phi_{m+1}(\alpha)\in S_{m}$, so $\phi_{m+1}(S_{m+1}) \subseteq S_{m}$. For all $j<m$, define $S_{m,j} = \phi_{j+1}(\phi_{j+2}(\hdots \phi_m(S_m))) \subseteq S_j$. The set $S_{m,j}$ is the set of primitive zeros in $\mathcal{O}_j^n$ which lift to primitive zeros in $\mathcal{O}_m^n$. Since all the $S_m$ are nonempty, all the $S_{m,j}$ are nonempty. 

For all $k\geq 0$, define $T_k = \bigcap_{m > k} S_{m,k}$. $T_k$ is the set of all solutions in $\mathcal{O}_k^n$ which lift to solutions in $\mathcal{O}_m^n$ for all $m>k$. Since $\overline{F}$ is finite, all of the $\mathcal{O}_m$ are finite. The chain $S_{k+1,k}\supseteq S_{k+2,k}\supseteq\hdots$ must break off at some $l>k$, with $S_{m,k} = S_{l,k}$ for all $m\geq l$, since the decreasing sequence of integers $|S_{k+1,k}|\geq |S_{k+2,k}|\geq\hdots$ is bounded below by 1. Thus $T_k = S_{l,k}$ is nonempty for all $k$.

Pick a zero $a_0 = (\alpha_{0,0},\hdots,\alpha_{0,n}) \in T_0$. For all $m$, $a_0$ lifts to a solution $a_m = (\alpha_{m,0},\hdots,\alpha_{m,n})$ in $\mathcal{O}_m^n$. That is, assuming that $a_{i-1}\in T_{i-1}$, we can choose $a_i \in \phi_{i}^{-1}(a_{i-1}) \cap T_i$. By construction, the sequence $(a_0, a_1, \hdots)$ satisfies $\phi_m(a_m) = a_{m-1}$, so the sequences $(\alpha_{0,i},\alpha_{1,i},\hdots)$ are elements of the completion $\widehat{\mathcal{O}}$ for all $i$. 

Since $a_m\in S_m$, $a_m$ is a zero of $\overline{f}$ mod $\pi^{m+1}$ for all $m\geq 0$. Hence, viewing $f$ as a polynomial in the completion by the isomorphism between the complete ring $\mathcal{O}$ and $\widehat{\mathcal{O}}$, 
\[
f((\alpha_{0,0},\alpha_{1,0},\hdots), \hdots, (\alpha_{0,n},\alpha_{1,n},\hdots)) = (\overline{f}(\alpha_{0,0},\hdots,\alpha_{0,n}), \overline{f}(\alpha_{1,0},\hdots,\alpha_{1,n}),\hdots) = 0,
\]
and this is a primitive zero of $f$ in $\widehat{\mathcal{O}}^n$. But $\mathcal{O}\cong\widehat{\mathcal{O}}$, so each $(\alpha_{0,i}, \alpha_{1,i}, \hdots)$ corresponds to an element of $\mathcal{O}$, and this zero corresponds to a primitive zero of $f$ in $\mathcal{O}^n$.
\end{proof}

Theorem~\ref{thm:VF-common_zeros} also holds without the assumption that $\overline{F}$ is finite, but the proof of the general version is more difficult, and we will only need the finite case for the Ax-Kochen Theorem.

\begin{thm}[{\cite[Corollary 4.9]{Greenberg}}]\label{thm:F((t))}
If $F$ is a finite field, then $F((t))$ is $C_2$. 
\end{thm}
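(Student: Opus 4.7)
The strategy is to invoke Theorem~\ref{thm:VF-common_zeros}. Because $F((t))$ is a complete discrete valued field (Example~\ref{exa:F((t))}, Lemma~\ref{lem:complete}) with finite residue class field $F$, a homogeneous polynomial $f$ with coefficients in $\mathcal{O}=F[[t]]$ will have a primitive, hence nontrivial, zero in $\mathcal{O}^n$ provided its reduction $\overline{f}$ has a primitive zero in $\mathcal{O}_m^n$ for every $m\geq 0$. I begin by scaling the given $f$ by a suitable power of $t$ so that all of its coefficients lie in $F[[t]]$; a nontrivial zero of the scaled polynomial is a nontrivial zero of $f$ by Remark~\ref{rmk:homconst}.

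Fix $m\geq 0$, and let $f^{(m)}\in F[t][x_1,\ldots,x_n]$ be the homogeneous polynomial of degree $d$ obtained by truncating each coefficient of $f$ modulo $t^{m+1}$. I view $f^{(m)}$ as a polynomial over the field of rational functions $F(t)$ and apply Theorem~\ref{thm:trans_ext}: since $F$ is $C_1$ by Corollary~\ref{cor:FF} and $F(t)$ is a purely transcendental extension of $F$ of transcendence degree $1$, the field $F(t)$ is $C_2$. Because $n>d^2$, this yields a nontrivial zero $(\alpha_1,\ldots,\alpha_n)\in F(t)^n$ of $f^{(m)}$.

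Next I normalize. Clearing denominators and using homogeneity (Remark~\ref{rmk:homconst}), I may assume $(\alpha_1,\ldots,\alpha_n)\in F[t]^n$. If $t^e$ is the largest power of $t$ dividing every $\alpha_j$ in $F[t]$, I replace each $\alpha_j$ by $\alpha_j/t^e$; homogeneity again makes this a zero of $f^{(m)}$, and now at least one coordinate has nonzero constant term. Because the coefficients of $f-f^{(m)}$ all lie in $t^{m+1}F[[t]]$, we get $f(\alpha_1,\ldots,\alpha_n)\equiv f^{(m)}(\alpha_1,\ldots,\alpha_n)=0\pmod{t^{m+1}}$, so the image of $(\alpha_1,\ldots,\alpha_n)$ in $\mathcal{O}_m^n$ is a primitive zero of $\overline{f}$. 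Since $m$ was arbitrary, Theorem~\ref{thm:VF-common_zeros} produces the required zero of $f$ in $\mathcal{O}^n\subseteq F((t))^n$.

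I do not anticipate a serious technical obstacle: the two nontrivial ingredients---that $F(t)$ is $C_2$, and the primitive-zero reduction for complete discrete valued fields with finite residue field---are already in hand. The only subtlety is the normalization, in which I must deliver not merely a \emph{nontrivial} zero of $\overline{f}$ in $\mathcal{O}_m^n$ but a \emph{primitive} one; this is precisely why I divide out the common $t$-power from $(\alpha_1,\ldots,\alpha_n)$ before reducing modulo $t^{m+1}$.
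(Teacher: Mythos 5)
Your proposal is essentially the paper's own proof: clear denominators, invoke Theorem~\ref{thm:VF-common_zeros}, truncate the coefficients mod $t^{m+1}$, use Corollary~\ref{cor:FF} and Theorem~\ref{thm:trans_ext} to get a nontrivial zero over $F(t)$, normalize to a primitive tuple in $F[t]^n$, and reduce. The only detail the paper handles that you elide is the degenerate case in which the truncation $f^{(m)}$ is the zero polynomial (every coefficient of $f$ divisible by $t^{m+1}$), in which case $f^{(m)}$ is not a homogeneous polynomial of degree $d$ and Theorem~\ref{thm:trans_ext} does not apply directly; there, $\overline{f}$ is identically zero mod $t^{m+1}$ and any primitive tuple serves, so the gap is trivial to fill.
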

\begin{proof}
It suffices to consider homogeneous polynomials with coefficients in $F[[t]]$, the valuation ring of $F((t))$, since we can clear denominators. That is, for $f\in F((t))[x_1,\hdots,x_n]$ homogeneous of degree $d$ in $n$ variables, if $c$ is the minimum valuation among the coefficients of $f$, then $t^{cd}f\in F[[t]][x_1,\hdots,x_n]$ is also homogeneous of degree $d$ in $n$ variables. If $(\alpha_1,\hdots,\alpha_n)$ is a nontrivial zero of $t^{cd}f$, then $f(t^{c}\alpha_1,\hdots,t^{c}\alpha_n) = t^{cd}f(\alpha_1,\hdots,\alpha_n) = 0$ by Remark \ref{rmk:homconst}, so $(t^{c}\alpha_1,\hdots,t^{c}\alpha_n)$ is a nontrivial zero of $f$.

Let $f$ be a homogeneous polynomial over $F[[t]]$ of degree $d$ in $n$ variables, where $n>d^2$. The residue class field $\overline{F((t))} \cong F$ is finite, and $F((t))$ is complete by Lemma~\ref{lem:complete}, so we can apply Theorem \ref{thm:VF-common_zeros}. Since any primitive zero in $F[[t]]^n$ is a nontrivial in $F((t))^n$, it suffices to find a primitive zero of $\overline{f}$ in the residue ring mod $t^{m+1}$ for all $m\geq 0$. 

Fixing $m\geq 0$, let $\widetilde{f}$ be the polynomial obtained by ignoring the terms of degree greater than $m$ in each coefficient of $f$. Each coefficient of $\widetilde{f}$ is then a polynomial in $t$ of degree at most $m$. Now $\widetilde{f}$ is either the zero polynomial or a homogeneous polynomial of degree $d$ in $n$ variables. In the first case, each coefficient of $f$ is divisible by $t^{m+1}$, so reducing mod $t^{m+1}$, $\overline{f}$ is the zero polynomial, which clearly has a primitive zero, and we are done.

Otherwise, we will view $\widetilde{f}$ as a polynomial over $F(t)$. By Corollary \ref{cor:FF}, $F$ is $C_1$, and by Theorem \ref{thm:trans_ext}, $F(t)$ is $C_2$. Since $n>d^2$, $\widetilde{f}$ has a nontrivial zero $(\alpha_1,\hdots,\alpha_n)$ in $F(t)$. Using the homogeneity of $\widetilde{f}$, we can normalize to find another zero in $F[t]$ which is primitive. 

Let $\alpha_j$ be the coordinate with minimum (possibly negative) valuation, and let $c = \mathfrak{v}(\alpha_j)$. Now let $(\beta_1,\hdots,\beta_n) = (t^{-c}\alpha_1,\hdots,t^{-c}\alpha_n)$. All of the $\beta_i$ are elements of $F[t]$, since $\mathfrak{v}(\beta_i) = \mathfrak{v}(t^{-c}) + \mathfrak{v}(\alpha_i) \geq -c + c = 0$. 

Now $t^{cd}\widetilde{f}(\beta_1,\hdots,\beta_n) = \widetilde{f}(t^c\beta_1,\hdots,t^c\beta_n) = \widetilde{f}(\alpha_1,\hdots,\alpha_n) = 0$. Now $F[t]$ is a domain so since $t^{cd}\neq 0$, $\widetilde{f}(\beta_1,\hdots,\beta_n) = 0$. Moreover, $\mathfrak{v}(\beta_j) = -c + c = 0$, so $t\nmid\beta_j$, and $(\beta_1,\hdots,\beta_n)$ is a primitive zero.

Finally, since $\widetilde{f}(\beta_1,\hdots,\beta_n) = 0$, and $\widetilde{f}$ corresponds to $\overline{f}$ mod $t^{m+1}$, viewing $\beta_1,\hdots,\beta_n$ as elements of $F[[t]]$ by the natural inclusion, $(\overline{\beta_1},\hdots,\overline{\beta_n})$ is a zero of $\overline{f}$ mod $t^{m+1}$. Since the zero is primitive, its reduction mod $t^{m+1}$ is also primitive, as was to be shown.
\end{proof}

As a special case of Theorem \ref{thm:F((t))}, we have that $\mathbb{F}_p((t))$ is $C_2$ for all primes $p$. Because of the resemblance between the fields $\mathbb{F}_p((t))$ and $\mathbb{Q}_p$, Artin conjectured that $\mathbb{Q}_p$ is also $C_2$ for all primes $p$. This conjecture turned out to be false, but Ax and Kochen were able to prove a weaker statement: for each degree $d$ there exists a finite set of primes $P(d)$ such that the $C_2$ property holds for polynomials of degree $d$ in $\mathbb{Q}_p$ for all $p\notin P(d)$.

The Ax-Kochen Theorem is a corollary of a much more general Ax-Kochen Principle: any first-order logical statement which is true of all but finitely many of the fields $\mathbb{F}_p((t))$ is true of all but finitely many of the fields $\mathbb{Q}_p$. This statement is what could be called a ``meta-theorem'', since it quantifies over logical statements. In order to prove it we will need to develop techniques for reasoning about logical statements in general.

\newpage

\section{The Language of Model Theory}\label{sec:MT}
\subsection{Languages, Models, and Theories}\label{subsec:LMT}

Model theory is concerned with the study of mathematical structures and the logical statements about them. Logical statements about a structure are built from the familiar boolean operators and quantifiers, but they must also refer to the distinguished elements, functions, and relations which are inherent to the structure in question. Thus we work in terms of formal languages of symbols representing these elements, functions, and relations.

\begin{defin}\label{def:language}
A \emph{language} is the union of
\begin{itemize}
\item $\mathcal{C}$, a set of constant symbols, 
\item $\mathcal{F}$, a set of function symbols, with an integer $n_f>0$ for each $f\in\mathcal{F}$, and
\item $\mathcal{R}$, a set of relation symbols, with an integer $n_R>0$ for each $R\in\mathcal{R}$.
\end{itemize}
\end{defin}

The integers $n_f$ and $n_R$ are called the arities of the corresponding functions and relations. An $n$-ary function takes $n$ arguments, and an $n$-ary relation is a relation on $n$ elements. Most of the function and relation symbols we encounter will have $n = 1$ or $n = 2$, called unary and binary respectively.

\begin{defin}\label{def:structure}
Let $\mathcal{L} = \mathcal{C}\cup\mathcal{F}\cup\mathcal{R}$ be a language. An \emph{$\mathcal{L}$-structure} $\mathcal{M}$ is
\begin{itemize}
\item a set $M\neq\emptyset$, the domain,
\item an element $c^\mathcal{M}\in M$ for all $c\in \mathcal{C}$,
\item a function $f^\mathcal{M}: M^{n_f}\rightarrow M$ for all $f\in \mathcal{F}$, and
\item a relation $R^\mathcal{M} \subseteq M^{n_R}$ for all $R\in \mathcal{R}$.
\end{itemize}
\end{defin}

The elements, functions, and relations $c^\mathcal{M}$, $f^\mathcal{M}$, and $R^\mathcal{M}$ are called the interpretations of the $\mathcal{L}$-symbols in $\mathcal{M}$. The distinction between a symbol and its interpretation in a given structure is very important. This division between syntax and semantics will allow us to define and study logical theories independently of any particular structure.

\begin{exa}\label{exa:L_G}
Let $\mathcal{L}_G$ be the language $\{\cdot,e\}$, where $\cdot$ is a binary function symbol and $e$ is a constant symbol. These symbols are necessary to describe the theory of groups, and the symbols of $\mathcal{L}_G$ can be interpreted in any group. For instance, the group $\left<\mathbb{Z},+,0\right>$ is an $\mathcal{L}_G$-structure under the interpretations $\cdot^\mathbb{Z} = +$ and $e^\mathbb{Z} = 0$. But any nonempty set with any binary function can also be an $\mathcal{L}_G$ structure. For example, if $X = \{a,b,c\}$, then $X$ is an $\mathcal{L}_G$ structure under the interpretations $\cdot^X: (x,y) \mapsto b$ for all $x,y\in X$, and $e^X = c$, despite the fact that $\langle X,\cdot^X,c\rangle$ is clearly not a group.
\end{exa}

A valued field is more difficult to formalize as a structure, since its definition relies on an auxiliary structure, the value group. We will use a property called cross section to deal with the value group as a substructure of the field itself.

\begin{defin}\label{def:VF-CS}
A valued field $F$ is called a \emph{valued field with cross section} if there is an injective map $i:\mathfrak{v}(F)\rightarrow F$ such that $i$ is a group homomorphism from $\mathfrak{v}(F^*)$ to $F^*$, and for all $x\in \mathfrak{v}(F)$, $\mathfrak{v}(i(x)) = x$. 
\end{defin}

Any discrete valued field can be given cross section, once we choose a prime element $\pi$, by defining the embedding $i(n) = \pi^n$ for all $n\in \mathbb{Z}$, and $i(\infty) = 0$. For all $n\in\mathbb{Z}$, we have $\mathfrak{v}(i(n)) = \mathfrak{v}(\pi^n) = n$, and $\mathfrak{v}(i(\infty)) = \mathfrak{v}(0) = \infty$. For the remainder of this thesis, we will identify the value group of all discrete valuation fields with the multiplicative group $\{\pi^n\,|\,n\in\mathbb{Z}\}$ and suppress the embedding $i$.

\begin{exa}\label{exa:L_VF}
In order to write down logical statements about valued fields (with cross section), we will need a number of symbols. Let $\mathcal{L}_{VF}$ be the language $\{+,\cdot,-,0,1,V,\leq,\mathfrak{v}\}$, where $+$ and $\cdot$ are binary function symbols, $-$ is a unary function symbol, $0$ and $1$ are constant symbols, $V$ is a unary relation symbol, $\leq$ is a binary relation symbol, and $\mathfrak{v}$ is a unary function symbol.

The cross section property will be useful so that we can refer to elements of the value group within the domain of the valued field structure. When interpreting the symbols of $\mathcal{L}_{VF}$ in a structure which is a valued field, we will use $+,\cdot,-,0,1$ to represent the field operations, the additive inverse function, and the additive and multiplicative identities, $V$ to pick out the elements of the value group (that is, $x\in V$ if and only if $x$ is in the value group), $\leq$ to represent the ordering on the value group, and $\mathfrak{v}$ to represent the valuation.

Note again that these function and relation symbols may be interpreted as any functions and relations of the appropriate arities on any set. In order to require that our $\mathcal{L}_{VF}$ structures be valued fields, we will need some logical statements, the valued field axioms. 
\end{exa}

\subsubsection*{Terms, Formulas, and Satisfaction}
The building blocks of our logical statements are the symbols of a language $\mathcal{L}$, an infinite set of variables $\mathcal{V}=\{v_1,v_2,\hdots\}$, and the formal symbols $=$, $\land$, $\lor$, $\lnot$, $\exists$, $\forall$, $($, and $)$. The symbols $\land$, $\lor$, and $\lnot$ (read as ``and'', ``or'', and ``not'') are called Boolean operators, and the symbols $\forall$ and $\exists$ (read as ``for all'' and ``there exists'') are called quantifiers. Certain finite strings of these symbols, called $\mathcal{L}$-terms, $\mathcal{L}$-formulas, and $\mathcal{L}$-sentences, can be interpreted to have semantic meaning. Intuitively, given an $\mathcal{L}$-structure, we use $\mathcal{L}$-terms to refer to elements of that structure, $\mathcal{L}$-formulas to express properties of particular elements of the structure, and $\mathcal{L}$-sentences to express properties of the structure itself.

In order to analyze these strings systematically, we define them with a specific inductive structure. The simplest are $\mathcal{L}$-terms, which are constructed from constants and variables by means of function applications.

\begin{defin}\label{def:term}
A finite string $t$ is an \emph{$\mathcal{L}$-term} if and only if
\begin{itemize}
\item it is a constant symbol, $t\in\mathcal{C}$, or
\item it is a variable, $t\in\mathcal{V}$, or
\item it has the form $f(t_1,\hdots,t_{n_f})$, where $f\in\mathcal{F}$ is a function symbol, and $t_1,\hdots,t_{n_f}$ are $\mathcal{L}$-terms.
\end{itemize}
\end{defin} 

Binary function symbols, such as $+$ or $\cdot$, will often be written in the usual (infix) way. That is, when constructing $\mathcal{L}$-terms, we will write $t_1 + t_2$ instead of $+(t_1,t_2)$ and $t_1 \cdot t_2$ instead of $\cdot(t_1,t_2)$.

If an $\mathcal{L}$-term $t$ contains variables from $v_1,\hdots,v_n$, we will often write it as $t(v_1,\hdots,v_n)$. We do not require all of the variables $v_1,\hdots,v_n$ to appear in $t$. Let $a_1,\hdots,a_n$ be elements of the domain of some $\mathcal{L}$-structure $\mathcal{M}$. We will denote by $t^\mathcal{M}(a_1,\hdots,a_n)$ the interpretation of $t$ in $\mathcal{M}$ on the elements $a_1,\hdots,a_n$. The interpretation is obtained by substituting for each $v_i$ the corresponding element $a_i$, substituting for each constant symbol $c$ its interpretation $c^\mathcal{M}$, and substituting for each function symbol $f$ its interpretation $f^\mathcal{M}$. Applying functions in the natural way, $t^\mathcal{M}(a_1,\hdots,a_n)$ is an element of the domain of $\mathcal{M}$.

Some examples of $\mathcal{L}_{VF}$-terms include $0$, $1+1$, $v_1 \cdot 1$, and $\mathfrak{v}(v_1 + v_2)$. If $t$ is $\mathfrak{v}(v_1 + v_2)$, then taking $\mathbb{Q}_3$ as an $\mathcal{L}_{VF}$-structure (with elements written as ``Laurent series'' in $3$), where we interpret $+$ as addition and $\mathfrak{v}$ as the valuation $\mathfrak{v}_3$, we have $t^{\mathbb{Q}_3}(1,2+2\cdot 3 + 3^2) = \mathfrak{v}_3(1 +^{\mathbb{Q}_3} (2+2\cdot 3 + 3^2)) = \mathfrak{v}_3(2\cdot 3^2) = 3^2$ (recall that the value group in $\mathbb{Q}_3$ as a valued field with cross section is $\{3^n\,|\,n\in\mathbb{Z}\})$. 

Next, we define $\mathcal{L}$-formulas. The simplest of these, called atomic $\mathcal{L}$-formulas, express the properties that two terms are equal or that a collection of terms satisfy a relation. General $\mathcal{L}$-formulas are constructed from atomic $\mathcal{L}$-formulas by means of Boolean operators and quantifiers.

\begin{defin}\label{def:formula}
A finite string is an \emph{$\mathcal{L}$-formula} if and only if
\begin{itemize}
\item it has the form $t_1=t_2$, where $t_1$ and $t_2$ are $\mathcal{L}$-terms, or
\item it has the form $R(t_1,\hdots,t_{n_R})$, where $R\in\mathcal{R}$ is a relation symbol, and $t_1,\hdots,t_{n_R}$ are $\mathcal{L}$-terms.
\item it has the form $\lnot\phi$, $\phi\land\psi$, $\phi\lor\psi$, $\exists v\, \phi$, or $\forall v\, \phi$, where $\phi$ and $\psi$ are $\mathcal{L}$-formulas and $v\in\mathcal{V}$ is a variable.
\end{itemize}
\end{defin}

Some binary relation symbols, such as $\leq$, will also be written in the usual (infix) way. Instead of the atomic formula $\leq(t_1,t_2)$, we will write $t_1 \leq t_2$. 

We will use parentheses for grouping in the natural way to avoid ambiguity. We will omit the formalization of this, as it is straightforward but rather time consuming.

We will also employ the standard abbreviations $\phi\rightarrow\psi$ (read as ``$\phi$ implies $\psi$'' or ``if $\phi$ then $\psi$'') for $\lnot\phi\lor\psi$ and $\phi\leftrightarrow\psi$ (read as ``$\phi$ if and only if $\psi$'') for $(\phi\rightarrow\psi)\land(\psi\rightarrow\phi)$. We could have omitted $\lor$ and $\forall$ from our definition of $\mathcal{L}$-formula, since $\phi\lor\psi$ and $\forall v\, \phi$ can be viewed as abbreviations for $\lnot(\lnot\phi\land\lnot\psi)$ and $\lnot(\exists v\, \lnot\phi)$ respectively.

Some examples of $\mathcal{L}_{VF}$-formulas include $(v_1 + v_2) + v_3 = v_1 + (v_2 + v_3)$, $\mathfrak{v}(v_1)\leq \mathfrak{v}(v_1 + v_2)$, $\forall v_1\, V(\mathfrak{v}(v_1))$, and $\lnot(v_1=0)\rightarrow(\exists v_2\, v_1\cdot v_2 = 1)$.

Upon interpreting an $\mathcal{L}$-formula in a particular $\mathcal{L}$-structure, $\mathcal{M}$, the quantifiers $\forall$ and $\exists$ are understood to quantify over the elements of $M$, the domain of $\mathcal{M}$. This is what makes the formula ``first-order''. In first-order logic, we cannot express statements like ``every bounded subset has a least upper bound'' or ``$\forall i\in\mathbb{Z}\,, x^i \neq 0$'', since the first quantifies over subsets, not elements, and the second quantifies over a specific structure, the integers. Because of this restriction, first-order logic is less expressive than other logics, but it has more structure which can be exploited mathematically.

A variable $v$ is called bound if it occurs inside a $\forall v$ or $\exists v$ quantifier. Otherwise it is called free. In the $\mathcal{L}_{VF}$-formula $\exists v_1\,v_1\leq v_2$, $v_1$ is bound, but $v_2$ is free. To avoid ambiguity, we will require that no variable occurs both free and bound in a formula, and that no variable is bound by more than one quantifier. When combining formulas, we can ensure these conditions by substituting unused variables for any variable which appears in more than one context.

If an $\mathcal{L}$-formula $\phi$ contains free variables from $v_1,\hdots,v_n$, we often write it as $\phi(v_1,\hdots,v_n)$. We do not require all of the variables $v_1,\hdots,v_n$ to appear in $\phi$. If we substitute elements $a_1,\hdots,a_n$ from the domain of an $\mathcal{L}$-structure $\mathcal{M}$ for the variables $v_1,\hdots,v_n$, and if we interpret the $\mathcal{L}$-symbols in $\mathcal{M}$ and interpret the boolean operators and quantifiers in the natural way, then $\phi(a_1,\hdots,a_n)$ is either true or false in $\mathcal{M}$. If it is true, we write $\mathcal{M}\models\phi(a_1,\hdots,a_n)$ and say that $\mathcal{M}$ satisfies $\phi(a_1,\hdots,a_n)$. Otherwise, we write $\mathcal{M}\not\models\phi(a_1,\hdots,a_n)$. 

For example, let $\phi_\mathcal{O}$ be the formula $1\leq\mathfrak{v}(v_1)$. Let $F$ be a discrete valued field with valuation $\mathfrak{v}$ and prime element $\pi$, taken as an $\mathcal{L}_{VF}$-structure in the natural way. For all $x\in F$, $F\models \phi_\mathcal{O}(x)$ if and only if $\mathfrak{v}(x)\geq \pi^0$, that is, if and only if $x$ is an element of $\mathcal{O}_F$. In this way, the formula expresses a property of elements of a valued field, namely, that an element is in the valuation ring.

\begin{defin}\label{def:satisfies}
Let $\phi$ be an $\mathcal{L}$-formula with free variables from $v_1,\hdots,v_n$. Let $\mathcal{M}$ be an $\mathcal{L}$-structure with domain $M$, and let $a_1,\hdots,a_n\in M^n$ be elements of the domain. We define $\mathcal{M}\models\phi(a_1,\hdots,a_n)$ inductively as follows:
\begin{itemize}
\item If $\phi$ is $t_1(v_1,\hdots,v_n) = t_2(v_1,\hdots,v_n)$, then $\mathcal{M}\models\phi(a_1,\hdots,a_n)$ if and only if 
\[
t_1^\mathcal{M}(a_1,\hdots,a_n)=t_1^\mathcal{M}(a_1,\hdots,a_n).
\] 
\item If $\phi$ is $R(t_1(v_1,\hdots,v_n),\hdots,t_{n_r}(v_1,\hdots,v_n))$, then $\mathcal{M}\models\phi(a_1,\hdots,a_n)$ if and only if 
\[
(t^\mathcal{M}_1(a_1,\hdots,a_n),\hdots,t^\mathcal{M}_{n_R}(a_1,\hdots,a_n))\in R^\mathcal{M}.
\]
\item If $\phi$ is $\lnot\psi(v_1,\hdots,v_n)$, then $\mathcal{M}\models\phi(a_1,\hdots,a_n)$ if and only if
\[
\mathcal{M}\not\models\psi(a_1,\hdots,a_n).
\] 
\item If $\phi$ is $\psi(v_1,\hdots,v_n)\land\theta(v_1,\hdots,v_n)$, then $\mathcal{M}\models\phi(a_1,\hdots,a_n)$ if and only if
\[
\mathcal{M}\models\psi(a_1,\hdots,a_n) \,\text{and}\, \mathcal{M}\models\theta(a_1,\hdots,a_n).
\] 
\item If $\phi$ is $\psi(v_1,\hdots,v_n)\lor\theta(v_1,\hdots,v_n)$, then $\mathcal{M}\models\phi(a_1,\hdots,a_n)$ if and only if
\[
\mathcal{M}\models\psi(a_1,\hdots,a_n) \,\text{or}\, \mathcal{M}\models\theta(a_1,\hdots,a_n).
\]
\item If $\phi$ is $\exists v\,\psi(v_1,\hdots,v_n,v)$, then $\mathcal{M}\models\phi(a_1,\hdots,a_n)$ if and only if there exists $b\in M$ such that 
\[
\mathcal{M}\models\psi(a_1,\hdots,a_n,b).
\]
\item If $\phi$ is $\forall v\,\psi(v_1,\hdots,v_n,v)$, then $\mathcal{M}\models\phi(a_1,\hdots,a_n)$ if and only if for all $b\in M$, 
\[
\mathcal{M}\models\psi(a_1,\hdots,a_n,b).
\]
\end{itemize}
\end{defin}

This definition may seem pedantic, but it is another key separation between syntax and semantics, and it clearly demonstrates the inductive structure of $\mathcal{L}$-formulas.

\subsubsection*{Sentences and Theories}
For each prime $p$, consider the $\mathcal{L}_{VF}$-formula
\[
Char_p: \underbrace{1 + 1 + \hdots + 1}_{p \,times} = 0,
\]
which expresses the property that an $\mathcal{L}_{VF}$-structure has characteristic $p$. Once again taking valued fields as $\mathcal{L}_{VF}$-structures in the natural way, we have $\mathbb{F}_5((t))\models Char_5$, but $\mathbb{Q}_5\not\models Char_5$. 

\begin{defin}\label{def:sentence}
An \emph{$\mathcal{L}$-sentence} is an $\mathcal{L}$-formula which has no free variables.
\end{defin}

Note that since $Char_p$ has no free variables, we are able to state whether a structure $\mathcal{M}$ satisfies $Char_p$ without choosing any elements from the domain of $\mathcal{M}$ to substitute. Sentences express properties of structures, not of individual elements.

Suppose that we want to express the property that a structure has characteristic zero as an $\mathcal{L}_{VF}$-sentence. That is, we want to say that a structure does not have characteristic $p$ for any prime $p$. We can write a sentence which expresses the property that a structure does not have characteristic $p$ for some finite number of primes $p_1,\hdots,p_n: \lnot Char_{p_1} \land \lnot Char_{p_2} \land \hdots \land \lnot Char_{p_n}$. But sentences have finite length by definition, so this approach will not work for all $p$. It turns out that the only way to express the property characteristic zero in $\mathcal{L}_{VF}$ is with an infinite set of sentences.

\begin{defin}\label{def:theory}
An \emph{$\mathcal{L}$-theory} is a set of $\mathcal{L}$-sentences. For $\mathcal{M}$ an $\mathcal{L}$-structure, and $T$ an $\mathcal{L}$-theory, we say that $\mathcal{M}$ is a \emph{model} of $T$, written $\mathcal{M}\models T$, if $\mathcal{M}\models\phi$ for all sentences $\phi\in T$.
\end{defin}

In order to express the property that a structure has characteristic zero, we can define an $\mathcal{L}_{VF}$-theory, $Char_0 = \{\lnot Char_p \,|\, p \,\mbox{prime}\}$. Then $\mathbb{Q}_5 \models Char_0$, since $\mathbb{Q}_5\models \lnot Char_p$ for all primesm $p$, but $\mathbb{F}_5((t))\not\models Char_0$, since $\mathbb{F}_5((t))\not\models \lnot Char_5$.

Of course, every valued field with characteristic zero is a model for the theory $Char_0$, but $Char_0$ has other models which are not even fields. If we add the field axioms expressed as $\mathcal{L}_{VF}$ sentences to the theory, then the models for this theory will be exactly the class of $\mathcal{L}_{VF}$-structures which are fields with characteristic zero. 

\begin{defin}\label{def:el_class}
A class of $\mathcal{L}$-structures, $\mathcal{K}$, is called \emph{elementary} if there exists an $\mathcal{L}$-theory $T$ such that $\mathcal{K}$ contains exactly those $\mathcal{L}$-structures which are models for $T$. The structure $T$ is called a set of \emph{axioms} for $\mathcal{K}$.
\end{defin}

\begin{exa}\label{exa:VFel}
We will show that the class of valued fields with cross section is elementary by providing a set of axioms in $\mathcal{L}_{VF}$. We will call this theory $VF$. As an exercise, make sure you understand what property each of the following $\mathcal{L}_{VF}$-sentences expresses.
\begin{itemize}
\item Field axioms:
\begin{enumerate}
\item $\forall v_1\forall v_2\forall v_3\,(v_1+v_2)+v_3=v_1+(v_2+v_3)$
\item $\forall v_1\,v_1+0 = v_1$
\item $\forall v_1\,v_1+-(v_1)=0$
\item $\forall v_1\forall v_2\,v_1+v_2=v_2+v_1$
\item $\forall v_1\forall v_2\forall v_3\,(v_1\cdot v_2)\cdot v_3=v_1\cdot (v_2\cdot v_3)$
\item $\forall v_1\,v_1\cdot 1=v_1$
\item $\forall v_1 \,\lnot(v_1=0)\rightarrow(\exists v_2\,v_1\cdot v_2=1)$
\item $\forall v_1\forall v_2\,v_1\cdot v_2=v_2\cdot v_1$
\item $\forall v_1\forall v_2\forall v_3\, v_1\cdot (v_2 + v_3) = v_1\cdot v_2 + v_1\cdot v_3$
\item $\lnot(1=0)$
\end{enumerate}
\item Valuation axioms:
\begin{enumerate}
\item $\forall v_1\, \mathfrak{v}(v_1)=0\leftrightarrow v_1 = 0$
\item $\forall v_1\forall v_2\, \mathfrak{v}(v_1\cdot v_2) = \mathfrak{v}(v_1)\cdot \mathfrak{v}(v_2)$
\item $\forall v_1\forall v_2\, \mathfrak{v}(v_1)\leq \mathfrak{v}(v_2)\rightarrow \mathfrak{v}(v_1)\leq \mathfrak{v}(v_1 + v_2)$
\end{enumerate}
\item The value group is a subgroup of the multiplicative group:
\begin{enumerate}
\item $\forall v_1\forall v_2\, (V(v_1)\land V(v_2))\rightarrow V(v_1\cdot v_2)$
\item $\forall v_1\, V(v_1)\rightarrow (\exists v_2\, V(v_2)\land v_1\cdot v_2 = 1)$
\end{enumerate}
\item Linear order axioms for the value group:
\begin{enumerate}
\item $\forall v_1 \forall v_2\, (V(v_1)\land V(v_2))\rightarrow (v_1\leq v_2 \lor v_2\leq v_1)$
\item $\forall v_1 \forall v_2\, (V(v_1)\land V(v_2)\land (v_1\leq v_2) \land (v_2\leq v_1))\rightarrow (v_1 = v_2)$
\item $\forall v_1\forall v_2\forall v_3\, (V(v_1) \land V(v_2) \land V(v_3)\land (v_1\leq v_2) \land (v_2\leq v_3))\rightarrow (v_1\leq v_3)$
\item $\forall v_1\forall v_2\forall v_3\, (V(v_1) \land V(v_2) \land V(v_3)\land v_1\leq v_2)\rightarrow (v_1\cdot v_3\leq v_1\cdot v_3)$
\end{enumerate}
\item Cross section axioms:
\begin {enumerate}
\item $\forall v_1\, V(\mathfrak{v}(v_1))$
\item $\forall v_1\, V(v_1)\rightarrow (\mathfrak{v}(v_1)=v_1)$
\end{enumerate}
\end{itemize}
\end{exa}

Of course, there are many other $\mathcal{L}_{VF}$-sentences which are true of all valued fields but are not included in the axioms. We call these sentences logical consequences of the theory, and denote this relationship with the already overloaded symbol $\models$.

\begin{defin}\label{def:consequence}
Let $T$ be an $\mathcal{L}$-theory and $\phi$ an $\mathcal{L}$-sentence. We call $\phi$ a \emph{logical consequence} of $T$ and write $T\models\phi$ if $\mathcal{M}\models\phi$ for all models $\mathcal{M}\models T$. An $\mathcal{L}$-theory $T$ is called \emph{complete} if for all $\mathcal{L}$-sentences $\phi$, $T\models\phi$ or $T\models\lnot\phi$. 
\end{defin}

Note that in the definition of $T\models\phi$, we do not claim that one can provide a proof of $\phi$ given the assumptions in $T$, merely that in any structure in which the sentences in $T$ hold, $\phi$ also holds. One of G\"odel's remarkable results was proving that these concepts, provability and model theoretic consequence, are actually equivalent.

The mathematical study of proof and proof systems belongs to another branch of logic, and we will not make the notion of proof completely rigorous in this thesis. But we will note that in this context, a proof means a finite sequence of $\mathcal{L}$-sentences, some of which are introduced as assumptions and some of which follow from previous sentences by rules of inference. If a proof of an $\mathcal{L}$-sentence exists involving only assumptions from an $\mathcal{L}$-theory $T$, we write $T\vdash \phi$.

\begin{thm}[G\"odel's Completeness Theorem]\label{thm:GCT}
Let $T$ be an $\mathcal{L}$-theory and $\phi$ an $\mathcal{L}$-sentence. Then $T\models\phi$ if and only if $T\vdash\phi$. 
\end{thm}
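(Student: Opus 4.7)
The plan is to prove the two directions separately. The easy direction, soundness ($T \vdash \phi \Rightarrow T \models \phi$), I would handle by induction on the length of a proof: one checks that every rule of inference in the proof system is sound, i.e.\ whenever its premises hold in a structure $\mathcal{M}$, so does its conclusion. Then if every step in a proof from $T$ holds in every model of $T$, so does the final line $\phi$. The hard direction is completeness: $T \models \phi \Rightarrow T \vdash \phi$. By contraposition, it suffices to show that if $T \not\vdash \phi$ then $T \not\models \phi$. Setting $T' = T \cup \{\lnot\phi\}$, the failure of $T \vdash \phi$ means $T'$ is consistent (proves no contradiction), and then it is enough to exhibit a model of $T'$, since such a model would satisfy $T$ but not $\phi$. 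The whole theorem therefore reduces to the \emph{model existence lemma}: every consistent $\mathcal{L}$-theory has a model.

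For the model existence lemma I would carry out the Henkin construction. First, enlarge $\mathcal{L}$ by throwing in a fresh constant symbol $c_{\psi,v}$ for each $\mathcal{L}$-formula $\psi(v)$ with one free variable, and add to $T'$ the Henkin axioms $\exists v\,\psi(v) \rightarrow \psi(c_{\psi,v})$. One must verify that consistency is preserved at this step: if a contradiction were derivable using finitely many Henkin axioms, then a fresh-constant argument (replacing each $c_{\psi,v}$ by a new variable and universally generalizing) would yield a contradiction already in $T'$. Iterating this expansion countably many times and taking the union produces a consistent theory $T''$ in a language $\mathcal{L}''$ that has a witness for every existential formula. Next, extend $T''$ to a maximal consistent $\mathcal{L}''$-theory $T^*$ by a Zorn/transfinite-induction argument over an enumeration of $\mathcal{L}''$-sentences, adding each sentence or its negation depending on which keeps the theory consistent.

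Finally, construct the canonical model $\mathcal{M}$ whose domain consists of equivalence classes $[t]$ of closed $\mathcal{L}''$-terms under the relation $t_1 \sim t_2 \iff (t_1 = t_2) \in T^*$, interpreting each constant, function, and relation symbol in the obvious syntactic way. The key step is the \textbf{truth lemma}: for every $\mathcal{L}''$-sentence $\sigma$, one has $\mathcal{M} \models \sigma$ if and only if $\sigma \in T^*$. This is proved by induction on the complexity of $\sigma$ following the clauses of Definition~\ref{def:satisfies}; the atomic case uses that $T^*$ contains the equality axioms, the Boolean cases use maximal consistency, and the quantifier case is exactly where the Henkin witnesses do their work, ensuring that $\exists v\,\psi(v) \in T^*$ actually produces a term $c_{\psi,v}$ whose equivalence class satisfies $\psi$. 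Since $T' \subseteq T^*$, the resulting $\mathcal{M}$ (viewed as an $\mathcal{L}$-structure by forgetting the extra constants) models $T'$, completing the argument.

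The main obstacle is the existential case of the truth lemma, which is the entire reason for the Henkin expansion: without a pre-installed witness inside the language for every $\exists v\,\psi(v)$, one cannot build the required element of the domain from syntax alone. A close second is the bookkeeping to show that adding Henkin axioms and then maximalizing both preserve consistency, and that the iteration through countably many language expansions still yields a consistent theory containing witnesses for \emph{every} existential formula in the final language.
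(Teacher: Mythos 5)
The paper does not prove this theorem. It is stated as a black-box result from mathematical logic, and the author explicitly declines to develop the proof calculus needed to make sense of $\vdash$ (``The mathematical study of proof and proof systems belongs to another branch of logic, and we will not make the notion of proof completely rigorous in this thesis''). So there is no argument in the paper to compare yours against; the theorem is used only to derive Corollary~\ref{cor:consistent} and Theorem~\ref{thm:compactness}.

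Your sketch is the standard Henkin proof and it is correct in outline. Soundness by induction on derivations is right. The reduction of completeness to model existence via $T' = T \cup \{\lnot\phi\}$ is right, though it quietly uses a deduction-style lemma (if $T \cup \{\lnot\phi\}$ is inconsistent then $T \vdash \phi$), which one should list among the facts to verify about the proof system. The Henkin expansion, the fresh-constant argument for consistency preservation, the $\omega$-fold iteration to close under witnesses, Lindenbaum maximalization (Zorn or a transfinite enumeration of sentences, as needed when $\mathcal{L}$ is uncountable), the term model modulo provable equality, and the truth lemma by induction following the clauses of Definition~\ref{def:satisfies} are all exactly the right ingredients, and you have correctly identified the existential clause of the truth lemma as the reason the Henkin constants are needed. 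Two small points worth flagging in a full write-up: the atomic case of the truth lemma requires that the proof system derive the equality and congruence axioms, so that $\sim$ is a congruence for every function and relation symbol; and the bound $|\mathcal{L}''| = |\mathcal{L}| + \aleph_0$ is what makes the resulting model have the cardinality claimed in Theorem~\ref{thm:cardinal_compactness}, so it is worth tracking explicitly.
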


One consequence of the Completeness Theorem is that any theory which does not imply a contradiction has a model.

\begin{defin}\label{def:consistent}
An $\mathcal{L}$-theory $T$ is called \emph{inconsistent} if there is an $\mathcal{L}$-sentence $\phi$ such that $T\vdash (\phi\land\lnot\phi)$. Otherwise, $T$ is called \emph{consistent}. 
\end{defin}

\begin{defin}\label{def:sat}
An $\mathcal{L}$-theory $T$ is called \emph{satisfiable} if it has a model.
\end{defin}

\begin{cor}[{\cite[Corollary 2.1.3]{Marker}}]\label{cor:consistent}
An $\mathcal{L}$-theory $T$ is satisfiable if and only if is consistent.
\end{cor}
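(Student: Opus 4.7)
The corollary is essentially a direct translation of G\"odel's Completeness Theorem (Theorem \ref{thm:GCT}) into the language of consistency and satisfiability, so the plan is to leverage that theorem in both directions and simply unwind definitions.

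For the forward direction, I would assume $T$ is satisfiable and fix a model $\mathcal{M} \models T$. The goal is to show no sentence $\phi$ satisfies $T \vdash (\phi \land \lnot \phi)$. Suppose for contradiction such a $\phi$ exists. By the Completeness Theorem, $T \vdash (\phi \land \lnot \phi)$ implies $T \models (\phi \land \lnot \phi)$, so in particular $\mathcal{M} \models (\phi \land \lnot \phi)$. Unwinding the inductive definition of satisfaction (Definition \ref{def:satisfies}) for the connective $\land$ and then for $\lnot$, this gives both $\mathcal{M} \models \phi$ and $\mathcal{M} \not\models \phi$, a contradiction. Hence $T$ is consistent.

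For the reverse direction, I would argue the contrapositive: if $T$ is not satisfiable, then $T$ has no models at all, so vacuously every model of $T$ satisfies every $\mathcal{L}$-sentence. In particular, picking any sentence $\phi$ (say $\phi$ is $1=1$, which is guaranteed to exist since $\mathcal{L}$-sentences form a nonempty set), we have $T \models (\phi \land \lnot \phi)$ vacuously. Applying the Completeness Theorem in the other direction, $T \vdash (\phi \land \lnot \phi)$, so $T$ is inconsistent.

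This proof is very short and routine once Theorem \ref{thm:GCT} is in hand; there is no real obstacle. The only subtle point worth flagging is the vacuous step in the reverse direction, which relies on the fact that a universal quantification over the empty collection of models is automatically true. If one wanted to avoid invoking the full Completeness Theorem for the easy (soundness) direction, one could observe directly that every axiom and inference rule preserves truth in any given structure, so no contradiction can be derived from sentences all true in $\mathcal{M}$; but since the Completeness Theorem is stated and available, citing it uniformly is cleaner.
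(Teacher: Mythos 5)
Your proof is correct and follows essentially the same route as the paper's: both directions reduce to G\"odel's Completeness Theorem, with the forward direction using the soundness half and the reverse direction arguing via the vacuity of a universal statement over an empty class of models. If anything, you are slightly more careful than the paper's own write-up, which states the forward direction in terms of $T\models(\phi\land\lnot\phi)$ without explicitly pausing to note that the Completeness Theorem is what connects this to the definition of consistency (which is phrased via $\vdash$); your explicit invocation makes the $\vdash$/$\models$ transition airtight.
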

\begin{proof}
Suppose $T$ is satisfiable. Then there is a model $\mathcal{M}\models T$. If $T\models(\phi\land\lnot\phi)$ for some $\mathcal{L}$-sentence $\phi$, then $\mathcal{M}\models(\phi\land\lnot\phi)$, which is impossible, since by definition, we would have $\mathcal{M}\models\phi$ and $\mathcal{M}\not\models\phi$, a contradiction. Thus $T$ is consistent.

Now suppose $T$ is not satisfiable. For any $\mathcal{L}$-sentence $\phi$, $(\phi\land\lnot\phi)$ is true in every model of $T$ trivially, since $T$ has no models. Thus $T\models(\phi\land\lnot\phi)$, and by the Completeness Theorem, $T\vdash (\phi\land\lnot\phi)$. So $T$ is inconsistent.
\end{proof}

Another easy consequence of the Completeness Theorem is the Compactness Theorem, a powerful result which is central to Model Theory.

\begin{thm}[Compactness Theorem {\cite[Theorem 2.1.4]{Marker}}]\label{thm:compactness}
An $\mathcal{L}$-theory $T$ is satisfiable if and only if every finite subset of $T$ is satisfiable. 
\end{thm}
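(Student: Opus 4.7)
The plan is to derive Compactness directly from the Completeness Theorem (Theorem~\ref{thm:GCT}) and its Corollary~\ref{cor:consistent}, which bridges satisfiability and consistency. Since consistency is defined in terms of provability, and provability is an inherently finitary notion (a proof is a finite sequence of sentences), Compactness will follow almost immediately once we pass from satisfiability to consistency.

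For the forward direction, I would observe that if $T$ is satisfiable, it has a model $\mathcal{M}$. For any finite subset $T_0 \subseteq T$ and any $\phi \in T_0$, we have $\phi \in T$, so $\mathcal{M} \models \phi$. Hence $\mathcal{M} \models T_0$, showing that $T_0$ is satisfiable. This direction requires no appeal to Completeness.

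For the backward direction, suppose every finite subset of $T$ is satisfiable. By Corollary~\ref{cor:consistent}, each such finite subset is consistent. I would then argue by contrapositive: suppose $T$ is not satisfiable. By Corollary~\ref{cor:consistent} again, $T$ is inconsistent, so there is some $\mathcal{L}$-sentence $\phi$ with $T \vdash (\phi \land \lnot\phi)$. A formal proof is, by definition, a finite sequence of sentences, each of which is either introduced as an assumption from $T$ or follows from earlier sentences by a rule of inference; in particular, only finitely many sentences from $T$ are used as assumptions. Collecting these into a finite set $T_0 \subseteq T$, we get $T_0 \vdash (\phi \land \lnot\phi)$, so $T_0$ is inconsistent, and by Corollary~\ref{cor:consistent} $T_0$ is not satisfiable, contradicting our hypothesis.

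The main conceptual step, and the only one that is not essentially bookkeeping, is the observation that proofs are finite objects and therefore cite only finitely many axioms from $T$. This is the ``compactness'' content of the argument, and it is the reason Compactness is usually advertised as a consequence of Completeness rather than proved directly; the direct semantic proof (e.g., via ultraproducts or a Henkin construction) is considerably more work. Since the paper has already taken Completeness as a black box, the proof reduces to the two short paragraphs sketched above.
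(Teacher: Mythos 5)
Your proof is correct and takes essentially the same route as the paper: forward direction is immediate, and the backward direction passes through Corollary~\ref{cor:consistent} to convert unsatisfiability into inconsistency, extracts the finitely many axioms cited in the proof of the contradiction, and applies Corollary~\ref{cor:consistent} again to the resulting finite subtheory. The only cosmetic difference is that you announce a contrapositive argument before carrying out what is structurally a proof by contradiction, exactly as the paper does.
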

\begin{proof}
One direction is obvious. Any model of $T$ is also a model of any subset of $T$, so if $T$ is satisfiable, then every finite subset of $T$ is satisfiable.

Conversely, suppose that every finite subset of $T$ is satisfiable. Assume for the sake of contradiction that $T$ is not satisfiable. Then by Corollary~\ref{cor:consistent}, there is some $\mathcal{L}$-sentence $\phi$ such that $T\vdash(\phi\land\lnot\phi)$. Now since proofs are finite in length, the proof of $\phi\land\lnot\phi$ can use as assumptions only finitely many elements of $T$. Call this finite set $\Delta$.

Then $\Delta\vdash(\phi\land\lnot\phi)$, and by Corollary~\ref{cor:consistent}, $\Delta$ is not satisfiable. But this is a contradiction, and hence $T$ is satisfiable.
\end{proof}

There are several other proofs of the Completeness Theorem which do not rely on the Completeness Theorem. These other methods are in a sense more constructive, and they can give us more information about the satisfying model, including us an upper bound on its cardinality. See Marker \cite{Marker}.

\begin{thm}[Compactness Theorem with cardinality {\cite[Theorem 2.1.11]{Marker}}]\label{thm:cardinal_compactness}
Let $T$ be an $\mathcal{L}$-theory such that every finite subset of $T$ is satisfiable. Then there is a model of $T$ of cardinality $|\mathcal{L}|$.
\end{thm}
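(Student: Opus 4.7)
The plan is to give a Henkin-style construction, which produces a model whose cardinality is controlled by the size of the language. Start by expanding $\mathcal{L}$ to a language $\mathcal{L}^*$ equipped with enough new constant symbols to serve as witnesses for every existential formula. Concretely, build a chain $\mathcal{L}=\mathcal{L}_0\subseteq \mathcal{L}_1\subseteq\cdots$ where, at each step, for each $\mathcal{L}_n$-formula $\phi(v)$ with a single free variable we adjoin a fresh constant $c_\phi$, and set $\mathcal{L}^*=\bigcup_n\mathcal{L}_n$. Simultaneously extend $T$ to $T^*$ by adding the Henkin axioms $\exists v\,\phi(v)\rightarrow\phi(c_{\phi})$. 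The first key check is that $T^*$ remains finitely satisfiable: any finite fragment of $T^*$ mentions only finitely many of the new constants, and a model of the corresponding finite fragment of $T$ can be expanded to interpret these constants (witnesses when available, arbitrary otherwise) so as to satisfy the finitely many Henkin axioms used.

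Next, I would extend $T^*$ to a maximal finitely satisfiable $\mathcal{L}^*$-theory $\overline{T}$ by a Zorn's lemma argument on the poset of finitely satisfiable $\mathcal{L}^*$-theories containing $T^*$; a union of a chain of such theories is again finitely satisfiable since any finite subset lies in one link of the chain. Maximality gives the standard closure properties: for every $\mathcal{L}^*$-sentence $\phi$, exactly one of $\phi,\lnot\phi$ is in $\overline{T}$, conjunctions behave correctly, and, crucially, $\overline{T}$ remains a Henkin theory, so $\exists v\,\phi(v)\in\overline{T}$ if and only if $\phi(c_\phi)\in\overline{T}$.

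Then I would construct the term model. Let $\mathcal{T}$ be the set of closed $\mathcal{L}^*$-terms; define $s\sim t$ iff $(s=t)\in\overline{T}$. This is an equivalence relation by the equality axioms, and it is a congruence for each function and relation symbol, so we obtain an $\mathcal{L}^*$-structure $\mathcal{M}$ on $\mathcal{T}/{\sim}$ with $f^{\mathcal{M}}([t_1],\ldots,[t_n])=[f(t_1,\ldots,t_n)]$ and $([t_1],\ldots,[t_n])\in R^{\mathcal{M}}$ iff $R(t_1,\ldots,t_n)\in\overline{T}$. The main work is an induction on formula complexity proving $\mathcal{M}\models\phi$ iff $\phi\in\overline{T}$ for every $\mathcal{L}^*$-sentence $\phi$. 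Atomic and Boolean cases are immediate from the congruence and maximality. The existential case is precisely where Henkin witnesses are needed: $\mathcal{M}\models\exists v\,\phi(v)$ means some $[t]$ witnesses $\phi$, giving $\phi(t)\in\overline{T}$ and hence $\exists v\,\phi(v)\in\overline{T}$; conversely, if $\exists v\,\phi(v)\in\overline{T}$, the Henkin axiom supplies $\phi(c_\phi)\in\overline{T}$, and by induction $[c_\phi]$ witnesses $\phi$ in $\mathcal{M}$.

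Finally, I would count. Because $\mathcal{L}_{n+1}$ adds one new constant per $\mathcal{L}_n$-formula, an easy induction shows $|\mathcal{L}_n|=|\mathcal{L}|$ for all $n$, so $|\mathcal{L}^*|=|\mathcal{L}|$; hence the set of closed $\mathcal{L}^*$-terms has cardinality $|\mathcal{L}|$, and $|\mathcal{M}|\leq|\mathcal{L}|$. Reducing $\mathcal{M}$ from an $\mathcal{L}^*$-structure to an $\mathcal{L}$-structure by forgetting the interpretations of the new symbols yields a model of $T$ of cardinality at most $|\mathcal{L}|$. The principal obstacle is the verification of the equivalence $\mathcal{M}\models\phi \Leftrightarrow \phi\in\overline{T}$, especially the existential clause, which is the entire reason for introducing the Henkin witnesses; getting the bookkeeping correct (so that every formula introduced at any stage has a witness in a later stage) is where care is most needed.
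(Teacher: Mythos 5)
The paper does not actually prove this theorem; it states it with a citation to Marker and moves on. Your Henkin construction is exactly the argument Marker gives, and it is the right approach: produce witnesses, saturate to a maximal finitely satisfiable theory via Zorn's lemma, and read off a term model whose size is controlled by the size of the language. The structure of your argument --- language expansion, preservation of finite satisfiability, Lindenbaum-style maximalization, term model, and the induction on formulas with the crucial existential step handled by Henkin witnesses --- is all correct.

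Two small points of care. First, your cardinality count shows $|\mathcal{M}| \leq |\mathcal{L}| + \aleph_0$, not $|\mathcal{M}| = |\mathcal{L}|$: if $\mathcal{L}$ is finite there are still countably many $\mathcal{L}_0$-formulas and hence countably many Henkin constants, so the bound on the term model is $\aleph_0$ rather than $|\mathcal{L}|$; and even for infinite $\mathcal{L}$ the term model may be strictly smaller than $|\mathcal{L}|$ if $T$ bounds the number of elements. So the theorem as stated is really ``a model of cardinality at most $|\mathcal{L}| + \aleph_0$,'' which is what Marker actually proves and which suffices for the paper's uses (in Lemma~\ref{lem:realizing_types}, the extra constants from $Diag_{el}(\mathcal{M})$ force the model to have cardinality at least $|M|$, giving equality). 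Second, in checking that $T^*$ remains finitely satisfiable, the clean way to handle nested witnesses is to proceed by induction on the stages $\mathcal{L}_n$: show each $T_n$ is finitely satisfiable, using a model of a finite piece of $T_{n-1}$ and interpreting the finitely many new constants from stage $n$ one stage at a time (witness if the existential holds, arbitrary otherwise). Your one-sentence sketch glosses over this ordering, but the underlying idea is sound.
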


\begin{exa}\label{exa:compactness}
Compactness is a powerful tool for constructing models with desired properties. As an example, consider the language $\mathcal{L} = \{\cdot,+,<,0,1\}$ and the $\mathcal{L}$-structure $\mathbb{N}$, with the $\mathcal{L}$-symbols interpreted in the usual way. Let $Th(\mathbb{N})$ be the full $\mathcal{L}$-theory of $\mathbb{N}$, that is, the set of all $\mathcal{L}$-sentences which are true in $\mathbb{N}$.

Now we will extend the language by adding a new constant symbol, $c$. Let $\mathcal{L}' = \mathcal{L}\cup\{c\}$. We will also extend the theory by adding new sentences expressing that $c$ is larger than every natural number. For each $n\in\mathbb{N}$, let $\phi_n$ be the $\mathcal{L}'$-sentence
\[
\underbrace{1 + 1 + \hdots + 1}_{n\, \text{times}}<c.
\]
Let $T' = Th(\mathbb{N}) \cup \{\phi_n\,|\,n = 1,2,\hdots\}$. We can consider the $\mathcal{L}$-sentences in $Th(\mathbb{N})$ as $\mathcal{L}'$-sentences, since $\mathcal{L}\subset\mathcal{L}'$, so $T'$ is an $\mathcal{L}'$-theory.

We will use Compactness to show that $T'$ has a model. Let $\Delta$ be a finite subset of $T'$. We claim that $\mathbb{N}\models\Delta$ under an appropriate interpretation of $c$. Since $\Delta$ is finite, it consists of finitely many sentences of $Th(\mathbb{N})$ and finitely many $\phi_n$. Let $M$ be the greatest integer such that $\phi_M\in \Delta$ (or $0$ if $\Delta$ contains no $\phi_n$). Then consider $\mathbb{N}$ as an $\mathcal{L}'$-structure, with the interpretation $c^\mathbb{N} = M+1$. For each $\phi_n\in T'$, $\mathbb{N}\models \phi_n$, since $n < M+1$. For each other $\psi\in T'$, $\psi\in Th(\mathbb{N})$, so $\mathbb{N}\models\psi$ by definition.

Thus with this interpretation of $c$, $\mathbb{N}\models\Delta$. Hence every finite subset of $T'$ is satisfiable, and by Compactness, $T'$ has a model.

This means that there is an $\mathcal{L}'$-structure, $\mathcal{N}$, such that every $\mathcal{L}$-sentence which is true in $\mathbb{N}$ is true in $\mathcal{N}$. That is, $\mathbb{N}$ and $\mathcal{N}$ cannot be distinguished using any first-order $\mathcal{L}$-sentence. But the interpretation of $c$ in $\mathcal{N}$ is greater than every natural number, so $\mathcal{N}$ has ``infinite'' elements. 
\end{exa}

Model theory is filled with counterintuitive results like these, and much of the theory is devoted to exploring the properties of unusual models for familiar theories. In fact, part of the proof of the Ax-Kochen Theorem requires the use of very large models for the theory of valued fields (see Section~\ref{subsec:TaSM}). 

\subsubsection*{Homomorphisms and Elementary Maps}
As usual when defining new mathematical objects, we will define the maps between them which preserve structure.

\begin{defin}\label{def:hom}
Given two $\mathcal{L}$-structures $\mathcal{M}$ and $\mathcal{N}$ with domains $M$ and $N$, an \emph{$\mathcal{L}$-homomorphism} from $\mathcal{M}$ to $\mathcal{N}$ is a map $\eta: M\rightarrow N$ which preserves interpretation of $\mathcal{L}$-symbols. That is,
\begin{itemize}
\item if $c$ is a constant symbol, then $\eta(c^\mathcal{M}) = c^\mathcal{N}$,
\item if $f$ is a function symbol, then for all $(a_1,\hdots,a_{n_f})\in M^{n_f}$, $\eta(f^\mathcal{M}(a_1,\hdots,a_{n_f})) = f^\mathcal{N}(\eta(a_1),\hdots,\eta(a_{n_f}))$, and
\item if $R$ is a relation symbol, then for all $(a_1,\hdots,a_{n_f})\in M^{n_R}$, $(a_1,\hdots,a_{n_R})\in R^\mathcal{M}$ if and only if $(\eta(a_1),\hdots,\eta(a_{n_R}))\in R^\mathcal{N}$.
\end{itemize}
An \emph{$\mathcal{L}$-isomorphism} is a bijective $\mathcal{L}$-homomorphism. If there is an $\mathcal{L}$-isomorphism from $\mathcal{M}$ to $\mathcal{N}$, we write $\mathcal{M}\cong\mathcal{N}$.
\end{defin}

This definition of homomorphism generalizes the notion of homomorphism in many settings. For instance, if we interpret groups as $\mathcal{L}_G$-structures as in Example~\ref{exa:L_G}, then all group homomorphisms are $\mathcal{L}_G$-homomorphisms. Similarly, homomorphisms of valued fields, which must preserve the field structure, the valuation, and the ordering on the value group, are $\mathcal{L}_{VF}$-homomorphisms. 

The existence of $\mathcal{L}$-homomorphisms between $\mathcal{L}$-structures does not give us much information about which $\mathcal{L}$-formulas are satisfied in these structures. A stronger notion is that of an elementary homomorphism, a map which preserves not just the interpretation of the language, but also the satisfaction of formulas.

\begin{defin}\label{def:el_emb}
An \emph{elementary} $\mathcal{L}$-homomorphism is an $\mathcal{L}$-homomorphism $j:\mathcal{M}\rightarrow\mathcal{N}$ between $\mathcal{L}$-structures $\mathcal{M}$ and $\mathcal{N}$ such that for all $\mathcal{L}$-formulas $\phi(v_1,\hdots,v_n)$ and elements $a_1,\hdots,a_n$ in the domain of $\mathcal{M}$, $\mathcal{M}\models\phi(a_1,\hdots,a_n)$ if and only if $\mathcal{N}\models\phi(j(a_1),\hdots,j(a_n))$.
\end{defin}

\begin{defin}\label{def:substructure}
An $\mathcal{L}$-structure $\mathcal{M}$ with domain $M$ is a \emph{substructure} of an $\mathcal{L}$-structure $\mathcal{N}$ with domain $N$ if $M\subseteq N$ and the inclusion map is an $\mathcal{L}$-homomorphism.

If $\mathcal{M}$ is a substructure of $\mathcal{N}$ and the inclusion map is elementary, then $\mathcal{M}$ is an \emph{elementary substructure} of $\mathcal{N}$ and $\mathcal{N}$ is an \emph{elementary extension} of $\mathcal{M}$.
\end{defin}

One of the most surprising of the foundational theorems of model theory relates to the existence of elementary extensions and substructures. The L\"owenheim-Skolem Theorem, given here without proof, intuitively states that given an infinite structure, there are elementary extensions and elementary substructures of all infinite cardinalities.

\begin{thm}[L\"owenheim-Skolem Theorem Up {\cite[Theorem 2.3.4]{Marker}}]\label{thm:L-SUp}
Let $\mathcal{M}$ be an infinite $\mathcal{L}$-structure with domain $M$, and let $\kappa$ be an infinite cardinal such that $\kappa\geq|M|+|\mathcal{L}|$. Then there is an $\mathcal{L}$-structure $\mathcal{N}$ of cardinality $\kappa$ and an elementary embedding $j:\mathcal{M}\rightarrow\mathcal{N}$, so that $\mathcal{N}$ is an elementary extension of $j(\mathcal{M})$. 
\end{thm}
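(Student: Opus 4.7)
The plan is to use the Compactness Theorem with cardinality (Theorem~\ref{thm:cardinal_compactness}) applied to a carefully chosen expansion of $\mathcal{L}$ and a theory combining the elementary diagram of $\mathcal{M}$ with sentences forcing $\kappa$-many distinct elements. First I would expand $\mathcal{L}$ to a language $\mathcal{L}'=\mathcal{L}\cup\{c_a:a\in M\}\cup\{d_\alpha:\alpha<\kappa\}$ by adjoining a fresh constant symbol $c_a$ for each element $a\in M$ (to serve as a name for $a$) and an independent set of $\kappa$ new constant symbols $d_\alpha$ (to force a large model). Note $|\mathcal{L}'|=|\mathcal{L}|+|M|+\kappa=\kappa$ by the hypothesis $\kappa\geq|M|+|\mathcal{L}|$.

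Next I would form the $\mathcal{L}'$-theory
\[
T'=\operatorname{ElDiag}(\mathcal{M})\cup\{\lnot(d_\alpha=d_\beta):\alpha<\beta<\kappa\},
\]
where $\operatorname{ElDiag}(\mathcal{M})$ denotes the elementary diagram of $\mathcal{M}$, i.e.\ the set of all $\mathcal{L}'$-sentences $\phi(c_{a_1},\ldots,c_{a_n})$ such that $\mathcal{M}\models\phi(a_1,\ldots,a_n)$ (interpreting each $c_a$ as $a$). The key step is to show $T'$ is finitely satisfiable. Any finite $\Delta\subseteq T'$ mentions only finitely many $d_\alpha$, say $d_{\alpha_1},\ldots,d_{\alpha_k}$. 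Since $M$ is infinite, we can pick $k$ distinct elements $b_1,\ldots,b_k\in M$ and interpret the $d_{\alpha_i}$ as the $b_i$ (interpreting every other $d_\alpha$ however we like, and every $c_a$ as $a$); this turns $\mathcal{M}$ into an $\mathcal{L}'$-structure satisfying the inequalities in $\Delta$ and, tautologically, all sentences of $\operatorname{ElDiag}(\mathcal{M})$ in $\Delta$.

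By Theorem~\ref{thm:cardinal_compactness}, $T'$ has a model $\mathcal{N}'$ with $|N'|\leq|\mathcal{L}'|=\kappa$. The inequalities $d_\alpha\neq d_\beta$ force $|N'|\geq\kappa$, so $|N'|=\kappa$. Let $\mathcal{N}$ be the $\mathcal{L}$-reduct of $\mathcal{N}'$, and define $j:M\to N$ by $j(a)=c_a^{\mathcal{N}'}$. That $j$ is an elementary embedding follows directly from $\mathcal{N}'\models\operatorname{ElDiag}(\mathcal{M})$: for any $\mathcal{L}$-formula $\phi(v_1,\ldots,v_n)$ and $a_1,\ldots,a_n\in M$, the sentence $\phi(c_{a_1},\ldots,c_{a_n})$ lies in $\operatorname{ElDiag}(\mathcal{M})$ iff $\mathcal{M}\models\phi(a_1,\ldots,a_n)$, while it holds in $\mathcal{N}'$ iff $\mathcal{N}\models\phi(j(a_1),\ldots,j(a_n))$; taking $\phi$ and also $\lnot\phi$ gives the biconditional. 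Viewing $j(\mathcal{M})$ as a substructure of $\mathcal{N}$, this exhibits $\mathcal{N}$ as an elementary extension of $j(\mathcal{M})$ of cardinality $\kappa$.

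The only delicate point is to be precise about the two sides of the cardinality argument and to verify that the elementary diagram really does deliver elementarity (rather than mere embedding). The bookkeeping that $|\mathcal{L}'|=\kappa$ uses standard cardinal arithmetic on infinite cardinals, and the lower bound $|N'|\geq\kappa$ follows immediately from the distinctness sentences; neither is deep, but getting both to pin the cardinality to exactly $\kappa$ is where the hypothesis $\kappa\geq|M|+|\mathcal{L}|$ is essential.
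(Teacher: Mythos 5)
The paper states Theorem~\ref{thm:L-SUp} without proof, so there is no in-paper argument to compare against. Your argument is the standard one and is correct: adjoin to $\mathcal{L}$ the elementary diagram constants $\{c_a : a\in M\}$ together with $\kappa$ fresh constants $\{d_\alpha : \alpha<\kappa\}$, take $T' = Diag_{el}(\mathcal{M}) \cup \{\lnot(d_\alpha = d_\beta) : \alpha<\beta<\kappa\}$, verify finite satisfiability in $\mathcal{M}$ (using that $M$ is infinite to interpret the finitely many $d_\alpha$ appearing as distinct elements), and apply Theorem~\ref{thm:cardinal_compactness}. Two small remarks. First, you could shorten the last step by invoking the paper's Lemma~\ref{lem:diag}, which says exactly that any model of $Diag_{el}(\mathcal{M})$ admits an elementary embedding of $\mathcal{M}$; that lemma also records injectivity of $j$ explicitly, which your argument leaves to the reader to extract from the elementarity condition applied to $\lnot(v_1=v_2)$. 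Second, you read Theorem~\ref{thm:cardinal_compactness} as producing a model of cardinality at most $|\mathcal{L}'|$ (the standard reading, even though the paper's wording omits the ``at most''); your use of the distinctness sentences to force $|N'|\geq\kappa$ pins down $|N'|=\kappa$ regardless of which reading one adopts, so the argument is robust.
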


\begin{thm}[L\"owenheim-Skolem Theorem Down {\cite[Theorem 2.3.7]{Marker}}]\label{thm:L-S}
Let $\mathcal{M}$ be an $\mathcal{L}$-structure with domain $M$, and let $X\subseteq M$. Then there is an elementary substructure $\mathcal{N}$ of $\mathcal{M}$ with domain $N$ such that $X\subseteq N$ and $|N|\leq |X| + |\mathcal{L}| + \aleph_0$. 
\end{thm}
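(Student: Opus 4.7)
The plan is to prove this via the Tarski-Vaught test together with an iterated closure (``Skolem hull'') construction. Recall that the Tarski-Vaught test says: an $\mathcal{L}$-substructure $\mathcal{N}$ of $\mathcal{M}$ is elementary if and only if, for every $\mathcal{L}$-formula $\phi(v, w_1, \dots, w_n)$ and every tuple $a_1, \dots, a_n$ from the domain $N$, whenever there exists $b \in M$ with $\mathcal{M} \models \phi(b, a_1, \dots, a_n)$, there is already such a $b$ in $N$. So my strategy is to inflate $X$ just enough to make Tarski-Vaught succeed, while keeping the cardinality under control.

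First I would set $\kappa = |X| + |\mathcal{L}| + \aleph_0$, and build an increasing chain $X = X_0 \subseteq X_1 \subseteq X_2 \subseteq \cdots$ of subsets of $M$ as follows. Given $X_k$, for each $\mathcal{L}$-formula $\phi(v, w_1, \dots, w_n)$ and each tuple $(a_1, \dots, a_n) \in X_k^n$ such that $\mathcal{M} \models \exists v\, \phi(v, a_1, \dots, a_n)$, use the Axiom of Choice to pick one witness $b \in M$ with $\mathcal{M} \models \phi(b, a_1, \dots, a_n)$, and throw all such witnesses into $X_{k+1}$. Also throw in $c^\mathcal{M}$ for every constant symbol $c$, and $f^\mathcal{M}(\bar a)$ for every function symbol $f$ and every appropriate tuple $\bar a$ from $X_k$ (this is already subsumed by the witness step applied to the formula $v = f(w_1,\dots,w_{n_f})$, but it is cleaner to say it separately). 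Finally let $N = \bigcup_{k \in \mathbb{N}} X_k$.

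For the cardinality bound, note that the total number of $\mathcal{L}$-formulas is $|\mathcal{L}| + \aleph_0$. At each stage, the number of witnesses added is at most $(|\mathcal{L}| + \aleph_0) \cdot |X_k|^{<\omega}$. A routine induction shows $|X_k| \leq \kappa$ for every $k$, using standard cardinal arithmetic: if $|X_k| \leq \kappa$ then $|X_k|^{<\omega} \leq \kappa$ (since $\kappa$ is infinite), so $|X_{k+1}| \leq \kappa \cdot \kappa = \kappa$. Since $N$ is a countable union of sets of size $\leq \kappa$, we get $|N| \leq \aleph_0 \cdot \kappa = \kappa$, as required.

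It remains to check that $N$ is the domain of an elementary substructure. Closure under the interpretations of constants and functions is built in, so $N$ defines a substructure $\mathcal{N}$. To verify Tarski-Vaught, take any formula $\phi(v, w_1, \dots, w_n)$ and tuple $a_1, \dots, a_n \in N$ with $\mathcal{M} \models \exists v\, \phi(v, a_1, \dots, a_n)$. Because the chain is increasing and the tuple is finite, there is some $k$ with $a_1, \dots, a_n \in X_k$; then by construction a witness $b$ was added to $X_{k+1} \subseteq N$. By Tarski-Vaught, $\mathcal{N}$ is an elementary substructure of $\mathcal{M}$, completing the proof. The main obstacle is really just keeping the bookkeeping honest: the witness-selection uses AC, and one must be sure the cardinal arithmetic at each stage does not blow past $\kappa$; the Tarski-Vaught verification itself is essentially automatic once the chain is set up.
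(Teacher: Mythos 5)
The paper states this theorem without proof, citing Marker; there is therefore no in-paper argument to compare against. Your proof is the standard one (it is essentially the argument in Marker), and it is correct: you construct a Skolem hull by iterating witness-selection $\omega$ times starting from $X$, bound the size at each stage by $\kappa = |X| + |\mathcal{L}| + \aleph_0$ using the fact that there are only $|\mathcal{L}| + \aleph_0$ formulas and $|X_k|^{<\omega} \leq \kappa$ many finite tuples, and then invoke the Tarski--Vaught test, which holds because any finite tuple from $N$ already lies in some finite stage $X_k$ so its witness was swept into $X_{k+1}$. The bookkeeping you flag as the main hazard is indeed the only place real care is needed, and you handle it correctly. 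Two points you gloss over, both minor and easily repaired: first, you should confirm that $N$ is nonempty even when $X = \emptyset$ and $\mathcal{L}$ has no constants (the sentence $\exists v\,(v = v)$ holds in $\mathcal{M}$, so a witness enters at stage $1$); second, the Tarski--Vaught test requires $\mathcal{N}$ to already be an $\mathcal{L}$-substructure before you apply it, which you do note is guaranteed by the closure under constant and function interpretations, so this is fine as long as you keep that remark.
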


\begin{rmk}\label{rmk:el_eq}
It is immediate from the definition of elementary $\mathcal{L}$-homomorphism that if there is an elementary $\mathcal{L}$-homomorphism $j:\mathcal{M}\rightarrow\mathcal{N}$, then $\mathcal{M}$ and $\mathcal{N}$ satisfy exactly the same $\mathcal{L}$-sentences, since for any $\mathcal{L}$-sentence $\phi$, $\mathcal{M}\models\phi$ if and only if $\mathcal{N}\models\phi$. Such structures are called elementarily equivalent.
\end{rmk}

Given an $\mathcal{L}$-structure $\mathcal{M}$, we define the full $\mathcal{L}$-theory of $\mathcal{M}$, $Th(\mathcal{M}) = \{\phi \,|\, \mathcal{M}\models\phi\}$. By definition, if $\phi$ is an $\mathcal{L}$-sentence, $\mathcal{M}\models\phi$ or $\mathcal{M}\models\lnot\phi$, so $Th(\mathcal{M})\models\phi$ or $Th(\mathcal{M})\models\lnot\phi$, and thus $Th(\mathcal{M})$ is complete.

\begin{defin}\label{def:el_eq}
Let $\mathcal{M}$ and $\mathcal{N}$ be $\mathcal{L}$-structures. We say that $\mathcal{M}$ and $\mathcal{N}$ are \emph{elementarily equivalent}, written $\mathcal{M}\equiv\mathcal{N}$, if $Th(\mathcal{M}) = Th(\mathcal{N})$, that is, for any $\mathcal{L}$-sentence $\phi$, $\mathcal{M}\models\phi$ if and only if $\mathcal{N}\models\phi$.
\end{defin}

The converse to Remark~\ref{rmk:el_eq} does not hold in general. That is, it is possible to have $\mathcal{L}$-structures $\mathcal{M}$ and $\mathcal{N}$ such that $\mathcal{M}\equiv\mathcal{N}$, but there is no elementary $\mathcal{L}$-homomorphism from $\mathcal{M}$ to $\mathcal{N}$. The statement that $\mathcal{M}$ and $\mathcal{N}$ are elementarily equivalent only requires that they satisfy the same \emph{$\mathcal{L}$-sentences}, but if there is an elementary $\mathcal{L}$-homomorphism from $\mathcal{M}$ to $\mathcal{N}$, then this homomorphism must respect the satisfaction of all \emph{$\mathcal{L}$-formulas}. By choosing a suitably extended language, we can turn these formulas into sentences.

Let $\mathcal{M}$ be an $\mathcal{L}$-structure with domain $M$. We will extend the language $\mathcal{L}$ by adding a new constant symbol $c_m$ for every element $m\in M$. Let $\mathcal{L}_\mathcal{M} = \mathcal{L}\cup\{c_m\,|\,m\in M\}$. Then $\mathcal{M}$ can be viewed as an $\mathcal{L}_\mathcal{M}$-structure, where we interpret each constant in the natural way, $c_m^\mathcal{M} = m$.

\begin{defin}\label{def:diag}
The \emph{elementary diagram} of an $\mathcal{L}$-structure $\mathcal{M}$ with domain $M$, denoted $Diag_{el}(\mathcal{M})$, is the $\mathcal{L}_\mathcal{M}$-theory which captures all the information about satisfaction of $\mathcal{L}$-formulas in $\mathcal{M}$.
\[
Diag_{el}(\mathcal{M}) = \{\phi(c_{m_1},\hdots,c_{m_n})\,|\,m_1,\hdots,m_n\in M\,\text{and}\,\mathcal{M}\models\phi(m_1,\hdots,m_n)\}.
\]
\end{defin}

\begin{lem}\label{lem:diag}
Let $\mathcal{M}$ be an $\mathcal{L}$-structure with domain $M$. Suppose $\mathcal{N}$ is an $\mathcal{L}_\mathcal{M}$-structure such that $\mathcal{N}\models Diag_{el}(\mathcal{M})$. Then viewing $\mathcal{N}$ as an $\mathcal{L}$-structure (by ``forgetting'' the interpretations of the symbols $c_m$), there is an elementary embedding of $\mathcal{M}$ into $\mathcal{N}$.
\end{lem}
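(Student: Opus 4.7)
The natural candidate for the elementary embedding is the map $j : M \to N$ defined by $j(m) = c_m^{\mathcal{N}}$, where $c_m^{\mathcal{N}}$ is the interpretation in $\mathcal{N}$ of the new constant symbol introduced for $m$. The plan is to verify three things, all of which will follow almost tautologically from the assumption $\mathcal{N} \models Diag_{el}(\mathcal{M})$: (i) $j$ is well-defined and injective, (ii) $j$ is elementary, and (iii) $j$ is an $\mathcal{L}$-homomorphism. The central point is the elementarity biconditional; once that is in place, everything else is free.

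For injectivity, note that for any two distinct elements $m_1, m_2 \in M$ the formula $\lnot(v_1 = v_2)$ is satisfied in $\mathcal{M}$ by $(m_1, m_2)$, so $\lnot(c_{m_1} = c_{m_2}) \in Diag_{el}(\mathcal{M})$ and hence $c_{m_1}^{\mathcal{N}} \neq c_{m_2}^{\mathcal{N}}$.

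For elementarity, the goal is to show that for every $\mathcal{L}$-formula $\phi(v_1, \ldots, v_n)$ and every tuple $m_1, \ldots, m_n \in M$,
\[
\mathcal{M} \models \phi(m_1, \ldots, m_n) \iff \mathcal{N} \models \phi(j(m_1), \ldots, j(m_n)).
\]
The forward direction is immediate from the definition of the elementary diagram: if $\mathcal{M} \models \phi(m_1, \ldots, m_n)$, then the $\mathcal{L}_\mathcal{M}$-sentence $\phi(c_{m_1}, \ldots, c_{m_n})$ is in $Diag_{el}(\mathcal{M})$, so it is satisfied by $\mathcal{N}$; but the interpretation of $c_{m_i}$ in $\mathcal{N}$ is precisely $j(m_i)$, so viewing $\mathcal{N}$ as an $\mathcal{L}$-structure, $\mathcal{N} \models \phi(j(m_1), \ldots, j(m_n))$. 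The reverse direction is obtained by running the same argument on $\lnot \phi$: since either $\phi(m_1, \ldots, m_n)$ or $\lnot \phi(m_1, \ldots, m_n)$ holds in $\mathcal{M}$, the forward direction forces $\mathcal{N}$ to satisfy whichever one it is, and $\mathcal{N}$ cannot satisfy both.

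Finally, the homomorphism property is not an additional obligation but a special case of elementarity applied to atomic formulas. Applying the biconditional to the formulas $c = c_{c^{\mathcal{M}}}$, $f(c_{m_1}, \ldots, c_{m_{n_f}}) = c_{f^{\mathcal{M}}(m_1, \ldots, m_{n_f})}$, and $R(c_{m_1}, \ldots, c_{m_{n_R}})$ (in both directions for the last one) yields exactly the three clauses of Definition \ref{def:hom} for $j$. So there is really no obstacle in this lemma: the content lies entirely in setting up the definitions so that satisfaction of a formula in $\mathcal{M}$ with parameters translates, via the extra constants $c_m$, into satisfaction of a sentence in $\mathcal{N}$ with the corresponding interpretations.
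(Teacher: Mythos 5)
Your proof is correct and takes the same approach as the paper: define $j(m) = c_m^{\mathcal{N}}$, get injectivity from the formulas $\lnot(c_{m_1} = c_{m_2})$, and get elementarity by translating satisfaction in $\mathcal{M}$ into membership in $Diag_{el}(\mathcal{M})$. In fact you are slightly more thorough than the paper, which verifies only the forward direction of the biconditional and leaves implicit both the reverse direction (your $\lnot\phi$ argument) and the observation that the $\mathcal{L}$-homomorphism clauses are special cases of elementarity for atomic formulas.
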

\begin{proof}
Define $j:\mathcal{M}\rightarrow\mathcal{N}$ by $j(m) = c_m^\mathcal{N}$, the interpretation of the corresponding constant symbol in $\mathcal{N}$. Suppose $m_1,m_2\in M$ with $m_1\neq m_2$. Then $\lnot (c_{m_1} = c_{m_2})$ is in $Diag_{el}(\mathcal{M})$, so $\mathcal{N}\models \lnot (c_{m_1} = c_{m_2})$, and in $\mathcal{N}$, $c_{m_1}^\mathcal{N}\neq c_{m_2}^\mathcal{N}$, so $j(m_1)\neq j(m_2)$. Thus $j$ is injective.

If $\mathcal{M}\models\phi(m_1,\hdots,m_n)$ for an $\mathcal{L}$-formula $\phi$, then $\phi(c_{m_1},\hdots,c_{m_n})\in Diag_{el}(\mathcal{M})$. Since $\mathcal{N}\models Diag_{el}(\mathcal{M})$, $\mathcal{N}\models \phi(c_{m_1},\hdots,c_{m_n})$, and thus $\mathcal{N}\models \phi(j(m_1),\hdots,j(m_n))$, so $j$ is an elementary $\mathcal{L}$-homomorphism.
\end{proof}

As one would expect, $\mathcal{L}$-isomorphic structures are elementarily equivalent. We will conclude our whirlwind tour of the basics of model theory with a proof of this fact, which will also serve as a first example of the technique of induction on terms and formulas. 

The idea is that all terms and formulas are built from atomic elements (terms from variables and constants, formulas from atomic formulas) in a finite number of steps. To prove a claim, we show that it is true for all atomic elements. Then we show that if we construct a new term (or formula) from a set of terms (or formulas) for which our claim is true, then our claim is also true on the new term (or formula). This shows that the claim is true for all terms (or formulas). 

\begin{thm}[{\cite[Theorem 1.1.10]{Marker}}]\label{thm:isomorphism}
For $\mathcal{L}$-structures $\mathcal{M}$ and $\mathcal{N}$, if $\mathcal{M}\cong\mathcal{N}$, then $\mathcal{M}\equiv\mathcal{N}$. 
\end{thm}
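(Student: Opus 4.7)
The plan is to fix an $\mathcal{L}$-isomorphism $\eta: \mathcal{M}\rightarrow\mathcal{N}$ and show that $\eta$ is in fact an elementary homomorphism; the conclusion $\mathcal{M}\equiv\mathcal{N}$ then follows from Remark~\ref{rmk:el_eq}. To prove elementarity, I would follow the author's suggested template of induction on terms and formulas, done in two stages.

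First I would handle terms: I claim that for every $\mathcal{L}$-term $t(v_1,\hdots,v_n)$ and every $a_1,\hdots,a_n$ in $M$, one has $\eta(t^\mathcal{M}(a_1,\hdots,a_n)) = t^\mathcal{N}(\eta(a_1),\hdots,\eta(a_n))$. The base cases are immediate: if $t$ is a variable $v_i$, both sides equal $\eta(a_i)$; if $t$ is a constant $c$, then $\eta(c^\mathcal{M}) = c^\mathcal{N}$ by the homomorphism axiom. For the inductive step, if $t$ is $f(t_1,\hdots,t_{n_f})$, the function-symbol clause in the definition of $\mathcal{L}$-homomorphism together with the inductive hypothesis on each $t_i$ does all the work.

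Next I would run the induction on the complexity of $\mathcal{L}$-formulas $\phi(v_1,\hdots,v_n)$, showing $\mathcal{M}\models\phi(a_1,\hdots,a_n)$ iff $\mathcal{N}\models\phi(\eta(a_1),\hdots,\eta(a_n))$. For atomic formulas $t_1=t_2$, injectivity of $\eta$ combined with the term claim gives the equivalence; for $R(t_1,\hdots,t_{n_R})$, the relation clause of the homomorphism definition plus the term claim does it. The Boolean cases ($\lnot$, $\land$, $\lor$) fall out directly from the inductive hypothesis and the inductive clauses in Definition~\ref{def:satisfies}.

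The one step that deserves care, and which I expect to be the only substantive part, is the quantifier case: if $\phi$ is $\exists v\,\psi(v_1,\hdots,v_n,v)$, then $\mathcal{M}\models\phi(\bar{a})$ means some $b\in M$ satisfies $\mathcal{M}\models\psi(\bar{a},b)$, which by induction is equivalent to $\mathcal{N}\models\psi(\eta(\bar{a}),\eta(b))$. Here I use surjectivity of $\eta$: every element of $N$ has the form $\eta(b)$ for some $b\in M$, so the existence of such a $b$ in $M$ is equivalent to the existence of a witness in $N$, giving $\mathcal{N}\models\phi(\eta(\bar{a}))$. The $\forall$ case is handled by the same argument (or by noting $\forall v\,\psi$ abbreviates $\lnot\exists v\,\lnot\psi$). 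Specializing to sentences, where there are no free variables to substitute, yields $\mathcal{M}\models\phi$ iff $\mathcal{N}\models\phi$, hence $\mathcal{M}\equiv\mathcal{N}$.
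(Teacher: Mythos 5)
Your proposal matches the paper's proof essentially step for step: the same two-stage induction on terms and then formulas, the same use of injectivity in the atomic equality case and surjectivity in the existential case, and the same remark that $\lor$ and $\forall$ reduce to $\lnot,\land,\exists$. The only cosmetic difference is that you conclude via Remark~\ref{rmk:el_eq} while the paper directly specializes the formula claim to sentences.
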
 
\begin{proof}
Let $j: \mathcal{M}\rightarrow\mathcal{N}$ be an $\mathcal{L}$-isomorphism mapping $M$, the domain of $\mathcal{M}$, bijectively to $N$, the domain of $\mathcal{N}$. If $\overline{a} = (a_1,\hdots,a_n)\in M^n$, let $j(\overline{a}) = (j(a_1),\hdots,j(a_n))\in N^n$. 

We will prove by induction on terms the following claim: if $t$ is an $\mathcal{L}$-term with free variables from $\overline{v} = (v_1,\hdots,v_n)$, then for all $\overline{a} = (a_1,\hdots,a_n)\in M^n$, $j(t^\mathcal{M}(\overline{a})) = t^\mathcal{N}(j(\overline{a}))$. 

If $t$ is a constant symbol $c$, then $j(t^\mathcal{M}(\overline{a})) = j(c^\mathcal{M}) = c^\mathcal{N} = t^\mathcal{N}(j(\overline{a}))$.

If $t$ is a variable $v_i$, then $j(t^\mathcal{M}(\overline{a})) = j(a_i) = t^\mathcal{N}(j(\overline{a}))$.

If $t$ is $f(t_1(\overline{v}),\hdots,t_{n_f}(\overline{v}))$, where $f$ is a function symbol and $t_1,\hdots,t_{n_f}$ are $\mathcal{L}$-terms for which the claim is true, then
\begin{eqnarray*}
j(t^\mathcal{M}(\overline{a})) &=& j(f^\mathcal{M}(t_1^\mathcal{M}(\overline{a}),\hdots,t_{n_f}^\mathcal{M}(\overline{a}))) \\
&=& f^\mathcal{N}(j(t_1^\mathcal{M}(\overline{a})),\hdots,j(t_{n_f}^\mathcal{M}(\overline{a}))) \,\,\mbox{since $j$ is an $\mathcal{L}$-homomorphism}\\
&=& f^\mathcal{N}(t_1^\mathcal{N}(j(\overline{a})),\hdots,t_{n_f}^\mathcal{N}(j(\overline{a}))) \,\,\mbox{by induction}\\
&=& t^\mathcal{N}(j(\overline{a})).
\end{eqnarray*}

This completes the induction on terms and the proof of the claim. 

Next we will prove by induction on formulas that if $\phi$ is an $\mathcal{L}$-formula with free variables from $\overline{v} = (v_1,\hdots,v_n)$, then for all $\overline{a} = (a_1,\hdots,a_n)\in M^n$, $\mathcal{M}\models\phi(\overline{a})$ if and only if $\mathcal{N}\models\phi(j(\overline{a}))$. 

If $\phi(\overline{v})$ is $t_1(\overline{v}) = t_2(\overline{v})$, where $t_1$ and $t_2$ are  $\mathcal{L}$-terms, then
\begin{eqnarray*}
\mathcal{M}\models\phi(\overline{a}) &\mbox{iff}& t_1^\mathcal{M}(\overline{a}) = t_2^\mathcal{M}(\overline{a}) \\
&\mbox{iff}& j(t_1^\mathcal{M}(\overline{a})) = j(t_2^\mathcal{M}(\overline{a})) \,\,\mbox{because $j$ is injective}\\
&\mbox{iff}& t_1^\mathcal{N}(j(\overline{a})) = t_2^\mathcal{N}(j(\overline{a})) \,\,\mbox{applying the claim}\\
&\mbox{iff}& \mathcal{N}\models \phi(j(\overline{a})).
\end{eqnarray*}

If $\phi(\overline{v})$ is $R(t_1(\overline{v}),\hdots,t_{n_R}(\overline{v}))$, where $R$ is a relation symbol and $t_1,\hdots,t_{n_R}$ are $\mathcal{L}$-terms, then

\begin{eqnarray*}
\mathcal{M}\models\phi(\overline{a}) &\mbox{iff}& (t_1^\mathcal{M}(\overline{a}),\hdots,t_{n_R}^\mathcal{M}(\overline{a}))\in R^\mathcal{M}\\
&\mbox{iff}& (j(t_1^\mathcal{M}(\overline{a})),\hdots,j(t_{n_R}^\mathcal{M}(\overline{a})))\in R^\mathcal{N} \,\,\mbox{because $j$ is a homomorphism}\\
&\mbox{iff}& (t_1^\mathcal{N}(j(\overline{a})),\hdots,t_{n_R}^\mathcal{N}(j(\overline{a})))\in R^\mathcal{N}\,\,\mbox{applying the claim}\\
&\mbox{iff}& \mathcal{N}\models\phi(j(\overline{a})).
\end{eqnarray*}

If $\phi(\overline{v})$ is $\lnot\psi(\overline{v})$, where $\psi$ is an $\mathcal{L}$-formula for which our assertion is true, then 
\begin{eqnarray*}
\mathcal{M}\models\phi(\overline{a}) &\mbox{iff}& \mathcal{M}\not\models\psi(\overline{a})\\
&\mbox{iff}& \mathcal{N}\not\models\psi(j(\overline{a}))\,\,\mbox{by induction}\\
&\mbox{iff}& \mathcal{N}\models\phi(j(\overline{a})).
\end{eqnarray*}

If $\phi(\overline{v})$ is $\psi(\overline{v})\land\theta(\overline{v})$, where $\psi$ and $\theta$ are $\mathcal{L}$-formulas for which our assertion is true, then
\begin{eqnarray*}
\mathcal{M}\models\phi(\overline{a}) &\mbox{iff}& \mathcal{M}\models\psi(\overline{v}) \,\mbox{and}\,\mathcal{M}\models\theta(\overline{v})\\
&\mbox{iff}& \mathcal{N}\models\psi(j(\overline{v}))\,\mbox{and}\,\mathcal{N}\models\theta(j(\overline{v}))\,\,\mbox{by induction}\\
&\mbox{iff}& \mathcal{N}\models\phi(j(\overline{v})).
\end{eqnarray*}

If $\phi(\overline{v})$ is $\exists w\,\psi(\overline{v},w)$, where $w$ is a variable and $\psi$ is an $\mathcal{L}$-formula for which our assertion is true, then $\mathcal{M}\models\phi(\overline{a})$ if and only if there exists some $b\in M$ such that $\mathcal{M}\models\psi(\overline{a},b)$. Now if there exists such a $b$, then by induction, $\mathcal{N}\models\psi(j(\overline{a}),j(b))$, so there exists $c\in N$ (take $c = j(b)$) such that $\mathcal{N}\models\psi(j(\overline{a}),c)$, and thus $\mathcal{N}\models\phi(j(\overline{a}))$. Conversely, if $\mathcal{N}\models\phi(j(\overline{a}))$, then there exists $c\in N$ such that $\mathcal{N}\models\psi(j(\overline{a}),c)$. $j$ is surjective, so there exists $b\in M$ such that $j(b) = c$, and by induction $\mathcal{M}\models\psi(\overline{a},b)$. 

This completes the proof by induction on formulas. We do not need to consider formulas constructed using $\lor$ or $\forall$, since these can be re-written to use only $\lnot$, $\land$, and $\exists$. This also completes the proof of the theorem, since we have shown that if $\phi$ is an $\mathcal{L}$-sentence, then $\mathcal{M}\models\phi$ if and only if $\mathcal{N}\models\phi$. So $\mathcal{M}\equiv\mathcal{N}$.
\end{proof}

\subsection{Ultraproducts}\label{subsec:UP}

The fields $\mathbb{Q}_p$ and $\mathbb{F}_p((t))$ are not elementarily equivalent for any $p$ (for instance, the sentence $Char_p$
is true in $\mathbb{F}_p((t))$ but false in $\mathbb{Q}_p$). However, we will prove that the theory of $\mathcal{L}_{VF}$-sentences which are true in $\mathbb{Q}_p$ for all but finitely many $p$ is the same as the theory of $\mathcal{L}_{VF}$-sentences which are true in  $\mathbb{F}_p((t))$ for all but finitely many $p$.

A construction called the ultraproduct will allow us to build new structures from the $\mathbb{Q}_p$ and $\mathbb{F}_p((t))$ in which a sentence is true if and only if it is true in ``almost all'' of these fields. 
The precise meaning of ``almost all'' is described by the definition of a filter on a set.

\begin{defin}\label{def:filter}
Given a set $I$, a \emph{filter} on $I$ is a subset of the power set $\mathcal{D}\subseteq\mathcal{P}(I)$ such that
\begin{itemize}
\item $\emptyset\notin\mathcal{D}$ and $I\in\mathcal{D}$,
\item if $A\in \mathcal{D}$ and $B\in\mathcal{D}$, then $A\cap B\in \mathcal{D}$, and
\item if $A\in \mathcal{D}$ and $A\subseteq B\subseteq I$, then $B\in \mathcal{D}$.
\end{itemize}
\end{defin}

\begin{exa}\label{exa:filters}
The following are examples of filters on a set $I$:
\begin{itemize}
\item $\mathcal{D}_T = \{I\}$. $\mathcal{D}_T$ is called the trivial filter.
\item For $j\in I$, $\mathcal{D}_j = \{X\subseteq I\,|\,j\in X\}$. $\mathcal{D}_j$ is called the principal filter generated by $j$.
\item $\mathcal{D}_F = \{X\subseteq I\,|\, I\setminus X \,\mbox{is finite}\}$. $\mathcal{D}_F$ is called the Frechet filter. Note that $\mathcal{D}_F$ is a filter only when $I$ is infinite, since otherwise $\emptyset\in \mathcal{D}_F$. 
\end{itemize}
\end{exa}

Some intuition for the notion of a filter can be built by thinking of the sets in the filter as those containing ``almost all'' elements of $I$. If two sets both contain almost all elements, their intersection should also contain almost all elements. If a set contains almost all elements, any superset should also contain almost all elements. Of course, for different filters, ``almost all'' has different meanings. A principal filter, for instance, gives great preference to its generating element.

\begin{defin}\label{def:ultrafilter}
A filter $\mathcal{D}$ on $I$ is an \emph{ultrafilter} if for all $X\subseteq I$, $X\in \mathcal{D}$ or $I\setminus X \in \mathcal{D}$.
\end{defin}

All principal filters are ultrafilters. The next lemma and theorem demonstrate that nonprincipal ultrafilters can be obtained by extending the Frechet filter on an infinite set.

\begin{lem}\label{lem:filter_extension}
Given a filter $\mathcal{D}$ on a set $I$ and a subset $X\subset I$ such that $X\notin\mathcal{D}$, we can extend $\mathcal{D}$ to a filter $\mathcal{D}_X$ on $I$ such that $\mathcal{D}\subseteq\mathcal{D}_X$ and $I\setminus X\in\mathcal{D}_X$.
\end{lem}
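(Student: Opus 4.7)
The plan is to define $\mathcal{D}_X$ explicitly as the closure of $\mathcal{D} \cup \{I \setminus X\}$ under the filter operations, and then check that the three filter axioms from Definition~\ref{def:filter} hold. Concretely, I would set
\[
\mathcal{D}_X = \{\, Y \subseteq I \,:\, A \cap (I \setminus X) \subseteq Y \text{ for some } A \in \mathcal{D} \,\}.
\]
The inclusion $\mathcal{D} \subseteq \mathcal{D}_X$ is immediate by taking $A = Y$ and using $A \cap (I \setminus X) \subseteq A = Y$, and $I \setminus X \in \mathcal{D}_X$ follows by taking $A = I \in \mathcal{D}$. Closure under supersets is obvious from the definition, and closure under finite intersection follows because if $A \cap (I \setminus X) \subseteq Y_1$ and $B \cap (I \setminus X) \subseteq Y_2$ with $A, B \in \mathcal{D}$, then $(A \cap B) \cap (I \setminus X) \subseteq Y_1 \cap Y_2$, and $A \cap B \in \mathcal{D}$ by the filter axioms on $\mathcal{D}$.

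The main obstacle, as always in this kind of extension argument, is showing $\emptyset \notin \mathcal{D}_X$; everything else is essentially bookkeeping. For this, suppose for contradiction that $\emptyset \in \mathcal{D}_X$. Then there exists $A \in \mathcal{D}$ with $A \cap (I \setminus X) = \emptyset$, equivalently $A \subseteq X$. But then since $\mathcal{D}$ is closed under supersets and $A \subseteq X \subseteq I$, we would have $X \in \mathcal{D}$, contradicting the hypothesis $X \notin \mathcal{D}$. This uses precisely the assumption of the lemma, which is reassuring. Finally, $I \in \mathcal{D}_X$ is trivial since $I \in \mathcal{D}$ and $\mathcal{D} \subseteq \mathcal{D}_X$.

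Putting this together yields a filter $\mathcal{D}_X$ with $\mathcal{D} \subseteq \mathcal{D}_X$ and $I \setminus X \in \mathcal{D}_X$, as required. I expect the whole proof to fit in a short paragraph; the only subtlety worth emphasizing is that the hypothesis $X \notin \mathcal{D}$ is exactly what is needed to prevent $\emptyset$ from sneaking into $\mathcal{D}_X$. This lemma is clearly the stepping stone to a Zorn's Lemma argument showing that every filter extends to an ultrafilter (and in particular that the Frechet filter on an infinite set extends to a nonprincipal ultrafilter), which I anticipate will be the next theorem.
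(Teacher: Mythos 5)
Your proposal is correct and matches the paper's proof essentially exactly: the paper defines $\mathcal{D}_X = \{Y\subseteq I \mid Z\setminus X\subseteq Y \text{ for some } Z\in\mathcal{D}\}$, which is the same set as yours since $Z\setminus X = Z\cap(I\setminus X)$, and verifies the filter axioms, the nonemptiness obstruction, and the two required inclusions in the same way. Your anticipation of the role of Zorn's Lemma in the next result is also on target.
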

\begin{proof}
Let $\mathcal{D}_X = \{Y\subseteq I \,|\, \text{there exists}\,Z\in\mathcal{D}\,\text{such that}\, Z\setminus X\subseteq Y\}$.

$\mathcal{D}_X$ is a filter on $I$:
\begin{itemize}
\item If $\emptyset\in\mathcal{D}_X$, then there exists $Z\in\mathcal{D}$ such that $Z\setminus X = \emptyset$, that is, $Z\subseteq X$. But then $X\in \mathcal{D}$, which contradicts our assumption. So $\emptyset\notin\mathcal{D}_X$. Also, for any $Z \in \mathcal{D}$, $Z\setminus X\subseteq I$, so $I\in\mathcal{D}_X$. 
\item For $A,B\in\mathcal{D}_X$, there exist sets $Z_A, Z_B\in \mathcal{D}$ such that $Z_A\setminus X\subseteq A$ and $Z_B\setminus X\subseteq B$. Then $(Z_A\cap Z_B)\setminus X \subseteq (A\cap B)$, so $A\cap B\in \mathcal{D}_X$. 
\item For $A\in\mathcal{D}_X$ and $A\subseteq B\subseteq I$, there exists $Z_A\in\mathcal{D}$ such that $Z_A\setminus X \subseteq A \subseteq B$, so $B\in\mathcal{D}_X$.
\end{itemize}

For any $Y\in\mathcal{D}$, take $Z = Y$. $Y\setminus X\subseteq Y$, so $Y\in \mathcal{D}_X$. Thus $\mathcal{D}\subseteq \mathcal{D}_X$. 

To show that $I\setminus X\in\mathcal{D}_X$, take $Z = I$. $I\setminus X\subseteq I\setminus X$, so $I\setminus X\in\mathcal{D}_X$.
\end{proof}

\begin{thm}[{\cite[Proposition 4.1.3]{Chang}}]\label{thm:ultrafilter}
For any filter $\mathcal{D}$ on $I$, there exists an ultrafilter $\mathcal{U}$ on $I$ with $\mathcal{D}\subseteq \mathcal{U}$.
\end{thm}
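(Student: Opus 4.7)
The plan is to apply Zorn's Lemma to the collection of all filters on $I$ extending $\mathcal{D}$, ordered by inclusion, and then use Lemma~\ref{lem:filter_extension} to show that a maximal such filter must in fact be an ultrafilter. Let $\mathcal{F}$ denote the set of filters on $I$ containing $\mathcal{D}$, partially ordered by $\subseteq$. It is nonempty, since $\mathcal{D}\in\mathcal{F}$.

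To apply Zorn's Lemma, I first verify that every chain in $\mathcal{F}$ has an upper bound. Given a chain $\{\mathcal{D}_\alpha\}_{\alpha\in A}$ in $\mathcal{F}$, let $\mathcal{D}^* = \bigcup_{\alpha\in A}\mathcal{D}_\alpha$. A quick check of the filter axioms confirms that $\mathcal{D}^*$ is a filter on $I$: it contains $I$ and avoids $\emptyset$ because each $\mathcal{D}_\alpha$ does; it is upward closed because each $\mathcal{D}_\alpha$ is; and it is closed under binary intersection because any two elements $A,B\in\mathcal{D}^*$ lie in some common $\mathcal{D}_\alpha$ (the larger of the two filters containing them, by the chain property), so $A\cap B\in\mathcal{D}_\alpha\subseteq\mathcal{D}^*$. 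Clearly $\mathcal{D}\subseteq\mathcal{D}^*$ and $\mathcal{D}_\alpha\subseteq\mathcal{D}^*$ for every $\alpha$, so $\mathcal{D}^*\in\mathcal{F}$ is an upper bound for the chain.

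Zorn's Lemma then yields a maximal element $\mathcal{U}\in\mathcal{F}$. It remains to show that $\mathcal{U}$ is an ultrafilter. Suppose for contradiction that there is some $X\subseteq I$ with neither $X\in\mathcal{U}$ nor $I\setminus X\in\mathcal{U}$. Since $X\notin\mathcal{U}$, Lemma~\ref{lem:filter_extension} produces a filter $\mathcal{U}_X$ on $I$ with $\mathcal{U}\subseteq\mathcal{U}_X$ and $I\setminus X\in\mathcal{U}_X$. But because $I\setminus X\notin\mathcal{U}$, the containment $\mathcal{U}\subseteq\mathcal{U}_X$ is strict, contradicting the maximality of $\mathcal{U}$ in $\mathcal{F}$. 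Hence for every $X\subseteq I$, either $X\in\mathcal{U}$ or $I\setminus X\in\mathcal{U}$, so $\mathcal{U}$ is the desired ultrafilter extending $\mathcal{D}$.

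There is no substantive obstacle here beyond the use of Zorn's Lemma; the filter axioms pass to unions of chains essentially by inspection, and the key leverage is provided by Lemma~\ref{lem:filter_extension}, which guarantees that any non-ultrafilter strictly extends inside $\mathcal{F}$. The only subtle point worth flagging is the verification that the union of a chain of filters is closed under finite intersections, which is where the chain hypothesis (as opposed to mere directedness of an arbitrary family) is genuinely used.
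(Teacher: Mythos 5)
Your proof is correct and follows essentially the same approach as the paper: Zorn's Lemma applied to the poset of filters extending $\mathcal{D}$, with unions of chains providing upper bounds, and Lemma~\ref{lem:filter_extension} supplying the contradiction that forces the maximal element to be an ultrafilter. The only cosmetic difference is that you frame the final step as a proof by contradiction, whereas the paper concludes directly that $\mathcal{U}_X = \mathcal{U}$ and hence $I\setminus X\in\mathcal{U}$.
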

\begin{proof}
Let $\mathcal{F}$ be the set of all filters on $I$ extending $\mathcal{D}$, ordered by the subset relation. If $(C_{\alpha}:\alpha<\beta)$ is a chain in $\mathcal{F}$, then $\mathcal{C} = \bigcup_{\alpha<\beta} \mathcal{C}_\alpha$ is a filter on $I$:
\begin{itemize}
\item We have $\emptyset\notin\mathcal{C}$ since $\emptyset\notin\mathcal{C}_\alpha$ for all $\alpha$, and $I\in \mathcal{C}$ since $I\in\mathcal{C}_\alpha$ for all $\alpha$. 
\item If $A,B\in\mathcal{C}$, then $A\in\mathcal{C}_\alpha$ and $B\in\mathcal{C}_\beta$ for some $\alpha$ and $\beta$. Then $A,B\in\mathcal{C}_{\max\{\alpha,\beta\}}$, and $A\cap B\in\mathcal{C}_{\max\{\alpha,\beta\}}\subseteq\mathcal{C}$.
\item If $A\in\mathcal{C}$ and $A\subseteq B\subseteq I$, then $A\in\mathcal{C}_\alpha$ for some $\alpha$, so $B\in\mathcal{C}_\alpha\subseteq\mathcal{C}$. 
\end{itemize}
$\mathcal{C}$ extends $\mathcal{D}$, since it is the union of a set of filters extending $\mathcal{D}$, so $\mathcal{C}\in\mathcal{F}$. Moreover, $\mathcal{C}$ is an upper bound for $(\mathcal{C}:\alpha<\beta)$, since $\mathcal{C}_\alpha\subseteq\mathcal{C}$ for all $\alpha$. Applying Zorn's lemma, $\mathcal{F}$ has maximal elements. Let $\mathcal{U}$ be a maximal element. We claim that $\mathcal{U}$ is an ultrafilter.

Let $X\subseteq I$ be a subset such that $X\notin\mathcal{U}$. By Lemma \ref{lem:filter_extension} we can find a filter $\mathcal{U}_X$ on $I$ such that $\mathcal{U}\subseteq\mathcal{U}_X$ and $I\setminus X\in\mathcal{U}_X$. Since $\mathcal{U}_X$ extends $\mathcal{U}$, it also extends $\mathcal{D}$. But $\mathcal{U}$ is a maximal element among the filters on $I$ extending $\mathcal{D}$, and thus $\mathcal{U}_X = \mathcal{U}$. We have shown that for any $X\subset I$, if $X\notin\mathcal{U}$, then $I\setminus X\in\mathcal{U}$, so $\mathcal{U}$ is an ultrafilter.
\end{proof}

Note that no ultrafilter $\mathcal{U}$ on $I$ extending the Frechet filter is principal, since for any $j\in I$, $I\setminus\{j\}\in \mathcal{D}_F\subset \mathcal{U}$. In fact, all nonprincipal ultrafilters are extensions of the Frechet filter.

\begin{lem}\label{lem:npuf}
The intersection $\bigcap\mathcal{D}$ of all nonprincipal ultrafilters $\mathcal{D}$ on an infinite set $I$ is the Frechet filter $\mathcal{D}_F$ on $I$. 
\end{lem}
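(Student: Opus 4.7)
The plan is to prove mutual inclusion. For the inclusion $\mathcal{D}_F \subseteq \bigcap \mathcal{D}$, I would fix a cofinite set $A$ and a nonprincipal ultrafilter $\mathcal{U}$, and argue by contradiction that $A \in \mathcal{U}$. If $A \notin \mathcal{U}$ then $I \setminus A \in \mathcal{U}$, but $I \setminus A = \{j_1, \ldots, j_n\}$ is finite. The key sublemma is that ultrafilters are prime: if $X \cup Y \in \mathcal{U}$ then $X \in \mathcal{U}$ or $Y \in \mathcal{U}$, because otherwise $I \setminus X$ and $I \setminus Y$ would both lie in $\mathcal{U}$, making $\emptyset = (I\setminus X)\cap(I\setminus Y)\cap(X\cup Y)$ an element of $\mathcal{U}$. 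Applying this inductively to $\{j_1\} \cup \cdots \cup \{j_n\}$ forces some singleton $\{j_k\}$ into $\mathcal{U}$, and I would then observe that this makes $\mathcal{U} = \mathcal{D}_{j_k}$ principal, contradicting our choice.

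For the reverse inclusion $\bigcap \mathcal{D} \subseteq \mathcal{D}_F$, I would take $A \notin \mathcal{D}_F$ and manufacture a nonprincipal ultrafilter that omits $A$. Since $A$ is not cofinite, $A \notin \mathcal{D}_F$, so Lemma~\ref{lem:filter_extension} applied to the Frechet filter with $X = A$ gives a filter $\mathcal{D}_F'$ extending $\mathcal{D}_F$ and containing $I \setminus A$. Then Theorem~\ref{thm:ultrafilter} extends $\mathcal{D}_F'$ to an ultrafilter $\mathcal{U}$. This $\mathcal{U}$ still contains $I \setminus A$ (hence $A \notin \mathcal{U}$), and it is nonprincipal because it contains every cofinite set, in particular $I \setminus \{j\}$ for each $j \in I$, forbidding $\{j\} \in \mathcal{U}$.

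The main obstacle is really just the first direction, and specifically the step that identifies $\mathcal{U}$ with a principal ultrafilter once a singleton lies in it; the rest is a straightforward application of results already proved in the text. I would present the prime property of ultrafilters as a short preliminary observation before entering the main dichotomy, since it is the only piece of machinery not already packaged into a named lemma.
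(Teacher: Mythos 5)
Your proposal is correct. The reverse inclusion is essentially identical to the paper's argument (extend $\mathcal{D}_F$ past $A$ via Lemma~\ref{lem:filter_extension}, extend to an ultrafilter via Theorem~\ref{thm:ultrafilter}, observe nonprincipality). The forward inclusion differs mildly in packaging: you argue by contradiction, invoking primeness of ultrafilters (if a finite union is in, some part is in) to force a singleton into $\mathcal{U}$, and then conclude $\mathcal{U}=\mathcal{D}_{j_k}$. The paper instead argues directly: for each $j$, nonprincipality produces some $X_j\in\mathcal{D}$ with $j\notin X_j$, whence $I\setminus\{j\}\supseteq X_j$ lies in $\mathcal{D}$, and every cofinite set is a finite intersection of such $I\setminus\{j\}$'s. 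Both routes hinge on the same observation that a nonprincipal ultrafilter contains no singleton; the paper's version avoids the auxiliary primeness sublemma and the top-level contradiction, making it slightly shorter, while yours makes the "ultrafilters are prime" fact explicit, which is a useful lemma to have articulated even if not strictly required here. Your closing step (a singleton in an ultrafilter forces the ultrafilter to be that principal filter) is correct and worth stating as carefully as you did, since the paper compresses the contrapositive of this into a parenthetical.
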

\begin{proof}
Let $\mathcal{D}$ be a nonprincipal ultrafilter. For all $j\in I$, there exists $X_j\in \mathcal{D}$ such that $j\notin X_j$ (otherwise $\mathcal{D}$ would be principal generated by $j$). Then $Y_j = I\setminus\{j\}\in\mathcal{D}$, since $X_j\subseteq Y_j$. For all $A\in\mathcal{D}_F$, we have $A = I\setminus\{a_i\}_{i=1}^n$ for some finite set $\{a_i\}_{i=1}^n\subset I$. Then $A = \bigcap_{i=1}^n Y_{a_i} \in \mathcal{D}$. Thus $\mathcal{D}_F\subseteq \mathcal{D}$ for all nonprincipal ultrafilters $\mathcal{D}$, and $\mathcal{D}_F\subseteq\bigcap\mathcal{D}$.

Conversely, suppose $A\in\bigcap\mathcal{D}$, $A\notin \mathcal{D}_F$. Then by Lemma \ref{lem:filter_extension} we can extend $\mathcal{D}_F$ to a filter $\mathcal{D}_A$ containing $I\setminus A$. By Theorem \ref{thm:ultrafilter} we can extend $\mathcal{D}_A$ to an ultrafilter $\mathcal{U}$ containing $I\setminus A$. This filter is nonprincipal, since it extends $\mathcal{D}_F$, and $A\notin\mathcal{U}$. But then $A\notin\bigcap\mathcal{D}$, contradicting our assumption. Thus $\bigcap\mathcal{D} \subseteq \mathcal{D}_F$, and we have shown that $\bigcap\mathcal{D} = \mathcal{D}_F$.
\end{proof}

\subsubsection*{The Ultraproduct Construction}
Now that we have ultrafilters at our disposal, we are ready to introduce the ultraproduct construction.

Let $\{\mathcal{M}_i\}_{i\in I}$ be a collection of $\mathcal{L}$-structures indexed by an infinite set $I$, and let $\mathcal{D}$ be an ultrafilter on $I$. We will view the Cartesian product $\prod M_i$ of the domains of the $\mathcal{M}_i$ as the set of choice functions $\{f:I\rightarrow\bigcup M_i\,|\,\forall i\in I, f(i)\in M_i\}$. We define a relation $\sim_\mathcal{D}$ on $\prod M_i$ by $f\sim_\mathcal{D} g$ if and only if $\{i\in I\,|\,f(i)=g(i)\}\in \mathcal{D}$. 

\begin{prop}
The relation $\sim_\mathcal{D}$ is an equivalence relation.
\end{prop}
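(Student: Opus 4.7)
The plan is a routine verification of the three equivalence relation axioms, each of which will correspond directly to one of the defining properties of a filter listed in Definition \ref{def:filter}.

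First, for reflexivity, given any $f \in \prod M_i$, the set $\{i \in I \mid f(i) = f(i)\}$ is simply all of $I$, which lies in $\mathcal{D}$ since every filter contains $I$. So $f \sim_\mathcal{D} f$.

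Next, for symmetry, I would note that the set $\{i \in I \mid f(i) = g(i)\}$ and the set $\{i \in I \mid g(i) = f(i)\}$ are literally the same subset of $I$, so $f \sim_\mathcal{D} g$ immediately implies $g \sim_\mathcal{D} f$ with no further appeal to the filter axioms.

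The one step that uses two filter properties is transitivity. Suppose $f \sim_\mathcal{D} g$ and $g \sim_\mathcal{D} h$, so that $A = \{i \in I \mid f(i) = g(i)\} \in \mathcal{D}$ and $B = \{i \in I \mid g(i) = h(i)\} \in \mathcal{D}$. By closure under finite intersection, $A \cap B \in \mathcal{D}$. Any $i \in A \cap B$ satisfies $f(i) = g(i) = h(i)$, hence $A \cap B \subseteq \{i \in I \mid f(i) = h(i)\}$; closure under supersets then places $\{i \in I \mid f(i) = h(i)\}$ in $\mathcal{D}$, giving $f \sim_\mathcal{D} h$. There is no real obstacle here: the proof simply matches each equivalence relation axiom with the corresponding filter axiom, and notably it uses only that $\mathcal{D}$ is a filter, not that it is an ultrafilter.
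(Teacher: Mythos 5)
Your proof is correct and follows essentially the same route as the paper's: reflexivity via $I \in \mathcal{D}$, symmetry via literal equality of the two agreement sets, and transitivity via closure under intersection followed by closure under supersets. Your observation that only the filter axioms (not the ultrafilter property) are needed is a nice, accurate remark, though the paper does not state it explicitly.
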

\begin{proof}

Let $f,g,h\in\prod M_i$.

\begin{itemize}
\item The set $\{i\in I\,|\, f(i)=f(i)\} = I\in\mathcal{D}$, so $f\sim_\mathcal{D} f$, and $\sim_\mathcal{D}$ is reflexive.
\item If $f\sim_\mathcal{D} g$, then $\{i\in I\,|\, g(i)=f(i)\} = \{i\in I\,|\, f(i)=g(i)\}\in\mathcal{D}$, so $g\sim_\mathcal{D} f$, and $\sim_\mathcal{D}$ is symmetric.
\item Suppose $f\sim_\mathcal{D} g$ and $g\sim_\mathcal{D} h$. Let $A = \{i\in I\,|\, f(i)=g(i)\}$, $B = \{i\in I\,|\, g(i)=h(i)\}$, and $C = \{i\in I\,|\, f(i)=h(i)\}$. $A\in\mathcal{D}$ and $B\in\mathcal{D}$, so $A\cap B\in\mathcal{D}$. $A\cap B\subseteq C$, so $C\in\mathcal{D}$. Thus $f\sim_\mathcal{D} h$, and $\sim_\mathcal{D}$ is transitive.
\end{itemize}

Hence $\sim_\mathcal{D}$ is an equivalence relation.
\end{proof}

\begin{defin}\label{def:ultraproduct}
With $\{\mathcal{M}_i\}_{i\in I}$, $\mathcal{D}$, and $\sim_\mathcal{D}$ as above, the \emph{ultraproduct} $\prod\mathcal{M}_i/\mathcal{D}$, is an $\mathcal{L}$-structure, defined as follows. Let $\mathcal{M} = \prod\mathcal{M}_i/\mathcal{D}$.
\begin{itemize}
\item The domain of $\mathcal{M}$, denoted $\prod M_i/\mathcal{D}$, is the set of equivalence classes of $\sim_\mathcal{D}$ in $\prod M_i$. We will denote the equivalence class of $f\in\prod M_i$ by $[f]$, or by $[f(i)|i\in I]$.
\item For each function symbol $f\in\mathcal{L}$, we define the interpretation $f^\mathcal{M}$ by 
\[
f^\mathcal{M}([g_1],\hdots,[g_{n_f}]) = [f^{\mathcal{M}_i}(g_1(i),\hdots,g_{n_f}(i))|i\in I].
\]
\item For each relation symbol $R\in\mathcal{L}$, we define the interpretation $R^\mathcal{M}$ by 
\[
([g_1],\hdots,[g_{n_R}])\in R^\mathcal{M}\,\text{if and only if}\, \{i\in I\,|\,(g_1(i),\hdots,g_{n_R}(i))\in R^{\mathcal{M}_i}\}\in\mathcal{D}.
\]
\item For each constant symbol $c\in\mathcal{L}$, we define the interpretation $c^\mathcal{M}$ by
\[
c^\mathcal{M} = [c^{\mathcal{M}_i}|i\in I].
\] 
\end{itemize}
\end{defin}

Using the ``almost all'' intuition for the ultrafilter $\mathcal{D}$, we can describe the elements of the domain of the ultraproduct as the classes of elements of the Cartesian product which are the same almost everywhere. A relation holds for elements of the ultraproduct if and only if the interpretation of the relation symbol holds for representatives of the element classes in almost all of the $\mathcal{M}_i$. 

Functions are applied in the ultraproduct by applying the interpretation of the function symbol to representatives of the element classes in each $\mathcal{M}_i$.  A constant in the ultraproduct is simply the equivalence class of the interpretation of the constant in each $\mathcal{M}_i$.

But there is something to check before we can accept this definition. In the definitions of the interpretations $f^\mathcal{M}$ and $R^\mathcal{M}$, we chose a representative $g_i$ for each equivalence class $[g_i]$. We must show that the interpretations are independent of our choices of representatives.

\begin{prop}
Let $\mathcal{M} = \prod\mathcal{M}_i/\mathcal{D}$, defined as above. For all function symbols $f$ and relation symbols $R$ in $\mathcal{L}$, the interpretations $f^\mathcal{M}$ and $R^\mathcal{M}$ are well-defined.
\end{prop}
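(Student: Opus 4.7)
The plan is to exploit the two defining closure properties of a filter: closure under finite intersection and closure under superset. Suppose $[g_k] = [g_k']$ for $k = 1,\ldots, N$ (where $N = n_f$ for the function case and $N = n_R$ for the relation case). By definition of $\sim_\mathcal{D}$, the set $A_k = \{i \in I \,|\, g_k(i) = g_k'(i)\}$ lies in $\mathcal{D}$ for each $k$. Since $\mathcal{D}$ is closed under finite intersections, $A = \bigcap_{k=1}^N A_k \in \mathcal{D}$, and on every $i \in A$ we have $g_k(i) = g_k'(i)$ simultaneously for all $k$.

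For the function case, observe that at every $i \in A$, substituting equal arguments into the same function yields equal outputs, so $f^{\mathcal{M}_i}(g_1(i), \ldots, g_{n_f}(i)) = f^{\mathcal{M}_i}(g_1'(i), \ldots, g_{n_f}'(i))$. Hence
\[
\{i \in I \,|\, f^{\mathcal{M}_i}(g_1(i),\ldots,g_{n_f}(i)) = f^{\mathcal{M}_i}(g_1'(i),\ldots,g_{n_f}'(i))\} \supseteq A,
\]
so this set lies in $\mathcal{D}$ by upward closure, which is exactly the statement that the two tuples $[f^{\mathcal{M}_i}(g_1(i),\ldots,g_{n_f}(i)) \,|\, i \in I]$ and $[f^{\mathcal{M}_i}(g_1'(i),\ldots,g_{n_f}'(i)) \,|\, i \in I]$ are $\sim_\mathcal{D}$-equivalent. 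Thus $f^\mathcal{M}$ does not depend on the chosen representatives.

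For the relation case, let $B = \{i \in I \,|\, (g_1(i),\ldots,g_{n_R}(i)) \in R^{\mathcal{M}_i}\}$ and $B' = \{i \in I \,|\, (g_1'(i),\ldots,g_{n_R}'(i)) \in R^{\mathcal{M}_i}\}$. On $A$ the two tuples coincide coordinatewise, hence $A \cap B = A \cap B'$. Assume $B \in \mathcal{D}$; then $A \cap B \in \mathcal{D}$ by finite intersection, and since $A \cap B = A \cap B' \subseteq B'$, upward closure gives $B' \in \mathcal{D}$. The reverse direction is symmetric, so $B \in \mathcal{D}$ if and only if $B' \in \mathcal{D}$, which is exactly the condition in the definition of $R^\mathcal{M}$. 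This shows $R^\mathcal{M}$ is also independent of representatives.

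There is no real obstacle here; the argument is a direct application of the filter axioms, and the key insight is simply that passing from finitely many large sets to their common refinement $A$ preserves membership in $\mathcal{D}$. (The ultrafilter property is not needed for this proposition; it is only the filter axioms that matter.)
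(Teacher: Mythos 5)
Your proof is correct and follows the same approach as the paper: intersect the finitely many agreement sets $A_k$ (using closure under finite intersections), observe that outputs or membership agree on that intersection, and conclude by upward closure. The one small stylistic difference is that in the relation case you observe $A \cap B = A \cap B'$ exactly rather than just $G \cap \bigcap A_j \subseteq H$, and your remark that only the filter axioms (not the ultrafilter property) are used here is accurate and worth noting.
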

\begin{proof}
Let $f$ be a function symbol. Suppose that for $1\leq j \leq n_f$ we have $g_j,h_j\in\prod M_i$ with $g_j \sim_\mathcal{D} h_j$. If we define $g_f(i) = f^{\mathcal{M}_i}(g_1(i),\hdots,g_{n_f}(i))$ and $h_f(i) = f^{\mathcal{M}_i}(h_1(i),\hdots,h_{n_f}(i))$, we would like to show that $g_f\sim_\mathcal{D} h_f$. For all $j$, let $A_j = \{i\in I\,|\, g_j(i)=h_j(i)\}$. $A_j\in\mathcal{D}$ for all $j$, so $\bigcap_{j=1}^n A_j\in \mathcal{D}$. Now $g_f$ and $h_f$ certainly agree whenever all of the $g_j$ and $h_j$ agree, so $\bigcap_{j=1}^n A_j\subseteq A_f = \{i\in I\,|\, g_f(i)=h_f(i)\}$, hence $A_f\in\mathcal{D}$, and $g_f\sim_\mathcal{D} h_f$.

Let $R$ be a relation symbol. Suppose that for $1\leq j \leq n_R$ we have $g_j,h_j\in\prod M_i$ with $g_j \sim_\mathcal{D} h_j$. If we define $G = \{i\in I\,|\,(g_1(i),\hdots,g_{n_R}(i))\in R^{\mathcal{M}_i}\}$ and $H = \{i\in I\,|\,(h_1(i),\hdots,h_{n_R}(i))\in R^{\mathcal{M}_i}\}$, we would like to show that $G\in \mathcal{D}$ if and only if $H\in\mathcal{D}$. Again, we define $A_j = \{i\in I\,|\, g_j(i)=h_j(i)\}\in\mathcal{D}$ for all $j$. Suppose $G\in\mathcal{D}$. Then $G\cap \bigcap_{j=1}^n A_j\in\mathcal{D}$. Now $(h_1(i),\hdots,h_{n_R}(i))$ is certainly in $R^{\mathcal{M}_i}$ whenever all of the $g_j$ and $h_j$ agree and $(g_1(i),\hdots,g_{n_R}(i))$ is in $R^{\mathcal{M}_i}$, so $G\cap \bigcap_{j=1}^n A_j \subseteq H$, and thus $H\in\mathcal{D}$. The converse follows by the same argument.
\end{proof}

The Fundamental Theorem of Ultraproducts states that the ultraproduct $\prod\mathcal{M}_i/\mathcal{D}$ of $\mathcal{L}$-structures satisfies the $\mathcal{L}$-formula $\phi$ if and only if almost all of the $\mathcal{M}_i$ satisfy $\phi$.

\begin{thm}[Fundamental Theorem of Ultraproducts, {\cite[Theorem 4.1.9]{Chang}}]\label{thm:FToUP}
Let $\mathcal{M}$ be the ultraproduct $\prod\mathcal{M}_i/\mathcal{D}$ of $\mathcal{L}$-structures with ultrafilter $\mathcal{D}$ on index set $I$. Let $\phi(\overline{v})$ be an $\mathcal{L}$-formula with free variables from $\overline{v} = (v_1,\hdots,v_n)$. Then for $\overline{[g]} = ([g_1],\hdots,[g_n])\in\left(\prod M_i/\mathcal{D}\right)^n$, $\prod\mathcal{M}_i/\mathcal{D}\models \phi(\overline{[g]})$ if and only if $\{i\in I\,|\,\mathcal{M}_i\models\phi(\overline{g(i)})\}\in\mathcal{D}$. 
\end{thm}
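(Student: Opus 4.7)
The plan is to prove the theorem by induction on the complexity of $\phi$, preceded by a parallel induction on terms: for any $\mathcal{L}$-term $t(v_1,\ldots,v_n)$ and any $[g_1],\ldots,[g_n] \in \prod M_i/\mathcal{D}$, one has $t^{\mathcal{M}}([g_1],\ldots,[g_n]) = [t^{\mathcal{M}_i}(g_1(i),\ldots,g_n(i)) \,|\, i \in I]$. This is immediate for variables and constants from the definition of $c^{\mathcal{M}}$, and the step for a composite term $f(t_1,\ldots,t_{n_f})$ follows by unwinding the definition of $f^{\mathcal{M}}$ in the ultraproduct and applying the inductive hypothesis to each $t_k$.

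For the formula induction, the two atomic cases $t_1 = t_2$ and $R(t_1,\ldots,t_{n_R})$ are handled by combining the term claim with the definitions of $\sim_\mathcal{D}$ and of $R^{\mathcal{M}}$ respectively; in each case the condition in the ultraproduct unfolds literally into membership in $\mathcal{D}$ of the corresponding pointwise set. The conjunction step uses that a filter is closed under binary intersection and supersets: if $A = \{i : \mathcal{M}_i \models \psi(\overline{g(i)})\}$ and $B = \{i : \mathcal{M}_i \models \theta(\overline{g(i)})\}$ both lie in $\mathcal{D}$, then so does $A \cap B$, which is exactly the set where $\psi \land \theta$ is satisfied. The negation step is where the \emph{ultra}filter property enters essentially: Definition~\ref{def:ultrafilter} guarantees that for every subset of $I$, either it or its complement belongs to $\mathcal{D}$, and never both (else $\emptyset \in \mathcal{D}$), so the set where $\psi$ holds lies in $\mathcal{D}$ if and only if its complement, the set where $\lnot \psi$ holds, does not.

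The main obstacle is the existential clause $\phi \equiv \exists w\, \psi(\overline{v}, w)$. The forward direction is routine: a witness $[h]$ for $\exists w\, \psi(\overline{[g]}, w)$ yields by induction that $\{i : \mathcal{M}_i \models \psi(\overline{g(i)}, h(i))\} \in \mathcal{D}$, and this set is contained in $\{i : \mathcal{M}_i \models \exists w\, \psi(\overline{g(i)}, w)\}$, which therefore lies in $\mathcal{D}$ by the superset closure property. The converse is the subtle part, because it requires manufacturing a single element of the ultraproduct out of pointwise data, which is where the axiom of choice enters. Given that $B = \{i : \mathcal{M}_i \models \exists w\, \psi(\overline{g(i)}, w)\} \in \mathcal{D}$, choose for each $i \in B$ a witness $h(i) \in M_i$ for $\psi(\overline{g(i)}, w)$, and for $i \notin B$ pick $h(i) \in M_i$ arbitrarily. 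Then $B \subseteq \{i : \mathcal{M}_i \models \psi(\overline{g(i)}, h(i))\}$, so the latter set is in $\mathcal{D}$, and the inductive hypothesis gives $\mathcal{M} \models \psi(\overline{[g]}, [h])$, hence $\mathcal{M} \models \phi(\overline{[g]})$. Since $\lor$ and $\forall$ can be rewritten using only $\lnot$, $\land$, and $\exists$ (as was noted in the proof of Theorem~\ref{thm:isomorphism}), no further cases need to be treated.
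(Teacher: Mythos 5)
Your proof is correct and takes essentially the same route as the paper: a parallel induction on terms establishing $t^{\mathcal{M}}(\overline{[g]}) = [t^{\mathcal{M}_i}(\overline{g(i)}) \,|\, i\in I]$, followed by induction on formulas with the same treatment of the atomic, $\land$, $\lnot$, and $\exists$ cases, including the choice-function construction of the witness in the converse direction of the existential step and the reduction of $\lor$ and $\forall$ to the other connectives. Your added remarks on exactly where the ultrafilter property and the axiom of choice enter are accurate and match the structure of the paper's argument.
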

\begin{proof}
The proof is by induction on terms and formulas.

First, we claim that if $t(\overline{v})$ is an $\mathcal{L}$-term, then $t^\mathcal{M}(\overline{[g]}) = [t^{\mathcal{M}_i}(\overline{g(i)})\,|\,i\in I]$.

If $t$ is a constant symbol $c$, then the claim is true by definition: $c^\mathcal{M} = [c^{\mathcal{M}_i}\,|\,i\in I]$.

If $t$ is a variable $v_j$, then $t^\mathcal{M}(\overline{[g]}) = [g_j] = [g_j(i)\,|\,i\in I] = [t^{\mathcal{M}_i}(\overline{g(i)})\,|\,i\in I]$.

If $t$ is $f(t_1(\overline{v}),\hdots,t_{n_f}(\overline{v}))$, where $f$ is a function symbol and $t_1,\hdots,t_{n_f}$ are $\mathcal{L}$-terms for which the claim is true, then 
\begin{eqnarray*}
t^\mathcal{M}(\overline{[g]}) &=& f^\mathcal{M}(t_1^\mathcal{M}(\overline{[g]}),\hdots,t_{n_f}^\mathcal{M}(\overline{[g]}))\\
&=& f^\mathcal{M}([t_1^{\mathcal{M}_i}(\overline{g(i)})\,|\,i\in I],\hdots,[t_{n_f}^{\mathcal{M}_i}(\overline{g(i)})\,|\,i\in I]) \,\,\mbox{by induction}\\
&=& [f^{\mathcal{M}_i}(t_1^{\mathcal{M}_i}(\overline{g(i)}),\hdots,t_{n_f}^{\mathcal{M}_i}(\overline{g(i)}))\,|\,i\in I] \,\,\mbox{interpretation of $f$}\\
&=& [t^{\mathcal{M}_i}(\overline{g(i)})\,|\,i\in I].
\end{eqnarray*}

Having established that terms behave as expected under interpretation, we may move on to proving the theorem. We begin with atomic formulas. 

If $\phi(\overline{v})$ is $t_1(\overline{v}) = t_2(\overline{v})$, where $t_1$ and $t_2$ are $\mathcal{L}$-terms, then 
\begin{eqnarray*}
\mathcal{M}\models\phi(\overline{[g]}) &\mbox{iff}& t_1^\mathcal{M}(\overline{[g]}) = t_2^\mathcal{M}(\overline{[g]})\\
&\mbox{iff}& [t_1^{\mathcal{M}_i}(\overline{g(i)})\,|\,i\in I] = [t_2^{\mathcal{M}_i}(\overline{g(i)})\,|\,i\in I] \,\,\mbox{applying the claim}\\
&\mbox{iff}& \{i\in I\,|\,t_1^{\mathcal{M}_i}(\overline{g(i)}) = t_2^{\mathcal{M}_i}(\overline{g(i)})\}\in\mathcal{D}\\
&\mbox{iff}& \{i\in I\,|\, \mathcal{M}_i\models \phi(\overline{g(i)})\}\in\mathcal{D}.
\end{eqnarray*}

If $\phi(\overline{v})$ is $R(t_1(\overline{v}),\hdots,t_{n_R}(\overline{v}))$, where $R$ is a relation symbol and $t_1,\hdots,t_{n_R}$ are $\mathcal{L}$-terms, then
\begin{eqnarray*}
\mathcal{M}\models\phi(\overline{[g]}) &\mbox{iff}& (t_1^\mathcal{M}(\overline{[g]}),\hdots,t_{n_R}^\mathcal{M}(\overline{[g]}))\in R^\mathcal{M}\\
&\mbox{iff}& ([t_1^{\mathcal{M}_i}(\overline{g(i)})\,|\,i\in I],\hdots,[t_{n_R}^{\mathcal{M}_i}(\overline{g(i)})\,|\,i\in I])\in R^\mathcal{M} \,\,\mbox{applying the claim}\\
&\mbox{iff}& \{i\in I\,|\, (t_1^{\mathcal{M}_i}(\overline{g(i)}),\hdots,t_{n_R}^{\mathcal{M}_i}(\overline{g(i)}))\in R^{\mathcal{M}_i}\}\in\mathcal{D}\,\,\mbox{interpretation of $R$}\\
&\mbox{iff}& \{i\in I\,|\, \mathcal{M}_i\models \phi(\overline{g(i)})\}\in\mathcal{D}.
\end{eqnarray*}

If $\phi(\overline{v})$ is $\lnot\psi(\overline{v})$, where $\psi$ is an $\mathcal{L}$-formula for which our assertion is true, then
\begin{eqnarray*}
\mathcal{M}\models\phi(\overline{[g]}) &\mbox{iff}& \mathcal{M}\not\models\psi(\overline{[g]})\\
&\mbox{iff}& \{i\in I\,|\,\mathcal{M}_i\models\psi(\overline{g(i)})\}\notin\mathcal{D}\,\,\mbox{by induction}\\
&\mbox{iff}& I\setminus\{i\in I\,|\,\mathcal{M}_i\models\psi(\overline{g(i)})\}\in\mathcal{D}\,\,\mbox{since $\mathcal{D}$ is an ultrafilter}\\
&\mbox{iff}& \{i\in I\,|\,\mathcal{M}_i\not\models\psi(\overline{g(i)})\}\in\mathcal{D}\\
&\mbox{iff}& \{i\in I\,|\, \mathcal{M}_i\models \phi(\overline{g(i)})\}\in\mathcal{D}.
\end{eqnarray*}

If $\phi(\overline{v})$ is $\psi(\overline{v})\land\theta(\overline{v})$, where $\psi$ and $\theta$ are $\mathcal{L}$-formulas for which our assertion is true, then
\begin{eqnarray*}
\mathcal{M}\models\phi(\overline{[g]}) &\mbox{iff}& \mathcal{M}\models\psi(\overline{[g]})\,\,\mbox{and}\,\,\mathcal{M}\models\theta(\overline{[g]})\\
&\mbox{iff}& \{i\in I\,|\,\mathcal{M}_i\models\psi(\overline{g(i)})\}\in\mathcal{D}\,\,\mbox{and}\,\, \{i\in I\,|\,\mathcal{M}_i\models\theta(\overline{g(i)})\}\in\mathcal{D},
\end{eqnarray*}
by induction. Let $A = \{i\in I\,|\,\mathcal{M}_i\models\psi(\overline{g(i)})\}$ and $B = \{i\in I\,|\,\mathcal{M}_i\models\theta(\overline{g(i)})\}$. If $A\in\mathcal{D}$ and $B\in\mathcal{D}$, then $A\cap B\in\mathcal{D}$. Conversely, $A\cap B\subseteq A$ and $A\cap B\subseteq B$, so if $A\cap B\in\mathcal{D}$, then $A\in\mathcal{D}$ and $B\in\mathcal{D}$. Now,
\begin{eqnarray*}
A\cap B\in\mathcal{D} &\mbox{iff}& \{i\in I\,|\,\mathcal{M}_i\models\psi(\overline{g(i)})\,\,\mbox{and}\,\,\mathcal{M}_i\models\theta(\overline{g(i)})\}\in\mathcal{D}\\
&\mbox{iff}& \{i\in I\,|\, \mathcal{M}_i\models \phi(\overline{g(i)})\}\in\mathcal{D}.
\end{eqnarray*}

If $\phi(\overline{v})$ is $\exists w\, \psi(\overline{v},w)$, where $w$ is a variable and $\psi$ is an $\mathcal{L}$-formula for which our assertion is true, then $\mathcal{M}\models\phi(\overline{[g]})$ if and only if there exists $[h]\in\prod\mathcal{M}_i/\mathcal{D}$ such that $\mathcal{M}\models\psi(\overline{[g]},[h])$, if and only if (by induction) there exists $[h]$ such that $\{i\in I\,|\,\mathcal{M}_i\models\psi(\overline{g(i)},h(i))\}\in\mathcal{D}$. Let $A_{[h]} = \{i\in I\,|\,\mathcal{M}_i\models\psi(\overline{g(i)},h(i))\}$ and let $B = \{i\in I\,|\,\mathcal{M}_i\models \phi(\overline{g(i)})\}$. We would like to show that there exists $[h]$ such that $A_{[h]}\in\mathcal{D}$ if and only if $B\in\mathcal{D}$.

Suppose there exists such an $[h]$. Then $A_{[h]}\subseteq B$, since for $i\in A$, $\mathcal{M}_i\models\psi(\overline{g(i)},h(i))$, so $\mathcal{M}_i\models\phi(\overline{g(i)})$, since $h(i)$ satisfies the existential quantifier in $\mathcal{M}_i$. So $B\in\mathcal{D}$. Conversely, suppose $B\in\mathcal{D}$. Let $h\in\prod\mathcal{M}$ be such that for all $i\in B$, $h(i)\in\mathcal{M}_i$ is an element which satisfies the existential quantifier, and for $i\notin B$, $h(i)$ is an arbitrary element. Then $B\subseteq A_{[h]}$, so $A_{[h]}\in\mathcal{D}$.

This completes the proof by induction on formulas. We do not need to consider formulas constructed using $\lor$ or $\forall$, since these can be re-written to use only $\lnot$, $\land$, and $\exists$.
\end{proof}

\subsubsection*{Applications of Ultraproducts}
The Fundamental Theorem of Ultraproducts has a number of elegant consequences.

\begin{cor}\label{cor:el_class}
If $\mathcal{K}$ is an elementary class of $\mathcal{L}$-structures and $\{\mathcal{M}_i\}_{i\in I}$ is a collection of $\mathcal{L}$-structures in $\mathcal{K}$, indexed by an infinite set $I$, then for any ultrafilter $\mathcal{D}$ on $I$, $\prod\mathcal{M}_i/\mathcal{D}$ is in $\mathcal{K}$. 
\end{cor}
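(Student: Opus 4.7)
The plan is to unwind the definition of an elementary class and then apply the Fundamental Theorem of Ultraproducts (Theorem \ref{thm:FToUP}) sentence by sentence.

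First, since $\mathcal{K}$ is an elementary class, by Definition \ref{def:el_class} there exists an $\mathcal{L}$-theory $T$ whose models are exactly the structures in $\mathcal{K}$. Because each $\mathcal{M}_i$ lies in $\mathcal{K}$, we have $\mathcal{M}_i \models T$ for every $i \in I$; that is, for every sentence $\phi \in T$ and every index $i \in I$, $\mathcal{M}_i \models \phi$.

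Next, fix an arbitrary $\phi \in T$ and consider the set $A_\phi = \{i \in I \mid \mathcal{M}_i \models \phi\}$. By the previous paragraph, $A_\phi = I$, and any ultrafilter $\mathcal{D}$ on $I$ contains $I$ by Definition \ref{def:filter}, so $A_\phi \in \mathcal{D}$. Since $\phi$ is a sentence (no free variables), applying Theorem \ref{thm:FToUP} with the empty tuple of variables yields $\prod \mathcal{M}_i / \mathcal{D} \models \phi$.

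Because $\phi \in T$ was arbitrary, we conclude that $\prod \mathcal{M}_i / \mathcal{D} \models T$, and therefore $\prod \mathcal{M}_i / \mathcal{D} \in \mathcal{K}$. There is no real obstacle here; the entire content is packaged inside Theorem \ref{thm:FToUP}, and the corollary is essentially the observation that the ``almost all'' condition from the fundamental theorem is satisfied trivially (with the full index set $I$) when the property in question is an axiom satisfied by every factor.
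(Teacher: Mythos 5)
Your proof is correct and follows essentially the same approach as the paper: identify the axiomatization $T$, observe that $\{i \in I \mid \mathcal{M}_i \models \phi\} = I \in \mathcal{D}$ for each $\phi \in T$, and invoke the Fundamental Theorem of Ultraproducts. Your version is merely more explicit about the steps.
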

\begin{proof}
Let $T$ be a set of axioms for $\mathcal{K}$. For any $\phi\in T$, $\{i\in I\,|\, \mathcal{M}_i\models\phi\} = I$, since $\mathcal{M}_i\in\mathcal{K}$ for all $i\in I$. Now $I\in\mathcal{D}$, so $\prod\mathcal{M}_i/\mathcal{D}\models\phi$ by Theorem \ref{thm:FToUP}. Thus $\prod\mathcal{M}_i/\mathcal{D}\models T$, and $\prod\mathcal{M}_i/\mathcal{D}$ is in the class $\mathcal{K}$.
\end{proof}

\begin{exa}\label{exa:UP_char0}
Let $\mathcal{D}$ be a nonprincipal ultrafilter on the set of primes, $P$. By Corollary \ref{cor:el_class}, $\prod\mathbb{F}_p((t))/\mathcal{D}$ and $\prod\mathbb{Q}_p/\mathcal{D}$ are valued fields with cross section, since we saw in Example \ref{exa:VFel} that the class of valued fields with cross section is elementary. Also, $\prod\mathbb{Q}_p/\mathcal{D}$ has characteristic zero, since $\mathbb{Q}_p\models Char_0$ for all primes $p$.

Consider the characteristic of $\prod\mathbb{F}_p((t))/\mathcal{D}$. For any prime $p$, $\mathbb{F}_q((t))\models Char_p$ if and only if $q = p$, so $\{q\in P\,|\,\mathbb{F}_q((t))\models Char_p\} = \{p\}$. This is a finite set, so its complement is in the Frechet filter $\mathcal{D}_F$ on $P$. But $\mathcal{D}$ contains $\mathcal{D}_F$ by Lemma~\ref{lem:npuf}, so $P\setminus \{p\}\in\mathcal{D}$, and thus $\{p\}\notin\mathcal{D}$. Hence $\prod\mathbb{F}_p((t))/\mathcal{D}\models\lnot Char_p$ for all $p\in P$. This means that $\prod\mathbb{F}_p((t))/\mathcal{D}$ has characteristic zero.  

\end{exa}

This result suggests that the ultraproducts $\prod\mathbb{F}_p((t))/\mathcal{D}$ and $\prod\mathbb{Q}_p/\mathcal{D}$ are very similar structures. The next lemma supports this intuition and sheds some light on the structure of the ultraproducts as valued fields.

\begin{lem}\label{lem:UP_resval}
For any nonprincipal ultrafilter $\mathcal{D}$, the residue class fields and value groups of $\prod\mathbb{Q}_p/\mathcal{D}$ and $\prod\mathbb{F}_p((t))/\mathcal{D}$ are isomorphic. Specifically, 
\begin{itemize}
\item $\mathfrak{v}((\prod\mathbb{Q}_p/\mathcal{D})^*)\cong \prod\mathbb{Z}/\mathcal{D}\cong\mathfrak{v}((\prod\mathbb{F}_p((t))/\mathcal{D})^*)$, isomorphic as groups,
\item $\overline{\prod\mathbb{Q}_p/\mathcal{D}}\cong \prod\mathbb{F}_p/\mathcal{D}\cong\overline{\prod\mathbb{F}_p((t))/\mathcal{D}}$, isomorphic as fields, and
\item $char(\prod\mathbb{F}_p/\mathcal{D}) = 0$, so the residue class fields have characteristic zero.
\end{itemize}
\end{lem}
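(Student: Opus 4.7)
The plan is to establish all three claims by constructing explicit maps and applying the Fundamental Theorem of Ultraproducts (Theorem~\ref{thm:FToUP}) to appropriate $\mathcal{L}_{VF}$-formulas. For the value groups, I would define $\Phi : \prod\mathbb{Z}/\mathcal{D} \to \mathfrak{v}((\prod\mathbb{Q}_p/\mathcal{D})^*)$ by $\Phi([n]) = [p^{n(p)} : p \in P]$, using the cross section $n \mapsto p^n$ on each $\mathbb{Q}_p$. Since $\mathbb{Q}_p \models V(p^{n(p)})$ for every $p$, Theorem~\ref{thm:FToUP} applied to the formula $V(x)$ places the image inside the value group. Injectivity uses $p^{n(p)} = p^{g(p)} \iff n(p) = g(p)$ (as $p > 1$); surjectivity follows because any $[f]$ in $\mathfrak{v}((\prod\mathbb{Q}_p/\mathcal{D})^*)$ satisfies $\{p : f(p) \in V(\mathbb{Q}_p)\} \in \mathcal{D}$, and on that set $f(p) = p^{m(p)}$ for a unique $m(p) \in \mathbb{Z}$. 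The map is a homomorphism because $\Phi([n+g]) = [p^{n(p)}p^{g(p)}] = \Phi([n])\Phi([g])$, translating addition in $\prod\mathbb{Z}/\mathcal{D}$ into multiplication on the value group. The identical argument with $t$ in place of $p$ handles $\mathbb{F}_p((t))$.

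For the residue fields, I would use the coset representatives fixed in Examples~\ref{exa:F((t))} and~\ref{exa:Q_p}: $\{0,1,\ldots,p-1\} \subset \mathbb{Z}_p$ and $\mathbb{F}_p \subset \mathbb{F}_p[[t]]$. Define $\Psi : \prod\mathbb{F}_p/\mathcal{D} \to \overline{\prod\mathbb{Q}_p/\mathcal{D}}$ by $\Psi([a]) = \overline{[a(p) : p \in P]}$, identifying each $a(p) \in \mathbb{F}_p$ with its representative in $\{0,\ldots,p-1\} \subset \mathcal{O}_{\mathbb{Q}_p}$. The relevant formulas are $\phi_\mathcal{O}(x):\ 1 \leq \mathfrak{v}(x)$, defining the valuation ring, and $\phi_{I_1}(x):\ (1 \leq \mathfrak{v}(x)) \land \lnot(1 = \mathfrak{v}(x))$, defining the maximal ideal. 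Theorem~\ref{thm:FToUP} applied to $\phi_\mathcal{O}$ shows $[a(p)]$ lies in the valuation ring of the ultraproduct; applied to $\phi_{I_1}$, it gives well-definedness and injectivity, since $[a] = [b]$ in $\prod\mathbb{F}_p/\mathcal{D}$ iff $\{p : p \mid a(p)-b(p)\} \in \mathcal{D}$ iff $[a(p)] - [b(p)]$ lies in the maximal ideal of $\prod\mathbb{Q}_p/\mathcal{D}$. Surjectivity follows because any residue class has a representative $[f]$ with $\{p : f(p) \in \mathcal{O}_{\mathbb{Q}_p}\} \in \mathcal{D}$, and on this set $f(p) \equiv a(p) \pmod{I_1(\mathbb{Q}_p)}$ for a unique $a(p) \in \{0,\ldots,p-1\}$. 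Field operations are preserved because in $\mathbb{Z}_p$, $(a(p)+b(p)) \bmod p$ and $a(p)+b(p)$ differ by a multiple of $p$, hence determine the same residue class, and similarly for multiplication. The same construction handles $\mathbb{F}_p((t))$.

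For the characteristic, I would reuse the argument of Example~\ref{exa:UP_char0}: for each prime $q$, $\{p \in P : \mathbb{F}_p \models Char_q\} = \{q\}$ is finite, so by Lemma~\ref{lem:npuf} its complement lies in $\mathcal{D}$, and Theorem~\ref{thm:FToUP} gives $\prod\mathbb{F}_p/\mathcal{D} \models \lnot Char_q$ for every prime $q$, hence $\prod\mathbb{F}_p/\mathcal{D} \models Char_0$. The main technical obstacle is the residue field case, where one must match the quotient $\overline{\prod\mathbb{Q}_p/\mathcal{D}}$ with the ultraproduct of quotients $\prod\mathbb{F}_p/\mathcal{D}$; this is resolved cleanly by applying Theorem~\ref{thm:FToUP} to $\phi_{I_1}$, which reduces congruence modulo the maximal ideal of the ultraproduct to pointwise congruence on a set in $\mathcal{D}$.
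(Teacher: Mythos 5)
Your proof is correct and follows essentially the same route as the paper's: identifying the value group with $\prod\mathbb{Z}/\mathcal{D}$ via the cross section $n\mapsto\pi^n$, identifying the residue field with $\prod\mathbb{F}_p/\mathcal{D}$ via the canonical coset representatives $\{0,\dots,p-1\}$ (resp.\ $\mathbb{F}_p$), and the $Char_q$ argument for characteristic zero. The only difference is stylistic: you invoke the Fundamental Theorem of Ultraproducts explicitly on formulas like $V(x)$ and $\phi_{I_1}$ where the paper unwinds the same facts by hand, and your maps run in the opposite direction from the paper's (which changes nothing).
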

\begin{proof}
Let $\mathcal{M} = \prod\mathbb{Q}_p/\mathcal{D}$ and take $[f]\in M$, the domain of $\mathcal{M}$, with $[f]\neq [0]$. Then $f\not\sim_\mathcal{D} 0$, so $\{p\in P\,|\,f(p)\neq 0\}\in\mathcal{D}$, and we can find $g\sim_\mathcal{D} f$ such that $g(p)\neq 0$ for all $p$. We will use $g$ as our representative element for the equivalence class $[f]$. By definition, $\mathfrak{v}^\mathcal{M}([f]) = [\mathfrak{v}^{\mathbb{Q}_p}(g(p))\,|\,p\in P] = [p^{n_p}\,|\,p\in P]$, for some integers $n_p$. The value group consists of all equivalence classes of this form. Let $\phi$ be the function which maps $[p^{n_p}\,|\,p\in P]$ to $[n_p\,|\,p\in P]\in\prod\mathbb{Z}/\mathcal{D}$. It is easy to see that $\phi$ is a bijection, and $\phi([p^{n_p}\,|\,p\in P] \cdot [p^{m_p}\,|\,p\in P]) = \phi([p^{n_p+m_p}\,|\,p\in P]) = [n_p+m_p\,|\,p\in P] = [n_p\,|\,p\in P] + [m_p\,|\,p\in P] = \phi([p^{n_p}\,|\,p\in P])) + \phi([p^{m_p}\,|\,p\in P])$, so $\phi$ is an isomorphism of groups.

The same argument shows that $\mathfrak{v}(\prod\mathbb{F}_p((t))/\mathcal{D})\cong \prod\mathbb{Z}/\mathcal{D}$, with the minor difference that elements of the value group are of the form $[t^{n_p}\,|\,p\in P]$, where the $n_p$ are integers, since we choose $t$ instead of $p$ as a prime element for $F_p((t))$.

Thus, $\mathfrak{v}((\prod\mathbb{Q}_p/\mathcal{D})^*)\cong \mathfrak{v}((\prod\mathbb{F}_p((t))/\mathcal{D})^*)$.

Now consider $\overline{\mathcal{M}} = \mathcal{O}/\{[f]\in\mathcal{O}\,|\,\mathcal{M}\models 1<\mathfrak{v}([f])\}$ (here we use $t_1<t_2$ as an abbreviation for $(t_1\leq t_2)\land\lnot(t_1=t_2)$). 
This is $\mathcal{O}/\{[f]\in\mathcal{M}\,|\,\{p\,|\,\mathbb{Q}_p\models 1<\mathfrak{v}(f(p))\}\in\mathcal{D}\}$. The ideal in the denominator is the maximal idea $I_1$.

Consider the set $R$ of distinct $[g]\in\mathcal{O}$ such that $g(p)\in\{0,\hdots,p-1\}$ for all $p\in P$. We would like to show that this set is a complete set of representatives for the residue class field. 

Take $[g],[h]\in R$, $[g]\neq [h]$. Suppose $[g]\equiv[h]$ (mod $I_1$). Then we have $[g]+[f] = [h]$ for some $[f]\in I_1$. Now if $[f]\in I_1$, then $\{p\,|\,\mathbb{Q}_p\models1<\mathfrak{v}(f(p))\}\in \mathcal{D}$, and since $[g]\neq [h]$, $\{p\,|\,g(p)\neq h(p)\}\in\mathcal{D}$. But for any $p$ for which both $g(p)\neq h(p)$ and $1<\mathfrak{v}(f(p))$, we have $g(p)+f(p)\neq h(p)$, since $g(p)$ and $h(p)$ are taken from $\{0,\hdots,p-1\}$, and their difference cannot be divisible by $p$.

So $\{p\,|\,\mathbb{Q}_p\models1<\mathfrak{v}(f(p))\}\cap \{p\,|\,g(p)\neq h(p)\} \subseteq \{p\,|\,g(p)+f(p)\neq h(p)\}\in \mathcal{D}$, and hence $g+f\not\sim_D h$, which is a contradiction. Thus distinct elements of $R$ are representatives for distinct equivalence classes mod $I_1$. 

But every equivalence class mod $I_1$ has a representative in $R$, since we can take any representative $f$ of the class and reduce each coordinate $f(p)$ mod $p$ to an element of $\{0,\hdots,p-1\}$. Letting $[g]$ be the resulting element of $R$, the difference $[f]-[g] = [f-g]$ is an element of $I_1$, since each coordinate of $f-g$is divisible by $p$, so $[g]$ is a representative for the class.

Thus $R$ is a complete set of representatives for the classes mod $I_1$. Interpreting each $[g]\in R$ as an element of $\prod\mathbb{F}_p/\mathcal{D}$ in the obvious way, it is easy to verify that the resulting map from $\overline{\mathcal{M}}$ to $\prod\mathbb{F}_p/\mathcal{D}$ is an isomorphism of fields.

The same argument holds for $\prod\mathbb{F}_p((t))/\mathcal{D}$, the only difference being that we note that the difference between distinct $g(p)$ and $h(p)$ from $\{0,\hdots,p-1\}$ cannot be divisible by $t$. 

Thus, $\overline{\prod\mathbb{Q}_p/\mathcal{D}}\cong\overline{\prod\mathbb{F}_p((t))/\mathcal{D}}$.

Finally, the residue class fields have characteristic zero by the same argument given in Example~\ref{exa:UP_char0}. That is, $\mathbb{F}_q\models Char_p$ only when $q=p$, so $\prod\mathbb{F}_p/\mathcal{D}\models\lnot Char_p$ for all $p$, and thus $char(\prod\mathbb{F}_p/\mathcal{D}) = 0$. 
\end{proof}

Next we will prove a theorem which is a significant step toward a proof of the Ax-Kochen Principle.

\begin{thm}\label{thm:template}
Let $\{\mathcal{M}_i\}_{i\in I}$ and $\{\mathcal{N}_i\}_{i\in I}$ be collections of $\mathcal{L}$-structures, indexed by the same infinite set $I$. Suppose that for all nonprincipal ultrafilters $\mathcal{D}$, $\prod\mathcal{M}_i/\mathcal{D}\equiv\prod\mathcal{N}_i/\mathcal{D}$. Then for any $\mathcal{L}$-sentence $\phi$, $\mathcal{M}_i\models\phi$ for all but finitely many $i$ if and only if $\mathcal{N}_i\models\phi$ for all but finitely many $i$.
\end{thm}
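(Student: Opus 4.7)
The plan is to prove the biconditional via the contrapositive; by symmetry (swapping the roles of $\mathcal{M}_i$ and $\mathcal{N}_i$) it suffices to show that if $\mathcal{M}_i\models\phi$ for all but finitely many $i$, then $\mathcal{N}_i\models\phi$ for all but finitely many $i$. Suppose for contradiction that $\mathcal{M}_i\models\phi$ for all $i$ outside some finite set, yet $\mathcal{N}_i\models\lnot\phi$ for infinitely many $i$. Let $A=\{i\in I:\mathcal{M}_i\models\phi\}$ and $B=\{i\in I:\mathcal{N}_i\models\lnot\phi\}$; by assumption $A$ is cofinite and $B$ is infinite.

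The central move is to build a nonprincipal ultrafilter $\mathcal{U}$ on $I$ that contains $B$. Since $B$ is infinite, $I\setminus B$ is not cofinite, hence $I\setminus B\notin\mathcal{D}_F$, the Fr\'echet filter on $I$. Applying Lemma~\ref{lem:filter_extension} with $X=I\setminus B$, we extend $\mathcal{D}_F$ to a filter that contains $B=I\setminus X$; Theorem~\ref{thm:ultrafilter} then further extends this to an ultrafilter $\mathcal{U}$ on $I$. Because $\mathcal{U}\supseteq\mathcal{D}_F$, the ultrafilter $\mathcal{U}$ is nonprincipal, so it is covered by the hypothesis of the theorem.

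Now I apply the Fundamental Theorem of Ultraproducts (Theorem~\ref{thm:FToUP}) to the sentence $\phi$ for both collections. Since $A$ is cofinite we have $A\in\mathcal{D}_F\subseteq\mathcal{U}$, so $\prod\mathcal{M}_i/\mathcal{U}\models\phi$; and since $B\in\mathcal{U}$, $\prod\mathcal{N}_i/\mathcal{U}\models\lnot\phi$. This directly contradicts the hypothesis that $\prod\mathcal{M}_i/\mathcal{U}\equiv\prod\mathcal{N}_i/\mathcal{U}$, completing the proof.

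The argument is short and rests on a single construction, so there is no serious obstacle beyond identifying the right ingredients: the machinery of Lemma~\ref{lem:filter_extension} and Theorem~\ref{thm:ultrafilter} for producing an ultrafilter that pins down the ``exceptional'' set $B$, combined with the translation between ``true in all but finitely many $\mathcal{M}_i$'' and ``true in the ultraproduct.'' The one subtle point is ensuring $\mathcal{U}$ is nonprincipal so that the hypothesis applies, which is guaranteed precisely by arranging $\mathcal{D}_F\subseteq\mathcal{U}$.
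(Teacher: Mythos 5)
Your proof is correct. It differs from the paper's in one structural respect: the paper argues directly and leans on the full characterization in Lemma~\ref{lem:npuf} (that $\bigcap\mathcal{D}=\mathcal{D}_F$, where the intersection ranges over all nonprincipal ultrafilters). Concretely, the paper first passes $A\in\mathcal{D}_F$ forward into every nonprincipal $\mathcal{D}$, pushes through the ultraproduct and the elementary equivalence, and then invokes the nontrivial containment $\bigcap\mathcal{D}\subseteq\mathcal{D}_F$ to conclude that $\{i:\mathcal{N}_i\models\phi\}$ is cofinite. You instead argue by contradiction and construct a single witnessing nonprincipal ultrafilter $\mathcal{U}$ containing the infinite exceptional set $B=\{i:\mathcal{N}_i\models\lnot\phi\}$, using Lemma~\ref{lem:filter_extension} and Theorem~\ref{thm:ultrafilter}, and then derive the contradiction from FToUP. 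In effect you have inlined the proof of the hard direction of Lemma~\ref{lem:npuf} (that is precisely how that containment is proved in the paper) into the main argument, while only using the easy direction ($\mathcal{D}_F\subseteq\mathcal{U}$) as a black box. Both routes rest on the same ingredients; the paper's version is a bit shorter once Lemma~\ref{lem:npuf} is available, whereas yours is more self-contained and makes the role of the single ``bad'' ultrafilter explicit. Your handling of the nonprincipality check (via $\mathcal{D}_F\subseteq\mathcal{U}$) is exactly right, and the symmetry reduction at the start is fine.
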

\begin{proof}
Suppose that $\mathcal{M}_i\models\phi$ for all but finitely many $i$. Then $A = \{i\in I\,|\,\mathcal{M}_i\models\phi\}\in\mathcal{D}_F$, the Frechet filter on $I$. By Lemma \ref{lem:npuf}, all nonprincipal ultrafilters extend the Frechet filter, so $A\in\mathcal{D}$ for all nonprincipal ultrafilters $\mathcal{D}$. By Theorem \ref{thm:FToUP}, $\prod\mathcal{M}_i/\mathcal{D}\models\phi$, and by elementarily equivalence, $\prod\mathcal{N}_i/\mathcal{D}\models\phi$. Again by Theorem \ref{thm:FToUP}, $B = \{i\in I\,|\,\mathcal{N}_i\models\phi\}\in\mathcal{D}$ for all nonprincipal ultrafilters $\mathcal{D}$, so $B\in\bigcap\mathcal{D} = \mathcal{D}_F$ by Lemma \ref{lem:npuf}. Thus $\mathcal{N}_i\models\phi$ for all but finitely many $i$.

The converse follows symmetrically.
\end{proof}

Now that we have established Theorem~\ref{thm:template}, the Ax-Kochen Principle will be proven if we can demonstrate the elementary equivalence of the ultraproducts $\prod\mathbb{F}_p((t))/\mathcal{D}$ and $\prod\mathbb{Q}_p/\mathcal{D}$ for all nonprincipal ultrafilters $\mathcal{D}$ on the set $P$ of all primes. But to do so, we will need to appeal to more powerful techniques from model theory and dig deeper into the algebraic properties of $\mathbb{F}_p((t))$ and $\mathbb{Q}_p$.

\subsection{Types and Saturated Models}\label{subsec:TaSM}

This section is largely concerned with a useful technique for proving that two structures are isomorphic, the back-and-forth argument. We will begin with a demonstration of a simple back-and-forth argument in the case of countable dense linear orders without endpoints. We will then generalize the property of these structures which makes the back-and-forth argument possible by discussing types and saturated models.

We will often refer to ordinal and cardinal numbers, and we will assume some knowledge of transfinite arithmetic and transfinite induction. For more information, see Appendix~\ref{sec:OCTI}.

Let $\mathcal{L}_O = \{<\}$, where $<$ is a binary relation symbol, and let DLO be the $\mathcal{L}_O$-theory of dense linear orders without endpoints:
\begin{enumerate}
\item $\forall x\,\lnot(x<x)$
\item $\forall x\forall y\forall z\,((x<y\land y<z)\rightarrow x<z)$
\item $\forall x\forall y\, (x<y\lor x=y\lor y<x)$
\item $\forall x\forall y\, (x<y\rightarrow \exists z\,(x<z\land z<y))$
\item $\forall x\exists y\exists z\, (y<x\land x<z)$
\end{enumerate}

One example of a model for DLO is the $\mathcal{L}_O$-structure $\langle\mathbb{Q},<\rangle$, where the interpretation of the symbol $<$ is the usual ordering of $\mathbb{Q}$. It is well known that $\mathbb{Q}$ is countable. We will use a back-and-forth argument to show that up to isomorphism, $\langle\mathbb{Q},<\rangle$ is the only countable model for DLO. Note that this means up to $\mathcal{L}_O$-isomorphism as an $\mathcal{L}_O$-structure; we view $\mathbb{Q}$ only as an ordered set, not as a field.

\begin{thm}[{\cite[Theorem 2.4.1]{Marker}}]\label{thm:DLO}
Let $\mathcal{A}$ and $\mathcal{B}$ be $\mathcal{L}_O$-structures with domains $A$ and $B$ such that $\mathcal{A}\models \text{DLO}$, $\mathcal{B}\models\text{DLO}$, and $|A|=|B|=\aleph_0$. Then $\mathcal{A}\cong \mathcal{B}$.
\end{thm}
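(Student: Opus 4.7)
The plan is to construct an $\mathcal{L}_O$-isomorphism $\eta: A \to B$ by the classical back-and-forth argument, building it up as the union of an increasing chain of finite partial order-preserving bijections. Since $|A| = |B| = \aleph_0$, we may enumerate $A = \{a_0, a_1, a_2, \ldots\}$ and $B = \{b_0, b_1, b_2, \ldots\}$. We will define, by recursion on $n$, finite partial maps $\eta_n$ with $\text{dom}(\eta_n) \subseteq A$ and $\text{im}(\eta_n) \subseteq B$, satisfying: $\eta_n \subseteq \eta_{n+1}$; each $\eta_n$ is an order-preserving bijection from its domain to its image; at the end of the even steps, $a_k$ lies in the domain of $\eta_{2k+1}$; and at the end of the odd steps, $b_k$ lies in the image of $\eta_{2k+2}$. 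Taking $\eta = \bigcup_n \eta_n$ then yields a bijection $A \to B$ preserving $<$, which is exactly an $\mathcal{L}_O$-isomorphism.

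Start with $\eta_0 = \emptyset$. At stage $2k+1$ (the forth step), if $a_k$ is already in $\text{dom}(\eta_{2k})$, set $\eta_{2k+1} = \eta_{2k}$. Otherwise, list the already-matched pairs as $a_{i_1} < \cdots < a_{i_m}$ with images $b_{j_1} < \cdots < b_{j_m}$ in $B$, and locate where $a_k$ falls in this chain in $A$: either below all $a_{i_\ell}$, above all of them, or strictly between some consecutive $a_{i_\ell} < a_{i_{\ell+1}}$. Use the corresponding axiom of DLO in $\mathcal{B}$ (no left endpoint, no right endpoint, or density) to pick some $b \in B$ with the matching relation to $b_{j_1}, \ldots, b_{j_m}$, and set $\eta_{2k+1} = \eta_{2k} \cup \{(a_k, b)\}$. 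At stage $2k+2$ (the back step), do the symmetric construction to ensure $b_k \in \text{im}(\eta_{2k+2})$, this time invoking the corresponding DLO axiom in $\mathcal{A}$ to find a suitable preimage.

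Each $\eta_n$ is clearly a finite order-preserving bijection on its domain, so the same is true of the union $\eta$. The forth steps guarantee $A \subseteq \text{dom}(\eta)$ and the back steps guarantee $B \subseteq \text{im}(\eta)$, so $\eta$ is a bijection from $A$ to $B$. Since preservation of $<$ can be checked on pairs and every pair of elements of $A$ lies in some $\eta_n$, the map $\eta$ preserves $<$; moreover it preserves $=$ trivially and there are no constants or function symbols in $\mathcal{L}_O$, so $\eta$ is an $\mathcal{L}_O$-isomorphism.

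The only step that needs real thought is showing that the witness $b$ at each forth step (and the analogous preimage at each back step) exists; this is where the axioms of DLO are used, and the argument splits cleanly into three cases depending on whether the new element is below, above, or strictly between previously matched elements. Once this local extension lemma is in place the rest is bookkeeping, and the alternation between forth and back steps is what ensures surjectivity as well as totality of the limit map.
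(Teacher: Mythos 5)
Your proposal is correct and follows essentially the same back-and-forth construction as the paper: enumerate both countable domains, build a chain of finite order-preserving partial bijections, alternate forth steps (to absorb $a_k$) and back steps (to absorb $b_k$), and in each extension step split into the three cases (below all, above all, strictly between) using the no-endpoints and density axioms of DLO. The only difference is cosmetic: you split the forth and back extensions into separate odd/even stages $\eta_{2k+1},\eta_{2k+2}$, whereas the paper folds both into a single step $f_i\mapsto f_{i+1}$ via an intermediate map $g_{i+1}$.
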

\begin{proof}
Since $A$ and $B$ are countable, we can choose enumerations $a_0,a_1,\hdots$ and $b_0,b_1,\hdots$ of $A$ and $B$. We will inductively construct a sequence of functions $f_i:A_i\rightarrow B_i$, between finite subsets $A_i\subset A$ and $B_i\subset B$, satisfying the following properties for each $i\geq 0$:
\begin{enumerate}
\item For all $j<i$, $A_j\subseteq A_i$, $B_j\subseteq B_i$, and $f_j\subseteq f_i$. That is, if $a\in A_j$, then $a\in A_i$, $f_j(a)\in B_i$, and $f_i(a) = f_j(a)$. 
\item If $a < a'$, with $a,a'\in A_i$, then $f_i(a)<f_i(a')$. That is, each $f_i$ is an $\mathcal{L}_O$-homomorphism between the substructures of $\mathcal{A}$ and $\mathcal{B}$ with domains $A_i$ and $B_i$.
\item We have $\{a_0,\hdots,a_{i-1}\}\subseteq A_i$ and $\{b_0,\hdots,b_{i-1}\}\subseteq B_i$.
\item The function $f_i$ is a bijection between $A_i$ and $B_i$. 
\end{enumerate}

Given such a sequence of functions, we let $f = \bigcup_{i=0}^\infty f_i:\bigcup_{i=0}^\infty A_i\rightarrow \bigcup_{i=0}^\infty B_i$. Property 1 ensures that $f$ is well-defined, for if $a\in\bigcup_{i=0}^\infty A_i$, there is some $N\geq0$ such that $a\in A_j$ for all $j\geq N$, and the $f_j$ agree on $a$ for all $j\geq N$. Property 2 ensures that $f$ is an $\mathcal{L}_O$-homomorphism. Property 3 ensures that $\bigcup_{i=0}^\infty A_i = A$ and $\bigcup_{i=0}^\infty B_i = B$, since the $a_i$ and $b_i$ enumerate all of $A$ and $B$.  Property 4 ensures that $f$ is a bijection $A\rightarrow B$, and thus an isomorphism between $\mathcal{A}$ and $\mathcal{B}$.

For the base case, let $A_0 = B_0 = f_0 = \emptyset$. The four properties are trivially satisfied.

Given $f_i$ satisfying the four properties, we first extend $f_i$ to a function $g_{i+1}:A_{i+1}'\rightarrow B_{i+1}'$ (with $A_i\subseteq A_{i+1}'$ and $B_i\subseteq B_{i+1}'$) by ensuring that $a_{i}$ is in the domain. Then we extend $g_{i+1}$ to the next function $f_{i+1}: A_{i+1}\rightarrow B_{i+1}$ in the sequence by ensuring that $b_{i}$ is in the range. Going ``back and forth'' in this way, we will ensure that $f_{i+1}$ satisfies property 3.

If $a_{i+1}\in A_i$, then we simply let $A_{i+1}'=A_i$, $B_{i+1}'=B_i$, and $g_{i+1} = f_i$. Otherwise, we must pick an element $b^*\in B$ onto which to map $a_{i+1}$. Property 4 requires that $b^*\notin B_i$ (otherwise $g_{i+1}$ would not be injective), and property 2 requires that for all $a\in A_i$, $a<a_{i+1}$ if and only if $f_i(a)<b^*$.

Exactly one of the following holds:
\begin{enumerate}
\item $a_{i+1}$ is less than every element of $A_i$, or
\item $a_{i+1}$ is greater than every element of $A_i$, or
\item since $A_i$ is finite, there exists a greatest element less than $a_{i+1}$, $\alpha$, and a least element greater than $a_{i+1}$, $\beta$, such that $\alpha < \beta$.
\end{enumerate}

In the first case, since $\mathcal{B}\models \text{DLO}$, it has no greatest element, so there is some $b^*\in B\setminus B_i$ greater than every element of $B_i$. In the second case, $\mathcal{B}$ has no least element, so there is some $b^*\in B\setminus B_i$ less than every element of $B_i$. In the third case, $\mathcal{B}$ is dense, so we can find $b^*\in B\setminus B_i$ such that $f_i(\alpha) < b^* < f_i(\beta)$. In any case, we have satisfied $b^*\notin B_i$ and $a<a_{i+1}$ if and only if $f_i(a)<b^*$.

Now define $A_{i+1}'=A_i\cup\{a_{i+1}\}$, $B_{i+1}'=B_i\cup\{b^*\}$, and $g_{i+1}:A_{i+1}'\rightarrow B_{i+1}'$ such that $g_{i+1}(a_{i+1}) = b^*$ and for all $a\in A_i$, $g_{i+1}(a) = f_i(a)$. 

The other direction is symmetric. If $b_{i+1}\in B_i$, then we simply let $A_{i+1}=A_{i+1}'$, $B_{i+1}=B_{i+1}'$, and $f_{i+1} = g_{i+1}$. Otherwise, we must pick an element $a^*\in A$ to map onto $b_{i+1}$. We must have $a^*\notin A_i$ (otherwise $f_{i+1}$ would not be well-defined), and property 2 requires that for all $a\in A_i$, $a<a^*$ if and only if $g_i(a)<b_{i+1}$. Using the fact that $\mathcal{A}\models \text{DLO}$, we can pick such an $a^*$ in the same way we picked $b^*$.

Now define $A_{i+1}=A_{i+1}'\cup\{a^*\}$, $B_{i+1}=B_{i+1}'\cup\{b_{i+1}\}$, and $f_{i+1}$ such that $f_{i+1}(a^*) = b_{i+1}$ and for all $a\in A_i$, $f_{i+1}(a) = g_{i+1}(a)$. Note that we have maintained properties 1-4, and by induction we can construct the required sequence of functions.
\end{proof}

The key property of models of DLO which allows the back-and-forth argument to work is this: given a finite subset of the domain, if we specify a place in the ordering relative to the elements of the subset where we would like to find some new element, we are guaranteed to be able to find such an element, provided that its existence would not break the linearity of the ordering.

\subsubsection*{Types}
To our toolbox of formulas, sentences, and theories, we add a new way of expressing first-order properties, types. Types will allow us to express properties of elements of a structure relative to a distinguished set of other elements. More precisely, given a set $A$ of elements of a structure $\mathcal{M}$, a type captures the relationships that other elements could have relative to the elements of $A$. 

Let $\mathcal{M}$ be an $\mathcal{L}$-structure with domain $M$. Given $A\subseteq M$, let $\mathcal{L}_A$ be the language $\mathcal{L}\cup\{c_a\,|\,a\in A\}$ where each $c_a$ is a new distinct constant symbol. We view $\mathcal{M}$ is an $\mathcal{L}_A$-structure by interpreting $c_a^{\mathcal{M}} = a$ for each new constant symbol.

When we are working over $\mathcal{L}$, sentences and formulas may only refer explicitly to the elements of $M$ which are interpretations of the constant symbols of $\mathcal{L}$. By expanding the language, we are allowing sentences to refer to the elements of $A$. Let $Th_A(\mathcal{M}) = \{\phi \,|\, \phi \,\mbox{is an $\mathcal{L}_A$-sentence, and}\, \mathcal{M}\models\phi\}$. This theory is an extension of $Th(\mathcal{M})$, consisting of all sentences which are true in $\mathcal{M}$, when we are allowed to explicitly refer to the elements of $A$. 

Earlier (Definition~\ref{def:sat}), we defined satisfiability of an $\mathcal{L}$-theory. There is also a concept of satisfiability of a set of $\mathcal{L}$-formulas.

\begin{defin}\label{def:formulasat}
A set of $\mathcal{L}$-formulas $S$ with free variables $v_1,\hdots,v_n$ is called \emph{satisfiable} if there is an $\mathcal{L}$-structure $\mathcal{M}$ with domain $M$ and elements $a_1,\hdots,a_n\in M$ such that $\mathcal{M}\models\phi(a_1,\hdots,a_n)$ for all formulas $\phi\in S$. Note that the elements of the domain are fixed. The same value must be substituted for the same variable across all formulas.
\end{defin}

\begin{defin}\label{def:type}
Given an $\mathcal{L}$-structure $\mathcal{M}$ with domain $M$ and a subset $A\subseteq M$, an \emph{$n$-type} over $A$ is a set $P$ of $\mathcal{L}_A$ formulas in free variables $v_1,\hdots,v_n$, such that $Th_A(\mathcal{M})\cup P$ is satisfiable. If for all $\mathcal{L}_A$-formulas $\phi$ in free variables $v_1,\hdots,v_n$, either $\phi\in P$ or $\lnot\phi\in P$, then $P$ is called \emph{complete}.
\end{defin}

The satisfiability condition here means that there is some $\mathcal{L}_A$-structure $\mathcal{N}$ with domain $N$ which is a model for $Th_A(\mathcal{M})$, and that there are elements $b_1,\hdots,b_n\in N$ such that $\mathcal{N}\models \phi(b_1,\hdots,b_n)$ for all $\phi\in P$. We say that the elements $b_1,\hdots,b_n$ realize $P$ in $\mathcal{N}$. If $P$ is not realized in $\mathcal{M}$ we say that $\mathcal{M}$ omits the type $P$.

\begin{exa}\label{exa:omitted_type}
We return to the example of $\langle\mathbb{Q},<\rangle$ as an $\mathcal{L}_O$-structure. Let $A = \mathbb{N} \subset \mathbb{Q}$. We will define two types over $A$.

Let $P = \{c_0<v_1, c_1<v_1, c_2<v_1,\hdots\}$. Note that the elements of $\mathbb{N}$ appear (represented by constant symbols) in the formulas of $P$. In order to show that $P$ is a 1-type over $A$, we must show that $Th_A(\langle\mathbb{Q},<\rangle)\cup P$ is satisfiable. Let $\Delta\subset Th_A(\langle\mathbb{Q},<\rangle)\cup P$ be a finite subset. Only finitely many of the formulas in $P$ appear in $\Delta$, so we let $i$ be the maximum integer such that $i<v_1\in\Delta$. Then for all $\phi(v_1) = j<v_1 \in\Delta\cap P$, $\langle\mathbb{Q},<\rangle\models \phi(i+1)$. The other sentences in $\Delta$ are true in $\langle\mathbb{Q},<\rangle$ by definition, so $\langle\mathbb{Q},<\rangle\models \Delta$. Thus $Th_A(\langle\mathbb{Q},<\rangle)\cup P$ is finitely satisfiable, and therefore satisfiable by Compactness.

So $P$ is a 1-type over $A$, but $\langle\mathbb{Q},<\rangle$ omits $P$, since there is no rational number which is greater than every natural number.

Let $Q = \{\phi(v_1)\,|\,\langle\mathbb{Q},<\rangle\models\phi(\frac{1}{2})\}$. The element $\frac{1}{2}$ realizes $Q$ in $\langle\mathbb{Q},<\rangle$, so $Q\cup Th_{\mathbb{Q}}(\langle\mathbb{Q},<\rangle)$ is clearly satisfiable, and $Q$ is a 1-type over $A$. Moreover, for any $\mathcal{L}_A$-formula $\psi$ in one free variable, $\langle\mathbb{Q},<\rangle\models \psi(\frac{1}{2})$ or $\langle\mathbb{Q},<\rangle\models\lnot\psi(\frac{1}{2})$, so either $\psi\in Q$ or $\lnot\psi\in Q$. Thus $Q$ is complete.
\end{exa}

There is a useful generalization of the type $Q$ in Example $\ref{exa:omitted_type}$. For any $\mathcal{L}$-structure $\mathcal{M}$ with domain $M$, $A\subseteq M$, and elements $m_1,\hdots,m_n\in M$, we define $tp^{\mathcal{M}}(m_1,\hdots,m_n/A) = \{\phi(v_1,\hdots,v_n)\,|\,\mathcal{M}\models\phi(m_1,\hdots,m_n)\}$.
By the argument given in the example, this type, called the complete type of $a_1,\hdots,a_n$ over $A$, is a complete $n$-type which is realized in $\mathcal{M}$.

\subsubsection*{Saturated Models}
A $\kappa$-saturated model realizes all types over sets of cardinality less than $\kappa$.

\begin{defin}\label{def:saturated}
Let $T$ be a complete theory with infinite models in a countable language $\mathcal{L}$. Let $\kappa$ be an infinite cardinal. A model $\mathcal{M}\models T$ with domain $M$ is called \emph{$\kappa$-saturated} if for all $A\subset M$ with $|A|<\kappa$ every type over $A$ is realized in $\mathcal{M}$.
\end{defin}

In Theorem~\ref{thm:DLO}, we constructed an isomorphism between any two countable models for $DLO$. Using a similar argument, we can construct partial elementary bijections between subsets of the domains of $\kappa$-saturated models. 

\begin{thm}[{\cite[Lemma 5.1.11]{Chang}}]\label{thm:saturatedbijection}
Let $\kappa$ be an infinite cardinal, and let $\mathcal{M}$ and $\mathcal{N}$ be $\kappa$-saturated models of a complete theory $T$ with domains $M$ and $N$ respectively. Let $A\subseteq M$ and $B\subseteq N$ be subsets such that $|A| = |B| = \kappa$. Then there is a partial elementary bijection $f:\widetilde{A}\rightarrow \widetilde{B}$, where $A\subseteq \widetilde{A}\subseteq M$ and $B\subseteq \widetilde{B}\subseteq N$, and $|\widetilde{A}|=|\widetilde{B}|=\kappa$. By a partial elementary bijection, we mean that $\mathcal{M}\models\phi(x_1,\hdots,x_n)$ for $\phi$ an $\mathcal{L}$-formula and $x_1,\hdots,x_n\in \widetilde{A}$ if and only if $\mathcal{N}\models\phi(f(x_1),\hdots,f(x_n))$. 

\end{thm}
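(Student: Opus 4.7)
The strategy is a transfinite back-and-forth argument that directly generalizes the proof of Theorem~\ref{thm:DLO}, with $\kappa$-saturation playing the role that density and unboundedness played for DLO. Fix enumerations $A=\{a_\alpha : \alpha<\kappa\}$ and $B=\{b_\alpha : \alpha<\kappa\}$, and construct by transfinite induction on $\alpha<\kappa$ a chain of partial elementary maps $f_\alpha : A_\alpha\to B_\alpha$ (with $A_\alpha\subseteq M$, $B_\alpha\subseteq N$) satisfying: (i) $f_\beta\subseteq f_\alpha$ whenever $\beta<\alpha$; (ii) $\{a_\beta : \beta<\alpha\}\subseteq A_\alpha$ and $\{b_\beta : \beta<\alpha\}\subseteq B_\alpha$; and (iii) $|A_\alpha|=|B_\alpha|\leq|\alpha|+\aleph_0<\kappa$, which holds automatically because $\kappa$ is an initial ordinal. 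Take $f_0=\emptyset$ (vacuously partial elementary), and at a limit $\lambda<\kappa$ set $f_\lambda=\bigcup_{\alpha<\lambda}f_\alpha$; partial elementarity passes through this union because any given $\mathcal{L}$-formula uses only finitely many parameters, all of which appear in a single $A_\alpha$ with $\alpha<\lambda$.

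The substantive work is the successor step, where saturation is invoked. Suppose $f_\alpha$ is in hand; I first ``go forth'' to force $a_\alpha$ into the domain. Define
\[
p(v) \;=\; \{\phi(v, f_\alpha(c_1),\ldots,f_\alpha(c_n)) : \phi(v,c_1,\ldots,c_n)\text{ is an }\mathcal{L}_{A_\alpha}\text{-formula with }\mathcal{M}\models\phi(a_\alpha,c_1,\ldots,c_n)\}.
\]
For any finite conjunction $\bigwedge_i\phi_i(v,f_\alpha(\overline{c_i}))$ from $p$, the element $a_\alpha$ witnesses $\exists v\,\bigwedge_i\phi_i(v,\overline{c_i})$ in $\mathcal{M}$; since $f_\alpha$ is partial elementary, $\mathcal{N}\models\exists v\,\bigwedge_i\phi_i(v,f_\alpha(\overline{c_i}))$, so $p$ is finitely satisfiable over $B_\alpha$ in $\mathcal{N}$, hence a type over $B_\alpha$. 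Because $|B_\alpha|<\kappa$, the $\kappa$-saturation of $\mathcal{N}$ produces $b^*\in N$ realizing $p$, and extending $f_\alpha$ by $a_\alpha\mapsto b^*$ yields a partial elementary map. Symmetrically, ``go back'' using the $\kappa$-saturation of $\mathcal{M}$ to produce an $a^*\in M$ whose image under the extended map can be set to $b_\alpha$. The result is $f_{\alpha+1}$, still of cardinality strictly below $\kappa$.

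Finally, set $\widetilde{A}=\bigcup_{\alpha<\kappa}A_\alpha$, $\widetilde{B}=\bigcup_{\alpha<\kappa}B_\alpha$, and $f=\bigcup_{\alpha<\kappa}f_\alpha$. Property (ii) forces $A\subseteq\widetilde{A}$ and $B\subseteq\widetilde{B}$, so $|\widetilde{A}|,|\widetilde{B}|\geq\kappa$; property (iii) gives $|\widetilde{A}|,|\widetilde{B}|\leq\kappa\cdot\kappa=\kappa$; and $f$ is a partial elementary bijection by the same limit-union argument used at limit stages. The main obstacle is verifying the successor step: one must be careful that $p$ really is a type in $\mathcal{N}$, and this is exactly where the partial elementarity of $f_\alpha$ has to be used to transfer the existential witness from $\mathcal{M}$ to $\mathcal{N}$. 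Everything else is bookkeeping on cardinalities, which is painless because $\kappa$ is an initial ordinal and $|\alpha|+\aleph_0<\kappa$ for every $\alpha<\kappa$.
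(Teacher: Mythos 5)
Your proof follows essentially the same transfinite back-and-forth argument as the paper, with the same use of $\kappa$-saturation to realize the pushed-forward complete type $tp^{\mathcal M}(a_\alpha/A_\alpha)$ at each successor stage and unions at limit stages. The only nitpick is that your cardinality invariant $|A_\alpha|\leq|\alpha|+\aleph_0<\kappa$ is false as stated when $\kappa=\aleph_0$ (there $|\alpha|+\aleph_0=\aleph_0=\kappa$); the actual count of at most $2\alpha$ elements added is what keeps $|A_\alpha|<\kappa$, and the paper states the bound as $|A_\alpha|\leq|2\alpha|<\kappa$ to cover this case.
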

\begin{proof}
Let $(a_\alpha\,:\,\alpha<\kappa)$ and $(b_\alpha\,:\,\alpha<\kappa)$ be enumerations of $A$ and $B$ respectively.

We will inductively construct a sequence of functions $(f_\alpha\,:\,\alpha<\kappa)$ such that each $f_\alpha$ is a partial elementary bijection with domain $A_\alpha\subset M$ and range $B_\alpha\subset N$. We require $f_\alpha$ to satisfy the following properties for all $\alpha<\kappa$:
\begin{enumerate}
\item For all $\beta<\alpha$, $A_\beta\subseteq A_\alpha$, $B_\beta\subseteq B_\alpha$, and $f_\beta\subseteq f_\alpha$.
\item We have $a_\alpha\in A_{\alpha+1}$ and $b_\alpha\in B_{\alpha+1}$.
\item The function $f_\alpha$ is a bijection between $A_\alpha$ and $B_\alpha$.
\item We have $|A_\alpha| \leq |2\alpha| < \kappa$ and $|B_\alpha|\leq |2\alpha| < \kappa$.
\end{enumerate}

Given such a sequence of functions, we let $\widetilde{A}=\bigcup_{\alpha<\kappa}A_\alpha$, $\widetilde{B}=\bigcup_{\alpha<\kappa}B_\alpha$, and $f = \bigcup_{\alpha<\kappa} f_\alpha$. Properties 1 and 3 guarantee that $f$ is a well-defined elementary bijection $\widetilde{A}\rightarrow\widetilde{B}$, property 2 guarantees that $A\subseteq\widetilde{A}$ and $B\subseteq\widetilde{B}$, and property 4 guarantees that $|\widetilde{A}|=|\widetilde{B}|=\kappa$.

For the base case, let $A_0 = B_0 = f_0 = \emptyset$. The properties are trivially satisfied.

If $\alpha$ is a limit ordinal, we define $A_\alpha = \bigcup_{\beta<\alpha}A_\beta$, $B_\alpha = \bigcup_{\beta<\alpha}B_\beta$, and $f_\alpha = \bigcup_{\beta<\alpha}f_\beta$. Property 1 is clearly satisfied. Property 2 only requires certain elements to be in the domain and range of successor ordinals, so it is trivially satisfied. Property 3 is satisfied, since every element of $B_\alpha$ is in the range of some $f_\beta$ for $\beta<\alpha$, since the $f_\beta$ are surjective, and any two elements in $A_\alpha$ are in some $A_\beta$ for $\beta<\alpha$, so they are sent to distinct elements, since the $f_\beta$ are injective. Property 4 is true by transfinite arithmetic: $\alpha$ is the limit of the $\beta<\alpha$, so $|2\alpha|$ is the limit of $|2\beta|$ for $\beta<\alpha$, which bounds the cardinality of $\bigcup_{\beta<\alpha}A_\beta$ above. The argument for the cardinality of $B_\alpha$ is the same. Finally, $f_\alpha$ is elementary, since the same holds for all $f_\beta$, $\beta<\alpha$.

In the successor case, given $f_\alpha$ satisfying the properties, we first extend $f_\alpha$ to a function $g_{\alpha+1}:A_{\alpha+1}'\rightarrow B_{\alpha+1}'$ by ensuring that $a_{\alpha}$ is in the domain. Then we extend $g_{\alpha+1}$ to the next function $f_{\alpha+1}: A_{\alpha+1}\rightarrow B_{\alpha+1}$ in the sequence by ensuring that $b_{\alpha}$ is in the range. 

If $a_{\alpha}\in A_\alpha$, then we simply let $g_{\alpha+1} = f_\alpha$. Otherwise, we must pick an element $n\in N$ onto which to map $a_\alpha$. Consider the language $\mathcal{L}_{A_\alpha}$, which is $\mathcal{L}$ extended with a new constant symbol for each element in $A_\alpha$. We may consider $\mathcal{N}$ as a $\mathcal{L}_{A_\alpha}$-structure by interpreting the constant symbol $c_a$ (representing $a\in A_\alpha$) as $f_\alpha(a)\in B_\alpha$. We will identify the languages $\mathcal{L}_{A_\alpha}$ and $\mathcal{L}_{B_\alpha}$ by choosing the same constant symbol, $c_a$, to represent $a\in A_\alpha$ and $f_\alpha(a)\in B_\alpha$.  

Now for any $\mathcal{L}_{A_\alpha}$-sentence $\phi$, enumerate the new constant symbols which appear in $\phi$, $c_{m_1},\hdots,c_{m_t}$, and let $m_1,\hdots,m_t$ and $n_1,\hdots,n_t$ be their interpretations in $\mathcal{M}$ and $\mathcal{N}$, respectively. Form an $\mathcal{L}$-sentence $\psi$ by replacing each new constant symbol $c_{m_i}$ with a distinct variable $v_i$. Then $\mathcal{M}\models\phi$ if and only if $\mathcal{M}\models\psi(m_1,\hdots,m_t)$. Since $f_\alpha$ is elementary and $f_\alpha(m_i) = n_i$ for all $i$, $\mathcal{M}\models\psi(m_1,\hdots,m_t)$ if and only if $\mathcal{N}\models\psi(n_1,\hdots,n_t)$, if and only if $\mathcal{N}\models\phi$. Thus $Th_{A_\alpha}(\mathcal{M}) = Th_{B_\alpha}(\mathcal{N})$. 

We can easily show by the method in Example~\ref{exa:omitted_type} that $tp^\mathcal{M}(a_\alpha/A_\alpha)$ is a complete type realized in $\mathcal{M}$. Thus $Th_{A_\alpha}(\mathcal{M})\cup tp^\mathcal{M}(a_\alpha/A_\alpha)$ is satisfiable, so $Th_{B_\alpha}(\mathcal{N})\cup tp^\mathcal{M}(a_\alpha/A_\alpha)$ is satisfiable, and $tp^\mathcal{M}(a_\alpha/A_\alpha)$ is a complete type over $B_\alpha$ in $\mathcal{N}$, since we have identified the languages $\mathcal{L}_{A_\alpha}$ and $\mathcal{L}_{B_\alpha}$. Since $|B_\alpha|<\kappa$, and $\mathcal{N}$ is $\kappa$-saturated, $tp^\mathcal{M}(a_\alpha/A_\alpha)$ is realized in $\mathcal{N}$. Let $n$ be an element of $N$ which realizes this type.

Note that $n\notin B_\alpha$. For otherwise, letting $\phi$ be the $\mathcal{L}_{A_\alpha}$-formula $v_1=c_{n}$, $\mathcal{N}\models \phi(n)$, so $\mathcal{M}\models \phi(a_\alpha)$, and $a_\alpha = m$, where $m$ is the interpretation of $c_{n}$ in $A_\alpha$, and hence $a_\alpha\in A_\alpha$. But this contradicts the assumption that $a_\alpha\notin A_\alpha$.

Now define $g_{\alpha+1}$ by extending $f_\alpha$ such that $g_{\alpha+1}(a_\alpha) = n$. It is evident that $g_{\alpha+1}$ is elementary, for $\mathcal{M}\models\phi(a_\alpha,m_1,\hdots,m_t)$ with $m_1,\hdots,m_t\in A_\alpha$ if and only if $\phi(v_1,c_{m_1},\hdots,c_{m_t})\in tp^\mathcal{M}(a_\alpha/A_\alpha)$, if and only if $\mathcal{N}\models\phi(n,f_\alpha(m_1),\hdots,f_\alpha(m_t))$. 

The other direction is symmetric. If $b_{\alpha}\in B_\alpha$, then we simply let $f_{\alpha+1} = g_{\alpha+1}$. Otherwise, we may pick an element $m\in M$ to map onto $b_{\alpha}$ such that $m$ realizes $tp^\mathcal{N}(b_\alpha/B_\alpha)$. We define $f_{\alpha+1}$ by extending $g_{\alpha+1}$ such that $f_{\alpha+1}(m) = b_\alpha$. By the same argument, $f_{\alpha+1}$ is elementary.

Properties 1 and 2 are clearly satisfied. It is also clear that $f_{\alpha+1}$ is a surjection, since for every element we have added to the range, we have added an element to the domain mapping to it. By the observations that $m\notin A_\alpha$ and $n\notin B_\alpha$, $f_{\alpha+1}$ is well-defined and injective, so Property 3 is satisfied. Finally, we have added at most two elements to the domain and range, so since $|A_\alpha| \leq |2\alpha|$, $|A_{\alpha+1}|\leq |2(\alpha+1)|$. The same argument holds for the cardinality of $B_{\alpha+1}$, so Property 4 is satisfied.

Thus by transfinite induction we are able to construct the required sequence of functions. This completes the proof.
\end{proof}

Note that if a model $\mathcal{M}$ with domain $M$ is $\kappa$-saturated, then we must have $\kappa\leq|M|$. For otherwise, if $|M|<\kappa$, then $\mathcal{M}$ would realize every type over $M$, the entire domain. In particular, it would realize the type $\{\lnot(v_1=c_m)\,|\,m\in M\}$. Any element $x\in M$ which realizes this type is not equal to any element of $M$, which is a contradiction. 

If $\mathcal{M}$ is $|M|$-saturated, that is, as saturated as possible, then we simply say that $\mathcal{M}$ is saturated. As a corollary to the last theorem, saturated models of a given cardinality are unique up to isomorphism.

\begin{cor}\label{cor:saturatedisomorphism}
If $\mathcal{M}$ and $\mathcal{N}$ are saturated models of a complete theory, $T$, and they have the same cardinality $\kappa$, then $\mathcal{M}\cong\mathcal{N}$. 
\end{cor}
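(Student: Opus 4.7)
The plan is to apply Theorem~\ref{thm:saturatedbijection} directly, with $A = M$ and $B = N$ playing the roles of the distinguished subsets, and then to observe that a full elementary bijection between the two domains is automatically an $\mathcal{L}$-isomorphism. Since $\mathcal{M}$ is saturated of cardinality $\kappa$, it is $|M|$-saturated with $|M| = \kappa$, so it is $\kappa$-saturated; the same is true of $\mathcal{N}$. Both $|A|$ and $|B|$ equal $\kappa$, so the hypotheses of Theorem~\ref{thm:saturatedbijection} are met.

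The theorem then produces a partial elementary bijection $f : \widetilde{A} \to \widetilde{B}$ with $M = A \subseteq \widetilde{A} \subseteq M$ and $N = B \subseteq \widetilde{B} \subseteq N$. The chain of inclusions forces $\widetilde{A} = M$ and $\widetilde{B} = N$, so $f$ is a bijection of the full domains which satisfies $\mathcal{M} \models \phi(\overline{a})$ if and only if $\mathcal{N} \models \phi(f(\overline{a}))$ for every $\mathcal{L}$-formula $\phi$ and every tuple $\overline{a}$ from $M$.

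It remains to check that such an elementary bijection is an $\mathcal{L}$-homomorphism in the sense of Definition~\ref{def:hom}. Applying the elementary property to atomic formulas shows that $f$ preserves all the language symbols: for any constant $c$, the formula $c = v_1$ gives $f(c^{\mathcal{M}}) = c^{\mathcal{N}}$; for any function symbol $g$, the formula $v_0 = g(v_1,\dots,v_{n_g})$ gives $f(g^{\mathcal{M}}(a_1,\dots,a_{n_g})) = g^{\mathcal{N}}(f(a_1),\dots,f(a_{n_g}))$; and for any relation symbol $R$, the formula $R(v_1,\dots,v_{n_R})$ gives $(a_1,\dots,a_{n_R}) \in R^{\mathcal{M}}$ if and only if $(f(a_1),\dots,f(a_{n_R})) \in R^{\mathcal{N}}$. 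Hence $f$ is a bijective $\mathcal{L}$-homomorphism, i.e.\ an $\mathcal{L}$-isomorphism, so $\mathcal{M} \cong \mathcal{N}$.

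There is essentially no obstacle to overcome here: the entire content of the corollary was packed into Theorem~\ref{thm:saturatedbijection}, and all that is needed is the observation that when the two ambient structures are themselves of size $\kappa$, the ``partial'' elementary bijection produced by that theorem must in fact be total, and that being elementary is a strictly stronger condition than being an $\mathcal{L}$-homomorphism.
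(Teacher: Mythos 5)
Your proof is correct and takes essentially the same route as the paper: apply Theorem~\ref{thm:saturatedbijection} with $A = M$ and $B = N$ so that the resulting partial elementary bijection is forced to be total, and observe that it is then an $\mathcal{L}$-isomorphism. The paper states the last step without elaboration, whereas you spell out why an elementary bijection is automatically an $\mathcal{L}$-homomorphism, but this is the same argument.
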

\begin{proof}
We apply Theorem~\ref{thm:saturatedbijection}, taking as our subsets the entire domains $M$ and $N$ of $\mathcal{M}$ and $\mathcal{N}$, since $|M|=|N|=\kappa$. Then there is a function $f:M\rightarrow N$ which is an elementary bijection, and thus an isomorphism, between $\mathcal{M}$ and $\mathcal{N}$.
\end{proof}

\subsubsection*{Existence of Saturated Models}
Now that the utility of saturated models for demonstrating isomorphism is clear, we will take up the issue of their existence. The following lemma shows that if a type $P$ is omitted, we can always find an elementary extension in which $P$ is realized. 

\begin{lem}\label{lem:realizing_types}
Let $\mathcal{M}$ be an infinite $\mathcal{L}$-structure with domain $M$, $A\subseteq M$, and $P$ an $n$-type over $A$. Then there exists an elementary extension of $\mathcal{M}$, $\mathcal{N}$, such that $P$ is realized in $\mathcal{N}$. If $\mathcal{L}$ is countable, we can take $\mathcal{N}$ to have the same cardinality as $\mathcal{M}$. 
\end{lem}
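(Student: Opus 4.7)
The plan is to apply the Compactness Theorem to a suitably enlarged theory. Expand the language $\mathcal{L}_M$ (used for the elementary diagram) by adjoining $n$ fresh constant symbols $d_1,\ldots,d_n$, and consider the theory
\[
T \;=\; Diag_{el}(\mathcal{M}) \;\cup\; \{\phi(d_1,\ldots,d_n) \,:\, \phi(v_1,\ldots,v_n) \in P\}.
\]
If $T$ has a model $\mathcal{N}$, then by Lemma~\ref{lem:diag} (applied to $\mathcal{N}$ viewed as an $\mathcal{L}_M$-structure) $\mathcal{M}$ elementarily embeds into $\mathcal{N}$, and the interpretations $d_1^\mathcal{N},\ldots,d_n^\mathcal{N}$ realize $P$, giving the desired elementary extension.

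To show $T$ is satisfiable, by Compactness it suffices to verify finite satisfiability. A finite subset $\Delta \subseteq T$ involves only finitely many formulas $\phi_1,\ldots,\phi_t$ from $P$ and finitely many sentences from $Diag_{el}(\mathcal{M})$. The key observation is that $\mathcal{M}$ itself, with an appropriate interpretation of $d_1,\ldots,d_n$, satisfies $\Delta$. Indeed, the sentence
\[
\sigma \;:=\; \exists v_1 \cdots \exists v_n \bigl(\phi_1(v_1,\ldots,v_n) \wedge \cdots \wedge \phi_t(v_1,\ldots,v_n)\bigr)
\]
is an $\mathcal{L}_A$-sentence, so if $\mathcal{M} \not\models \sigma$, then $\lnot\sigma \in Th_A(\mathcal{M})$, which would make $Th_A(\mathcal{M}) \cup \{\phi_1,\ldots,\phi_t\}$ (and hence $Th_A(\mathcal{M}) \cup P$) unsatisfiable, contradicting the assumption that $P$ is a type. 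Therefore $\mathcal{M} \models \sigma$; choose witnesses $a_1,\ldots,a_n \in M$ and interpret $d_i^\mathcal{M} = a_i$. All sentences in $Diag_{el}(\mathcal{M}) \cap \Delta$ hold in $\mathcal{M}$ automatically, so $\mathcal{M} \models \Delta$. Hence $T$ is finitely satisfiable, and therefore satisfiable by the Compactness Theorem.

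For the cardinality refinement, suppose $\mathcal{L}$ is countable. Let $\mathcal{N}$ be the elementary extension produced above, and let $b_1,\ldots,b_n \in \mathcal{N}$ be the realizers of $P$. Apply the L\"owenheim-Skolem Theorem Down (Theorem~\ref{thm:L-S}) to $\mathcal{N}$ with the set $X = M \cup \{b_1,\ldots,b_n\}$ to obtain an elementary substructure $\mathcal{N}_0 \preceq \mathcal{N}$ containing $X$, of cardinality at most $|X| + |\mathcal{L}| + \aleph_0 = |M|$ (using that $\mathcal{M}$ is infinite). Since $\mathcal{M} \preceq \mathcal{N}$ and $\mathcal{M} \subseteq \mathcal{N}_0 \subseteq \mathcal{N}$, one easily checks $\mathcal{M} \preceq \mathcal{N}_0$: for any $\mathcal{L}$-formula $\phi$ and tuple $\bar a$ from $M$, $\mathcal{M} \models \phi(\bar a)$ iff $\mathcal{N} \models \phi(\bar a)$ iff $\mathcal{N}_0 \models \phi(\bar a)$. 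Since $b_1,\ldots,b_n \in \mathcal{N}_0$ still realize $P$, the structure $\mathcal{N}_0$ has the required properties and cardinality exactly $|M|$.

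The main obstacle is the finite satisfiability step: correctly packaging the formulas of $P$ as witnesses for an existential statement inside $Th_A(\mathcal{M})$ and recognizing that failure of this existential statement would contradict the type-hood of $P$. Everything else is a straightforward application of Compactness, the elementary diagram construction, and L\"owenheim-Skolem.
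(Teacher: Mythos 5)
Your proof is correct, and it follows the same overall architecture as the paper's: both apply Compactness to $Diag_{el}(\mathcal{M})\cup P$ (with the type's free variables interpreted as fresh constants) and then invoke Lemma~\ref{lem:diag} to get the elementary embedding. The interesting difference is in how finite satisfiability is verified. The paper starts from an $\mathcal{L}_A$-structure $\mathcal{N}_0\models P\cup Th_A(\mathcal{M})$ (which exists by definition of type), and then existentially witnesses the finitely many diagram constants $c_{b_i}$ ($b_i\in M\setminus A$) appearing in $\Delta$, using the fact that the corresponding existential sentence lies in $Th_A(\mathcal{M})$ and hence holds in $\mathcal{N}_0$. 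You run the argument in the dual direction: you use $\mathcal{M}$ itself as the witnessing structure (which trivially satisfies its own diagram) and existentially witness the finitely many formulas from $P$, observing that failure of $\mathcal{M}\models\sigma$ would force $\lnot\sigma\in Th_A(\mathcal{M})$ and contradict type-hood. Your direction is arguably tidier: you never need to name or manipulate a separate auxiliary model. The two versions are logically equivalent but quantify the existentials over different families of constants/variables. For the cardinality bound you use L\"owenheim--Skolem Down rather than the cardinality-bounded Compactness Theorem~\ref{thm:cardinal_compactness} that the paper cites; both are standard and your version correctly checks that $\mathcal{M}\preceq\mathcal{N}_0$ via the transitivity of elementarity and that the realizers $b_1,\ldots,b_n$ survive inside $\mathcal{N}_0$.
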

\begin{proof}
Since $P$ is a type over $A$, $P\cup Th_A(\mathcal{M})$ is satisfiable. Let $\mathcal{N}_0$ be an $\mathcal{L}_A$-structure which satisfies $P\cup Th_A(\mathcal{M})$, and let $x_1,\hdots,x_n$ be the elements realizing $P$ in $\mathcal{N}_0$.

Let $\Gamma = P\cup Diag_{el}(\mathcal{M})$. We will apply Compactness to prove that $\Gamma$ is satisfiable by showing that $\mathcal{N}_0$ satisfies every finite subset of $\Gamma$.

Note that $P$ consists of $\mathcal{L}_A$-formulas (with constant symbols for each element of $A$) and $Diag_{el}(\mathcal{M})$ consists of $\mathcal{L}_\mathcal{M}$-sentences (with constant symbols for each element of $M$). Since $\mathcal{L}_A\subseteq\mathcal{L}_\mathcal{M}$, we can consider the formulas in $P$ as $\mathcal{L}_\mathcal{M}$-formulas.

Let $\Delta$ be a finite subset of $\Gamma$. There are finitely many $\mathcal{L}_\mathcal{M}$-formulas $\phi_1,\hdots,\phi_s\in P\cap\Delta$ and finitely many $\mathcal{L}_\mathcal{M}$-sentences $\psi_1,\hdots,\psi_t\in Diag_{el}(\mathcal{M})\cap\Delta$. Let $\Phi(v_1,\hdots,v_n)$ be the $\mathcal{L}_\mathcal{M}$-formula $\phi_1\land \phi_2\land\hdots\land\phi_s$, and let $\Psi$ be the $\mathcal{L}_{\mathcal{M}}$-sentence $\psi_1\land\psi_2\land\hdots\land\psi_t$.

Let $c_{a_1},\hdots,c_{a_j},c_{b_1},\hdots,c_{b_k}$ be the new constant symbols of $\mathcal{L}_\mathcal{M}$ which appear in the formulas and sentences of $\Delta$. The symbol $c_{a_i}$ corresponds to the element $a_i\in A$ and the symbol $c_{b_i}$ corresponds to the element $b_i\in M\setminus A$.
 
Now to show that the $\mathcal{L}_A$-structure $\mathcal{N}_0$ satisfies $\Delta$, we must turn it into an $\mathcal{L}_\mathcal{M}$ structure by assigning interpretations to the constant symbols $c_m$ for all $m\in M$. But since $c_{b_1}\hdots,c_{b_k}$ are the only symbols appearing in $\Delta$, all other $c_m$ may be assigned interpretations arbitrarily. Then it will suffice to show that $\mathcal{N}_0\models\Phi(x_1,\hdots,x_n)\land\Psi$.

Let $\Psi'(w_1,\hdots,w_k)$ be the $\mathcal{L}_A$-formula formed by replacing each constant symbol $c_{b_i}$ in $\Psi$ with the variable $w_i$. Let $\theta$ be the $\mathcal{L}_A$-sentence $\exists w_1\hdots\exists w_k\,\Psi'(w_1,\hdots,w_k)$. Now $\mathcal{M}\models \Psi'(b_1,\hdots,b_j)$, so $\mathcal{M}\models\theta$, and thus $\theta\in Th_A(\mathcal{M})$. But $\mathcal{N}_0\models Th_A(\mathcal{M})$, so $\mathcal{N}_0\models\theta$. 

Thus there are elements $y_1,\hdots,y_k$ in the domain of $\mathcal{N}_0$ such that $\mathcal{N}_0\models\Psi'(y_1,\hdots,y_k)$. Interpreting the constant $c_{b_i}$ as $y_i$ for each $i$, we see that $\mathcal{N}_0\models\Psi$ as an $\mathcal{L}_\mathcal{M}$-structure. But also $\mathcal{N}_0\models\Phi(x_1,\hdots,x_n)$ since $x_1,\hdots,x_n$ realize $P$ in $\mathcal{N}_0$, so $\mathcal{N}_0\models\Phi(x_1,\hdots,x_n)\land\Psi$, and thus $\mathcal{N}_0$ satisfies $\Delta$.

Hence $\Gamma$ is finitely satisfiable, and there is a model $\mathcal{N}$ for $\Gamma$ of cardinality $|M|$ by Theorem~\ref{thm:cardinal_compactness}. Now $P$ is realized in $\mathcal{N}$, and moreover $\mathcal{N}\models Diag_{el}(\mathcal{M})$, so there is an elementary embedding of $\mathcal{M}$ into $\mathcal{N}$ by Lemma~\ref{lem:diag}. Identifying $\mathcal{M}$ with $j(\mathcal{M})$, we can view $\mathcal{N}$ as an elementary extension of $\mathcal{M}$.
\end{proof}

Now that we can add elements to realize types, we can construct saturated models for certain cardinalities.

\begin{thm}[{\cite[Theorem 4.3.12]{Marker}}]\label{thm:saturatedexistence}
Let $\mathcal{L}$ be a countable language, and let $\mathcal{M}$ be an infinite $\mathcal{L}$-structure with domain $M$. Let $\kappa$ be an infinite cardinal. Then there is an elementary extension $\mathcal{N}$ of $\mathcal{M}$ with domain $N$ such that $|N|\leq|M|^\kappa$, and $\mathcal{N}$ is $\kappa^+$-saturated. 
\end{thm}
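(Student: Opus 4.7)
The plan is to build $\mathcal{N}$ as the union of a transfinite elementary chain $(\mathcal{M}_\alpha : \alpha < \kappa^+)$ of length $\kappa^+$, in which we realize, at each successor stage, every type over every subset of cardinality at most $\kappa$ of the preceding model. The cofinality condition $\mathrm{cf}(\kappa^+) = \kappa^+ > \kappa$ will then force any small set sitting inside $\mathcal{N}$ to appear already at some intermediate stage, so the types it supports have been realized further up the chain.

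First I would set $\mathcal{M}_0 = \mathcal{M}$. At a limit ordinal $\alpha < \kappa^+$, define $\mathcal{M}_\alpha = \bigcup_{\beta < \alpha} \mathcal{M}_\beta$, which is an elementary extension of each $\mathcal{M}_\beta$ by the usual Tarski-Vaught union lemma for elementary chains (an easy induction on formulas, using that each $\mathcal{M}_\beta \preceq \mathcal{M}_{\beta+1}$). At a successor stage, given $\mathcal{M}_\alpha$, enumerate all pairs $(A, P)$ where $A \subseteq M_\alpha$ has $|A| \leq \kappa$ and $P$ is a complete $n$-type over $A$ (for some $n < \omega$); there are at most $|M_\alpha|^\kappa \cdot 2^\kappa = |M_\alpha|^\kappa$ such pairs, since $\mathcal{L}$ is countable and each $\mathcal{L}_A$-formula is determined by a finite string over a language of size $|A| + \aleph_0 \leq \kappa$. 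I would iterate Lemma~\ref{lem:realizing_types} through an inner transfinite chain of this length, realizing one type at each internal step, and set $\mathcal{M}_{\alpha+1}$ to be the union. A careful bookkeeping gives $|\mathcal{M}_{\alpha+1}| \leq |\mathcal{M}_\alpha|^\kappa$.

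Let $\mathcal{N} = \bigcup_{\alpha < \kappa^+} \mathcal{M}_\alpha$. To verify $\kappa^+$-saturation, let $A \subseteq N$ with $|A| < \kappa^+$, i.e.\ $|A| \leq \kappa$, and let $P$ be an $n$-type over $A$ in $\mathcal{N}$. Since $\mathrm{cf}(\kappa^+) = \kappa^+ > \kappa$, the set $A$ is contained in $M_\alpha$ for some $\alpha < \kappa^+$. Because $\mathcal{M}_\alpha \preceq \mathcal{N}$, $P$ is also a type over $A$ in $\mathcal{M}_\alpha$ (the same $\mathcal{L}_A$-formulas are satisfiable together with $Th_A(\mathcal{M}_\alpha) = Th_A(\mathcal{N})$). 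By extending $P$ to a complete type if needed, $P$ appears in the enumeration at stage $\alpha$, so it is realized in $\mathcal{M}_{\alpha+1} \preceq \mathcal{N}$. For the cardinality bound, a straightforward induction shows $|M_\alpha| \leq |M|^\kappa$ for all $\alpha < \kappa^+$: the successor step uses $(|M|^\kappa)^\kappa = |M|^\kappa$, and the limit step uses $\kappa^+ \leq 2^\kappa \leq |M|^\kappa$. Hence $|N| \leq \kappa^+ \cdot |M|^\kappa = |M|^\kappa$.

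The main obstacle is the cardinal bookkeeping rather than any conceptual difficulty: one must check that the cofinality of $\kappa^+$ really does exceed $\kappa$ (so that any $\kappa$-sized subset of the union lives at some earlier stage), and that the exponent $|M|^\kappa$ is preserved through $\kappa^+$ iterations of the operation $X \mapsto X^\kappa$. Everything else — taking unions of elementary chains, realizing one type via Lemma~\ref{lem:realizing_types} while preserving cardinality, and counting types over a small parameter set in a countable language — is routine once the transfinite framework is in place.
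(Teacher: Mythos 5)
Your proposal is correct and follows essentially the same route as the paper's proof: an outer elementary chain of length $\kappa^+$ in which each successor stage realizes, via an inner transfinite chain built from Lemma~\ref{lem:realizing_types}, all types over $\kappa$-sized subsets of the current domain, with the cofinality of $\kappa^+$ delivering saturation and $(|M|^\kappa)^\kappa = |M|^\kappa$ controlling cardinality. The only cosmetic difference is that the paper first reduces to realizing $1$-types before running the construction, whereas you handle $n$-types directly in the enumeration; both give the same count since there are only countably many arities.
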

\begin{proof}
First we will note that in order to prove that a model is $\kappa$-saturated, it suffices to prove that the model realizes all 1-types. The general case follows by induction: If $P$ is an $n$-type over $A$, let $Q = \{\phi(v_1,\hdots,v_{n-1})\,|\,\phi\in P\}$, the $(n-1)$-type consisting of all formulas in $P$ which do not include the last variable $v_n$. By induction, $Q$ is realized by some $a_1,\hdots,a_{n-1}\in M$. Let $R = \{\psi(c_{a_1},\hdots,c_{a_{n-1}},v_n)\,|\,\psi(v_1,\hdots,v_n)\in P\}$, which is a 1-type (in the free variable $v_n$) over $A\cup\{a_1,\hdots,a_{n-1}\}$. Since adding finitely many elements does not increase the cardinality of $A$, $R$ is realized in $\mathcal{M}$ by the base case. Suppose $b$ realizes $R$. Then $a_1,\hdots,a_{n-1},b$ realizes $P$.

Returning to the proof of the theorem, we will first apply Lemma~\ref{lem:realizing_types} repeatedly to build a chain of elementary extensions of $\mathcal{M}$, each of which satisfies a particular type.

Our claim is that there exists an elementary extension $\mathcal{M}'$ of $\mathcal{M}$ with $|M'|\leq|M|^\kappa$ such that for all $A\subseteq M$ with $|A|\leq\kappa$, every 1-type over $A$ is realized in $\mathcal{M}'$.

We need to pin down how many types we may need to satisfy. The number of subsets of $M$ with cardinality less than or equal to $\kappa$ is less than the number of functions, $\kappa \rightarrow M$, since each such subset is the range of one of these functions. This set of functions has cardinality $|M|^\kappa$. 

Now given a subset $A\subseteq M$ with $|A|\leq \kappa$, the language $\mathcal{L}_A$ has cardinality at most $\kappa$, since $\mathcal{L}$ is countable. The set of $\mathcal{L}_A$-formulas is a subset of the set of finite strings with symbols from $\mathcal{L}_A$ plus our finite set of boolean connectors, quantifiers, etc. The cardinality of the set of finite strings of any given length $l$ is the cardinality of the set of functions, $\{1,\hdots,l\}\rightarrow (\mathcal{L}_A\cup\{v_1,\land,\lor,\hdots\})$, and this set of functions has cardinality $\kappa^l = \kappa$. Now there are $\aleph_0$ values for $l$, so the set of $\mathcal{L}_A$-formulas has cardinality $\aleph_0 \kappa = \kappa$, since $\aleph_0\leq \kappa$. 

Now types are elements of the power set of the set of $\mathcal{L}_A$-formulas, so the cardinality of the set of $\mathcal{L}_A$-formulas is at most the cardinality of the power set, that is, $2^\kappa$.

Hence the total number of 1-types over all subsets $A$, with $|A|\leq\kappa$, is bounded above by $|M|^\kappa 2^\kappa = |M|^\kappa$, since $2<|M|$. 

Let $(P_\alpha\,:\,\alpha<|M|^\kappa)$ be an enumeration of all such types. We will build an elementary chain $(\mathcal{M}_\alpha\,:\,\alpha<|M|^\kappa)$ as follows:
\begin{enumerate}
\item Let $\mathcal{M}_0 = \mathcal{M}$. 
\item For $\alpha$ a limit ordinal, let $\mathcal{M}_\alpha = \bigcup_{\beta<\alpha}\mathcal{M}_\beta$. 
\item For all $\alpha$, apply Lemma~\ref{lem:realizing_types} to find an elementary extension $\mathcal{M}_{\alpha+1}$ of $\mathcal{M}_\alpha$ such that $|M_{\alpha+1}| = |M_\alpha|$ and $\mathcal{M}_{\alpha+1}$ realizes $P_{\alpha}$.
\end{enumerate}

Now let $\mathcal{M}'=\bigcup_{\alpha<|M|^\kappa}\mathcal{M}_\alpha$. Since every type $P_\alpha$ is realized in some $\mathcal{M}_\alpha$, $\mathcal{M}'$ realizes every such type. It remains to show that $|M'|\leq|M|^\kappa$. 

We will show by induction that for all $\alpha<|M|^\kappa$, $|M_\alpha|\leq|M|^\kappa$. In the base case, $|M_0| = |M| \leq |M|^\kappa$. If $|M_\alpha|\leq|M|^\kappa$, then $|M_{\alpha+1}| = |M_\alpha|\leq|M|^\kappa$. If $\alpha$ is a limit ordinal, then $|M_\alpha|$ is the union of a chain of sets of cardinality at most $|M|^\kappa$, so its cardinality is the limit of the cardinalities of these sets, which is bounded above by $|M|^\kappa$. So $|M_\alpha|\leq|M|^\kappa$. 

Now $\mathcal{M}'$ is the union of a chain of sets of cardinalities at most $|M|^\kappa$, so it has cardinality at most $|M|^\kappa$. This completes the proof of the claim.

We have that $\mathcal{M}'$ realizes every 1-type over every subset $A\subset M$ with $|A|\leq\kappa$, but we do not yet have the $\mathcal{M}'$ is $\kappa^+$-saturated, since its domain is larger than that of $\mathcal{M}$, and thus there are additional types to realize. 

We build another elementary chain $(\mathcal{N}_\alpha\,:\,\alpha<\kappa^+)$ as follows:
\begin{enumerate}
\item Let $\mathcal{N}_0 = \mathcal{M}$. 
\item For $\alpha$ a limit ordinal, let $\mathcal{N}_\alpha = \bigcup_{\beta<\alpha}\mathcal{N}_\beta$. 
\item For all $\alpha$, apply the previous claim to find an elementary extension $\mathcal{N}_{\alpha+1}$ of $\mathcal{N}_\alpha$ such that for all $A\subseteq N_\alpha$ (where $N_\alpha$ is the domain of $\mathcal{N}_\alpha$) with $|A|\leq\kappa$, every 1-type over $A$ is realized in $\mathcal{N}_{\alpha+1}$. By the claim, $|N_{\alpha+1}|\leq |N_\alpha|^\kappa$. 
\end{enumerate}

Now let $\mathcal{N}=\bigcup_{\alpha<\kappa^+}\mathcal{N}_\alpha$. Let $N$ be the domain of $\mathcal{N}$. For every $A\subseteq N$ such that $|A|<\kappa^+$, $A$ is contained in the domain of some $\mathcal{N}_\alpha$, and every 1-type over $A$ is realized in $\mathcal{N}_{\alpha+1}$, and therefore in $\mathcal{N}$. Thus $\mathcal{N}$ is $\kappa^+$-saturated. It remains to show that $|N|\leq|M|^\kappa$. 

We will show by induction that for all $\alpha<\kappa^+$, $|N_\alpha|\leq|M|^\kappa$. In the base case, $|N_0| = |M| \leq |M|^\kappa$. If $|{N}_\alpha|\leq|{M}|^\kappa$, then $|{N}_{\alpha+1}|\leq |{N}_\alpha|^\kappa\leq(|{M}|^\kappa)^\kappa = |{M}|^\kappa$. If $\alpha$ is a limit ordinal, then $|{N}_\alpha|$ is the union of a chain of sets of cardinality at most $|{M}|^\kappa$, so $|{N}_\alpha|\leq|{M}|^\kappa$. 

Now $\mathcal{N}$ is the union of a chain of sets of cardinalities at most $|{M}|^\kappa$ so it has cardinality at most $|{M}|^\kappa$. This completes the proof.
\end{proof}

\begin{cor}\label{cor:ch}
If we assume the Continuum Hypothesis, there is a saturated model of $Th(\mathcal{M})$ with cardinality $\aleph_1$. 
\end{cor}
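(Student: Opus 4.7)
The plan is to apply Theorem~\ref{thm:saturatedexistence} with $\kappa=\aleph_0$ to a carefully chosen starting structure. The quantity $|M|^\kappa$ appears in the cardinality bound, so it is essential to start from a model as small as possible. I would therefore first invoke the downward L\"owenheim-Skolem theorem (Theorem~\ref{thm:L-S}) on $\mathcal{M}$, with $X=\emptyset$, to obtain a countable elementary substructure $\mathcal{M}_0\preceq\mathcal{M}$. Since $\mathcal{M}$ is infinite and $\mathcal{L}$ is countable, $\mathcal{M}_0$ is infinite and $|M_0|=\aleph_0$. Because $\mathcal{M}_0\preceq\mathcal{M}$, we have $\operatorname{Th}(\mathcal{M}_0)=\operatorname{Th}(\mathcal{M})$, and any model of $\operatorname{Th}(\mathcal{M}_0)$ will be a model of $\operatorname{Th}(\mathcal{M})$.

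Next, apply Theorem~\ref{thm:saturatedexistence} to $\mathcal{M}_0$ with $\kappa=\aleph_0$. This produces an elementary extension $\mathcal{N}\succeq\mathcal{M}_0$ whose domain $N$ satisfies $|N|\leq |M_0|^{\aleph_0}=\aleph_0^{\aleph_0}$, and such that $\mathcal{N}$ is $\aleph_0^+=\aleph_1$-saturated. By standard transfinite arithmetic, $\aleph_0^{\aleph_0}=2^{\aleph_0}$ (since $2^{\aleph_0}\leq\aleph_0^{\aleph_0}\leq(2^{\aleph_0})^{\aleph_0}=2^{\aleph_0\cdot\aleph_0}=2^{\aleph_0}$), and invoking the Continuum Hypothesis this equals $\aleph_1$. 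Hence $|N|\leq\aleph_1$.

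For the matching lower bound, I would quote the observation made immediately after Theorem~\ref{thm:saturatedbijection}: any $\kappa$-saturated structure must have cardinality at least $\kappa$, otherwise the 1-type $\{\lnot(v_1=c_m)\mid m\in N\}$ over the entire domain would be unrealizable. Applied with $\kappa=\aleph_1$, this gives $|N|\geq\aleph_1$, so $|N|=\aleph_1$. An $\aleph_1$-saturated structure of cardinality $\aleph_1$ is saturated by definition, and since $\mathcal{N}\succeq\mathcal{M}_0$ we have $\mathcal{N}\models\operatorname{Th}(\mathcal{M}_0)=\operatorname{Th}(\mathcal{M})$, completing the proof.

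There is no real obstacle here; the only step that requires any care is the cardinal arithmetic $\aleph_0^{\aleph_0}=2^{\aleph_0}$ and the recognition that the Continuum Hypothesis is used precisely once, to collapse $2^{\aleph_0}$ to $\aleph_1$ so that the upper and lower bounds on $|N|$ coincide. Without CH, the same argument produces a saturated model of cardinality $2^{\aleph_0}$, which is why Appendix~\ref{sec:SM} is needed to eliminate CH from the main argument.
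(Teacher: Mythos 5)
Your proof matches the paper's essentially step for step: apply downward L\"owenheim--Skolem to get a countable model of $Th(\mathcal{M})$, then apply Theorem~\ref{thm:saturatedexistence} with $\kappa=\aleph_0$ to get an $\aleph_1$-saturated elementary extension of size at most $\aleph_0^{\aleph_0}=2^{\aleph_0}=\aleph_1$ under CH, and finish with the lower-bound observation that an $\aleph_1$-saturated model has cardinality at least $\aleph_1$. The extra detail you supply (the arithmetic $\aleph_0^{\aleph_0}=2^{\aleph_0}$, and the explicit remark that CH is used exactly once) is correct and a nice clarification, but the route is the same.
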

\begin{proof}
By the L\"owenheim-Skolem theorem (Theorem~\ref{thm:L-S}), there is a model $\mathcal{M}'\models Th(\mathcal{M})$ of cardinality $\aleph_0$. Applying Theorem~\ref{thm:saturatedexistence}, there is a $\aleph_1$-saturated elementary extension $\mathcal{N}$ of $\mathcal{M}'$ with domain $N$ of cardinality at most $\aleph_0^{\aleph_0}$. If we assume the Continuum Hypothesis, $\aleph_0^{\aleph_0} = \aleph_1$. Since an $\aleph_1$-saturated model must have cardinality at least $\aleph_1$, $|N| = \aleph_1$, and hence $\mathcal{N}$ is a saturated model for $Th(\mathcal{M})$.
\end{proof}

The existence of saturated models makes many results in model theory easier to prove, including the Ax-Kochen Principle. There are methods to eliminate the Continuum Hypothesis from some proofs which use saturated models, one of which is to employ a generalization of saturated models, called special models. For more information, see Appendix~\ref{sec:SM}. We will assume the existence of saturated models in order to simplify our arguments.

\newpage

\section{The Ax-Kochen Principle}\label{sec:AKP}
\subsection{Hensel's Lemma}\label{subsec:HL}

One of our key tools in establishing the Ax-Kochen Principle will be Hensel's lemma. In valued fields in which Hensel's lemma holds, one can lift information about polynomials over the residue class field to polynomials over the valuation ring. 

\begin{defin}
Let $F$ be a valued field.  We say that $F$ is \emph{Henselian} if $F$ has the following property, which is one of the formulations of Hensel's lemma:

Let $f,g_0,h_0\in \mathcal{O}[x]$ be monic polynomials. If the images of $g_0$ and $h_0$ in $\overline{F}[x]$, $\overline{g_0}$ and $\overline{h_0}$, are relatively prime, and if $\overline{g_0}\overline{h_0} = \overline{f}$, then there exist $g,h\in\mathcal{O}[x]$ such that $\overline{g} = \overline{g_0}$, $\overline{h} = \overline{h_0}$, and $f = gh$. 
\end{defin}

In this section, we will show that all complete discrete valued fields, and in particular the fields $\mathbb{Q}_p$ and $\mathbb{F}_p((t))$, are Henselian. 

Throughout this section, we will assume that all valued fields have cross section, and we will write our value groups multiplicatively. This is a change from the notation in Section ~\ref{subsec:VF}. In particular, we will take as the value group of a discrete valued field the group $\{\pi^n\,|\,n\in\mathbb{Z}\}$, where $\pi$ is a prime element. We must specify what is meant by homomorphism of valued fields and valued subfield in this context.

\begin{defin}\label{def:VF-hom}
Let $F$ and $F'$ be valued fields (with cross section), and let $\mathfrak{v}_F:F\rightarrow F$ and $\mathfrak{v}_{F'}:F'\rightarrow F'$ be their valuations. We say that a function $f:F\rightarrow F'$ is a \emph{homomorphism of valued fields} if it is a field homomorphism which preserves valuations, that is, if $f\circ \mathfrak{v}_F = \mathfrak{v}_{F'}\circ f$. We say that a subfield $K\subseteq F$ is a \emph{valued subfield} if $\mathfrak{v}_F(K)\subseteq K$. In this case, $K$ is a valued field with cross section whose valuation is the restriction of $\mathfrak{v}_F$ to $K$.
\end{defin}

We will assume familiarity with the resultant, an algebraic construction which gives information about the common roots of polynomials. The necessary facts about the resultant are developed in Appendix~\ref{sec:TR}. We will use the following results:

\begin{resprime}
Let $R$ be a ring. If $f,g\in R[x]$ are relatively prime, $Res(f,g)\neq 0$.
\end{resprime}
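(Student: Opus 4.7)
The plan is to prove the contrapositive: if $Res(f, g) = 0$ then $f$ and $g$ share a common factor, and therefore fail to be relatively prime. Write $n = \deg f$ and $m = \deg g$, and recall from the appendix that $Res(f,g) = \det S$, where $S$ is the Sylvester matrix representing the $R$-linear map
\[
\phi : R[x]_{<m} \oplus R[x]_{<n} \longrightarrow R[x]_{<n+m}, \qquad (A,B) \mapsto Af + Bg.
\]
The first move is to extract a nontrivial element of $\ker \phi$: from $\det S = 0$ I would produce polynomials $A, B \in R[x]$, not both zero, with $\deg A < m$, $\deg B < n$, and $Af + Bg = 0$. For an integral domain this is immediate, since $S$ is singular over the fraction field and we can clear denominators; for a general ring one invokes the classical adjoint identity $\mathrm{adj}(S)\cdot S = \det(S)\cdot I$, whose columns (when the adjoint is nonzero) encode exactly such an $(A,B)$.

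Now suppose for contradiction that $f$ and $g$ are relatively prime, meaning there exist $p, q \in R[x]$ with $pf + qg = 1$. Using $Af = -Bg$, multiply the identity $pf + qg = 1$ through by $A$ to obtain
\[
A \;=\; A(pf + qg) \;=\; pAf + qAg \;=\; -pBg + qAg \;=\; (qA - pB)\, g.
\]
Since $\deg A < m = \deg g$, the factorization $A = (qA - pB)\,g$ forces $qA - pB = 0$ and $A = 0$ (using that the leading coefficient of $g$ is a non-zero-divisor, which is the standing assumption under which $Res$ is developed in the appendix). Substituting $A = 0$ back into $Af + Bg = 0$ gives $Bg = 0$, whence $B = 0$, contradicting the non-triviality of $(A,B)$.

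The main obstacle is the very first step, namely extracting a genuine kernel vector of $\phi$ from the mere vanishing of $\det S$ over a ring that need not be a domain. For integral domains the fraction-field argument is painless; in full generality one uses the adjoint identity and has to handle the degenerate case where the whole adjoint matrix itself vanishes, either by iterating downward through smaller minors or by reducing to an integral-domain quotient $R/\mathfrak{p}$ for a minimal prime $\mathfrak{p}$ (and checking that the hypothesis on leading coefficients survives the reduction). Once that step is in hand, the elementary algebraic manipulation in the second paragraph closes the argument uniformly, independent of the structure of $R$.
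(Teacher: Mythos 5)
Your argument is correct and takes a genuinely different route from the paper's. The paper proves the much stronger identity $Res(f,g) = a_n^m b_m^n \prod_{i,j}(\alpha_i - \beta_j)$ over an algebraic closure: it first extracts $p f + q g = Res(f,g)$ via Cramer's rule on the resultant matrix, then expresses the coefficients of $f$ and $g$ as symmetric functions of the roots and pins down the product by a degree count; the nonvanishing for relatively prime $f,g$ then drops out because no $\alpha_i - \beta_j$ can vanish. Your proof bypasses the root-product formula entirely and works with the Sylvester map directly, extracting from $\det S = 0$ a nontrivial $(A,B)$ with $Af + Bg = 0$ and small degrees, then colliding it with a B\'ezout identity by multiplying through by $A$. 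Your route is leaner and more self-contained (no algebraic closure, no symmetric functions), at the price of not establishing the classical product formula, which the paper's development does want for other purposes. Two small points to tighten up. First, the restatement in Section 4.1 says ``let $R$ be a ring,'' but the theorem as actually stated and proved in the appendix assumes $R$ is a subring of a field $K$, i.e.\ an integral domain; this is precisely what makes your kernel-vector extraction painless by passing to the fraction field and clearing denominators, and the extra discussion of arbitrary rings via the adjoint and minimal primes is beyond what the paper needs or claims. Second, with $R$ only a subring of $K$, ``relatively prime'' gives a B\'ezout identity with $p,q \in K[x]$ rather than $R[x]$; this is harmless since $A, B \in R[x] \subseteq K[x]$ and the degree argument runs identically in $K[x]$, but it is worth being explicit that the manipulation happens over $K$, not $R$.
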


\begin{henselhelper}
Let $R$ be a subring of a field $K$, and let $g,h\in R[x]$ with $deg(g) = m$, $deg(h) = n$. If $\rho = Res(g,h)\neq 0$, then for all $l\in R[x]$ such that $deg(l)\leq m+n-1$, there exist $\phi,\psi\in R[x]$ with $deg(\phi)\leq n-1$, $deg(\psi)\leq m-1$ such that $g\phi + h\psi = \rho l$. 
\end{henselhelper}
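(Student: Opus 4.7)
The plan is to recognize that this lemma is essentially a statement about the Sylvester matrix together with Cramer's rule. I set up the $R$-linear map
\[
T: R[x]_{<n} \oplus R[x]_{<m} \longrightarrow R[x]_{<m+n}, \qquad (\phi,\psi) \longmapsto g\phi + h\psi,
\]
where $R[x]_{<k}$ denotes the free $R$-module of polynomials of degree strictly less than $k$. Both source and target have rank $m+n$, and I would choose the monomial bases $\{x^{n-1},\ldots,x,1\}$ on the first summand, $\{x^{m-1},\ldots,x,1\}$ on the second, and $\{x^{m+n-1},\ldots,x,1\}$ on the target. Then the matrix of $T$ in these bases is (the transpose of) the Sylvester matrix of $g$ and $h$, whose determinant is by definition $\rho = \mathrm{Res}(g,h)$; this is exactly the connection I need to exploit.

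Next I would pass to the field $K$ by tensoring. Since $\rho \neq 0$, the matrix of $T$ over $K$ is invertible, so $T_K$ is a $K$-linear isomorphism, and in particular given any $l \in R[x]_{<m+n} \subseteq K[x]_{<m+n}$ there is a unique pair $(\phi_0,\psi_0) \in K[x]_{<n}\oplus K[x]_{<m}$ with $g\phi_0 + h\psi_0 = l$. The question is the integrality of the coefficients of $\phi_0$ and $\psi_0$.

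This is where Cramer's rule comes in. The inverse of $T_K$ has matrix $(1/\rho)\,\mathrm{adj}(M)$, where $M$ is the Sylvester matrix and $\mathrm{adj}(M)$ has entries in $R$ (they are signed minors of $M$, hence polynomial expressions in the coefficients of $g$ and $h$). Therefore the matrix $\rho T_K^{-1}$ has entries in $R$. Applying this to $l$ and setting $(\phi,\psi) = \rho\, T_K^{-1}(l) \in R[x]_{<n}\oplus R[x]_{<m}$ gives coefficients in $R$ with $T(\phi,\psi) = \rho l$, i.e.\ $g\phi + h\psi = \rho l$, with the correct degree bounds.

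The main obstacle, if any, is purely bookkeeping: writing out the Sylvester matrix in a convention consistent with the chosen monomial bases so that its determinant really is $\mathrm{Res}(g,h)$ (rather than $\pm\rho$, which is harmless anyway since one can absorb the sign into $\phi,\psi$), and verifying that the image of the map really does have the monomial basis I claimed. The conceptual content, namely that solvability in $K$ together with Cramer's rule gives solvability in $R$ after multiplying by $\rho$, is immediate once the linear-algebra setup is in place.
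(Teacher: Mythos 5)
Your proof is correct and takes essentially the same approach as the paper: both set up the linear system $g\phi + h\psi = \rho l$ whose coefficient matrix is (the transpose of) the Sylvester matrix, invert over $K$, and use the adjugate/cofactor formula to conclude that $\rho M^{-1}$ has entries in $R$. The only difference is presentational — you phrase it in module-theoretic language with explicit monomial bases, while the paper writes out the system of equations coordinate-wise.
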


Most of the work of proving Hensel's lemma in complete discrete valued fields is done in the following more general lemma.

\begin{lem}[{\cite[Theorem 4.3.1]{Borevich}}]\label{lem:henselhelper2}
Let $F$ be a complete discrete valued field with valuation $\mathfrak{v}$ and prime element $\pi$. Let $f\in \mathcal{O}[x]$ be a polynomial of degree $m+n$. Suppose there are polynomials $g_0,h_0\in \mathcal{O}[x]$ of degrees $m$ and $n$ respectively such that
\begin{enumerate}
\item $f$ and $g_0h_0$ have the same leading coefficient,
\item $Res(g_0,h_0)\neq 0$, and
\item letting $\pi^r=\mathfrak{v}(Res(g_0,h_0))$, we have $f\equiv g_0h_0$ (mod $\pi^{2r+1}$). 
\end{enumerate}
Then there exist polynomials $g,h\in\mathcal{O}[x]$ of degrees $m$ and $n$ respectively such that
\begin{enumerate}
\item $f=gh$,
\item $g\equiv g_0$ and $h\equiv h_0$ (mod $\pi^{r+1}$), and
\item $g$ and $g_0$ have the same leading coefficient, as do $h$ and $h_0$.
\end{enumerate}
\end{lem}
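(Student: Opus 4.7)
The plan is a Hensel lifting by successive approximation in the spirit of Newton's method. Starting from the given pair $(g_0,h_0)$, I would inductively build sequences $(g_k),(h_k)\subset \mathcal{O}[x]$, with $\deg g_k = m$, $\deg h_k = n$, and the same leading coefficients as $g_0,h_0$, satisfying the two invariants
\begin{enumerate}
\item $g_{k+1}\equiv g_k$ and $h_{k+1}\equiv h_k\pmod{\pi^{r+k+1}}$,
\item $f\equiv g_kh_k\pmod{\pi^{2r+k+1}}$.
\end{enumerate}
Invariant (1) will force the coefficients of $g_k$ and $h_k$ to be Cauchy sequences in $\mathcal{O}$; since $F$ is complete, they converge to polynomials $g,h\in\mathcal{O}[x]$ with the correct degrees and leading coefficients, and invariant (2) forces $f=gh$ in the limit. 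Also (1) iterated from $k=0$ gives $g\equiv g_0$ and $h\equiv h_0\pmod{\pi^{r+1}}$, as required.

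For the inductive step, assume invariants (1) and (2) hold up through stage $k$. Writing $f-g_kh_k=\pi^{2r+k+1}\ell_k$ for some $\ell_k\in\mathcal{O}[x]$, the hypothesis that $f$ and $g_kh_k$ share leading coefficient forces $\deg \ell_k\leq m+n-1$. Apply Lemma~\ref{lem:henselhelper} (henselhelper) to $g_0,h_0$ with $l=\ell_k$: there exist $\phi_k,\psi_k\in\mathcal{O}[x]$ with $\deg\phi_k\leq n-1$, $\deg\psi_k\leq m-1$, and $g_0\phi_k+h_0\psi_k=\rho\,\ell_k$. Now set
\[
\Delta g_k=\pi^{2r+k+1}\rho^{-1}\psi_k,\qquad \Delta h_k=\pi^{2r+k+1}\rho^{-1}\phi_k,
\]
and define $g_{k+1}=g_k+\Delta g_k$, $h_{k+1}=h_k+\Delta h_k$. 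Since $\mathfrak{v}(\pi^{2r+k+1}\rho^{-1})=\pi^{r+k+1}$, the corrections lie in $\pi^{r+k+1}\mathcal{O}[x]$, which immediately verifies invariant (1) and, together with the degree bounds on $\phi_k,\psi_k$, preserves the degrees and leading coefficients.

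To see that invariant (2) propagates, expand
\[
g_{k+1}h_{k+1}-g_kh_k=g_k\Delta h_k+h_k\Delta g_k+\Delta g_k\Delta h_k,
\]
and then rewrite $g_k\Delta h_k+h_k\Delta g_k=(g_0\Delta h_k+h_0\Delta g_k)+(g_k-g_0)\Delta h_k+(h_k-h_0)\Delta g_k$. By construction the first bracket equals $\pi^{2r+k+1}\rho^{-1}(g_0\phi_k+h_0\psi_k)=\pi^{2r+k+1}\ell_k=f-g_kh_k$. The remaining terms, together with $\Delta g_k\Delta h_k$, all lie in $\pi^{2r+k+2}\mathcal{O}[x]$: for the mixed terms this uses invariant (1) telescoped down to give $g_k-g_0,h_k-h_0\in\pi^{r+1}\mathcal{O}[x]$, and for the product $\Delta g_k\Delta h_k$ it uses that each factor has valuation at least $\pi^{r+k+1}$, so their product has valuation at least $\pi^{2r+2k+2}$. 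Consequently $f-g_{k+1}h_{k+1}\in\pi^{2r+k+2}\mathcal{O}[x]$, completing the induction.

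The principal obstacle is precisely this valuation bookkeeping at each step: one is forced to linearize the correction equation using $g_0,h_0$ (because only they satisfy the hypothesis $\mathrm{Res}(g_0,h_0)\neq 0$ needed to invoke Lemma~\ref{lem:henselhelper}), yet the approximation error must be measured against the current $g_k,h_k$. The error this swap introduces has valuation at least $\pi^{(r+1)+(r+k+1)}=\pi^{2r+k+2}$, which is exactly what is needed for the iteration to gain one power of $\pi$ at every step. Once that accounting is in place, completeness of $\mathcal{O}$ delivers the limit polynomials $g,h$ and the conclusion.
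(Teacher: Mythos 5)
Your proof is correct, but it takes a genuinely different route through the inductive step. The paper applies Lemma~\ref{lem:henselhelper} to the \emph{current} pair $(g_{k-1},h_{k-1})$ at each stage, which requires an extra observation that $g_{k-1}\equiv g_0$ and $h_{k-1}\equiv h_0 \pmod{\pi^{r+1}}$ implies $\mathrm{Res}(g_{k-1},h_{k-1})\equiv\mathrm{Res}(g_0,h_0)\pmod{\pi^{r+1}}$, so the resultant still has valuation exactly $\pi^r$. In exchange, the linearization error collapses to a single term: $g_kh_k = f + \pi^{2r+2k}\phi_k\psi_k$, making the valuation gain immediate. You instead fix the linear operator once and for all by always solving $g_0\phi_k + h_0\psi_k = \rho\ell_k$; this avoids tracking the resultant through the iteration (the only hypothesis ever invoked is $\mathrm{Res}(g_0,h_0)\neq 0$, which is given), but you then have to account for the two extra cross terms $(g_k-g_0)\Delta h_k$ and $(h_k-h_0)\Delta g_k$ on top of $\Delta g_k\Delta h_k$, using the telescoped bound $g_k-g_0,h_k-h_0\in\pi^{r+1}\mathcal{O}[x]$. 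Your valuation bookkeeping on all three terms is correct and the degree and leading-coefficient invariants are preserved for the same reasons as in the paper, so both proofs are sound; yours trades a slightly messier error decomposition for not having to re-examine the resultant at each step, which is a perfectly reasonable trade.
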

\begin{proof}
For all $k\geq 0$, we will define polynomials $g_k$ and $h_k$ so that $deg(g_k) = deg(g_0)$, $deg(h_k) = deg(h_0)$, and $f\equiv g_kh_k$ (mod $\pi^{2r+k+1}$). 

We will accomplish this by defining $\phi_k,\psi_k\in \mathcal{O}[x]$ for all $k\geq 1$ and letting $g_k = g_0 + \pi^{r+1}\phi_1 + \hdots + \pi^{r+k}\phi_k$ and $h_k = h_0 + \pi^{r+1}\psi_1 + \hdots + \pi^{r+k}\psi_k$. If we require that $deg(\phi_k)\leq m-1$ and $deg(\psi_k)\leq n-1$, then we will have $deg(g_k) = deg(g_0)$ and $deg(h_k) = deg(h_0)$.

In the base case, we simply have $f\equiv g_0h_0$ (mod $\pi^{2r+1}$) by assumption. 

Suppose we have $f\equiv g_{k-1}h_{k-1}$ (mod $\pi^{2r + k}$). Then $f = g_{k-1}h_{k-1} + \pi^{2r+k}l$, for some $l\in \mathcal{O}[x]$. 

By inductive assumption, $g_0$ and $g_{k-1}$ have the same leading coefficient, as do $h_0$ and $h_{k-1}$. The leading coefficient of $f$ is the same as the product of the leading coefficients of $g_0$ and $h_0$ by assumption, so it is the same as the product of the leading coefficients of $g_{k-1}$ and $h_{k-1}$. Thus $l$ must have degree less than $m+n$. 

Also, $g_{k-1}\equiv g_0$, $h_{k-1}\equiv h_0$ (mod $\pi^{r+1}$) by construction, so $Res(g_{k-1},h_{k-1})\equiv Res(g_0,h_0)$ (mod $\pi^{r+1}$). But $\mathfrak{v}(Res(g_0,h_0)) = \pi^r$ by assumption, so $\mathfrak{v}(Res(g_{k-1},h_{k-1})) = \pi^r$, and $Res(g_{k-1},h_{k-1}) = \pi^ru$ for some unit $u$. 

Applying Lemma~\ref{lem:henselhelper}, there exist polynomials $\phi_k,\psi_k\in \mathcal{O}[x]$ such that $g_{k-1}\psi_k + h_{k-1}\phi_k = (\pi^ru)(u^{-1}l) = \pi^rl$ with $deg(\phi_k)\leq m-1$ and $deg(\psi_k)\leq n-1$.

Now we define $g_k = g_0 + \pi^{r+1}\phi_1 + \hdots + \pi^{r+k}\phi_k$ and $h_k = h_0 + \pi^{r+1}\psi_1 + \hdots + \pi^{r+k}\psi_k$. We need to show that $f\equiv g_kh_k$ (mod $\pi^{2r+k+1}$). 

Expanding, 
\begin{eqnarray*}
g_kh_k &=& (g_{k-1} + \pi^{r+k}\phi_k)(h_{k-1} + \pi^{r+k}\psi_k)\\
&=& g_{k-1}h_{k-1} + \pi^{r+k}(g_{k-1}\psi_k + h_{k-1}\phi_k) + \pi^{2r+2k}\phi_k\psi_k\\
&=& (f - \pi^{2r+k}l) + \pi^{2r+k}l + \pi^{2r+2k}\phi_k\psi_k\\
&=& f + \pi^{2r+2k}\phi_k\psi_k.
\end{eqnarray*}

So $f\equiv g_kh_k$ (mod $\pi^{2r+k+1}$), as was to be shown, since $2k\geq k+1$ when $k\geq 1$. 

Having established the claim by induction, we let $g = g_0 + \pi^{r+1}\phi_1 + \pi^{r+2}\phi_2 + \hdots$, which is an element of $\mathcal{O}[x]$, since $F$ is complete. Similarly, we let $h = h_0 + \pi^{r+1}\psi_1 + \pi^{r+2}\psi_2 + \hdots\in\mathcal{O}[x]$. 

Since the degrees of the $\phi_i$ are all less than $m$ and the degrees of the $\psi_i$ are all less than $n$, the leading coefficient of $g$ is the same as that of $g_0$, and the leading coefficient of $h$ is the same as that of $h_0$. We also clearly have $g\equiv g_0$ and $h\equiv h_0$ (mod $\pi^{r+1}$).

Finally, $f\equiv g_kh_k$ (mod $\pi^{2r+k+1}$) for all $k\geq 1$, and $g_kh_k\equiv gh$ (mod $\pi^{r+k+1}$) for all $k\geq 1$, so $f\equiv gh$ (mod $\pi^{r+k+1}$) for all $k\geq 1$. Letting $k$ to go infinity, we have $f = gh$ in $\mathcal{O}$.
\end{proof}

\begin{thm}\label{thm:DVFhensel}
All complete discrete valued fields are Henselian.
\end{thm}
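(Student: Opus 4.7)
The plan is to apply Lemma~\ref{lem:henselhelper2} directly to the same polynomials $f, g_0, h_0$ given in the Henselian hypothesis, after showing that the three numerical conditions of that lemma follow from the coprimality assumption on the reductions. Let $m = \deg g_0$ and $n = \deg h_0$. Because $g_0, h_0$ are monic, their reductions $\overline{g_0}, \overline{h_0}$ are also monic of degrees $m, n$, so $\overline{f} = \overline{g_0}\overline{h_0}$ forces $\deg f = m+n$, and both $f$ and $g_0 h_0$ are monic of degree $m+n$. This verifies hypothesis (1) of the lemma.

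Next, I would verify hypothesis (2), $\mathrm{Res}(g_0, h_0) \neq 0$, together with the computation of its valuation. Since the resultant is given by the Sylvester determinant in the coefficients and the leading coefficients of $g_0, h_0$ are units ($=1$), reduction mod $\pi$ commutes with the resultant: $\overline{\mathrm{Res}(g_0, h_0)} = \mathrm{Res}(\overline{g_0}, \overline{h_0})$ in $\overline{F}$. By Theorem~\ref{thm:resprime} applied in the ring $\overline{F}[x]$, the coprimality of $\overline{g_0}$ and $\overline{h_0}$ implies $\mathrm{Res}(\overline{g_0}, \overline{h_0}) \neq 0$. Thus $\mathrm{Res}(g_0, h_0) \notin I_1$, so $\mathfrak{v}(\mathrm{Res}(g_0, h_0)) = \pi^0$, giving $r = 0$.

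With $r = 0$, hypothesis (3) becomes $f \equiv g_0 h_0 \pmod{\pi}$, which is exactly the assumption $\overline{f} = \overline{g_0}\overline{h_0}$. All three hypotheses are satisfied, so Lemma~\ref{lem:henselhelper2} produces $g, h \in \mathcal{O}[x]$ of degrees $m, n$ with $f = gh$, with $g \equiv g_0$ and $h \equiv h_0 \pmod{\pi^{r+1}} = \pmod{\pi}$, and with the same leading coefficients as $g_0, h_0$. The congruences give $\overline{g} = \overline{g_0}$ and $\overline{h} = \overline{h_0}$, and since $g_0, h_0$ are monic, so are $g, h$. This is exactly the Henselian conclusion.

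The only nontrivial step is the verification of hypothesis (2), and the main subtlety there is that the Sylvester determinant is defined relative to a specified pair of degrees, so one must check that reduction mod $\pi$ does not drop the degree of either factor before invoking the resultant identity. Monicity handles this cleanly, which is why it is harmless that Lemma~\ref{lem:henselhelper2} is stated for nonmonic polynomials while the Henselian property as stated assumes monicity: monicity is exactly what makes the reduction-of-resultant argument go through without degree complications.
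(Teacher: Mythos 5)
Your proof is correct and follows essentially the same route as the paper: verify the three hypotheses of Lemma~\ref{lem:henselhelper2} with $r=0$, using monicity for degrees, reduction-compatibility of the resultant plus Theorem~\ref{thm:resprime} for nonvanishing, and the residue-field factorization for the congruence. The closing remark about monicity preventing degree drop under reduction is a nice clarification but does not change the argument.
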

\begin{proof}

Let $F$ be a complete discrete valued field, and let $f,g_0,h_0\in \mathcal{O}[x]$ be monic polynomials, such that the images of $g_0$ and $h_0$ in $\overline{F}[x]$, $\overline{g_0}$ and $\overline{h_0}$, are relatively prime, and $\overline{g_0}\overline{h_0} = \overline{f}$. We would like to show that there exist $g,h\in\mathcal{O}[x]$ such that $\overline{g} = \overline{g_0}$, $\overline{h} = \overline{h_0}$, and $gh = f$. 

Let $m = deg(g_0)$ and $n = deg(h_0)$. Since $f$, $g_0$, and $h_0$ are monic, $\overline{f}$, $\overline{g_0}$, and $\overline{h_0}$ are also monic. Then $deg(\overline{g_0}) = m$, $deg(\overline{h_0}) = n$, and since $\overline{f} = \overline{g_0}\overline{h_0}$, $deg(f) = m + n$. Also, $f$ and $g_0h_0$ have the same leading coefficient, $1$. This satisfies condition 1 of Lemma~\ref{lem:henselhelper2}.

Let $\rho = Res(g_0,h_0)$. Since $\rho$ is computed from the coefficients of $g_0$ and $h_0$ by addition and multiplication, we can mod out by $\pi$, and $\overline{\rho} = Res(\overline{g_0},\overline{h_0})$. Now $\overline{g_0}$ and $\overline{h_0}$ are relatively prime in $\overline{F}$, so $Res(\overline{g_0},\overline{h_0}) \neq 0$. Thus $\rho\not\equiv 0$ (mod $\pi$), and in particular $\rho\neq 0$, satisfying condition 2. Hence $\mathfrak{v}(\rho) = \pi^0$. Finally, $\overline{f} = \overline{g_0}\overline{h_0}$, so $f\equiv g_0h_0$ (mod $\pi$), satisfying condition 3 of Lemma~\ref{lem:henselhelper2} with $r = 0$. 

The lemma gives us polynomials $g,h\in\mathcal{O}[x]$ such that $g \equiv g_0$ (mod $\pi$), $h \equiv h_0$ (mod $\pi$), and $f = gh$, as required by Hensel's lemma.
\end{proof}

\subsubsection*{Consequences of Hensel's Lemma}
Henselian valued fields have a number of properties which will be useful in the proof of the Ax-Kochen Principal. The most important is that the residue class field of a Henselian valued field can be embedded as a subfield in the case of characteristic zero.

\begin{lem}[{\cite[Lemma 5.4.13 (ii)]{Chang}}]\label{lem:residue_subfield}
Let $F$ be a Henselian valued field with valuation $\mathfrak{v}$ such that $char(\overline{F}) = 0$. Then there exists a subfield $F_0\subseteq \mathcal{O}$ such that $F_0\cong \overline{F}$, where the isomorphism is given by $\phi(x) = \overline{x}$.  
\end{lem}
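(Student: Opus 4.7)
The plan is to construct $F_0$ as a maximal subfield of $\mathcal{O}$ with respect to inclusion, and then show that the reduction map sends it isomorphically onto $\overline{F}$.

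First I would note that since $\mathrm{char}(\overline{F}) = 0$, we also have $\mathrm{char}(F) = 0$: the reductions of the integers $\overline{n\cdot 1}$ are nonzero in $\overline{F}$ for all $n\geq 1$, so $n\cdot 1\neq 0$ in $F$. Hence $\mathbb{Q}\subseteq F$. Moreover, $\mathbb{Q}\subseteq\mathcal{O}$: every positive integer $n$ satisfies $\mathfrak{v}(n)\geq \min\mathfrak{v}(1)=1$ (multiplicatively, $\mathfrak{v}(n)\geq 1$, which is my notation for the nonnegative cone); since $\overline{n}\neq 0$ forces $\mathfrak{v}(n)=1$, every nonzero integer is a unit, so every rational has nonnegative valuation. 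This gives a nonempty starting point. Now consider the poset $\mathcal{P}$ of all subfields of $F$ contained in $\mathcal{O}$, ordered by inclusion. The union of any chain in $\mathcal{P}$ is again a subfield contained in $\mathcal{O}$, so by Zorn's lemma there is a maximal element $F_0\in\mathcal{P}$.

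Because $F_0$ is a field, the ring homomorphism $F_0\to\overline{F}$, $x\mapsto\overline{x}$, is automatically injective; let $\overline{F_0}$ denote its image. Note that any nonzero $x\in F_0$ has its inverse in $F_0\subseteq\mathcal{O}$, so $x\in\mathcal{O}^*$ and $\mathfrak{v}(x)=1$. Thus every nonzero element of $F_0$ is a unit, and consequently for any $q\in F_0[x]$ the reduction $\overline{q}\in\overline{F_0}[x]$ has the same degree as $q$. It remains to show $\overline{F_0}=\overline{F}$; I would argue this by contradiction with maximality, splitting into the transcendental and algebraic cases.

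Suppose there exists $\overline{\alpha}\in\overline{F}\setminus\overline{F_0}$ that is transcendental over $\overline{F_0}$. Choose any lift $\alpha\in\mathcal{O}$ with $\overline{\alpha}$ as specified. For any nonzero $q\in F_0[x]$, $\overline{q}$ has the same degree and is nonzero, so $\overline{q}(\overline{\alpha})\neq 0$ by transcendence; hence $q(\alpha)\in\mathcal{O}^*$. In particular $\alpha$ is transcendental over $F_0$, and for any $p,q\in F_0[x]$ with $q\neq 0$ we get $\mathfrak{v}(p(\alpha)/q(\alpha))=\mathfrak{v}(p(\alpha))\geq 1$, so $F_0(\alpha)\subseteq\mathcal{O}$. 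This strictly enlarges $F_0$, contradicting maximality. Now suppose instead that $\overline{\alpha}$ is algebraic over $\overline{F_0}$ with minimal polynomial $\overline{p}\in\overline{F_0}[x]$. Using the isomorphism $F_0\cong\overline{F_0}$, lift the coefficients of $\overline{p}$ to get a monic $p\in F_0[x]\subseteq\mathcal{O}[x]$. Since $\mathrm{char}(\overline{F})=0$, $\overline{p}$ is separable, so $\overline{\alpha}$ is a simple root: $\overline{p}(x)=(x-\overline{\alpha})\overline{h_0}(x)$ with $\gcd(x-\overline{\alpha},\overline{h_0})=1$ in $\overline{F}[x]$. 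Choosing a monic $h_0\in\mathcal{O}[x]$ reducing to $\overline{h_0}$ and a lift $\beta\in\mathcal{O}$ of $\overline{\alpha}$, the Henselian hypothesis applied to $p$, $g_0=x-\beta$, and $h_0$ produces monic $g,h\in\mathcal{O}[x]$ with $\overline{g}=x-\overline{\alpha}$, $\overline{h}=\overline{h_0}$, and $p=gh$. Since $g$ is monic of degree one, $g(x)=x-\alpha$ for some $\alpha\in\mathcal{O}$ with $\overline{\alpha}$ as required, and $p(\alpha)=0$. As $\overline{p}$ is irreducible over $\overline{F_0}\cong F_0$, $p$ is also irreducible over $F_0$, so $[F_0(\alpha):F_0]=\deg p=[\overline{F_0}(\overline{\alpha}):\overline{F_0}]$. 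Since $F_0(\alpha)\subseteq\mathcal{O}$ (polynomials in $\alpha$ with coefficients in $F_0\subseteq\mathcal{O}$ lie in $\mathcal{O}$, and the map to $\overline{F_0}(\overline{\alpha})$ is a surjective ring homomorphism between fields of equal finite dimension over $F_0$), $F_0(\alpha)\in\mathcal{P}$ strictly contains $F_0$, again contradicting maximality.

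The main obstacle is the algebraic case, where Hensel's lemma as stated in the paper is phrased in terms of coprime factorizations rather than simple roots; the work is in isolating the factor $x-\overline{\alpha}$ correctly and checking that the resulting $F_0(\alpha)$ really lies in $\mathcal{O}$ and embeds injectively into $\overline{F}$. Once this is done, the two cases together exhaust the possibilities for any would-be $\overline{\alpha}\in\overline{F}\setminus\overline{F_0}$, forcing $\overline{F_0}=\overline{F}$ and completing the proof.
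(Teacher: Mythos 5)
Your proof is correct and follows essentially the same route as the paper's: reduce to characteristic $0$ so $\mathbb{Q}\subseteq\mathcal{O}$, apply Zorn's lemma to extract a maximal subfield $F_0\subseteq\mathcal{O}$, observe the reduction map is injective on $F_0$, and then rule out any $\overline{\alpha}\in\overline{F}\setminus\overline{F_0}$ by splitting into the transcendental case (lift arbitrarily, show $F_0(\alpha)\subseteq\mathcal{O}$) and the algebraic case (lift the minimal polynomial, use separability and Hensel's lemma to split off a degree-one factor, extract a root in $\mathcal{O}$). One small imprecision: the Henselian property as stated in the paper only guarantees $\overline{g}=\overline{g_0}$ and $\overline{h}=\overline{h_0}$, not that $g,h$ are themselves monic; the degree constraint $\deg g+\deg h=\deg p$ forces $\deg g=1$ with unit leading coefficient, so the root $\alpha$ still lies in $\mathcal{O}$ as you need, but the paper makes this leading-coefficient argument explicit and you should too rather than asserting monicity outright.
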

\begin{proof}
Since $char(\overline{F}) = 0$, then we must also have $char(F) = 0$, for if $1 + \hdots + 1 = 0$, then $\overline{1} + \hdots + \overline{1} = \overline{0}$. Thus there is a natural embedding of the rationals into $F$. Our first step will be to show that rationals in $F$ are contained in $\mathcal{O}$. 

Since $\mathcal{O}$ is a ring, all integers in $F$ are elements of $\mathcal{O}$. Recall that $\mathfrak{v}(1) = 1$, since $\mathfrak{v}$ is a multiplicative group homomorphism. Since $char(\overline{F}) = 0$, if $n$ is a positive integer, $\overline{n} = \overline{1} + \hdots + \overline{1} \neq 0$, so $n$ is not in the maximal ideal $I_1$, and we do not have $\mathfrak{v}(n)>1$. But $n\in\mathcal{O}$, so $\mathfrak{v}(n) = 1$. By Lemma~\ref{lem:VF}, $\mathfrak{v}(-n) = \mathfrak{v}(n) = 1$, so all integers have valuation $0$. 

Now if $m/n$ is a rational in $F$, $\mathfrak{v}(m/n) = \mathfrak{v}(m) \mathfrak{v}(n)^{-1} = 1$. So the rationals are a subfield of the valuation ring $\mathcal{O}$.

By a simple application of Zorn's lemma, the rationals are contained in a maximal subfield $F_0$ of $\mathcal{O}$. More explicitly, the union of a chain of fields is a field, so every chain of subfields of $\mathcal{O}$ has an upper bound, and by Zorn's lemma, the set of subfields of $\mathcal{O}$ has maximal elements.

Since only elements with valuation $1$ have inverses in $\mathcal{O}$, $\mathfrak{v}(x) = 1$ for all nonzero $x\in F_0$. Thus $0$ is the only element of $F_0$ with valuation greater than $1$, and the kernel of the residue map $\phi$ is trivial, so $\phi$ maps $F_0$ isomorphically onto a subfield $G_0$ of $\overline{F}$. We will use Hensel's lemma to show that every element of $\overline{F}$ must be in $G_0$, proving their equality.

Suppose that $a\in\overline{F}$ and $a$ is algebraic over $G_0$. Then there is a monic irreducible polynomial $p_0(x)\in G_0[x]$ such that $p_0(a) = 0$. Choosing preimages for all coefficients of $p_0$ under $\phi$, we obtain a polynomial $p\in F_0[x]$ such that $\overline{p} = p_0$. Now $\overline{p}$ factors in $\overline{F}[x]$ as $\overline{p}(x) = q_0(x)(x-a)$, where $q_0(x)$ and $(x-a)$ are relatively prime since $char(\overline{F}) = 0$. Applying Hensel's lemma, there are polynomials $q,r\in \mathcal{O}[x]$ such that $\overline{q} = q_0$, $\overline{r} = x-a$, and $p = qr$. 

Let $c$ and $d$ be the leading coefficients of $q$ and $r$ respectively. Since $p$ is monic, $cd = 1$, so $\mathfrak{v}(c) \mathfrak{v}(d) = 1$, but $c,d\in\mathcal{O}$, so $\mathfrak{v}(c) = \mathfrak{v}(d) = 1$. Thus neither have residue $0$, and we have $deg(q) = deg(q_0)$ and $deg(r) = deg(x-a) = 1$. 

Let $r = b_1x + b_0$. Then $y = -b_0/b_1$ is a root of $r$ and therefore a root of $p$. Now $F_0$ is isomorphic to $G_0$, and $\overline{p}$ is irreducible in $G_0$, so $p$ is irreducible in $F_0$, and hence $y\notin F_0$. 

We showed that $b_1$ has valuation $1$. Also $b_0\in\mathcal{O}$, so $\mathfrak{v}(b_0)\geq 1$, and so $\mathfrak{v}(y) = \mathfrak{v}(b_0) \mathfrak{v}(b_1)^{-1} \geq 1$, and $y\in\mathcal{O}$. Since all the generators of $F_0[y]$ are in the ring $\mathcal{O}$,  $F_0[y]\subseteq \mathcal{O}$. This contradicts the maximality of $F_0$ as a subfield of $\mathcal{O}$. Hence there are no elements of $\overline{F}$ algebraic over $G_0$. 

Now suppose that $a\in\overline{F}$ and $a$ is transcendental over $G_0$. We will use the same contradiction strategy. Pick some $y\in\mathcal{O}$ such that $\overline{y} = a$. Now for any nonzero $p(x)\in F_0[x]$, $\overline{p(y)} = \overline{p}(a) \neq 0$, since $a$ is transcendental over $G_0$. In particular, $p(y)\neq 0$, so $y$ is transcendental over $F_0$. Also, $p(y)$ does not have residue $0$, but $p(y)\in\mathcal{O}$, so $\mathfrak{v}(p(y)) = 1$. Thus for any $q,r\in F_0[x]$, $\mathfrak{v}(q(y)/r(y)) = 1$, so $q(y)/r(y)\in\mathcal{O}$. All elements of the transcendental extension have this form, so $F_0(y)$ is contained in $\mathcal{O}$, once again contradicting the maximality of $F_0$.

Since there are no elements of $\overline{F}$ algebraic or transcendental over $G_0$, $G_0 = \overline{F}$, and thus $\phi$ is an isomorphism onto $\overline{F}$, and $F_0\cong \overline{F}$. 
\end{proof}

We will now introduce the Henselization of a valued field, which is similar in concept to algebraic closure. Intuitively, the Henselization of a valued field is the minimal Henselian valued field containing it. The Henselization is defined by means of a universal property.

\begin{defin}\label{def:henselization}
Let $G$ be a valued field. A Henselian valued field $K$ is said to be a \emph{Henselization} of $G$ if
\begin{enumerate}
\item the field $G$ is a valued subfield of $K$, and
\item if $F$ is a Henselian valued field and $\mu:G\rightarrow F$ is an embedding of valued fields, then $\mu$ extends uniquely to an embedding $\lambda:K\rightarrow F$ of valued fields.
\end{enumerate}
\end{defin}

The universal property guarantees that the Henselization is unique up to unique isomorphism. For suppose that $K$ and $K'$ are both Henselizations of the valued field $G$. Then $G$ certainly embeds into both $K$ and $K'$, so there exist unique embeddings of valued fields $\lambda:K\rightarrow K'$ and $\lambda':K'\rightarrow K$. Then $\lambda'\circ\lambda:K\rightarrow K$ is an embedding of valued fields. But by the universal property, there is a unique embedding of valued fields $K\rightarrow K$, and this must be the identity. So $\lambda'\circ\lambda = id_K$, and $\lambda$ is an isomorphism.

The Henselization of any valued field $F$ may be constructed as follows. Let $F^s$ be the separable closure of $F$ in its algebraic closure (in characteristic $0$, this is just the algebraic closure). Extend the valuation $\mathfrak{v}$ to a valuation $\mathfrak{v}^s$ on $F^s$, and let $L$ be the subgroup of the Galois group of $F^s$ over $F$ consisting of all automorphisms which are also valued field homomorphisms, that is, they preserve $\mathfrak{v}^s$. The fixed field of $L$ is the Henselization of $F$. In the special case that $F$ has a completion, $\widehat{F}$, this construction corresponds to the separable closure (or relative algebraic closure in the case of characteristic $0$) of $F$ in $\widehat{F}$. We have the following lemma, which we will state here without proof.

\begin{lem}[{\cite[Theorem 5.2]{Ribenboim}}]\label{lem:henselization_exists}
Every valued field $F$ (with valuation $\mathfrak{v}_F$) has a Henselization $K$ (with valuation $\mathfrak{v}_K$), which is unique up to unique isomorphism of valued fields. The value groups and residue class fields of $K$ and $F$ are isomorphic as groups and fields respectively.
\end{lem}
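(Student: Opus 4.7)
The plan is to prove uniqueness from the universal property, construct the Henselization using Galois theory of the separable closure, verify the universal property, and finally compare value groups and residue class fields.

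Uniqueness is immediate from the universal property: if $K$ and $K'$ are both Henselizations of $F$, then the identity embedding $F \to F$ extends uniquely to valued-field embeddings $\lambda : K \to K'$ and $\lambda' : K' \to K$, and the compositions $\lambda' \circ \lambda$ and $\lambda \circ \lambda'$ must by uniqueness coincide with the respective identity maps. Hence $\lambda$ is the unique isomorphism of Henselizations.

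For existence, let $F^s$ be the separable closure of $F$ inside an algebraic closure, and extend $\mathfrak{v}$ to a valuation $\mathfrak{v}^s$ on $F^s$; existence of such an extension is a Zorn's Lemma argument on valuation rings dominating $\mathcal{O}$. Let $G = \mathrm{Gal}(F^s/F)$ act on the set of extensions of $\mathfrak{v}$ to $F^s$ by $\sigma \cdot \mathfrak{w} := \mathfrak{w} \circ \sigma^{-1}$; a standard conjugacy argument shows this action is transitive. Let $L \leq G$ be the stabilizer of $\mathfrak{v}^s$, and define the candidate Henselization as
\[
K := (F^s)^L, \qquad \mathfrak{v}_K := \mathfrak{v}^s|_K.
\]
I would show $K$ is Henselian via the equivalent characterization that a valued field is Henselian if and only if its valuation extends uniquely to the algebraic closure. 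Since $\mathrm{Gal}(F^s/K) = L$ and $L$ stabilizes $\mathfrak{v}^s$ by construction, Galois-transitivity forces $\mathfrak{v}^s$ to be the unique extension of $\mathfrak{v}_K$ to $F^s$, whence $K$ is Henselian. The main obstacle is establishing the equivalence between this unique-extension criterion and the lifting-factorization definition given in the text; the nontrivial direction requires a Hensel-type argument applied to minimal polynomials of conjugate roots, which I would import from a standard reference such as Ribenboim~\cite{Ribenboim}.

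The universal property is then checked as follows. Given an embedding of valued fields $\mu : F \to F'$ with $F'$ Henselian, choose an extension $\tilde\mu : F^s \to (F')^s$. The pullback $\mathfrak{v}_{F'}^s \circ \tilde\mu$ is some extension of $\mathfrak{v}$ to $F^s$, so by transitivity it equals $\mathfrak{v}^s \circ \sigma$ for some $\sigma \in G$; replacing $\tilde\mu$ by $\tilde\mu \circ \sigma^{-1}$, we may assume $\tilde\mu$ is a valued-field embedding, and then $\lambda := \tilde\mu|_K$ is the required valued-field embedding $K \to F'$ extending $\mu$. Uniqueness of $\lambda$ follows because any two extensions would differ, after lifting back to $F^s$, by an element of $G$ that fixes $F$, preserves $\mathfrak{v}^s$, and fixes $K$, i.e.\ by an element of $L$ fixing $(F^s)^L$, which must be trivial.

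Finally, for the value group and residue class field, the inclusions $\mathfrak{v}(F^*) \subseteq \mathfrak{v}_K(K^*)$ and $\overline{F} \hookrightarrow \overline{K}$ are automatic. For the reverse inclusions I would present $K$ as the directed union, over finite Galois subextensions $E/F \subseteq F^s$, of the intersections $E \cap K = E^{L \cap \mathrm{Gal}(F^s/E)}$, which are precisely the decomposition fields of $E/F$ for the place induced by $\mathfrak{v}^s$. The classical theorem that in a finite Galois extension of valued fields the decomposition field has the same value group and residue field as the base, combined with passage to the colimit over this directed system, yields $\mathfrak{v}_K(K^*) = \mathfrak{v}(F^*)$ and $\overline{K} \cong \overline{F}$.
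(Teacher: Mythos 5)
The paper does not actually prove this lemma; it explicitly states it ``without proof'' and cites Ribenboim. However, the paragraph immediately preceding the lemma sketches the same construction you use: take the separable closure $F^s$, fix an extension $\mathfrak{v}^s$ of $\mathfrak{v}$, let $L$ be the subgroup of $\mathrm{Gal}(F^s/F)$ preserving $\mathfrak{v}^s$ (the decomposition group), and take $K = (F^s)^L$. Your uniqueness argument is also identical to the one the paper gives in the text before the lemma. So your approach is the one the paper has in mind, and the outline is correct: the facts you defer to a reference --- the equivalence of ``Henselian'' with unique extension of the valuation to $F^s$, Chevalley-type conjugacy of extensions, and that the decomposition field of a finite Galois extension has the same value group and residue field as the base --- are precisely the technical inputs the paper also leaves to Ribenboim. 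One small point worth making explicit in a final write-up: transitivity of the Galois action on extensions of $\mathfrak{v}_K$ to $F^s$ is what forces $K$ to be Henselian, and this uses that $F^s$ is also the separable closure of $K$ (which holds since $F \subseteq K \subseteq F^s$ and $F^s/F$ is separable); you use this implicitly but should state it.
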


The next lemmas consist of a number of facts about Henselizations and extensions of Henselian fields which will be necessary for the back-and-forth argument in Section~\ref{subsec:EEE}. The proofs of these facts are technical, and they rely on too much of the theory of valued fields to develop in this thesis. They can be found in full in Ribenboim \cite{Ribenboim} and Chang and Keisler \cite{Chang}. 

\begin{lem}[{\cite[Theorem 5.1]{Ribenboim}}]\label{lem:unique_extension}
The valuation on a Henselian valued field extends in a unique way to an algebraic extension. Equivalently, let $F_0$ and $G_0$ be Henselian valued fields with valuations $\mathfrak{v}_{F_0}$ and $\mathfrak{v}_{G_0}$ which are isomorphic as valued fields. Let $F$ and $G$ be algebraic extensions of $F_0$ and $G_0$ respectively which are isomorphic as fields, and let $\phi$ be the isomorphism. Given extensions $\mathfrak{v}_F$ and $\mathfrak{v}_G$, of $\mathfrak{v}_{F_0}$ and $\mathfrak{v}_{G_0}$ which are valuations on $F$ and $G$ respectively, $\phi$ is an isomorphism of valued fields.
\end{lem}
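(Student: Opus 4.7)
The plan is to reduce the lemma to the following key fact: in any Henselian valued field $K$, the valuation extends uniquely to any algebraic extension of $K$. Given this, the lemma follows readily. The isomorphism $\phi$ restricts to a valued field isomorphism $F_0 \to G_0$, and the pullback $\mathfrak{v}_G \circ \phi$ is then a valuation on $F$ extending $\mathfrak{v}_{F_0}$ (after identifying value groups via $\phi|_{F_0}$); by uniqueness it coincides with $\mathfrak{v}_F$, which is exactly the statement that $\phi$ preserves valuations.

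To establish the key uniqueness fact, I would first reduce to the finite case, since any $\alpha$ in an algebraic extension lies in the finite subextension $F_0(\alpha)$, and uniqueness of the extension on every finite subextension implies uniqueness throughout $F$. For a finite extension with $\alpha$ having minimal polynomial $p(x) = x^n + a_{n-1}x^{n-1} + \cdots + a_0 \in F_0[x]$, it suffices to show that any extension $\mathfrak{v}$ must satisfy $n \cdot \mathfrak{v}(\alpha) = \mathfrak{v}_{F_0}(a_0)$ in the divisible hull of the value group, a relation that pins $\mathfrak{v}(\alpha)$ down uniquely. This in turn follows from the stronger claim that, for any extension of $\mathfrak{v}$ to an algebraic closure $\overline{F_0}$ (which exists by a Zorn's lemma argument independent of the Henselian hypothesis), all conjugate roots $\alpha_1, \ldots, \alpha_n$ of $p$ share the same valuation, since $(-1)^n a_0 = \prod_i \alpha_i$.

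The main obstacle is proving this equal-valuation property for conjugate roots, and it is exactly where the Henselian hypothesis enters. Suppose for contradiction that the $\alpha_i$ have differing valuations. By rescaling $\alpha$ by a suitable element $c \in F_0$ --- that is, replacing $p$ with the minimal polynomial of $\alpha/c$ --- I can arrange that the rescaled polynomial has coefficients in $\mathcal{O}_{F_0}$, with some root having valuation zero and some having strictly positive valuation. Its reduction in $\overline{F_0}[x]$ then factors as $x^j \cdot \overline{h}(x)$, where $j > 0$ collects the roots that became zero in the residue field and $\overline{h}(0) \neq 0$ collects those that remained units; these two monic factors are manifestly coprime. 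Hensel's lemma lifts the factorization to a nontrivial product in $\mathcal{O}_{F_0}[x]$ whose factors each have positive degree, contradicting the irreducibility of the rescaled minimal polynomial. The choice of the rescaling parameter $c$ is the principal technical subtlety (since the needed valuation may not lie in $\mathfrak{v}_{F_0}(F_0^\times)$), but can be arranged by a commensurability argument for value groups of finite extensions; this is the only point at which nontrivial care is required.
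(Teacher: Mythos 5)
The paper does not prove this lemma; it cites Ribenboim, Theorem 5.1, and explicitly notes in the Opening Remark that the proofs of Lemmas~\ref{lem:unique_extension} through~\ref{lem:transcendental_value_group} are omitted from the thesis. So there is no in-paper proof to compare against, and your sketch must stand on its own.

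Your skeleton is the standard one and is sound: reduce to uniqueness of the extended valuation, reduce that to finite subextensions $F_0(\alpha)$, and use the norm identity $n\cdot\mathfrak{v}(\alpha)=\mathfrak{v}_{F_0}(a_0)$ together with equal valuation of conjugates to pin $\mathfrak{v}(\alpha)$ down in the divisible hull of $\mathfrak{v}_{F_0}(F_0^*)$. The one place the argument is genuinely incomplete is exactly where you flag it, and the difficulty is sharper than your sketch suggests. If every root of the monic irreducible $p\in\mathcal{O}_{F_0}[x]$ has strictly positive valuation, then $\overline{p}=x^n$, which admits no coprime factorization, so Hensel's lemma gives nothing until you rescale so that the least root valuation becomes $0$ --- and a scaling element $c\in F_0$ with that valuation need not exist. ``Commensurability'' is the right pointer, but it has to be cashed out: one way is to replace $\alpha$ by $\alpha^m$ with $m$ chosen so that $m\cdot\min_i\mathfrak{v}(\alpha_i)$ lies in $\mathfrak{v}_{F_0}(F_0^*)$ (possible since the value group of a finite extension has finite index over $\mathfrak{v}_{F_0}(F_0^*)$), work with the minimal polynomial of $\alpha^m$, rescale it by a suitable $c\in F_0$, and then run the residue-field factorization; equal valuation of the $\alpha_i^m$ forces equal valuation of the $\alpha_i$. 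Do not shortcut this by adjoining a $c$ of the needed valuation and invoking Henselianity of the resulting finite extension of $F_0$: that Henselianity descends to finite extensions is itself usually proved by way of the uniqueness statement you are trying to establish, so that route is circular. A cleaner alternative bypasses the rescaling entirely: supposing two distinct extensions $w\neq w'$ of $\mathfrak{v}_{F_0}$ to a finite normal extension $L$, use the approximation theorem to produce an integral $x\in L$ with $w(x)>0$ and $w'(x)=0$; the reduction of its minimal polynomial over $F_0$ then manifestly has both a zero and a nonzero root, so Hensel's lemma factors it, contradicting irreducibility.
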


\begin{lem}[{\cite[Lemma 5.4.13 (vi)]{Chang}}]\label{lem:closure_under_roots}
Let $F$ be a valued field with valuation $\mathfrak{v}$, $F_0$ a valued subfield, and $\widetilde{F_0}$ the relative algebraic closure of $F_0$ in $F$, that is, the set of all elements in $F$ algebraic over $F_0$. Then $\mathfrak{v}(\widetilde{F_0}^*) = \{x\in\mathfrak{v}(F^*)\,|\,x^n\in\mathfrak{v}(F_0^*), n\in\mathbb{Z}\}$, the closure under roots of $\mathfrak{v}(F_0^*)$ in $\mathfrak{v}(F^*)$. If $F$ is a Henselian valued field such that $\overline{F} = \overline{F_0}$, $char(\overline{F}) = 0$, and $\mathfrak{v}(\widetilde{F_0^*}) = \mathfrak{v}(F_0^*)$ (that is, $\mathfrak{v}(F_0^*)$ is already closed under roots), then $\widetilde{F_0}$ is a Henselization of $F_0$. 
\end{lem}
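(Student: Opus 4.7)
The statement has two parts. For the value-group equality, I would first handle the inclusion $\mathfrak{v}(\widetilde{F_0}^*) \subseteq \{x \in \mathfrak{v}(F^*) : x^n \in \mathfrak{v}(F_0^*)\}$, which is straightforward. If $\alpha \in \widetilde{F_0}^*$ has minimal polynomial $\sum_{i=0}^d c_i x^i \in F_0[x]$, then $\sum_i c_i \alpha^i = 0$, and iterated application of Lemma~\ref{lem:VF}(4) rules out a single summand having the strictly smallest valuation (otherwise the remaining sum would have strictly greater valuation, forcing the minimum term itself to be zero). So some pair $i \ne j$ satisfies $\mathfrak{v}(c_i)\mathfrak{v}(\alpha)^i = \mathfrak{v}(c_j)\mathfrak{v}(\alpha)^j$, giving $\mathfrak{v}(\alpha)^{|i-j|} \in \mathfrak{v}(F_0^*)$ with $|i-j| \leq d$. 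For the reverse inclusion, I would appeal to the standard fact from valuation theory that if $x \in \mathfrak{v}(F^*)$ and $x^n = \mathfrak{v}(a)$ with $a \in F_0^*$, then an $n$-th root of $a$ can be realized as an element of $F$ algebraic over $F_0$, using the compatibility of valuation extensions across conjugate roots.

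For the Henselization claim, my plan is to verify the two conditions of Definition~\ref{def:henselization}. Condition~(1) is immediate from the definition of $\widetilde{F_0}$. To establish that $\widetilde{F_0}$ is Henselian, I would start with a monic $f \in \mathcal{O}_{\widetilde{F_0}}[x]$ and a coprime residue factorization $\overline{f} = \overline{g_0}\,\overline{h_0}$, view $f$ as an element of $\mathcal{O}_F[x]$, and invoke Henselianness of $F$ (a hypothesis) to obtain monic $g, h \in \mathcal{O}_F[x]$ with $f = gh$ and the correct reductions. Each root of $g$ in an algebraic closure of $F$ is a root of $f$, hence algebraic over $F_0$, hence actually in $\widetilde{F_0}$. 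Because $g$ is monic, its coefficients are (up to sign) the elementary symmetric functions of its roots, so $g \in \widetilde{F_0}[x]$; the same argument applies to $h$, and the factorization already lives in $\mathcal{O}_{\widetilde{F_0}}[x]$.

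To verify condition~(2), given an embedding of valued fields $\mu \colon F_0 \hookrightarrow F'$ with $F'$ Henselian, I would extend $\mu$ to a field embedding of $\widetilde{F_0}$ into an algebraic closure of $F'$ and then use Lemma~\ref{lem:unique_extension}, together with the matching of residue fields and value groups forced by the hypotheses $\overline{F} = \overline{F_0}$ and $\mathfrak{v}(\widetilde{F_0}^*) = \mathfrak{v}(F_0^*)$, to argue that the image lands inside $F'$, that the embedding is automatically a valued-field homomorphism, and that it is unique.

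The hard part will be the reverse inclusion of the value-group identity and the existence and uniqueness of the embedding in the universal property: both rest on deep valuation-theoretic results (uniqueness of valuation extensions to algebraic extensions of Henselian fields, and control of valued embeddings by residue/value-group data) that the thesis explicitly chooses to import rather than develop. The force of the three hypotheses in the second part is precisely to serve as a "no room to grow" condition, ensuring that $\widetilde{F_0}$ adds no new residue or value-group data beyond $F_0$, which is exactly what makes it coincide with the abstract Henselization guaranteed by Lemma~\ref{lem:henselization_exists}.
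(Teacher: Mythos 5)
The paper cites this lemma from Chang and Keisler and does not prove it --- it is one of the valued-field facts (Lemmas~\ref{lem:unique_extension}--\ref{lem:transcendental_value_group}) that the thesis explicitly imports --- so there is no in-paper argument to compare against. Your outline has the right shape, but there are three specific gaps. First, your reverse inclusion for the value-group identity is hand-waved: you assert an $n$-th root of $a$ ``can be realized as an element of $F$,'' but nothing says $F$ contains one, and ``compatibility of valuation extensions across conjugate roots'' does not supply it. The paper's cross-section framework makes this step almost automatic, and you should use it: $\mathfrak{v}(F^*)$ is identified with a subgroup of $F^*$ and $\mathfrak{v}(F_0^*) \subseteq F_0$ by the valued-subfield condition, so $x^n \in \mathfrak{v}(F_0^*)$ means $x$ is a root of $T^n - x^n \in F_0[T]$, hence $x \in \widetilde{F_0}$, and $\mathfrak{v}(x) = x$ by the cross-section axiom. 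Second, in the Henselianity argument your claim that each root of $g$ is ``actually in $\widetilde{F_0}$'' is false --- those roots live in an algebraic closure of $F$ and generally not in $F$ --- though your conclusion survives once phrased correctly: the coefficients of $g$ and $h$, being symmetric functions of elements algebraic over $F_0$ and lying in $\mathcal{O}_F$, land in $\mathcal{O}_F \cap \widetilde{F_0} = \mathcal{O}_{\widetilde{F_0}}$.

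The third and deepest gap is the universal property. There is no a priori reason the algebraic extension $\widetilde{F_0}/F_0$ embeds into an arbitrary Henselian $F'$ over $\mu$, so the direct verification you propose stalls at the start. The cleaner route is through Lemma~\ref{lem:henselization_exists}: the abstract Henselization $F_0^h$ has the same value group and residue field as $F_0$, and since $\widetilde{F_0}$ is Henselian it receives an embedding $F_0^h \hookrightarrow \widetilde{F_0}$ over $F_0$. The hypotheses then make $\widetilde{F_0}/F_0^h$ an immediate algebraic extension of Henselian valued fields of residue characteristic $0$, and the real content you never invoke is why such an extension must be trivial: the fundamental equality $[L:K] = ef$ for finite algebraic extensions of Henselian valued fields, which holds in residue characteristic $0$ because the ramification defect (Ostrowski) is a power of $char(\overline{K})$ and hence equal to $1$. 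This no-defect fact is the sole place $char(\overline{F}) = 0$ actually does work in the proof; without it, the ``no room to grow'' observation in your last paragraph remains an intuition rather than an argument.
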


\begin{lem}[{\cite[Lemma 5.4.13 (vii)]{Chang}}]\label{lem:transcendental_extension}
Let $F$ and $G$ be Henselian valued fields (with valuations $\mathfrak{v}_F$ and $\mathfrak{v}_G$) with Henselian valued subfields $F_0$ and $G_0$ respectively such that $F_0\cong G_0$ by an isomorphism $f$. Suppose $x\in F$ is transcendental over $F_0$ and $y\in G$ is transcendental over $G_0$. Suppose further that $\mathfrak{v}(F_0(x)^*) = \mathfrak{v}(F_0^*)$, $\overline{F_0(x)} = \overline{F_0}$, and for all $a\in F_0$, $f(\mathfrak{v}_F(x-a)) = \mathfrak{v}_G(y-f(a))$. Then $\mathfrak{v}_G(G_0(y)^*) = \mathfrak{v}_G(G_0^*)$, $\overline{G_0(y)} = \overline{G_0}$, and $f$ can be extended to an isomorphism $F_0(x)\cong G_0(y)$.
\end{lem}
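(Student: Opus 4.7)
The plan is to define the candidate extension $\phi\colon F_0(x)\to G_0(y)$ by $\phi|_{F_0}=f$ and $\phi(x)=y$, extended via the field operations, and then verify that $\phi$ is an isomorphism of valued fields. Since both $x$ and $y$ are transcendental over their respective base fields and $f\colon F_0\to G_0$ is a field isomorphism, $\phi$ is automatically a field isomorphism. Once valuation preservation is established, the two remaining conclusions follow by transport along $\phi$: the value group of $G_0(y)$ equals the $f$-image of $\mathfrak{v}_F(F_0(x)^*)=\mathfrak{v}_F(F_0^*)$, which is $\mathfrak{v}_G(G_0^*)$, and the residue field computation is analogous.

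The heart of the proof is showing $f(\mathfrak{v}_F(p(x)))=\mathfrak{v}_G(\phi(p)(y))$ for every polynomial $p\in F_0[X]$, where $\phi(p)\in G_0[X]$ denotes the polynomial obtained by applying $f$ to the coefficients of $p$; the extension to rational functions is then immediate. I would induct on $\deg p$. The cases $\deg p\le 1$ reduce directly to the standing hypothesis $f(\mathfrak{v}_F(x-a))=\mathfrak{v}_G(y-f(a))$ together with the fact that $f$ is an isomorphism of valued fields on $F_0$. For the inductive step, I would first use the immediacy hypotheses to show that $x$ can be approximated by elements of $F_0$ to arbitrary valuation-theoretic accuracy: since $\mathfrak{v}_F(x)\in\mathfrak{v}_F(F_0^*)$ and $\overline{F_0(x)}=\overline{F_0}$, one can find $a_1\in F_0$ matching both the valuation and the residue of $x$, giving $\mathfrak{v}_F(x-a_1)>\mathfrak{v}_F(x)$; iterating with $x-a_1$ in place of $x$ produces $a\in F_0$ with $\mathfrak{v}_F(x-a)$ arbitrarily large. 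Then for $p$ of degree $n\ge 2$, write $p(X)=p(a)+(X-a)q_a(X)$ with $q_a\in F_0[X]$ of degree $n-1$, apply the inductive hypothesis to $q_a$, and use the standing hypothesis on $\mathfrak{v}_F(x-a)$ to compute the valuations of the two summands on both sides.

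The main obstacle is arranging that the two summands $p(a)$ and $(x-a)q_a(x)$ have distinct valuations, so that the ultrametric inequality becomes an equality and one can read off $\mathfrak{v}_F(p(x))$ as the minimum of the two. As $\mathfrak{v}_F(x-a)$ grows, the valuations $\mathfrak{v}_F(p(a))$ and $\mathfrak{v}_F(q_a(x))$ stabilize (to $\mathfrak{v}_F(p(x))$ and to a finite value determined by the shape of $p$ and the eventual value $\mathfrak{v}_F(a)=\mathfrak{v}_F(x)$), while $\mathfrak{v}_F(x-a)+\mathfrak{v}_F(q_a(x))$ grows without bound, so for $a$ with $\mathfrak{v}_F(x-a)$ sufficiently large the two valuations must differ. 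The same stabilization holds on the $G_0(y)$ side by the inductive hypothesis, and since $\phi$ sends $p(a)$ and $(x-a)q_a(x)$ respectively to $f(p(a))$ and $(y-f(a))\phi(q_a)(y)$ with valuations related by $f$, the minima match, completing the induction.
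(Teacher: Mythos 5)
First, a caveat on the comparison: the paper does not prove this lemma. It is stated as a black box from Chang and Keisler~\cite{Chang}, with the explicit remark that the proofs of Lemmas~\ref{lem:unique_extension}--\ref{lem:transcendental_value_group} rely on too much of the theory of valued fields to develop in the thesis. So there is no in-paper proof to compare against, only the question of whether your argument is correct. Your overall strategy is the right one, and it is essentially the Kaplansky pseudo-convergent (pseudo-Cauchy) sequence approach used in Chang--Keisler: extend $f$ by $x\mapsto y$, reduce to showing $f(\mathfrak{v}_F(p(x)))=\mathfrak{v}_G(\phi(p)(y))$ for all $p\in F_0[X]$, and exploit the immediacy of $F_0(x)/F_0$ to approximate $x$ by $a\in F_0$ with ever-better $\mathfrak{v}_F(x-a)$. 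Your observation that any given approximation can be strictly improved (else the value group or residue field of $F_0(x)$ would have to grow) is correct and is exactly how the pseudo-Cauchy sequence converging to $x$ is constructed.

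The crucial step, however, has a genuine gap. You assert that as $\mathfrak{v}_F(x-a)$ grows, $\mathfrak{v}_F(p(a))$ stabilizes to $\mathfrak{v}_F(p(x))$ and $\mathfrak{v}_F(q_a(x))$ stabilizes, and you conclude that $\mathfrak{v}_F(x-a)+\mathfrak{v}_F(q_a(x))$ ``grows without bound,'' so that the two summands $p(a)$ and $(x-a)q_a(x)$ eventually have distinct valuations. Two problems. First, ``grows without bound'' is false in a general ordered abelian value group: $\{\mathfrak{v}_F(x-a):a\in F_0\}$ has no maximum but need not be cofinal in $\mathfrak{v}_F(F^*)$ --- for instance, in $\mathbb{Z}\times\mathbb{Z}$ ordered lexicographically, a pseudo-Cauchy sequence with widths $(0,1),(0,2),(0,3),\ldots$ is bounded above by $(1,0)$. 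Second, and more fundamentally, the stabilization of $\mathfrak{v}_F(p(a))$ at $\mathfrak{v}_F(p(x))$ is not an observation one makes in passing; it is precisely Kaplansky's theorem that a pseudo-Cauchy sequence $(a_\rho)$ pseudo-converging to a transcendental $x$ with $F_0(x)/F_0$ immediate must be of transcendental type, so that for every nonzero $p\in F_0[X]$, $(p(a_\rho))$ is again pseudo-Cauchy with pseudo-limit $p(x)$, whence $\mathfrak{v}_F(p(x)-p(a_\rho))$ is eventually strictly greater than $\mathfrak{v}_F(p(x))$. That last inequality is exactly what you need --- it gives $\mathfrak{v}_F((x-a)q_a(x))>\mathfrak{v}_F(p(x))=\mathfrak{v}_F(p(a))$ for suitable $a$ without any appeal to unboundedness --- but it is the hard content of the lemma, not a consequence of what you wrote. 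Once this fact is supplied, your induction on $\deg p$, the transfer to the $G$-side via $f$ and the standing hypothesis on $\mathfrak{v}_G(y-f(a))$, and the final transport of the value group and residue field conclusions are all in order.
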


\begin{lem}[{\cite[Lemma 5.4.13 (viii)]{Chang}}]\label{lem:transcendental_value_group}
Let $F$ be a Henselian valued field with a Henselian valued subfield $F_0$, and suppose $x\in F$ is transcendental over $F_0$. If $\overline{F_0(x)} \cong \overline{F_0}$ and $\mathfrak{v}(F_0^*)$ is nontrivial, then adjoining $x$ does not increase the cardinality of the value group: $|\mathfrak{v}(F_0(x)^*)| = |\mathfrak{v}(F_0^*)|$. 
\end{lem}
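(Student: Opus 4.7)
The goal is $|\mathfrak{v}(F_0(x)^*)|\le|\mathfrak{v}(F_0^*)|$; the reverse inequality is automatic from $F_0\subseteq F_0(x)$. Write $\Gamma_0=\mathfrak{v}(F_0^*)$ and $\Gamma=\mathfrak{v}(F_0(x)^*)$. Since $\Gamma_0$ is a nontrivial linearly ordered abelian group, it is torsion-free and hence infinite, so the problem reduces to bounding $|\Gamma/\Gamma_0|$. Every element of $F_0(x)^*$ is $p(x)/q(x)$ with $p,q\in F_0[x]\setminus\{0\}$, so $\Gamma$ is generated as a group by the set of $\mathfrak{v}(p(x))$ with $p\in F_0[x]\setminus\{0\}$, and the plan is to control these polynomial valuations.

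Split on whether $\mathfrak{v}(x)$ is torsion modulo $\Gamma_0$. If no positive multiple $n\mathfrak{v}(x)$ lies in $\Gamma_0$, then for any nonzero $p(x)=\sum a_ix^i\in F_0[x]$ the quantities $\mathfrak{v}(a_i)+i\mathfrak{v}(x)$ (over nonzero $a_i$) are pairwise distinct, since an equality at $i\ne j$ would force $(j-i)\mathfrak{v}(x)\in\Gamma_0$. By Lemma~\ref{lem:VF}(4), $\mathfrak{v}(p(x))$ then equals this minimum and lies in $\Gamma_0+\mathbb{Z}\mathfrak{v}(x)$, a group of the same cardinality as $\Gamma_0$. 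Taking differences, $\Gamma\subseteq\Gamma_0+\mathbb{Z}\mathfrak{v}(x)$, and we are done in this case.

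Otherwise, let $n\ge1$ be minimal with $n\mathfrak{v}(x)=\mathfrak{v}(c)$ for some $c\in F_0^*$, and set $y=x^n/c$; then $\mathfrak{v}(y)=0$ and $x$ satisfies $X^n-cy=0$ over $F_0(y)$, so $[F_0(x):F_0(y)]\le n$. A standard fact for finite algebraic extensions of valued fields is that the value group extends by finite index (bounded by the degree), so $|\Gamma|=|\mathfrak{v}(F_0(y)^*)|$, reducing to the case of $F_0(y)$. Because $\overline{F_0(y)}\subseteq\overline{F_0(x)}=\overline{F_0}$, we can pick $a\in F_0$ with $\bar a=\bar y$ and replace $y$ by $y_1:=y-a$; then $F_0(y)=F_0(y_1)$, $y_1$ is transcendental over $F_0$, and $\mathfrak{v}(y_1)>0$. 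Now reapply the dichotomy to $y_1$.

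The main obstacle is the recursion in the second case: if $\mathfrak{v}(y_1)$ is again torsion modulo $\Gamma_0$, one must iterate, and in general this can continue transfinitely. The standard framework is pseudo-Cauchy sequences in a non-maximally-complete valued field, and the residue field hypothesis is precisely what tames them — at each stage the valuation of the new generator strictly increases in $\Gamma$. The net output is that $\Gamma/\Gamma_0$ has rational rank at most one (this is the special case of Abhyankar's inequality for a single transcendental step with unchanged residue field), so $\Gamma/\Gamma_0$ embeds into $\mathbb{Q}$ and is countable. Combined with $\Gamma_0$ infinite, $|\Gamma|\le\aleph_0\cdot|\Gamma_0|=|\Gamma_0|$, which is the lemma.
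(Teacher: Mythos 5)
The paper itself does not prove this lemma: it is one of the results listed just before Section~\ref{subsec:EEE} as ``too technical'' to develop in the thesis, with the reader referred to Chang and Keisler. So there is no in-paper argument to compare against; what follows is an assessment of your proposal on its own terms.

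Your Case~1 (where $\mathfrak{v}(x)$ is non-torsion modulo $\Gamma_0$) is correct and self-contained: distinctness of the $\mathfrak{v}(a_i)+i\mathfrak{v}(x)$ forces $\mathfrak{v}(p(x))$ into $\Gamma_0+\mathbb{Z}\mathfrak{v}(x)$ by Lemma~\ref{lem:VF}(4), and this group has cardinality $|\Gamma_0|$. The problem is Case~2 and the ``general case.'' You explicitly flag the recursion as an obstacle and then resolve it by appealing to pseudo-Cauchy sequences and Abhyankar's inequality, neither of which you prove or even state precisely. But Abhyankar's inequality is itself a substantial theorem of valued field theory — it is essentially the content of what is being asked here, not a step toward it. Invoking it without proof does not close the gap; it relabels it. (Notice also that once Abhyankar's inequality is granted, the entire dichotomy and the Henselian hypotheses become superfluous, which should be a warning sign that the proof as written has shifted the real work elsewhere.)

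There is also a concrete error in the final deduction. You write that rational rank at most one implies ``$\Gamma/\Gamma_0$ embeds into $\mathbb{Q}$ and is countable.'' This does not follow: rational rank one only controls $(\Gamma/\Gamma_0)\otimes\mathbb{Q}$, and a rank-one abelian group can have uncountable torsion. The rescue is that $\Gamma$, being an ordered abelian group, is torsion-free, so for each $n$ the map $\gamma+\Gamma_0 \mapsto n\gamma \bmod n\Gamma_0$ injects the $n$-torsion of $\Gamma/\Gamma_0$ into $\Gamma_0/n\Gamma_0$; hence the full torsion subgroup of $\Gamma/\Gamma_0$ has cardinality at most $\aleph_0\cdot|\Gamma_0|=|\Gamma_0|$, and combined with the countable torsion-free quotient this gives $|\Gamma|\le|\Gamma_0|$. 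Your final bound is therefore attainable, but the route through ``embeds into $\mathbb{Q}$'' is wrong and needs to be replaced by this torsion bookkeeping. The missing piece that would make this an actual proof, however, remains a proof of the Abhyankar-type rank bound, which is where all the valuation theory lives.
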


\subsection{Establishing Elementary Equivalence}\label{subsec:EEE}

We are now in a position to prove the Ax-Kochen Principle. We saw in Theorem~\ref{thm:template} that it suffices to prove the elementary equivalence of the ultraproducts $\prod\mathbb{Q}_p/\mathcal{D}$ and $\prod\mathbb{F}_p((t))/\mathcal{D}$ for all nonprincipal ultrafilters $\mathcal{D}$ on the set of all primes $P$. 

In Example~\ref{exa:UP_char0}, we applied Corollary~\ref{cor:el_class} to show that for any nonprincipal ultrafilter $\mathcal{D}$, $\prod\mathbb{Q}_p/\mathcal{D}$ and $\prod\mathbb{F}_p((t))/\mathcal{D}$ are valued fields with cross section. We would like to show that these valued fields are Henselian.

\begin{lem}\label{lem:HVF}
The class of Henselian valued fields with cross section is elementary.
\end{lem}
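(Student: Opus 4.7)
The plan is to exhibit a set of $\mathcal{L}_{VF}$-sentences whose models are precisely the Henselian valued fields with cross section. Example~\ref{exa:VFel} already provides a theory $VF$ axiomatising valued fields with cross section, so it suffices to add further sentences capturing Hensel's lemma. The obstacle is that Hensel's lemma as stated quantifies over monic polynomials of arbitrary degree in $\mathcal{O}[x]$, whereas first-order sentences can only quantify over elements of the structure. My remedy is to split the axiom by degree: for each pair $(m,n)$ with $m,n\geq 1$, I will introduce a single sentence $H_{m,n}$ expressing Hensel's lemma restricted to monic $f,g_0,h_0$ with $\deg f = m+n$, $\deg g_0 = m$, $\deg h_0 = n$.

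To write $H_{m,n}$, introduce universally quantified variables $a_0,\dots,a_{m+n-1}$ standing for the non-leading coefficients of the monic polynomial $f$, together with $b_0,\dots,b_{m-1}$ and $c_0,\dots,c_{n-1}$ for the non-leading coefficients of $g_0$ and $h_0$. Each of these variables is guarded by the formula $\mathfrak{v}(1)\leq \mathfrak{v}(v)$ which defines membership in $\mathcal{O}$. The product $g_0h_0$ has coefficients that are fixed polynomial expressions $P_k(\bar b,\bar c)$ in the $b_i,c_j$, so the hypothesis $\overline{g_0}\,\overline{h_0} = \overline{f}$ becomes the conjunction over $0\leq k \leq m+n-1$ of assertions of the form $\mathfrak{v}(1)<\mathfrak{v}(a_k - P_k(\bar b,\bar c))$. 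The hypothesis that $\overline{g_0}$ and $\overline{h_0}$ are coprime is encoded by requiring $Res(g_0,h_0)$, itself a fixed polynomial expression in $\bar b,\bar c$ (Appendix~\ref{sec:TR}), to be a unit of $\mathcal{O}$, i.e.\ $\mathfrak{v}(Res(g_0,h_0)) = \mathfrak{v}(1)$; the equivalence with coprimality of $\overline{g_0}$ and $\overline{h_0}$ follows from Theorem~\ref{thm:resprime} applied inside $\overline{F}[x]$, since the resultant reduces well modulo $I_1$. The conclusion of $H_{m,n}$ is then the existential statement that there exist $b_0',\dots,b_{m-1}',c_0',\dots,c_{n-1}'$ in $\mathcal{O}$ with $\mathfrak{v}(1) < \mathfrak{v}(b_i - b_i')$ and $\mathfrak{v}(1) < \mathfrak{v}(c_j - c_j')$ for every $i,j$, and $a_k = P_k(\bar b',\bar c')$ for every $k$. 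This is visibly a single $\mathcal{L}_{VF}$-sentence.

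Setting $T = VF \cup \{H_{m,n} : m,n\geq 1\}$, a structure $\mathcal{M}$ is a model of $T$ precisely when it is a valued field with cross section satisfying every instance of Hensel's lemma, which is exactly the definition of Henselian valued field (with cross section) given at the start of Section~\ref{subsec:HL}. Hence $T$ axiomatises the class in question, and so the class is elementary. The only step that requires genuine thought is the encoding of the coprimality hypothesis via the resultant; everything else reduces to bookkeeping about how polynomial multiplication is built out of the coefficient variables.
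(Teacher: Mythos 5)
Your proposal is correct, and it follows the same overall strategy as the paper: represent a polynomial of bounded degree by a tuple of coefficient variables, quantify over these, and axiomatise Hensel's lemma one degree (or degree-pair) at a time, appending the resulting sentences to the theory $VF$ of Example~\ref{exa:VFel}. The bookkeeping — expressing $a\in\mathcal{O}$ by a valuation inequality, expressing equality of residues by $1<\mathfrak{v}(a-b)$, expanding $g_0h_0$ coefficient-by-coefficient — matches the paper's abbreviations $f\in\mathcal{O}[x]$, $Monic(f)$, and $ResEq(f,g)$ essentially verbatim.

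The one place where you take a genuinely different route is the coprimality hypothesis. The paper encodes ``$\overline{g_0}$ and $\overline{h_0}$ are relatively prime'' with a formula $ResRelPrime(g_0,h_0)$ that negates the existence of $p,q,r\in\mathcal{O}[x]$ with $\overline{pq}=\overline{g_0}$ and $\overline{pr}=\overline{h_0}$; you instead write the single polynomial identity $\mathfrak{v}(Res(g_0,h_0))=\mathfrak{v}(1)$, i.e.\ the resultant (a fixed integer-polynomial in the coefficient variables, by its determinant definition) is a unit of $\mathcal{O}$. Your encoding is quantifier-free in the coefficient variables and relies only on Theorem~\ref{thm:resprime} together with the observation that for monic $g_0,h_0$ the resultant commutes with reduction mod $I_1$; this is a real simplification. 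In fact the paper's $ResRelPrime$ as literally written is defective — taking $p$ to be the constant $1$, $q=g_0$, $r=h_0$ always satisfies the existential it negates, so one must silently insert the restriction that $\overline{p}$ be non-constant — whereas your resultant-based version sidesteps this entirely. The other small difference, indexing sentences by pairs $(m,n)$ of degrees for $g_0,h_0$ rather than a single bound $n$, is immaterial: either decomposition yields a countable set of axioms with the same models.
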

\begin{proof}
We have already exhibited a set of first-order axioms in the language $\mathcal{L}_{VF}$ for the class of valued fields with cross section (see Example~\ref{exa:VFel}), so it remains to extend these axioms to include Hensel's lemma.

In order to express Hensel's lemma, we must be able to make statements about polynomials whose coefficients lie in the domain of our structure. We will represent a polynomial of degree at most $n$ by an $(n+1)$-tuple of coefficients and make appropriate statements about these coefficients.

We first note we can quantify over polynomials by using one quantifier for each coefficient. For a polynomial of degree at most $n$ we will write $\exists_n f(x)$ and $\forall_n f(x)$ as abbreviations for $\exists a_0\hdots\exists a_n$ and $\forall a_0\hdots\forall a_n$. The $(n+1)$-tuple $(a_0,\hdots,a_n)$ will then represent $f(x)$. Here we are using the variables $a_i$ for clarity. Formally, they are choices of $v_j$ from our infinite set of variables $\mathcal{V} = \{v_1,v_2,\hdots\}$.

For the following abbreviations, suppose $f(x)$ is represented by $(a_0,\hdots,a_n)$, and $g(x)$ is represented by $(b_0,\hdots,b_m)$, with $m\geq n$.

We can express the statement that a polynomial has coefficients in the valuation ring $\mathcal{O}$ by requiring that all of its coefficients have valuation at least $1$. We will write $f\in\mathcal{O}[x]$ as an abbreviation for 
\[
(1\leq\mathfrak{v}(a_0))\land\hdots\land(1\leq\mathfrak{v}(a_n).
\]

We can easily express the statement that a polynomial is monic. We will write $Monic(f)$ as an abbreviation for 
\[
a_n = 1.
\]

We can express equality of polynomials by stating the equality of the coefficients. We will write $f = g$ as an abbreviation for 
\[
(a_0 = b_0)\land\hdots\land(a_n = b_n)\land(0=b_{n+1})\land\hdots\land(0=b_m).
\]

We can form new polynomials by addition, subtraction, and multiplication. We will write $f+g$ to mean the polynomial which is represented by $(a_0+b_0,\hdots,a_n+b_n,b_{n+1},\hdots,b_m)$, and subtraction is just addition with an application of the additive inverse function, $-$, to each coefficient of $g$. We will write $fg$ to mean the polynomial which is represented by the $(nm+1)$-tuple $(a_0b_0,a_0b_1+a_1b_0,\hdots,a_nb_m)$.

We will also need to work with the images of polynomials in the residue class field. To express equality of the images of two polynomials in the residue class field, we will state that their difference is a polynomial whose coefficients all have valuation greater than $1$ (and thus is the zero polynomial in the residue class field). Using the standard abbreviation $(x<y)$ for $(x\leq y)\land\lnot(x=y)$, we will write $ResEq(f,g)$ as an abbreviation for 
\[
(1<\mathfrak{v}(a_0 + -(b_0))\land\hdots\land(1<\mathfrak{v}(a_n + -(b_n)))\land(1<\mathfrak{v}(-(b_{n+1})))\land\hdots\land(1<\mathfrak{v}(-(b_m)))).
\]

Finally, Hensel's lemma includes the statement that the images of two polynomials in the residue class field are relatively prime. We will write $ResRelPrime(f,g)$ as an abbreviation for 
\[
\lnot(\exists_m p(x)\exists_m q(x)\exists_m r(x)\,(p\in\mathcal{O}[x])\land(q\in\mathcal{O}[x])\land(r\in\mathcal{O}[x])\land ResEq(pq,f) \land ResEq(pr,g)).
\] 

Using these abbreviations, the following first-order sentence, $Hensel_n$, expresses Hensel's lemma for polynomials of degree at most $n$.

\begin{eqnarray*}
\forall_n f(x) \forall_n g_0(x) \forall_n h_0(x)&&((f\in\mathcal{O}[x])\land (g_0\in\mathcal{O}[x])\land (h_0\in\mathcal{O}[x])\\
&&\land Monic(f)\land Monic(g_0)\land Monic(h_0)\\
&&\land ResRelPrime(g_0,h_0) \land ResEq(g_0h_0,f)) \rightarrow \\
(\exists_n g(x) \exists_n h(x) &&(g\in\mathcal{O}[x])\land (h\in\mathcal{O}[x])\\
&&\land ResEq(g,g_0)\land ResEq(h,h_0)\land (f=gh))
\end{eqnarray*}

Appending the infinite set of sentences $\{Hensel_n\,|\,n\in\mathbb{N}\}$ to the axioms for the theory of valued fields with cross section, we obtain a set of axioms for the theory of Henselian valued fields with cross section.
\end{proof}

Together with Corollary~\ref{cor:el_class}, Lemma~\ref{lem:HVF} shows that for any ultrafilter $\mathcal{D}$, $\prod\mathbb{Q}_p/\mathcal{D}$ and $\prod\mathbb{F}_p((t))/\mathcal{D}$ are Henselian. In the next theorem, we will work with the value groups of these valued fields as $\mathcal{L}_G$-structures, where $\mathcal{L}_G = \{\cdot,1\}$ is the language of groups, with symbols interpreted in the natural way. Similarly, we will work with the residue class fields as $\mathcal{L}_F$-structures, where $\mathcal{L}_F = \{+,\cdot,-,0,1\}$ is the language of fields, with symbols interpreted in the natural way. 

In Lemma~\ref{lem:UP_resval}, we showed that the value groups and residue class fields of the ultraproducts are isomorphic. The following general theorem shows that this is enough to prove that the ultraproducts themselves are elementarily equivalent. The proof, which is quite lengthy, uses a back-and-forth argument, properties of valuations, types and saturation, and the lemmas at the end of Section~\ref{subsec:HL}.

\begin{thm}[{\cite[Theorem 5.4.12]{Chang}}]\label{thm:el_eq}
Suppose that $F$ and $G$ are Henselian valued fields with valuations $\mathfrak{v}_F$ and $\mathfrak{v}_G$ respectively such that $\mathfrak{v}_F(F^*)\equiv \mathfrak{v}_G(G^*)$ (as $\mathcal{L}_G$-structures), $\overline{F} \equiv \overline{G}$ (as $\mathcal{L}_F$-structures), and $char(\overline{F}) = char(\overline{G}) = 0$. Then $F\equiv G$. 
\end{thm}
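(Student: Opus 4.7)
The plan is to prove this via a back-and-forth construction between saturated elementary extensions of $F$ and $G$, using the lemmas at the end of Section~\ref{subsec:HL} to extend partial valued-field isomorphisms. First, invoking the Continuum Hypothesis (Corollary~\ref{cor:ch}), I would replace $F$ and $G$ by saturated elementary extensions $F^*$ and $G^*$ of cardinality $\aleph_1$. Since $F^* \equiv F$ and $G^* \equiv G$, it suffices to show $F^* \cong G^*$. The hypotheses pass up to the extensions: $F^*$ and $G^*$ are still Henselian valued fields of residue characteristic $0$, their value groups are still elementarily equivalent, and their residue class fields are still elementarily equivalent. Moreover, standard arguments show that $\mathfrak{v}_{F^*}(F^{*\,*})$, $\mathfrak{v}_{G^*}(G^{*\,*})$, $\overline{F^*}$, and $\overline{G^*}$ inherit $\aleph_1$-saturation; by Corollary~\ref{cor:saturatedisomorphism}, the two value groups are isomorphic (say via $\sigma$) and the two residue class fields are isomorphic (say via $\tau$).

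Next, using Lemma~\ref{lem:residue_subfield}, I would embed $\overline{F^*}$ as a subfield $F_0 \subseteq \mathcal{O}_{F^*}$ and $\overline{G^*}$ as $G_0 \subseteq \mathcal{O}_{G^*}$; both $F_0$ and $G_0$ are Henselian (trivially, since their valuations are trivial), and $\tau$ transports to an isomorphism $F_0 \cong G_0$ as valued fields. The remainder of the proof is a transfinite back-and-forth: enumerate $F^* = \{x_\alpha : \alpha < \aleph_1\}$ and $G^* = \{y_\alpha : \alpha < \aleph_1\}$ and construct an increasing chain of Henselian valued subfields $K_\alpha \subseteq F^*$, $L_\alpha \subseteq G^*$, each of cardinality less than $\aleph_1$, together with valued-field isomorphisms $f_\alpha : K_\alpha \to L_\alpha$ compatible with $\sigma$ on value groups and $\tau$ on residue fields; at limit ordinals take unions, and at successor stages alternate adjoining the next unenumerated element from $F^*$ and $G^*$.

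The heart of the matter is the successor step. Given $x \in F^*$ not yet in $K_\alpha$, I split into two cases. If $x$ is algebraic over $K_\alpha$, then Lemma~\ref{lem:unique_extension} guarantees that the valuation on $K_\alpha(x)$ is uniquely determined, so I only need to produce a corresponding algebraic $y$ over $L_\alpha$ realizing the same field-theoretic type; this is possible because $f_\alpha$ is a field isomorphism and both ambient fields are Henselian, so after taking relative algebraic closures I can extend $f_\alpha$ using Lemma~\ref{lem:closure_under_roots} to the Henselizations. If $x$ is transcendental over $K_\alpha$, I first enlarge $K_\alpha$ if necessary so that $\mathfrak{v}(K_\alpha^*)$ is closed under roots and $\overline{K_\alpha} = \overline{F^*}$, so that $K_\alpha$ is its own Henselization inside $F^*$; then Lemma~\ref{lem:transcendental_value_group} ensures adjoining $x$ does not blow up cardinality, and to apply Lemma~\ref{lem:transcendental_extension} I must find $y \in G^*$ transcendental over $L_\alpha$ with $\sigma(\mathfrak{v}_{F^*}(x-a)) = \mathfrak{v}_{G^*}(y - f_\alpha(a))$ for every $a \in K_\alpha$. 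This is a partial type over $L_\alpha$ in the variable $y$; the main obstacle is verifying that this type is consistent (i.e., finitely satisfiable in $G^*$) and then realizing it using the $\aleph_1$-saturation of $G^*$, since $|L_\alpha| < \aleph_1$.

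Finite satisfiability of the cut-type reduces, by the ultrametric inequality (Lemma~\ref{lem:VF}(4)), to checking that for any finite $a_1,\dots,a_n \in K_\alpha$ the specified values of $\mathfrak{v}_{G^*}(y - f_\alpha(a_i))$ are consistent with one another; this is an elementary computation in the value group using $\sigma$, and it is realized in $G^*$ because any single approximation to the cut determined by $x$ over $K_\alpha$ is matched by some element of $G^*$. Once this transcendental step is carried out, Lemma~\ref{lem:transcendental_extension} extends $f_\alpha$ to $K_\alpha(x) \to L_\alpha(y)$ as valued fields; after further closing under the Henselization in $F^*$ and $G^*$ (invoking Lemma~\ref{lem:closure_under_roots} once more), we obtain $f_{\alpha+1}$. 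The symmetric step handles $y_\alpha \in G^*$. Taking $f = \bigcup_{\alpha < \aleph_1} f_\alpha$ yields the desired isomorphism $F^* \cong G^*$, whence $F \equiv F^* \equiv G^* \equiv G$ by Theorem~\ref{thm:isomorphism}.
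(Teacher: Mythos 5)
Your high-level plan (pass to saturated models via CH, then do a transfinite back-and-forth using Lemmas~\ref{lem:unique_extension}--\ref{lem:transcendental_value_group}) is the right one, and it matches the paper's structure. But there is a genuine gap in the successor step that the paper spends most of its effort addressing: you only treat the transcendental case by invoking Lemma~\ref{lem:transcendental_extension}, which requires $\mathfrak{v}_F(K_\alpha(x)^*) = \mathfrak{v}_F(K_\alpha^*)$, i.e., that adjoining $x$ does \emph{not} change the value group. Closing $\mathfrak{v}(K_\alpha^*)$ under roots does not make this automatic: a transcendental $x$ can have $\mathfrak{v}_{F^*}(x)$ (or $\mathfrak{v}_{F^*}(p(x))$ for various $p$) lying genuinely outside $\mathfrak{v}(K_\alpha^*)$ even after root-closure. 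In that situation Lemma~\ref{lem:transcendental_extension} does not apply and your cut-type over $L_\alpha$ is not known to be realizable in the right way. The paper handles this with a separate case (adjoining an element of the value group itself, using saturation of $\mathfrak{v}_G(G^*)$ as an ordered abelian group, then closing under roots and Henselizing) and then an iterated $\omega$-chain reduction (``the general case'') that first adjoins all new value group elements produced by $x$, again and again, until the value group stabilizes, and only then adjoins $x$ via the ``value group unchanged'' lemma. This iteration is the crux of the argument and is entirely missing from your proposal.

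Two smaller remarks. First, you include an ``algebraic case'' in the successor step, but the paper's invariant is cleaner: the working subfields are kept \emph{relatively algebraically closed} in $F^*$, $G^*$ (not merely Henselian), so any $x\notin K_\alpha$ is automatically transcendental and the algebraic case is vacuous. As a result, the paper never has to argue that the image of an irreducible polynomial over $K_\alpha$ acquires a root inside $G^*$, which your sketch of the algebraic case glosses over; instead all algebraic elements are swept up at the end of each stage by taking relative algebraic closure, with Lemmas~\ref{lem:unique_extension} and \ref{lem:closure_under_roots} guaranteeing the valued-field isomorphism extends automatically. Second, you fix a single global value-group isomorphism $\sigma$ at the start, while the paper instead carries a \emph{growing partial elementary bijection} between the value groups as part of its $f_1:F_1\leftrightarrow G_1$ invariant; fixing $\sigma$ can be made to work since $\sigma$ preserves all first-order properties, but it is a stronger commitment than necessary and the paper's more flexible bookkeeping is what makes the saturation argument in the value-group case go through smoothly.
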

\begin{proof}
Suppose $F_1$ and $G_1$ are valued subfields of $F$ and $G$. We will write $f_1:F_1\leftrightarrow G_1$ if and only if $f_1$ is an isomorphism between $F_1$ and $G_1$, and $f_1$ restricted to the value group of $F_1$ is a partial elementary bijection between $\mathfrak{v}_F(F_1^*)$ and $\mathfrak{v}_G(G_1^*)$ as subsets of $\mathfrak{v}_F(F^*)$ and $\mathfrak{v}_G(G^*)$. 

The plan for the proof is as follows:
\begin{enumerate}
\item We will show that we can reduce to the case in which $F$ and $G$ are saturated models of cardinality $\aleph_1$.
\item We will show that the residue class fields $\overline{F}$ and $\overline{G}$ are relatively algebraically closed subfields of $F$ and $G$ respectively, and there exists $f_0:\overline{F}\leftrightarrow\overline{G}$.
\item We will show that given $f_1:F_1\leftrightarrow G_1$ between relatively algebraically closed valued subfields of $F$ and $G$, such that $\overline{F}\subseteq F_1$, $\overline{G}\subseteq G_1$, $f_0\subseteq f_1$, and $\mathfrak{v}_F(F_1^*)$ and $\mathfrak{v}_G(G_1^*)$ are countable, then given any element $x\in F$, we can extend $f_1$ to $f_2:F_2\leftrightarrow G_2$ such that $x\in F_2$, $F_1\subseteq F_2$, $G_1\subseteq G_2$, $f_1\subseteq f_2$, and $\mathfrak{v}_F(F_2)$ and $\mathfrak{v}_G(G_2)$ are countable.
\item We will show that the same extension result holds if we exchange the roles of $F$ and $G$. 
\item We will use these results and a back-and-forth argument to construct an isomorphism between $F$ and $G$. 
\end{enumerate}

(1) \emph{Reducing to the saturated case.} Assuming the Continuum Hypothesis, there exist saturated models $\mathcal{F}\models Th(F)$ and $\mathcal{G}\models Th(G)$ of cardinality $\aleph_1$, by Theorem~\ref{cor:ch}. Once again, we stress that the Continuum Hypothesis merely simplifies our arguments, and there are methods for eliminating it from the proof (see Appendix~\ref{sec:SM}). 

We would like to show that $\mathcal{F}$ and $\mathcal{G}$ satisfy the conditions of the theorem. Since the class of Henselian valued fields is elementary, the axioms for the class are a subset of both $Th(F)$ and $Th(G)$, so $\mathcal{F}$ and $\mathcal{G}$ are Henselian valued fields. 

For any $\mathcal{L}_F$-formula $\phi$ about the residue class field, we can transform $\phi$ into an $\mathcal{L}_{VF}$-formula $\phi'$ about the valued field as follows. First, we will restrict all variables to the valuation ring by replacing every instance of a quantifier $\exists v_i\, \psi$ or $\forall v_i\, \psi$ (where $\psi$ is some formula) with $\exists v_i\,(1\leq\mathfrak{v}(v_i))\land\psi$ or $\forall v_i\,(1\leq\mathfrak{v}(v_i))\rightarrow \psi$. Additionally, if $v_1,\hdots,v_j$ are free variables in $\phi$, we restrict these to the valuation ring as well by adding to $\phi$: $(1\leq\mathfrak{v}(v_1))\land\hdots\land(1\leq\mathfrak{v}(v_j)\land\phi$. Now we will replace equality by congruence modulo the maximal ideal $I_1$ by replacing every instance of $t_1 = t_2$ (where $t_1$ and $t_2$ are terms) with $1 < \mathfrak{v}(t_1 + -(t_2))$. 

It should be easy to convince yourself that for every $\mathcal{L}_{F}$-sentence $\phi$, $\overline{\mathcal{F}}\models \phi$ if and only if $\mathcal{F}\models \phi'$. But $\mathcal{F}\equiv F$, so $\mathcal{F}\models\phi'$ if and only if $F\models\phi'$ if and only if $\overline{F}\models\phi$. By the same argument, $\overline{\mathcal{G}}\models \phi$ if and only if $\overline{G}\models\phi$. But $\overline{F}\equiv\overline{G}$, so $\overline{\mathcal{F}}\models\phi$  if and only if $\overline{\mathcal{G}}\models\phi$, and thus $\overline{\mathcal{F}}\equiv\overline{\mathcal{G}}$.

In particular, the $\mathcal{L}_F$-sentences $\lnot Char_p$ for each prime $p$ in the theory $Char_0$ can each be transformed by this method. Call the theory made up of these transformed sentences $Char_0'$. Since the residue class field of $F$ has characteristic zero, $\overline{F}\models Char_0$, so $F\models Char_0'$, and since $F\equiv \mathcal{F}$, $\mathcal{F}\models Char_0'$, and thus $\overline{\mathcal{F}}\models Char_0$. Since $\overline{\mathcal{G}}\equiv\overline{\mathcal{F}}$, both residue class fields have characteristic zero.

Similarly, for any $\mathcal{L}_G$-formula $\phi$ about the value group, we can transform $\phi$ into an $\mathcal{L}_{VF}$-formula $\phi'$ about the valued field by restricting all variables to the value group. We replace every instance of a quantifier $\exists v_i\, \psi$ or $\forall v_i\, \psi$ (where $\psi$ is some formula) with $\exists v_i\,V(v_i)\land\psi$ or $\forall v_i\,V(v_i)\rightarrow \psi$. Additionally, if $v_1,\hdots,v_j$ are free variables in $\phi$, we restrict these to the value group as well by adding to $\phi$: $V(v_1)\land\hdots\land V(v_j)\land\phi$. Again, $\mathfrak{v}_F(\mathcal{F}^*)\models \phi$ if and only if $\mathcal{F}\models \phi'$, so $\mathfrak{v}_F(\mathcal{F}^*)\equiv\mathfrak{v}_G(\mathcal{G}^*)$ by the same argument.

Thus $\mathcal{F}$ and $\mathcal{G}$ satisfy the conditions of the theorem. It suffices to prove the theorem in the special case of saturated models of cardinality $\aleph_1$, since then we will have $\mathcal{F}\equiv\mathcal{G}$. But $F\equiv\mathcal{F}$ and $G\equiv\mathcal{G}$, so we will have $F\equiv G$, completing the proof in general. 

For the remainder of the proof, we will assume that $F$ and $G$ are saturated models of cardinality $\aleph_1$. In order to prove that $F\equiv G$, we will prove the stronger condition (by Theorem~\ref{thm:isomorphism}) that  $F\cong G$. Keep in mind that we are only working with the saturated case. We do not claim that the valued fields are isomorphic in the general case, just elementarily equivalent.

Note that we cannot simply apply Corollary~\ref{cor:saturatedisomorphism} to show that $F\cong G$, even though these models are saturated. The corollary requires the additional assumption that $F\equiv G$, which is exactly what we are trying to prove. However, we will use this corollary to show that $\overline{F}\cong\overline{G}$. 

As an additional consequence of the method of transforming formulas about the residue class field and value group into formulas about the valued field, we will show that $\overline{F}$ and $\overline{G}$ are saturated models, and except in the trivial case $\mathfrak{v}_F(F^*) = \mathfrak{v}_G(G^*) = \{1\}$, $\mathfrak{v}_F(F^*)$ and $\mathfrak{v}_G(G^*)$ are saturated models.

Note that the residue class fields are infinite by characteristic zero, and except in the trivial case, the value groups are infinite by the linear order properties. Any type in the residue class field or value group of $F$ over a set of cardinality at most $\aleph_1$ can be transformed into a type in $F$ over a set of cardinality at most $\aleph_1$, and since all such types are realized in $F$, this type is realized in the residue class field or value group, and hence these models are $\aleph_1$-saturated. The same argument holds for the residue class field and value group of $G$. 

Now any $\aleph_1$-saturated model has cardinality at least $\aleph_1$. But the value group of $F$ is a subset of $F$, so it has cardinality at most $\aleph_1$. And the residue class field of $F$ embeds into $F$ by Lemma~\ref{lem:residue_subfield}, so it has cardinality at most $\aleph_1$. The same argument holds in $G$, so $\overline{F}$, $\overline{G}$, $\mathfrak{v}_F(F^*)$, and $\mathfrak{v}_G(G^*)$ are all saturated, and $|\overline{F}|=|\overline{G}|=|\mathfrak{v}_F(F^*)|=|\mathfrak{v}_G(G^*)|=\aleph_1$.

(2) \emph{The base case: residue class fields.} By Lemma~\ref{lem:residue_subfield}, $\overline{F}$ and $\overline{G}$ are isomorphic to subfields of $F$ and $G$. We will identify the residue class fields with these subfields. Since $\overline{F}$ and $\overline{G}$ are saturated, and by assumption $\overline{F}\equiv\overline{G}$, $\overline{F}\cong\overline{G}$ by Corollary~\ref{cor:saturatedisomorphism}. Call the isomorphism $f_0$. We have $f_0:\overline{F}\leftrightarrow\overline{G}$, since $f_0$ is an elementary bijection between $\mathfrak{v}_F(\overline{F}^*)$ and $\mathfrak{v}_G(\overline{G}^*)$, as these value groups only contain a singe element, $1$, which is already a constant symbol in the language.

Now we can dispense with the trivial case $\mathfrak{v}_F(F^*)=\mathfrak{v}_G(G^*)=\{1\}$, for in this case, $\overline{F} = F$ and $\overline{G} = G$, so $f_0$ provides the desired isomorphism $F\cong G$. We will assume for the remainder of the proof that we are not in the trivial case, and thus the value groups of $F$ and $G$ are saturated of cardinality $\aleph_1$. 

It remains to show that $\overline{F}$ and $\overline{G}$ are relatively algebraically closed in $F$ and $G$. Let $p\in \overline{F}[t]$. Since $\overline{F}\subseteq \mathcal{O}_F$ by Lemma~\ref{lem:residue_subfield}, we can consider $p$ as an element of $\mathcal{O}_F[t]$. Let $x\in F$ such that $\mathfrak{v}_F(x)<1$. Substituting $x$ for $t$, $p(x)$ is a sum of terms of the form $a_mx^m$, where $\mathfrak{v}_F(a_m) = 1$ if $a_m\neq 0$. Thus $\mathfrak{v}_F(a_mx^m) = \mathfrak{v}_F(x)^m < 1$. Each of the powers of $\mathfrak{v}_F(x)$ is distinct, so $p(x)$ is a sum of terms with distinct values, and by Lemma~\ref{lem:VF} (4), $\mathfrak{v}_F(p(x))$ is the minimum of these, which is less than $1$. But $\mathfrak{v}_F(0) > 1$, so $x$ is not a root of $p$. 

Thus every root of $p$ in $F$ is an element of $\mathcal{O}_F$. Let $x$ be one such root. Then $\overline{x}$ is defined, and since $\overline{p} = p$, $p(\overline{x}) = \overline{p(x)} = \overline{0} = 0$. So $p$ already has the root $\overline{x}\in \overline{F}$, and we can factor $p$ in $\overline{F}$ as $(t-\overline{x})q$, where $q$ is a polynomial of lower degree. Now if $x\neq \overline{x}$, $x$ is still a root of $q$, and we can apply the same argument again to factor $q$. Repeating this process until we reach a polynomial of degree $1$, we see that we must have $x = \overline{x}$. So every root of $p$ in $F$ is in $\overline{F}$, and $\overline{F}$ is relatively algebraically closed.

The same argument shows that $\overline{G}$ is relatively algebraically closed in $G$. 

(3-4) \emph{The inductive step: extending isomorphisms.} We have $f_1:F_1\leftrightarrow G_1$ between relatively algebraically closed subfields of $F$ and $G$, such that $\overline{F}\subseteq F_1$, $\overline{G}\subseteq G_1$, $f_0\subseteq f_1$, and $\mathfrak{v}_F(F_1^*)$ and $\mathfrak{v}_G(G_1^*)$ are countable. Suppose $x\in F_1$. Then $x$ is already in the domain, so we can easily satisfy (3) by simply taking $f_2 = f_1$. 

Otherwise, suppose $x\notin F_1$. Since $F_1$ is relatively algebraically closed in $F$, $x$ is transcendental over $F_1$. We will first prove (3) in two special cases, then prove the general case. 

\emph{Case 1: Adjoining $x$ to $F_1$ does not change the value group of $F_1$.} That is, $\mathfrak{v}_F(F_1(x)^*) = \mathfrak{v}_F(F_1^*)$.

First, note that since $\overline{F}\subseteq F_1\subset F_1(x)$, and the residue class field of $\overline{F}$ is already the whole residue class field of $F$, we have $\overline{F_1(x)} = \overline{F} = \overline{F_1}$.

Since $F_1$ is relatively algebraically closed, $\widetilde{F_1} = F_1$, where $\widetilde{F_1}$ is the relative algebraic closure of $F_1$ in $F$. Moreover, $F$ is Henselian, $\overline{F_1} = \overline{F}$, $char(\overline{F}) = 0$, and trivially $\mathfrak{v}_F(\widetilde{F_1^*}) = \mathfrak{v}_F(F_1^*)$, so we can apply Lemma~\ref{lem:closure_under_roots} to show that $\widetilde{F_1}$ is a Henselization of $F_1$. In particular, $F_1$ is already Henselian. The same argument applied to $G_1$ shows that $G_1$ is Henselian.

We have established that $F_1$ and $G_1$ are Henselian, that $x$ is transcendental over $F_1$, and that adjoining $x$ does not change the value group or residue class field of $F_1$. In order to apply Lemma~\ref{lem:transcendental_extension}, it remains to find $y\in G$ transcendental over $G_1$ such that for all $a\in F_1$, $f_1(\mathfrak{v}_F(x-a)) = \mathfrak{v}_G(y-f_1(a))$. 

We will find our $y$ by using the fact that $G$ is saturated. That is, we will express the valuation condition required on $y$ as a 1-type, which must be realized in $G$.

Since $\mathfrak{v}_F(F_1(x))$ is countable, the set $\{\mathfrak{v}_F(x-b)\,|\,b\in F_1\}$ is countable, and thus there is a countable subset $A_1\subset F_1$ such that for all $b\in F_1$, there exists $a\in A_1$ with $\mathfrak{v}_F(x-a) = \mathfrak{v}_F(x-b)$. 

Let $S = f(A_1) \cup \mathfrak{v}_G(G_1) \subset G_1$, and let $\mathcal{L}_{S}$ be $\mathcal{L}_{VF}\cup \{c_{s}\,|\,s\in S\}$, the language of valued fields extended with a new constant symbol for each element of $f_1(A_1)$ and for each element of the value group $\mathfrak{v}_G(G_1)$. For all $a\in A_1$, let $\phi_a(v_1)$ be the $\mathcal{L}_{S}$-formula
\[
c_{f_1(\mathfrak{v}(x-a))} = \mathfrak{v}(v_1 - c_{f_1(a)}).
\]
Note that $\mathfrak{v}_F(x-a)\in \mathfrak{v}_F(F_1(x)) = \mathfrak{v}_F(F_1)$, so $f_1(\mathfrak{v}(x-a))\in \mathfrak{v}_G(G_1)$, and $c_{f_1(\mathfrak{v}(x-a))}$ is a constant symbol in $\mathcal{L}_{S}$. 

Let $P = \{\phi_a\,|\,a\in A_1\}$. We would like to show that $P$ is a 1-type over $S$, so we must show that $P\cup Th_{A}(G)$ is satisfiable. We will show that every finite subset of $P\cup Th_{A}(G)$ is satisfiable, then apply Compactness.

\emph{Claim:} For every finite set $A\subset A_1$, there is $y_A\in G$ such that for all $a\in A$, $f_1(\mathfrak{v}_F(x-a)) = \mathfrak{v}_G(y_A-f_1(a))$.

Choose $b\in A$ such that $w = \mathfrak{v}_F(x-b)$ takes on its maximum value. We have $\mathfrak{v}_F(F_1) = \mathfrak{v}_F(F_1(x))$, so $w\in \mathfrak{v}_F(F_1)$. Now for each positive integer $n$, we have seen that $\mathfrak{v}_F(n) = 1$, so $\mathfrak{v}_F(nw) = 1\cdot w = w$. Thus for all $n$ and all $a\in A$, 
\begin{eqnarray*}
\mathfrak{v}_F(b-nw-a) &\geq& \min(\mathfrak{v}_F(b-x),\mathfrak{v}_F(nw),\mathfrak{v}_F(x-a))\\
&\geq& \min(w,w,\mathfrak{v}_F(x-a)) \\
&\geq& \mathfrak{v}_F(x-a).
\end{eqnarray*}

Now by Lemma~\ref{lem:VF}, equality holds above whenever $\mathfrak{v}_F(x-a) < w$. We claim that this is the case for all but at most one $n$. For suppose we have $m<n$ with $\mathfrak{v}_F(b-mw-a)>\mathfrak{v}_F(x-a)$ and $\mathfrak{v}_F(b-nw-a)>\mathfrak{v}_F(x-a)$. Then
\begin{eqnarray*}
w &=& \mathfrak{v}_F((n-m)w) \\
&\geq& \min(\mathfrak{v}_F(b-mw-a),\mathfrak{v}_F(-b+nw+a))\\
&>& \mathfrak{v}_F(x-a),
\end{eqnarray*}
in which case equality holds above and $\mathfrak{v}_F(b-nw-a) = \mathfrak{v}_F(x-a)$, a contradiction.

Since $A$ is finite, and for each $a$ there is at most one positive integer $n$ such that $\mathfrak{v}_F(b-nw-a) \neq \mathfrak{v}_F(x-a)$, we can choose $n$ such that from all $a\in A$, $\mathfrak{v}_F(b-nw-a) = \mathfrak{v}_F(x-a)$. 

Let $y_A = f_1(b-nw)$. Then for all $a\in A$, 
\begin{eqnarray*}
f_1(\mathfrak{v}_F(x-a)) &=& f_1(\mathfrak{v}_F(b-nw-a))\\
&=& \mathfrak{v}_G(y_A-f_1(a)),
\end{eqnarray*}
since $f_1\circ\mathfrak{v}_F = \mathfrak{v}_G\circ f_1$. This completes the proof of the claim.

Let $\Delta$ be any finite subset of $P\cup Th_S(G)$. Let $A$ be the subset of $A_1$ consisting of all $a$ such that $\phi_a(v_1) \in \Delta$. Applying the claim, there is $y_A\in G$ such that for all $a\in A$, $f_1(\mathfrak{v}_F(x-a)) = \mathfrak{v}_G(y_A-f_1(a))$. That is, $G\models \phi_a(y_A)$ for all $a\in A$. Clearly, $G$ also satisfies all $\mathcal{L}_S$-sentences of $Th_S(G)$ in $\Delta$, so $\Delta$ is satisfiable. By Compactness (Theorem~\ref{thm:compactness}), $P\cup Th_S(G)$ is satisfiable. 

Hence $P$ is a 1-type over $S$. Since $f_1(A_1)$ is countable, and $\mathfrak{v}_G(G_1)$ is countable by assumption, $S$ is countable. Now $G$ is $\aleph_1$-saturated, so $P$ is realized by an element $y\in G$. 

Now we have that for all $a\in A_1$, $f_1(\mathfrak{v}_F(x-a)) = \mathfrak{v}_G(y-f_1(a))$. We must show that the same is true for all $b\in F_1$. 

Let $\mathfrak{v}_F(x-b) = d$ (the valuation of $x-b$ cannot be $0$, since then we would have $x = b$, but $x\notin F_1$). Since $\overline{F_1(x)} = \overline{F_1}$, there exists $b'\in F_1$ such that $\overline{b'} = \overline{(x-b)d^{-1}}$, that is, $\mathfrak{v}_F((x-b)d^{-1} - b') > 1$. Multiplying both sides by $d = \mathfrak{v}_F(d)$ (by the cross section property),
\begin{eqnarray*}
\mathfrak{v}_F((x-b)d^{-1} - b')\mathfrak{v}_F(d) &>& d \\
\mathfrak{v}_F(x - b - b'd) &>& \mathfrak{v}_F(x-b).
\end{eqnarray*}

By the definition of $A_1$, there exists $a\in A_1$ with $\mathfrak{v}_F(x - (b + b'd)) = \mathfrak{v}_F(x-a)$, and thus $\mathfrak{v}_F(x-a) = \mathfrak{v}_F(x - (b + b'd)) > \mathfrak{v}_F(x-b)$. By Lemma~\ref{lem:VF} (3) and (4),
\begin{eqnarray*}
\mathfrak{v}_F(a - b) &=& \mathfrak{v}_F((x-b) - (x-a))\\
&=& \min(\mathfrak{v}_F(x-b),\mathfrak{v}_F(-(x-a)))\\
&=& \min(\mathfrak{v}_F(x-b),\mathfrak{v}_F(x-a))\\
&=& \mathfrak{v}_F(x-b)\\
&<& \mathfrak{v}_F(x-a).
\end{eqnarray*}

Applying $f_1$, $\mathfrak{v}_G(f_1(a) - f_1(b)) = f_1(\mathfrak{v}_F(a-b)) < f_1(\mathfrak{v}_F(x-a)) = \mathfrak{v}_G(y-f_1(a))$, since $a\in A_1$. Hence,
\begin{eqnarray*}
\mathfrak{v}_G(y-f_1(b)) &=& \mathfrak{v}_G((y - f_1(a)) + (f_1(a) - f_1(b)))\\
&=& \min(\mathfrak{v}_G(y-f_1(a)),\mathfrak{v}_G(f_1(a)-f_1(b))) \\
&=& \mathfrak{v}_G(f_1(a)-f_1(b)) \\
&=& f_1(\mathfrak{v}_F(a-b)) \\
&=& f_1(\mathfrak{v}_F(x-b),
\end{eqnarray*}
as was to be shown.

Finally, we conclude that $y\notin G_1$, for if $y\in G_1$, then $f_1^{-1}(y)\in F_1$, so $0 = \mathfrak{v}_G(y-y) = f_1(\mathfrak{v}_F(x - f_1^{-1}(y)))$, so $x = f_1^{-1}(y)$, and $x\in F_1$, contradicting our choice of $x$. 

We have satisfied all of the hypotheses of Lemma~\ref{lem:transcendental_extension}. The lemma tells us that $\mathfrak{v}_G(G_1(y)*) = \mathfrak{v}_G(G_1)$, $\overline{G_1(y)} = \overline{G_1}$, and $f_1$ can be extended to an isomorphism $g_1:F_1(x)\cong G_1(y)$. 

We have not yet finished satisfying the conditions of (3). In particular, $F_1(x)$ and $G_1(y)$ are not necessarily relatively algebraically closed. By assumption, $F_1$ is relatively algebraically closed, so by Lemma~\ref{lem:closure_under_roots}, $\mathfrak{v}(F_1^*)$ is closed under roots in $\mathfrak{v}(F^*)$. But $\mathfrak{v}(F_1(x)^*) = \mathfrak{v}(F_1^*)$, so $\mathfrak{v}(F_1(x)^*)$ is closed under roots. Again by Lemma~\ref{lem:closure_under_roots}, the relative algebraic closure of $F_1(x)$ in $F$ is a Henselization of $F_1(x)$. Call this field $F_2$. The same argument shows that $G_2$, the relative algebraic closure of $G_1(x)$ in $G$, is a Henselization of $G_1(x)$. 

By Lemma~\ref{lem:unique_extension}, $g_1$ can be extended to an isomorphism $f_2:F_2\cong G_2$. By Lemma~\ref{lem:henselization_exists}, $\mathfrak{v}_F(F_2^*) = \mathfrak{v}_F(F_1(x)^*) = \mathfrak{v}_F(F_1^*)$, and $\mathfrak{v}_G(G_2^*) = \mathfrak{v}_G(G_1(y)^*) = \mathfrak{v}_G(G_1^*)$. In particular, $\mathfrak{v}_F(F_2)$ remains countable, and $f_2$ remains a partial elementary bijection between $\mathfrak{v}_F(F_2^*)$ and $\mathfrak{v}_G(G_2^*)$. This completes the proof of the first special case.

\emph{Case 2: The element $x$ is in the value group of $F$.} That is, $x\in\mathfrak{v}(F^*)$. 

The function $f_1$ restricted to $\mathfrak{v}_F(F_1^*)$ is a partial elementary bijection onto $\mathfrak{v}_G(G_1^*)$. Since $\mathfrak{v}_G(G^*)$ is $\aleph_1$-saturated and $\mathfrak{v}_F(F_1^*)$ is countable, we can choose an element $y\in\mathfrak{v}_G(G^*)$ which realizes the complete type of $x$ in $\mathfrak{v}_F(F)$ over $\mathfrak{v}_F(F_1^*)$, where we interpret the constant symbol corresponding to an element of $\mathfrak{v}_F(F_1^*)$ in $\mathfrak{v}_G(G^*)$ by its image under $f_1$. 

Thus, letting $V$ be the subgroup of $\mathfrak{v}_F(F^*)$ generated by $\mathfrak{v}_F(F_1^*)$ and $x$, and letting $W$ be the subgroup of $\mathfrak{v}_G(G^*)$ generated by $\mathfrak{v}_G(G_1^*)$ and $y$, the restriction of $f_1$ extends to a partial elementary bijection between $V$ and $W$ by mapping $x$ to $y$. Since we have only added one generator to a countable group in each case, $V$ and $W$ are countable.

Define an extension of $f_1$, $g_1:F_1(x)\rightarrow G_1(y)$, by
\[
g_1\left(\frac{d_0+\hdots+d_mx^m}{e_0+\hdots+e_nx^n}\right) = \frac{f_1(d_0)+\hdots+f_1(d_m)y^m}{f_1(e_0)+\hdots+f_1(e_n)y^n},
\]
with all coefficients $d_i$ and $e_j$ in $F_1$. Since $f_1$ is a field isomorphism, it is easy to check that $g_1$ is a field isomorphism. Checking that it is an isomorphism of valued fields takes a little more work.

Let $p(x) = e_0 + \hdots + e_nx^n$ with coefficients in $F_1$. Suppose that for some indices $r,s$ with $r<s$, $e_r\neq 0$, and $e_s\neq 0$, we have $\mathfrak{v}_F(e_rx^r) = \mathfrak{v}_F(e_sx^s)$. Then $x^r\mathfrak{v}_F(e_r) = x^s\mathfrak{v}_F(e_s)$, since $\mathfrak{v}_F(x) = x$ by the cross section property, and $x^{s-r} = \mathfrak{v}_F(e_s)(\mathfrak{v}_F(e_r))^{-1}\in\mathfrak{v}_F(F_1)$. But $F_1$ is relatively algebraically closed, so by Lemma~\ref{lem:closure_under_roots}, $\mathfrak{v}_F(F_1)$ is closed under roots, and thus $x\in \mathfrak{v}_F(F_1)\subseteq F_1$, contradicting our choice of $x$.

Thus for all distinct nonzero coefficients $e_r,e_s$, $\mathfrak{v}_F(e_rx^r)\neq \mathfrak{v}_F(e_sx^s)$, and there is a term $e_qx^q$ of least valuation. By Lemma~\ref{lem:VF} (4), $\mathfrak{v}_F(p(x)) = \mathfrak{v}_F(e_q)x^q\in V$. Since the valuation of any polynomial is in $V$, the valuation of any rational function must also be in $V$, so $\mathfrak{v}_F(F_1(x)^*) = V$. The same argument shows that $\mathfrak{v}_G(G_1(y)^*) = W$. We have established that $F_1(x)$ and $G_1(y)$ are valued subfields of $F$ and $G$ respectively, since $\mathfrak{v}_F(F_1(x)^*) = V \subseteq F_1(x)$ and similarly for $G_1(y)$. 

We have $g_1(p(x)) = f_1(e_0) + \hdots + f_1(e_n)y^n$, and the same argument as above shows that $\mathfrak{v}_G(f_1(e_0) + \hdots + f_1(e_n)y^n) = \mathfrak{v}_G(f_1(e_q))y^q$. Further, since $f_1 \circ \mathfrak{v}_F = \mathfrak{v}_G \circ f_1$,
\begin{eqnarray*}
g_1(\mathfrak{v}_F(p(x))) &=& g_1(\mathfrak{v}_F(e_q)x^q)\\
&=& f_1(\mathfrak{v}_F(e_q))y^q\\
&=& \mathfrak{v}_G(f_1(e_q))y^q\\
&=& \mathfrak{v}_G(g_1(p(x))),
\end{eqnarray*}
so $g_1\circ\mathfrak{v}_F = \mathfrak{v}_G\circ g_1$, and $g_1$ is an isomorphism of valued fields.

We will now establish the conditions of (3) by first passing to Henselizations, then closing the value groups under roots, and finally taking relative algebraic closures, all aided by the lemmas of Section~\ref{subsec:HL}.

By Lemma~\ref{lem:henselization_exists}, $F_1(x)$ and $G_1(y)$ have Henselizations $F_3$ and $G_3$. The fields $F$ and $G$ are Henselian, so by the definition of Henselization, $F_3$ and $G_3$ embed as valued subfields of $F$ and $G$ respectively. Since Henselizations are unique up to isomorphism, and $F_1(x)\cong G_1(y)$, there is an isomorphism of valued fields $g_3:F_3\cong G_3$. The lemma also tells us that $\mathfrak{v}_F(F_3^*) = V$ and $\mathfrak{v}_G(G_3^*) = W$.

Let $\widetilde{V}$ and $\widetilde{W}$ be the closures under roots of $V$ and $W$ in $\mathfrak{v}_F(F)$ and $\mathfrak{v}_G(G)$ respectively. Since $V$ and $W$ are countable, and in closing under roots we add at most one element for each natural number power and each element, $\widetilde{V}$ and $\widetilde{W}$ are countable. Moreover, for every element added to $\widetilde{V}$, there is a corresponding element of $\mathfrak{v}_G(G^*)$ added to $\widetilde{W}$, since $\mathfrak{v}_G(G^*)$ is saturated and $g_1$ restricted to $V$ is a partial elementary bijection onto $W$. Thus the restriction of $g_1$ to $\widetilde{V}$ can be extended to a partial elementary bijection of $\widetilde{V}$ onto $\widetilde{W}$. 

Let $F_4$ and $G_4$ be the subfields of $F$ and $G$ generated by $F_3\cup \widetilde{V}$ and $G_3\cup\widetilde{W}$ respectively. The field $F_4$ is algebraic over $F_3$, since every generator of $F_4$ not in $F_4$ is the root of some polynomial with coefficients in $\mathfrak{v}_F(F_3^*)\subset F_3$. Now letting $\widetilde{F_3}$ be the relative algebraic closure of $F_3$ in $F$, we have $F_4\subseteq \widetilde{F_3}$, so $\mathfrak{v}_F(F_4^*)\subseteq \mathfrak{v}_F(\widetilde{F_3}^*) = \widetilde{V}$ by Lemma~\ref{lem:closure_under_roots}. But $\widetilde{V}\subseteq\mathfrak{v}_F(F_4^*)$, so $\mathfrak{v}_F(F_4^*) = \widetilde{V}$. The same argument shows that $\mathfrak{v}_G(G_4^*) = \widetilde{W}$.

Now the extension of $g_1$ on $V$ to a partial elementary bijection between $\widetilde{V}$ and $\widetilde{W}$ together with the isomorphism $g_3:F_3\cong G_3$ defines a field isomorphism $g_4:F_4\cong G_4$. By Lemma~\ref{lem:unique_extension}, this field isomorphism is also be a valued field isomorphism.

The value groups of $F_4$ and $G_4$, $\widetilde{V}$ and $\widetilde{W}$, are closed under roots, so by Lemma~\ref{lem:closure_under_roots}, the relative algebraic closures of $F_4$ and $G_4$, $\widetilde{F_4}$ and $\widetilde{G_4}$, are Henselizations of $F_4$ and $G_4$ respectively. Once again, since Henselizations are unique up to isomorphism, there is an isomorphism of valued fields $f_2:\widetilde{F_4}\rightarrow\widetilde{G_4}$ extending $g_4$. 

Henselizations have the same value groups as their base fields by Lemma~\ref{lem:henselization_exists}, so $\mathfrak{v}_F(\widetilde{F_4}^*) = \widetilde{V}$ and $\mathfrak{v}_G(\widetilde{G_4}^*) = \widetilde{W}$. We have already seen that $\widetilde{V}$ and $\widetilde{W}$ are countable and that the isomorphism restricts to a partial elementary bijection between them. Hence, taking $F_2 = \widetilde{F_4}$ and $G_2 = \widetilde{G_4}$, we have $f_2:F_2\leftrightarrow G_2$. This completes the proof of the second special case.

\emph{The general case.}
We have $x$ transcendental over $F_1$, and we may assume that $x$ is not in the value group of $F$ and that the value group of $F_1(x)$ strictly contains the value group of $F_1$, since these cases have been dealt with. The idea now is to repeatedly apply the second special case to first adjoin each new element of the value group which would be added upon adjoining $x$. Then when we adjoin $x$ to the result, no further elements are added to the value group, and we are done by the first special case.

We have already established that $\overline{F_1(x)} = \overline{F_1} = \overline{F}$, and we have dealt with the case in which the value group is trivial, so by Lemma~\ref{lem:transcendental_value_group}, $\mathfrak{v}_F(F_1(x)^*)$ is countable.

Let $\{x_i\,|\,i\in\mathbb{N}\}$ be an enumeration of the elements of $\mathfrak{v}_F(F_1(x)^*)$ not in $\mathfrak{v}_F(F_1^*)$. Let $F_{x_0} = F_1$ and $G_{x_0} = G_1$. Applying the second special case, for each $i\in\mathbb{N}$, we can extend $f_{x_i}$ to $f_{x_{i+1}}:F_{x_{i+1}}\leftrightarrow G_{x_{i+1}}$, with $x_i\in F_{x_{i+1}}$. 

Let $F_2 = \bigcup_{i\in\mathbb{N}} F_{x_i}$, $G_2 = \bigcup_{i\in\mathbb{N}} G_{x_i}$, and $f_2 = \bigcup_{i\in\mathbb{N}} f_{x_i}$. Then $f_2:F_2\leftrightarrow G_2$, and $\mathfrak{v}_F(F_1(x)^*)\subseteq \mathfrak{v}_F(F_2^*)$. 

But we are not quite done, because adjoining $x$ to $F_2$ may add elements to the value group of $F_2$. So we repeat this argument, finding for each $i\geq 2$ an extension of $f_{i-1}$, $f_i:F_i\leftrightarrow G_i$ such that $\mathfrak{v}_F(F_{i-1}(x)^*)\subseteq\mathfrak{v}_F(F_i^*)$.

Let $F_\omega = \bigcup_{i\geq 1} F_i$, $G_\omega = \bigcup_{i\geq 1} G_i$, and $f_\omega = \bigcup_{i\geq 1} f_i$. Then $f_\omega:F_\omega\leftrightarrow G_\omega$. Consider $F_\omega(x)$. For element $x'\in F_\omega(x)$, $x'\in F_i(x)$ for some $i$, and thus $x'\in F_{i+1}\subseteq F_\omega$. So adjoining $x$ to $F_\omega$ does not add any elements to the value group, and we have reduced to the first special case.

All arguments given above hold with the roles of $F$ and $G$ reversed, so we have also established (4). 

(5) \emph{The back-and-forth argument.} Let $(a_\alpha\,:\alpha<\aleph_1)$ and $(b_\alpha\,:\alpha<\aleph_1)$ be enumerations of $F$ and $G$ respectively. We start with the isomorphism $f_0:\overline{F}\leftrightarrow\overline{G}$ established in (2) and inductively build a chain of isomorphisms $(f_\alpha\,:\,\alpha<\aleph_1)$ such that for each $\alpha$, $a_\alpha$ is in the domain of $f_{\alpha+1}$ (using (3)) and $b_\alpha$ is in the range of $f_{\alpha+1}$ (using (4)). By the familiar back-and-forth argument, $f = \bigcup_{\alpha<\aleph_1}f_\alpha$ is an isomorphism $F\cong G$. This completes the proof.
\end{proof}

Now that the heavy lifting is done, what remains is putting together the pieces.

\begin{thm}[Ax-Kochen Principle]\label{thm:AKP}
Let $\phi$ be an $\mathcal{L}_{VF}$-sentence. Then $\mathbb{Q}_p\models\phi$ for all but finitely many primes $p$ if and only if $\mathbb{F}_p((t))\models\phi$ for all but finitely many primes $p$. 
\end{thm}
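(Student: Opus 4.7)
The plan is to assemble the Ax-Kochen Principle from the machinery already developed: Theorem~\ref{thm:template} reduces the statement to a purely model-theoretic claim about ultraproducts, and Theorem~\ref{thm:el_eq} supplies that claim once we check its three hypotheses.

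More precisely, I would set $I = P$, the set of all primes, and take $\mathcal{M}_p = \mathbb{Q}_p$ and $\mathcal{N}_p = \mathbb{F}_p((t))$ as $\mathcal{L}_{VF}$-structures. By Theorem~\ref{thm:template}, the conclusion we want will follow as soon as I show $\prod \mathbb{Q}_p/\mathcal{D} \equiv \prod \mathbb{F}_p((t))/\mathcal{D}$ for every nonprincipal ultrafilter $\mathcal{D}$ on $P$. Fix such a $\mathcal{D}$ and set $F = \prod \mathbb{Q}_p/\mathcal{D}$, $G = \prod \mathbb{F}_p((t))/\mathcal{D}$.

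Next I verify each hypothesis of Theorem~\ref{thm:el_eq} for the pair $F,G$. First, Lemma~\ref{lem:HVF} shows that the class of Henselian valued fields with cross section is elementary, so by Corollary~\ref{cor:el_class} both $F$ and $G$ are Henselian valued fields with cross section (applied to the facts from Section~\ref{subsec:HL} that each $\mathbb{Q}_p$ and each $\mathbb{F}_p((t))$ is Henselian). Second, Lemma~\ref{lem:UP_resval} gives $\mathfrak{v}_F(F^*) \cong \prod \mathbb{Z}/\mathcal{D} \cong \mathfrak{v}_G(G^*)$ as groups and $\overline{F} \cong \prod \mathbb{F}_p/\mathcal{D} \cong \overline{G}$ as fields; applying Theorem~\ref{thm:isomorphism} to each isomorphism yields $\mathfrak{v}_F(F^*) \equiv \mathfrak{v}_G(G^*)$ as $\mathcal{L}_G$-structures and $\overline{F} \equiv \overline{G}$ as $\mathcal{L}_F$-structures. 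Third, Lemma~\ref{lem:UP_resval} also records that $\mathrm{char}(\prod \mathbb{F}_p/\mathcal{D}) = 0$, so both residue class fields have characteristic zero.

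With all three hypotheses in hand, Theorem~\ref{thm:el_eq} delivers $F \equiv G$, and then Theorem~\ref{thm:template} yields the Ax-Kochen Principle. There is essentially no remaining obstacle, since the main difficulty — proving elementary equivalence of the ultraproducts via the back-and-forth argument over Henselizations — has been absorbed into Theorem~\ref{thm:el_eq}; the only residual care needed is to notice that the roles of $\mathbb{Q}_p$ and $\mathbb{F}_p((t))$ are symmetric in Theorem~\ref{thm:template}, so both directions of the biconditional come for free once elementary equivalence of the ultraproducts is established.
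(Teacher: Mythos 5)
Your proposal is correct and follows essentially the same route as the paper: reduce via Theorem~\ref{thm:template} to elementary equivalence of the ultraproducts, then verify the hypotheses of Theorem~\ref{thm:el_eq} using Lemma~\ref{lem:HVF} with Corollary~\ref{cor:el_class} (Henselianity), Lemma~\ref{lem:UP_resval} with Theorem~\ref{thm:isomorphism} (elementarily equivalent value groups and residue fields), and again Lemma~\ref{lem:UP_resval} (residue characteristic zero). No gaps.
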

\begin{proof}
The class of Henselian valued fields is elementary by Lemma~\ref{lem:HVF} and therefore closed under ultraproduct by Corollary~\ref{cor:el_class}. For all $p$, $\mathbb{Q}_p$ and $\mathbb{F}_p((t))$ are complete discrete valued fields, so by Theorem~\ref{thm:DVFhensel} they are Henselian valued fields. Thus for any ultrafilter $\mathcal{D}$ on the set of primes, $\prod\mathbb{Q}_p/\mathcal{D}$ and $\prod\mathbb{F}_p((t))/\mathcal{D}$ are Henselian valued fields.

In Lemma~\ref{lem:UP_resval}, we saw that for any nonprincipal ultrafilter $\mathcal{D}$, the residue fields $\overline{\prod\mathbb{Q}_p/\mathcal{D}}$ and $\overline{\prod\mathbb{F}_p((t))/\mathcal{D}}$ have characteristic zero. We also showed that $\mathfrak{v}(\prod\mathbb{Q}_p/\mathcal{D})\cong \prod\mathbb{Z}/\mathcal{D}\cong\mathfrak{v}(\prod\mathbb{F}_p((t))/\mathcal{D})$ and $\overline{\prod\mathbb{Q}_p/\mathcal{D}}\cong \prod\mathbb{F}_p/\mathcal{D}\cong\overline{\prod\mathbb{F}_p((t))/\mathcal{D}}$. By Theorem~\ref{thm:isomorphism}, isomorphism implies elementary equivalence, so these Henselian valued fields have elementarily equivalent value groups and residue class fields. These are the conditions of Theorem~\ref{thm:el_eq}, so $\prod\mathbb{Q}_p/\mathcal{D}\equiv \prod\mathbb{F}_p((t))/\mathcal{D}$.

Since the elementary equivalence holds for any nonprincipal ultrafilter, applying Theorem~\ref{thm:template} completes the proof.
\end{proof}

\subsection{The Ax-Kochen Theorem}\label{subsec:AKT}

The Ax-Kochen Principle can be used to prove a whole family of theorems about the $p$-adic fields, but its most famous application is the Ax-Kochen Theorem, which addresses Artin's conjecture that $\mathbb{Q}_p$ is $C_2$ for all primes $p$. After traveling far afield, we finally return to nontrivial zeros of homogeneous polynomials.

An important subtlety to the Ax-Kochen Theorem arises from the fact that the property $C_2$ cannot be expressed as a first-order $\mathcal{L}_{VF}$-sentence, since we cannot quantify over polynomials of all degrees. Thus, we cannot apply the Ax-Kochen Principle to prove that $\mathbb{Q}_p$ is $C_2$ for all but finitely many $p$. 

However, when we restrict our attention to polynomials of a fixed degree, we can express a property which is equivalent to $C_2(d)$ as an $\mathcal{L}_{VF}$-sentence, and we can apply the Ax-Kochen Principle to prove that $\mathbb{Q}_p$ is $C_2(d)$ for all but finitely many $p$. Note that the finite set of exceptional primes may be different for each degree $d$.

\begin{thm}[Ax-Kochen Theorem]\label{thm:AKT}
For all degrees $d>0$, there exists a finite set of primes $P(d)$ such that for all $p\notin P(d)$, if $f$ is a homogeneous polynomial over $\mathbb{Q}_p$ of degree $d$ in $n$ variables such that $n>d^2$, then $f$ has a nontrivial zero in $\mathbb{Q}_p^n$. 
\end{thm}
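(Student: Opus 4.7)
The plan is to deduce the Ax-Kochen Theorem as a direct corollary of the Ax-Kochen Principle (Theorem~\ref{thm:AKP}) and the fact that $\mathbb{F}_p((t))$ is $C_2$ for every prime $p$ (Theorem~\ref{thm:F((t))}). The key observation, anticipated in the remark immediately before the theorem statement, is that while the full $C_2$ property is not expressible by a single first-order $\mathcal{L}_{VF}$-sentence, the restriction of $C_2$ to polynomials of one fixed degree and one fixed number of variables is.

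First I would reduce to a single number of variables. If every homogeneous polynomial over $\mathbb{Q}_p$ of degree $d$ in exactly $n_0 := d^2 + 1$ variables has a nontrivial zero, then the same holds for every $n > d^2$: given such an $f$ in $n$ variables, set the last $n - n_0$ variables to $0$ to obtain a (possibly zero) homogeneous polynomial $f'$ of degree $d$ in $n_0$ variables; if $f' \equiv 0$ then $(1, 0, \ldots, 0)$ is already a nontrivial zero of $f$, and otherwise a nontrivial zero of $f'$ extends by zeros to a nontrivial zero of $f$. So it suffices to find, for each $d$, a finite $P(d)$ handling only $n = n_0$.

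Next I would write down the $\mathcal{L}_{VF}$-sentence $\phi_d$ expressing ``every homogeneous polynomial of degree $d$ in $n_0$ variables has a nontrivial zero.'' Let $N = \binom{n_0 + d - 1}{d}$ be the number of degree-$d$ monomials in $n_0$ variables, and fix an enumeration $m_1(x_1, \ldots, x_{n_0}), \ldots, m_N(x_1, \ldots, x_{n_0})$ of these monomials (each $m_i$ being a term built from $\cdot$). Then $\phi_d$ is
\[
\forall a_1 \cdots \forall a_N \, \exists x_1 \cdots \exists x_{n_0} \, \Bigl( \bigl( \lnot(x_1 = 0) \lor \cdots \lor \lnot(x_{n_0} = 0) \bigr) \land a_1 \cdot m_1 + \cdots + a_N \cdot m_N = 0 \Bigr).
\]
This is a perfectly legitimate first-order $\mathcal{L}_{VF}$-sentence (only the field-operation fragment of the language is used), and $\phi_d$ holds in a field $K$ if and only if every homogeneous polynomial of degree $d$ in $n_0$ variables over $K$ has a nontrivial zero in $K^{n_0}$.

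Now the argument closes quickly. By Theorem~\ref{thm:F((t))}, $\mathbb{F}_p((t))$ is $C_2$ for every prime $p$, so $\mathbb{F}_p((t)) \models \phi_d$ for \emph{every} prime $p$, and in particular for all but finitely many. Applying the Ax-Kochen Principle (Theorem~\ref{thm:AKP}) to $\phi_d$, we conclude that $\mathbb{Q}_p \models \phi_d$ for all but finitely many $p$; let $P(d)$ be this finite exceptional set. For $p \notin P(d)$ and any homogeneous polynomial $f$ over $\mathbb{Q}_p$ of degree $d$ in $n > d^2$ variables, the reduction of the first paragraph together with $\mathbb{Q}_p \models \phi_d$ produces the desired nontrivial zero. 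There is really no obstacle here once the preceding machinery is in place; the only subtlety is recognizing that $\phi_d$ must be formulated for each fixed $(d, n_0)$ separately, and hence that the exceptional set $P(d)$ genuinely depends on $d$.
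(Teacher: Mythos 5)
Your proof is correct and follows the same route as the paper: reduce to exactly $n_0 = d^2+1$ variables, encode the resulting statement as a first-order $\mathcal{L}_{VF}$-sentence $\phi_d$ by enumerating degree-$d$ monomials and quantifying over their coefficients, invoke Theorem~\ref{thm:F((t))} to get $\mathbb{F}_p((t))\models\phi_d$ for all $p$, and transfer via the Ax-Kochen Principle. The only cosmetic difference is that you omit the antecedent requiring the coefficients not all be zero, but since the conclusion is vacuously satisfied in that case, your sentence is equivalent to the paper's.
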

\begin{proof}
Let $C_2(d)$ be the property that every homogeneous polynomial of degree $d$ in $n$ variables such that $n>d^2$ has a nontrivial zero. 

We first show that $C_2(d)$ is equivalent to the property that every homogeneous polynomial of degree $d$ in $d^2+1$ variables has a nontrivial zero. We will call this property $\phi_d$. Clearly $C_2(d)$ implies $\phi_d$. Conversely, if $f(x_1,\hdots,x_n)$ is a homogeneous polynomial of degree $d$ with $n>d^2$, then setting the extra variables to $0$, $g(x_1,\hdots,x_{d^2+1}) = f(x_1,\hdots,x_{d^2+1},0,\hdots,0)$ is either the zero polynomial or a homogeneous polynomial of degree $d$ in $d^2+1$ variables. In the first case, any nontrivial choice of values for the $x_1,\hdots,x_{d^2+1}$ is a nontrivial zero of $f$. In the second case, if $\phi_d$ holds, then $g$ has a nontrivial zero $(\alpha_1,\hdots,\alpha_{d^2+1})$, so $(\alpha_1,\hdots,\alpha_{d^2+1},0,\hdots,0)$ is a nontrivial zero of $f$.

We would like to express the property $\phi_d$ as an $\mathcal{L}_{VF}$-sentence in order to apply the Ax-Kochen Principle. To do this, we need to quantify over all possible homogeneous polynomials of degree $d$ in $d^2+1$ variables. 

Each monomial of such a polynomial has degree $d$, so it is a choice of $d^2+1$ exponents $n_1,\hdots,n_{d^2+1}\in\mathbb{N}$ for the $d^2+1$ variables, such that $\sum_{i=1}^{d^2+1}n_i = d$. Letting $\theta(d)$ be the total number of such choices, we can enumerate all possible monomials as $m_1,\hdots,m_{\theta(d)}$. Then each homogeneous polynomial of degree $d$ in $d^2+1$ variables is uniquely determined by a choice of $\theta(d)$ coefficients $a_1,\hdots,a_{\theta(d)}$, one for each $m_i$, such that at least one of the coefficients is nonzero.

If $m_i$ is the monomial $x_1^{n_1}\hdots x_{d^2+1}^{n_{d^2+1}}$, then we define the $\mathcal{L}_{VF}$-term $m_i(x_1,\hdots,x_{d^2+1})$ in free variables $(x_1,\hdots,x_{d^2+1})$ to be 
\[
\underbrace{x_1\cdot\hdots\cdot x_1}_{n_1\,\text{times}}\cdot\hdots\cdot \underbrace{x_{d^2+1}\cdot\hdots\cdot x_{d^2+1}}_{n_{d^2+1}\,\text{times}}.
\] 

We will use the variables $a_i$ and $x_i$ for clarity. Formally, they are choices of $v_j$ from our infinite set of variables $\mathcal{V}=\{v_1,v_2,\hdots\}$. We can express the property $\phi_d$ with the following $\mathcal{L}_{VF}$-sentence:

\begin{eqnarray*}
\forall a_1\hdots\forall a_{\theta(d)}&&\lnot((a_1=0)\land\hdots\land(a_{\theta(d)}=0))\rightarrow\\
 (\exists x_1\hdots \exists x_{d^2+1}&& \lnot((x_1=0)\land\hdots\land(x_{d^2+1}=0)) \land \\ 
 &&(a_1 \cdot m_1(x_1,\hdots,x_{d^2+1}) + \hdots\\
 && + a_{\theta(d)}\cdot m_{\theta(d)}(x_1,\hdots,x_{d^2+1}) = 0)).
\end{eqnarray*}

Now for a valued field $F$, $F\models \phi_d$ if and only if $F$ has the property $C_2(d)$. By Theorem~\ref{thm:F((t))}, $\mathbb{F}_p((t))$ is $C_2$, and therefore has the property $C_2(d)$, for all primes $p$. Thus for all primes $p$, $\mathbb{F}_p((t))\models\phi_d$. 

Applying the Ax-Kochen Principle, $\mathbb{Q}_p\models\phi_d$ for all but finitely many $p$, and thus $\mathbb{Q}_p$ has the property $C_2(d)$ for all but finitely many $p$. Let $P(d)$ be this finite exceptional set.

Then for all $p\notin P(d)$, $C_2(d)$ says that if $f$ is a homogeneous polynomial over $\mathbb{Q}_p$ of degree $d$ in $n$ variables such that $n>d^2$, then $f$ has a nontrivial zero in $\mathbb{Q}_p^n$.
\end{proof}

\newpage

\appendix

\section{Ordinals, Cardinals, and Transfinite Induction}\label{sec:OCTI}

This appendix gives a very brief and relatively informal overview of the transfinite numbers. The interested reader is encouraged to find a more thorough development, for instance in Jech's \emph{Set Theory} \cite{Jech}.

There are two types of transfinite numbers, ordinals and cardinals. Intuitively, ordinals generalize ordered numbers (``first'', ``second'', ``third''), while cardinals generalize amount, (``one'', ``two'', ``three''). Since the standard set theoretic construction defines cardinals as special types of ordinals, we will take up ordinals first.

\subsubsection*{Ordinals}

Ordinals represent order relations which are linear and well founded; that is, there a least element, and every element has a unique element immediately following it in the order. In this way, they generalize the order properties of sets of natural numbers, and, as we will see, provide a structure upon which induction makes sense. 

We will begin with an informal description of ordinals, and then present the set theoretic construction. We start with a canonical least ordinal, $0$, which represents the ordering on the empty set. Aside from $0$, there are two types of ordinals, successor ordinals and limit ordinals.

Given an ordinal $\alpha$, there is a successor ordinal $\alpha+1$ which represents the ordering of $\alpha$ with an additional element appended which is greater than all the elements in the ordering $\alpha$.

Given an infinite set of ordinals, $C$, there is a limit ordinal which represents the ordering on all elements in all the orderings in $C$. The first limit ordinal (also the first infinite ordinal) is $\omega$, which is the limit of the ordinals $\{0,1,2,\hdots\}$ and represents the ordering on the set of all natural numbers. Note that since the successor of a finite ordinal is still an ordering on finitely many elements, we cannot arrive at $\omega$ through the successor operation by appending elements one by one, only by the limit construction.

The table below demonstrates the order relations represented by a few ordinals. The circles are ordered left to right. The successor operation is indicated by adding a circle on the right, and the limit operation is represented by ($\hdots$). The ordinal $\omega 2$ is the limit of the ordinals $\{0,1,\hdots,\omega,\omega+1,\hdots\}$.

\begin{eqnarray*}
0 && \\
1 && \circ \\
2 && \circ \circ \\
\vdots && \vdots \\
\omega && \circ \circ \circ \circ \hdots \\
\omega + 1 && \circ \circ \circ \circ \hdots \circ \\
\omega + 2 && \circ \circ \circ \circ \hdots \circ \circ \\
\vdots && \vdots \\
\omega 2 && \circ \circ \circ \circ \hdots \circ \circ \circ \circ \hdots \\
\vdots && \vdots
\end{eqnarray*}

Ordinals are quite useful for indexing infinite collections and performing induction in infinite settings. For example, if $C$ is a chain of sets with a least element and order relation (defined by inclusion) corresponding to the ordinal $\beta$, we can index the elements of $C$ by $(C_\alpha\,:\,\alpha<\beta)$. 

If there is a proposition $P_\alpha$ for each ordinal $\alpha$ (collections of propositions like this often correspond to collections of objects indexed by ordinals), then we can prove that $P_\alpha$ is true for all $\alpha$ by a method similar to induction on the natural numbers. The main difference is that we must also deal with the limit case.

\begin{athm}[Transfinite Induction, {\cite[Theorem A.8]{Marker}}]
Suppose that $P_\alpha$ is a proposition for each ordinal $\alpha$. Suppose that
\begin{enumerate}
\item $P_0$ is true,
\item if $P_\alpha$ is true, then $P_{\alpha+1}$ is true, and
\item if $\alpha$ is a limit ordinal and $P_\beta$ is true for all $\beta < \alpha$, then $P_\alpha$ is true.
\end{enumerate}
Then $P_\alpha$ is true for all ordinals $\alpha$.
\end{athm}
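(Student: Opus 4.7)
The plan is to argue by contradiction using the well-foundedness of the ordinals, which is built into their definition: every nonempty collection of ordinals has a least element. This is the same strategy as the familiar proof of ordinary induction on $\mathbb{N}$, but extended to accommodate the presence of limit ordinals.

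First I would assume for contradiction that there is some ordinal $\alpha$ for which $P_\alpha$ fails. The collection $C$ of all such ordinals is then nonempty, so by well-foundedness it has a least element $\gamma$. I would then perform a trichotomy on $\gamma$, using the standard fact that every ordinal is either $0$, a successor ordinal, or a limit ordinal.

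In the first case, $\gamma = 0$, which contradicts hypothesis (1). In the second case, $\gamma = \beta + 1$ for some ordinal $\beta < \gamma$; by minimality of $\gamma$, $P_\beta$ holds, so hypothesis (2) yields $P_{\beta+1} = P_\gamma$, contradicting $\gamma \in C$. In the third case, $\gamma$ is a limit ordinal; again by minimality of $\gamma$, $P_\beta$ holds for every $\beta < \gamma$, so hypothesis (3) forces $P_\gamma$ to hold, another contradiction. In every case we reach a contradiction, so $C$ must be empty and $P_\alpha$ holds for all $\alpha$.

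The main conceptual obstacle is not any particular calculation but rather that the proof leans on two foundational properties of ordinals which the appendix has only stated informally: the well-foundedness of the class of ordinals (so that $C$ has a least element), and the trichotomy that every nonzero ordinal is either a successor or a limit. Since the appendix explicitly says it is giving only a brief and informal overview and directs the reader to Jech for rigorous development, I would invoke these two facts by name rather than attempt to derive them from the set-theoretic construction.
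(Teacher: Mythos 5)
The paper itself gives no proof of this theorem: it is stated in Appendix~\ref{sec:OCTI} without argument, as part of what the appendix explicitly calls a ``brief and relatively informal overview'' of transfinite numbers, with a pointer to Jech for the rigorous development. So there is nothing internal to the paper to compare your proposal against. That said, your proposal is correct and is precisely the standard proof: pass to the (assumed nonempty) class of counterexamples, extract a least element by well-foundedness, split into the three cases ($0$, successor, limit), and derive a contradiction from the corresponding hypothesis in each case. You have also correctly identified the only real subtlety, namely that the argument rests on two foundational facts --- well-foundedness of the ordinals and the trichotomy that every ordinal is $0$, a successor, or a limit --- which the appendix asserts informally rather than proves. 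Flagging those as external inputs (rather than re-deriving them from the set-theoretic construction of ordinals as $\in$-transitive sets) is the right judgment call for an appendix at this level of rigor.
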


Examples of transfinite induction in this thesis can be found in the proofs of theorems requiring back and forth arguments, most explicitly in Theorem~\ref{thm:saturatedbijection}.

There is a very elegant set theoretic construction of the ordinals. Since the relation $\in$ is the primitive binary relation of set theory, we will construct our ordinals so that they are ordered by $\in$. In particular, we will define an ordinal to be the set containing all ordinals less than it.

We define $0 = \emptyset$, since there are no ordinals less than $0$. 

Given an ordinal $\alpha$, we define $\alpha + 1 = \alpha \cup \{\alpha\}$. Then $\alpha + 1$ is the set containing all the elements of $\alpha$ (all ordinals less than $\alpha$) and $\alpha$ itself.

Given a set of ordinals $C$, we define the limit of $C$ by $\delta = \bigcup_{\alpha\in C} \alpha$. Suppose that $C$ is unbounded above, that is, for each $\alpha\in C$ there is a $\beta\in C$ such that  $\alpha < \beta$. Then $\alpha \in \beta$, and hence $\alpha \in \delta$, so $\alpha < \delta$, and $\delta$ is greater than every element of $C$.

The table below demonstrates the set theoretic representations of a few ordinals.

\begin{eqnarray*}
0 && \emptyset\\
1 && \{0\} = \{\emptyset\} \\
2 && \{0, 1\} = \{\emptyset, \{\emptyset\}\} \\
\vdots && \vdots \\
\omega && \{0, 1, 2, \hdots\} = \{\emptyset, \{\emptyset\}, \{\emptyset \{\emptyset\}\}, \hdots\}\\
\omega + 1 && \{0,1,2,\hdots,\omega\} = \{\emptyset, \{\emptyset\}, \{\emptyset \{\emptyset\}\}, \hdots, \{\emptyset, \{\emptyset\}, \{\emptyset \{\emptyset\}\}, \hdots\}\}\\
\vdots && \vdots
\end{eqnarray*}

By repeatedly taking limits, we can construct larger and larger ordinals. We can construct $\omega 3$ as the limit of $\{\omega2,\omega2 + 1,\hdots\}$. The limit of $\{\omega,\omega2,\omega3,\hdots\}$ is $\omega \omega = \omega^2$. The limit of $\{\omega^2,\omega^22,\omega^23,\hdots\}$ is $\omega^2\omega = \omega^3$, and the limit of $\{\omega,\omega^2,\omega^3,\hdots\}$ is $\omega^\omega$. Continuing in this way, we can construct larger towers $\omega^{\omega^\omega}$, $\omega^{\omega^{\omega^\omega}}$, and so forth. The limit of all these towers is yet another ordinal. 

However, all the ordinals we have discussed so far are still relatively small. To say what we mean by small, we must introduce the notion of cardinality.

\subsubsection*{Cardinals}

We say that two sets have the same cardinality if there is a bijection between them. Finite sets with different numbers of elements clearly have distinct cardinalities, since their elements cannot be put into 1-1 correspondence. With his famous diagonalization argument, Cantor showed that infinite sets can also have distinct cardinalities. 

Formally, we define the cardinality of a set $A$ to be the least ordinal $\alpha$ such that $A$ can be put into bijection with $\alpha$, and we denote this ordinal by $|A|$. A cardinal is an ordinal which is the cardinality of some set.

Note that as a consequence of this definition, we can describe the cardinals as those ordinals which cannot be put into bijection with any ordinals less than themselves, since for any such ordinal $\alpha$, $|\alpha| = \alpha$.

All finite ordinals are cardinals. The first infinite cardinal is $\omega$. When we are working with $\omega$ as a cardinal, we will denote it by $\aleph_0$. This is the cardinality of the set of natural numbers. If a set $A$ has cardinality $\aleph_0$, we say that $A$ is countable, since $A$ can be ``counted'', that is, put into bijection with $\mathbb{N}$.

Cantor also showed that any countable union of countable sets is countable. All the ordinals described above can be constructed as limits of countable sequences of ordinals, so they are all countable.

There is a simple construction of the first uncountable ordinal. Take $C$ to be the set of all countable ordinals. The limit of $C$ is $\aleph_1 = \bigcup_{\alpha\in C} \alpha$. The limit $\aleph_1$ is strictly greater than every countable ordinal, so it must be uncountable. Moreover, it contains only countable ordinals, so it is the least uncountable ordinal, and thus is a cardinal, the smallest cardinal greater than $\aleph_0$.

Repeating this argument, we can construct the next cardinal $\aleph_2$ by taking the limit of all ordinals of cardinality $\aleph_1$. The limit of the cardinals $\{\aleph_0,\aleph_1,\aleph_2,\hdots\}$ is the limit cardinal $\aleph_\omega$, and further limits produce greater cardinals, indexed by the ordinals. If $\kappa = \aleph_\alpha$ for some ordinal $\alpha$, we denote by $\kappa^+$ the next cardinal, $\aleph_{\alpha+1}$.

We can define addition, multiplication, and exponentiation of cardinals. For $\kappa$ and $\lambda$ cardinals and $A$ and $B$ disjoint sets with $|A| = \kappa$ and $|B| = \lambda$, we define $\kappa + \lambda = |A\cup B|$, the cardinality of the union of $A$ and $B$, $\kappa\lambda = |A\times B|$, the cardinality of the cartesian product of $A$ and $B$, and $\kappa^\lambda = |A^B|$, the cardinality of the set of functions from $B$ to $A$. 

The following facts are useful for determining the cardinalities of sets.

\begin{athm}[Cardinal Arithmetic]\label{thm:cardinal_arithmetic}
Let $\kappa$ and $\lambda$ be cardinals. If both $\kappa$ and $\lambda$ are finite, then addition, multiplication, and exponentiation agree with the usual arithmetic of natural numbers. Otherwise,
\begin{enumerate}
\item $\kappa + \lambda = \kappa\lambda = \max(\kappa,\lambda)$,
\item if $\lambda$ is infinite and $\kappa \leq \lambda$, then $\kappa^\lambda = 2^\lambda$, and
\item if $\lambda$ is finite and $\kappa$ is infinite, then $\kappa^\lambda = \kappa$.
\end{enumerate}
\end{athm}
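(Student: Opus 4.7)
The plan is to reduce everything to one key fact: for every infinite cardinal $\kappa$, $\kappa\cdot\kappa=\kappa$. Once that is in hand, (1) follows because $\max(\kappa,\lambda)\le\kappa+\lambda\le\kappa\lambda\le\max(\kappa,\lambda)\cdot\max(\kappa,\lambda)=\max(\kappa,\lambda)$, using the obvious injections (e.g.\ $A\sqcup B\hookrightarrow (A\cup B)\times\{0,1\}$ for the middle step). For (2), the easy direction is $2^\lambda\le\kappa^\lambda$ since $2\le\kappa$; the other direction uses $\kappa\le 2^\kappa\le 2^\lambda$ (as $\kappa\le\lambda$), so $\kappa^\lambda\le(2^\lambda)^\lambda=2^{\lambda\cdot\lambda}=2^\lambda$ by (1). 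For (3), induct on the finite $\lambda=n$: $\kappa^1=\kappa$ is trivial, and $\kappa^{n+1}=\kappa^n\cdot\kappa=\kappa\cdot\kappa=\kappa$ by induction and (1). The finite case of the theorem is just the remark that finite cardinal arithmetic (defined via disjoint unions, products, and function spaces of finite sets) agrees with the usual arithmetic of $\mathbb{N}$, which is standard.

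Thus the whole content of the theorem sits in the identity $\kappa\cdot\kappa=\kappa$ for infinite $\kappa$. I would prove this by transfinite induction on $\kappa$, where $\kappa$ ranges over infinite cardinals. The base case $\kappa=\aleph_0$ is the familiar fact that $\mathbb{N}\times\mathbb{N}$ is countable, proved by a zig-zag enumeration of the diagonals. For the inductive step, I would assume $\mu\cdot\mu=\mu$ for all infinite cardinals $\mu<\kappa$ and then produce a well-ordering of $\kappa\times\kappa$ whose order type is exactly $\kappa$.

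The well-ordering I would use on $\kappa\times\kappa$ is the standard one: declare $(\alpha_1,\beta_1)\prec(\alpha_2,\beta_2)$ if either $\max(\alpha_1,\beta_1)<\max(\alpha_2,\beta_2)$, or the maxima are equal and $(\alpha_1,\beta_1)$ precedes $(\alpha_2,\beta_2)$ lexicographically. This is a well-ordering (every nonempty subset has a least element because the ordinals under the maxima form a well-ordered set, and then the lexicographic order picks out a least pair). To show its order type is $\kappa$, I would fix a pair $(\alpha,\beta)\in\kappa\times\kappa$, set $\gamma=\max(\alpha,\beta)+1<\kappa$, and observe that the set of predecessors of $(\alpha,\beta)$ injects into $\gamma\times\gamma$. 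Let $\mu=|\gamma|$; since $\gamma<\kappa$ and $\kappa$ is a cardinal, $\mu<\kappa$. If $\mu$ is finite the predecessor set is finite, and if $\mu$ is infinite then by the inductive hypothesis $|\gamma\times\gamma|=\mu\cdot\mu=\mu<\kappa$. Either way, every initial segment of $(\kappa\times\kappa,\prec)$ has cardinality strictly less than $\kappa$, so the order type of $\prec$ is at most $\kappa$. The reverse inequality is trivial from the injection $\alpha\mapsto(\alpha,0)$, and together they give $|\kappa\times\kappa|=\kappa$.

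The hard part will be the well-ordering argument for $\kappa\cdot\kappa=\kappa$, and specifically the bookkeeping that initial segments have the correct cardinality; this is the only place where the inductive hypothesis is used, and where one has to be careful that passing from an arbitrary ordinal $\gamma<\kappa$ to its cardinality $\mu=|\gamma|$ really does yield $\mu<\kappa$. Once that step is carried out cleanly, parts (1), (2), and (3) fall out by the short arithmetic manipulations sketched above.
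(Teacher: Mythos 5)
The paper states this theorem without proof: the appendix is an informal survey that explicitly refers the reader to Jech's \emph{Set Theory} for a full development of cardinal arithmetic, so there is no in-text argument to compare against. Your proposal supplies the standard textbook proof and it is correct. The reduction of all three clauses to the Hessenberg identity $\kappa\cdot\kappa=\kappa$ for infinite $\kappa$ is the right move, and your proof of that identity by transfinite induction using the max-then-lexicographic well-ordering on $\kappa\times\kappa$ is exactly the classical argument. The step you flag as delicate --- that $\gamma<\kappa$ forces $|\gamma|<\kappa$ --- is sound precisely because $\kappa$ is a cardinal (an initial ordinal); and the passage from ``every proper initial segment has cardinality $<\kappa$'' to ``the order type is $\le\kappa$'' is the routine observation that otherwise some initial segment would have order type $\kappa$ and hence cardinality $\kappa$. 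One small caveat, inherited from the theorem's own wording rather than from your argument: the statement tacitly assumes the degenerate cases are excluded (e.g.\ $\lambda\ge 1$ in clauses (1) and (3), and $\kappa\ge 2$ in clause (2)), since $\kappa\cdot 0=0$, $\kappa^0=1$, and $1^\lambda=1$ are exceptions; your proof is valid under those implicit hypotheses, which is how such statements are universally read.
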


\begin{athm}\label{thm:cardinal_chain}
Let $(A_\alpha\,:\,\alpha<\beta)$ be a chain of sets indexed by the ordinal $\beta$, where $A_\alpha\subseteq A_{\alpha'}$ if $\alpha < \alpha'$. For all $\alpha < \beta$, let $\kappa_\alpha = |A_\alpha|$. Then if $A = \bigcup_{\alpha < \beta} A_\alpha$, $|A|=\bigcup_{\alpha<\beta}\kappa_\alpha$, the limit of the cardinals $\kappa_\alpha$.
\end{athm}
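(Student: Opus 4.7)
The plan is to show that $|A|=\lambda$, where $\lambda=\bigcup_{\alpha<\beta}\kappa_\alpha$. Since $A_\alpha\subseteq A_{\alpha'}$ whenever $\alpha\leq\alpha'$, we automatically have $\kappa_\alpha\leq\kappa_{\alpha'}$, so the sequence of cardinals is weakly increasing and the ordinal union $\bigcup_{\alpha<\beta}\kappa_\alpha$ coincides with the supremum $\sup_{\alpha<\beta}\kappa_\alpha$. The lower bound $\lambda\leq|A|$ is immediate: for each $\alpha<\beta$, the inclusion $A_\alpha\subseteq A$ yields $\kappa_\alpha=|A_\alpha|\leq|A|$, and taking the supremum over $\alpha$ gives $\lambda\leq|A|$.

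For the upper bound I would construct an injection from $A$ into a set of known cardinality. Using the axiom of choice, for each $\alpha<\beta$ pick an injection $f_\alpha\colon A_\alpha\to\kappa_\alpha$. Each element $x\in A$ lies in some $A_\alpha$, so there is a least such $\alpha(x)<\beta$. Define $g\colon A\to\beta\times\lambda$ by $g(x)=(\alpha(x),f_{\alpha(x)}(x))$. If $g(x)=g(y)$, then the first coordinates agree, so $\alpha(x)=\alpha(y)$ call it $\alpha_0$, and then $f_{\alpha_0}(x)=f_{\alpha_0}(y)$ forces $x=y$. Hence $|A|\leq|\beta|\cdot\lambda$.

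The main obstacle is closing the gap from $|\beta|\cdot\lambda$ to $\lambda$. Every application of this theorem in the body (for example, the limit-ordinal steps in Theorems~\ref{thm:saturatedbijection} and~\ref{thm:saturatedexistence}) occurs in a setting where $|\beta|\leq\lambda$, and under this implicit cofinality hypothesis Theorem~\ref{thm:cardinal_arithmetic}(1) gives $|\beta|\cdot\lambda=\max(|\beta|,\lambda)=\lambda$ whenever $\lambda$ is infinite. The remaining case of finite $\lambda$ is handled directly: a weakly increasing chain of finite sets of cardinality at most $\lambda$ must stabilize at some stage $\gamma<\beta$ (since no strict inclusion of finite sets can increase cardinality beyond $\lambda$), so $A=A_\gamma$ and $|A|=|A_\gamma|=\lambda$. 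Combining the two inequalities completes the proof.
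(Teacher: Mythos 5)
The paper states Theorem~\ref{thm:cardinal_chain} without proof in its informal appendix on transfinite arithmetic, so there is no internal proof to compare against; your argument must stand on its own. And it does, modulo the repair you yourself flag: the theorem as literally stated is false. Taking $\beta=\omega_1$ and $A_\alpha=\alpha$ for each $\alpha<\omega_1$ gives a chain with $\kappa_\alpha\leq\aleph_0$ for every $\alpha$, hence $\bigcup_{\alpha<\omega_1}\kappa_\alpha=\aleph_0$, yet $A=\omega_1$ has cardinality $\aleph_1$. You correctly identified that an extra hypothesis such as $|\beta|\leq\lambda$ is needed, you checked that this holds in every use of the theorem in the body of the paper, and your two-sided argument under that hypothesis is correct: the inclusion $A_\alpha\subseteq A$ gives $\lambda\leq|A|$; the map $x\mapsto(\alpha(x),f_{\alpha(x)}(x))$ into $\beta\times\lambda$ is a well-defined injection (since $\kappa_{\alpha(x)}\subseteq\lambda$ as ordinals and each $f_\alpha$ is injective on its domain), giving $|A|\leq|\beta|\cdot\lambda$; Theorem~\ref{thm:cardinal_arithmetic}(1) then closes the gap when $\lambda$ is infinite, and your stabilization argument handles finite $\lambda$. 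This is a clean and complete proof of a corrected statement.

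One small refinement worth noting: the sharp sufficient condition is $\operatorname{cf}(\beta)\leq\lambda$ rather than $|\beta|\leq\lambda$, since one may first pass to a cofinal subchain of length $\operatorname{cf}(\beta)$ with the same union before running your injection argument, giving $|A|\leq|\operatorname{cf}(\beta)|\cdot\lambda\leq\lambda$. The counterexample above fails precisely this weaker condition ($\operatorname{cf}(\omega_1)=\omega_1>\aleph_0$). The hypothesis $|\beta|\leq\lambda$ that you chose is coarser but simpler to verify and suffices for everything the paper does with this theorem, so the choice is reasonable.
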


The diagonalization argument provides a different way of constructing distinct infinite cardinals. Cantor showed that for any set $A$, its power set $\mathcal{P}(A)$ has strictly greater cardinality. The cardinality of $\mathcal{P}(A)$ is $2^{|A|}$, since the elements of the power set are in bijection with the functions $A\rightarrow \{0,1\}$. A subset $B\subseteq A$ corresponds to the function $f_B$ defined by $f_B(a) = 1$ if $a\in B$ and $f_B(a) = 0$ if $a\notin B$.

As a consequence of Cantor's Theorem, the sequence $\aleph_0, 2^{\aleph_0}, 2^{2^{\aleph_0}}, \hdots$ is an increasing sequence of distinct cardinals. The terms of this sequence are sometimes denoted $\beth_0,\beth_1,\beth_2,\hdots$, and by taking limits, $\beth_\alpha$ may be defined for any ordinal $\alpha$. For all ordinals $\alpha$, $\aleph_\alpha\leq \beth_\alpha$, but the question of whether the sequences $\aleph_0,\aleph_1,\hdots$ and $\beth_0,\beth_1,\hdots$ differ is independent from the usual axioms of set theory.

\begin{CH}
There are no cardinals between $\aleph_0$ and $2^{\aleph_0}$; that is, $\aleph_1 = 2^{\aleph_0}$.
\end{CH}

\begin{GCH}
For all ordinals $\alpha$, there are no cardinals between $\aleph_\alpha$ and $2^{\aleph_\alpha}$; that is, $\aleph_{\alpha+1} = 2^{\aleph_\alpha}$.
\end{GCH}

\newpage

\section{Special Models}\label{sec:SM}

In our proof of the Ax-Kochen Theorem, we assumed the Continuum Hypothesis in order to use Theorem~\ref{cor:ch}, that all complete theories have saturated models. The Continuum Hypothesis can be eliminated from the proof by replacing saturated models with special models.

\begin{adefin}\label{def:special}
Let $T$ be a complete theory with infinite models in a countable language $\mathcal{L}$. A model $\mathcal{M}\models T$ with domain $M$ is called special if it is the union of an elementary chain of models $(\mathcal{M}_\beta\,:\,\beta<|M|, \beta\,\text{an infinite cardinal})$ such that each $\mathcal{M}_\beta$ is $\beta^+$-saturated. The elementary chain is called a specializing chain of $\mathcal{M}$.
\end{adefin}

Note that in the definition, nothing is required about the cardinalities of the $\mathcal{M}_\beta$.

The analogue of Corollary~\ref{cor:saturatedisomorphism} also holds for special models. 

\begin{athm}[{\cite[Theorem 5.1.17]{Chang}}]\label{thm:specialisomorphism}
If $\mathcal{M}$ and $\mathcal{N}$ are special models of a complete theory $T$ of the same cardinality $\kappa>\aleph_0$, then $\mathcal{M}\cong\mathcal{N}$. 
\end{athm}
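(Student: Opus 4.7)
The plan is to mimic the back-and-forth construction used for saturated models in Theorem~\ref{thm:saturatedbijection}, but with the added bookkeeping that, at each stage, the partial elementary bijection lives inside a matched pair of links from the specializing chains so that a sufficient (but bounded) amount of saturation is available. Fix specializing chains $(\mathcal{M}_\beta:\beta\in C)$ for $\mathcal{M}$ and $(\mathcal{N}_\beta:\beta\in C)$ for $\mathcal{N}$, where $C$ is the set of infinite cardinals below $\kappa$. Because $\kappa>\aleph_0$, $C$ is cofinal in $\kappa$. Fix enumerations $(a_\alpha:\alpha<\kappa)$ and $(b_\alpha:\alpha<\kappa)$ of $M$ and $N$, chosen to respect the chain structure: for each $\beta\in C$, arrange that the initial segment of length $\beta$ lists exactly the elements of $M_\beta$ (and similarly for $N_\beta$), which is possible since $|M_\beta|\leq\beta$ (one can replace each $\mathcal{M}_\beta$ by an elementary substructure of size $\beta$ using L\"owenheim--Skolem, preserving $\beta^+$-saturation if one chooses it carefully, or simply use the chain as given and reshuffle within each $M_\beta\setminus\bigcup_{\gamma<\beta}M_\gamma$).

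Build a chain of partial elementary bijections $f_\alpha:A_\alpha\to B_\alpha$ by transfinite recursion on $\alpha<\kappa$, maintaining the invariants: (i)~$f_{\alpha'}\subseteq f_\alpha$ for $\alpha'<\alpha$; (ii)~there is a cardinal $\mu_\alpha\in C$ such that $A_\alpha\subseteq M_{\mu_\alpha}$, $B_\alpha\subseteq N_{\mu_\alpha}$, and $|A_\alpha|\leq\mu_\alpha$; and (iii)~$\{a_{\alpha'}:\alpha'<\alpha\}\subseteq A_\alpha$ together with the symmetric condition for $(b_{\alpha'})$ and $B_\alpha$. At the successor stage $\alpha+1$, pick $\lambda\in C$ with $\lambda\geq\mu_\alpha$ and $a_\alpha\in M_\lambda$, $b_\alpha\in N_\lambda$ (using the enumeration convention, $\lambda:=\mu_\alpha$ if $\alpha<\mu_\alpha$, else the next cardinal in $C$ above $|\alpha|$). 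Since $\mathcal{N}_\lambda$ is $\lambda^+$-saturated and $|A_\alpha|\leq\lambda$, the image type $f_\alpha\bigl(\mathrm{tp}^{\mathcal{M}}(a_\alpha/A_\alpha)\bigr)$ is realized by some $c\in N_\lambda$; extend $f_\alpha$ by sending $a_\alpha$ to $c$. Then go ``back'' symmetrically for $b_\alpha$, using $\lambda^+$-saturation of $\mathcal{M}_\lambda$, and set $\mu_{\alpha+1}:=\lambda$. At a limit $\alpha$, take unions $A_\alpha:=\bigcup_{\alpha'<\alpha}A_{\alpha'}$, $B_\alpha:=\bigcup B_{\alpha'}$, $f_\alpha:=\bigcup f_{\alpha'}$, and choose $\mu_\alpha\in C$ to be any cardinal with $\mu_\alpha\geq\sup_{\alpha'<\alpha}\mu_{\alpha'}$ and $\mu_\alpha\geq|\alpha|$.

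Taking $f:=\bigcup_{\alpha<\kappa}f_\alpha$ at the end gives an elementary bijection $M\to N$, since invariant (iii) forces both $M$ and $N$ to be exhausted; by Theorem~\ref{thm:isomorphism}'s proof technique (induction on formulas) $f$ is then an $\mathcal{L}$-isomorphism $\mathcal{M}\cong\mathcal{N}$.

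The main obstacle is the limit step when $\kappa$ is singular: a priori $\sup_{\alpha'<\alpha}\mu_{\alpha'}$ could reach $\kappa$, destroying invariant (ii). The fix is the enumeration convention together with choosing $\mu_{\alpha+1}$ only as large as actually necessary to accommodate the new elements. With this discipline one has $\mu_\alpha\leq|\alpha|+\aleph_0<\kappa$ throughout (perhaps rounded up to the next element of $C$), so the sup at a limit of length $<\kappa$ remains strictly below $\kappa$. This is precisely where the hypothesis $\kappa>\aleph_0$ is used, to ensure $C$ contains cardinals above any given ordinal $<\kappa$. The verification that $f_\alpha$ is partial elementary at each successor step is routine (identical to the corresponding step in Theorem~\ref{thm:saturatedbijection}), so the only essential work is the cofinality bookkeeping described above.
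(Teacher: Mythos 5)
Your overall strategy—a back-and-forth of length $\kappa$, tracking a parameter $\mu_\alpha<\kappa$ so that the partial elementary bijection $f_\alpha$ has domain and range inside $M_{\mu_\alpha}$ and $N_{\mu_\alpha}$—is the right idea, and you correctly identify that the essential difficulty is keeping $\mu_\alpha$ below $\kappa$ at limit stages when $\kappa$ is singular. However, your justification of the crucial cardinality bound is wrong. You assert $|M_\beta|\leq\beta$, but this contradicts $\beta^+$-saturation: as the paper itself observes just before Corollary~\ref{cor:saturatedisomorphism}, a $\kappa$-saturated model must have cardinality $\geq\kappa$, so $|M_\beta|\geq\beta^+>\beta$. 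For the same reason, your proposed L\"owenheim--Skolem fix (``replace each $\mathcal{M}_\beta$ by an elementary substructure of size $\beta$ preserving $\beta^+$-saturation'') is flatly impossible: no $\beta^+$-saturated structure has cardinality $\leq\beta$. The definition of special model also explicitly places no upper bound on $|M_\beta|$, so the ``reshuffle'' alternative as stated does not give you an enumeration indexed by $\kappa$ with the needed monotonicity (the concatenation $\sum_{\beta\in C}|M_\beta\setminus\bigcup_{\gamma<\beta}M_\gamma|$ can have order type strictly greater than $\kappa$).

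The claimed bound $\mu_\alpha\leq|\alpha|+\aleph_0$ is nonetheless achievable, but for the opposite reason than the one you give: it is the \emph{abundance} of elements in $M_\lambda$, not scarcity, that one exploits. Since $|M_{|\alpha|+\aleph_0}|\geq(|\alpha|+\aleph_0)^+>|\alpha|$, at every stage $\alpha<\kappa$ there are unenumerated elements available in $M_{|\alpha|+\aleph_0}$. Fixing an auxiliary bijection $\pi:\kappa\to M$ and, at each position $\alpha$, greedily choosing the $\pi$-least unenumerated element of $M_{|\alpha|+\aleph_0}$, one obtains (by a priority argument: each $\pi(\gamma)$ can be blocked at most $|\gamma|<\kappa$ times once it becomes available, and there are $\kappa$ positions above its first level) an exhaustive enumeration $(a_\alpha:\alpha<\kappa)$ with $a_\alpha\in M_{|\alpha|+\aleph_0}$ for all $\alpha$, and similarly for $N$. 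With this enumeration your invariants (i)--(iv) do go through, including at limits. But as written, the justification you give for the enumeration is incorrect, and since you yourself flag this as ``the only essential work,'' the proof as submitted has a genuine gap there.
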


The idea of the proof is to use a back and forth argument, where at each stage partial elementary bijections are constructed using Theorem~\ref{thm:saturatedbijection} between subsets of the $\beta^+$-saturated submodels of $\mathcal{M}$ and $\mathcal{N}$.

The advantage of special models is that we can show that all complete theories have special models without appealing to the Continuum Hypothesis. 

\begin{athm}[{\cite[Proposition 5.1.8]{Chang}}]\label{thm:specialexistence}
For any $\mathcal{L}$-structure $\mathcal{M}$, there is a special elementary extension of $\mathcal{M}$.
\end{athm}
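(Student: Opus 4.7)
The plan is to build a specializing chain for the desired extension $\mathcal{N}$ by transfinite recursion on the infinite cardinals, invoking Theorem~\ref{thm:saturatedexistence} at each step to produce the needed level of saturation. The only non-trivial choice is the target cardinality $\kappa$ for $\mathcal{N}$: I fix $\kappa$ to be a strong limit cardinal with $|M|<\kappa$ (for instance $\kappa=\beth_\omega$ if $M$ is countable, or more generally $\kappa=\beth_{\omega+\alpha}$ where $|M|\le\beth_\alpha$). Such cardinals exist outright in ZFC, so no appeal to CH is needed.

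Given $\kappa$, I construct an elementary chain $(\mathcal{M}_\beta\,:\,\beta<\kappa,\ \beta\text{ an infinite cardinal})$ as follows. At the base, let $\mathcal{M}_{\aleph_0}$ be an $\aleph_1$-saturated elementary extension of $\mathcal{M}$ supplied by Theorem~\ref{thm:saturatedexistence}. At a successor cardinal $\beta=\gamma^+$, apply Theorem~\ref{thm:saturatedexistence} to $\mathcal{M}_\gamma$ to obtain a $\beta^+$-saturated elementary extension $\mathcal{M}_\beta$. At a limit cardinal $\beta$, first form the union $\mathcal{M}'_\beta=\bigcup_{\gamma<\beta}\mathcal{M}_\gamma$, which is an elementary extension of each $\mathcal{M}_\gamma$ as the union of an elementary chain, and then apply Theorem~\ref{thm:saturatedexistence} to $\mathcal{M}'_\beta$ to obtain a $\beta^+$-saturated elementary extension $\mathcal{M}_\beta$. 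Finally, set $\mathcal{N}=\bigcup_{\beta<\kappa}\mathcal{M}_\beta$.

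Three things then need to be checked: (i) $(\mathcal{M}_\beta)$ is an elementary chain with $\mathcal{M}\preceq\mathcal{N}$; (ii) each $\mathcal{M}_\beta$ is $\beta^+$-saturated; and (iii) $|\mathcal{N}|=\kappa$, so that the chain is indexed exactly by the infinite cardinals below $|\mathcal{N}|$, as Definition~\ref{def:special} requires. Points (i) and (ii) are immediate from the construction together with the standard fact that the union of an elementary chain is an elementary extension of each link. For (iii), the strong limit hypothesis on $\kappa$ gives by transfinite induction that $|\mathcal{M}_\beta|<\kappa$ for every infinite cardinal $\beta<\kappa$: in the successor step the bound from Theorem~\ref{thm:saturatedexistence} is $|\mathcal{M}_\gamma|^\beta$, which stays below $\kappa$ because $\kappa$ is strong limit and $|\mathcal{M}_\gamma|,\beta<\kappa$; in the limit step the union has cardinality at most $\sup_{\gamma<\beta}|\mathcal{M}_\gamma|\le\beta<\kappa$, and the $\beta^+$-saturated extension of it again has cardinality bounded by $\beta^\beta<\kappa$. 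This gives $|\mathcal{N}|\le\kappa$. Conversely, $|\mathcal{N}|\ge|\mathcal{M}_\beta|\ge\beta^+$ for every infinite $\beta<\kappa$ (since a $\beta^+$-saturated model has cardinality at least $\beta^+$), and taking $\beta\to\kappa$ yields $|\mathcal{N}|\ge\kappa$ because $\kappa$ is a limit cardinal.

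The only real obstacle is the cardinality bookkeeping in (iii); without the strong limit assumption the successor step $|\mathcal{M}_\gamma|^{\gamma^+}$ could overshoot $\kappa$ and spoil the indexing. Choosing a strong limit cardinal removes this difficulty, and the construction then fits together cleanly. No new structural insight beyond what is already encoded in Theorem~\ref{thm:saturatedexistence} is required, which is exactly what lets us avoid appealing to the Continuum Hypothesis.
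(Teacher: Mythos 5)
The paper does not give a full proof of this theorem; it only sketches the idea ("construct a chain of $\kappa$-saturated models for increasing cardinalities $\kappa$, then take limits") and refers to Chang--Keisler. Your proposal fleshes this out in essentially the standard way, and the overall plan --- fix a strong limit target cardinality $\kappa > |M|$ so that the iterated powers from Theorem~\ref{thm:saturatedexistence} never overshoot, build the chain by transfinite recursion on infinite cardinals, take the union --- is correct and is what makes the argument go through without CH.

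There is, however, a concrete error in your cardinality bookkeeping at limit stages. You claim $\sup_{\gamma<\beta}|\mathcal{M}_\gamma|\le\beta$, but this is false in general: every $\mathcal{M}_\gamma$ is an elementary extension of $\mathcal{M}$, so $|\mathcal{M}_\gamma|\ge|M|$, and we only assumed $|M|<\kappa$, not $|M|\le\beta$. For instance, if $|M|=\beth_5$ and $\beta=\aleph_\omega<\beth_5$, then already $|\mathcal{M}_{\aleph_0}|\ge\beth_5>\beta$. Consequently the bound $\beta^\beta$ for $|\mathcal{M}_\beta|$ at limit stages is also wrong. The fix is to prove the sharper inductive bound $|\mathcal{M}_\gamma|\le 2^{\max(|M|,\gamma)}$ for all infinite cardinals $\gamma<\kappa$: the base case gives $|\mathcal{M}_{\aleph_0}|\le|M|^{\aleph_0}\le 2^{|M|}$; the successor case gives $|\mathcal{M}_{\gamma^+}|\le|\mathcal{M}_\gamma|^{\gamma^+}\le\bigl(2^{\max(|M|,\gamma)}\bigr)^{\gamma^+}=2^{\max(|M|,\gamma^+)}$; and at a limit $\beta$, $\sup_{\gamma<\beta}|\mathcal{M}_\gamma|\le\sup_{\gamma<\beta}2^{\max(|M|,\gamma)}\le 2^{\max(|M|,\beta)}$, so $|\mathcal{M}_\beta|\le\bigl(2^{\max(|M|,\beta)}\bigr)^\beta=2^{\max(|M|,\beta)}$. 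Since $\max(|M|,\gamma)<\kappa$ and $\kappa$ is strong limit, every $|\mathcal{M}_\gamma|<\kappa$, and $|\mathcal{N}|=\kappa$ then follows exactly as you argued. A separate small slip: $\beth_{\omega+\alpha}$ is a strong limit only when $\omega+\alpha$ is a limit ordinal, so you should take $\kappa=\beth_\delta$ for a limit ordinal $\delta$ large enough that $\beth_\delta>|M|$.
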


The idea of the proof is to construct a chain of $\kappa$-saturated models for increasing cardinalities $\kappa$, then take limits.

The proof of Theorem~\ref{thm:el_eq} can be altered to reduce to the special case instead of the saturated case. Unfortunately, this complicates the argument significantly, since the back and forth argument must take the specializing chains into account. Additionally, the special models guaranteed by Theorem~\ref{thm:specialexistence} may have cardinality larger than $\aleph_1$, so more complicated cardinality and enumeration arguments are required.

\newpage

\section{The Resultant}\label{sec:TR}

\begin{adefin}\label{def:res}
Let $R$ be a ring. The resultant, $Res:R[x]\times R[x]\rightarrow R$, is the function which maps two polynomials over $R$, $f = a_nx^n + a_{n-1}x^{n-1} + \hdots + a_0$ and $g = b_mx^m + b_{m-1}x^{m-1} + \hdots + b_0$ of degrees $n$ and $m$ respectively, to the determinant of the following $(m+n)\times(m+n)$ matrix:
\[
\begin{array}{cc}
m & 
\begin{cases}\\
\\
\\
\\
\\
\end{cases}\\
n &
\begin{cases}\\
\\
\\
\\
\end{cases}
\end{array}
\underbrace{
\left(
\begin{array}{cccccccc}
a_n & \hdotsfor{2} & a_0 & 0 & \hdotsfor{2} & 0\\
0 & a_n & \hdotsfor{2} & a_0 & 0 & \hdots & 0 \\
\vdots & \ddots & \ddots & \ddots  & \ddots &  \ddots & \ddots &  \vdots \\
\vdots & \ddots & \ddots & \ddots  & \ddots &  \ddots & \ddots &  \vdots \\
0 & \hdotsfor{2} & 0 & a_n & \hdotsfor{2} & a_0 \\
b_m & \hdotsfor{3} & b_0 & 0 & \hdots & 0 \\
0 & b_m & \hdotsfor{3} & b_0 & \ddots & 0 \\
\vdots & \ddots & \ddots & \ddots & \ddots &  \ddots & \ddots  & \vdots \\
0 & \hdots & 0 & b_m & \hdotsfor{3} & b_0 \\
\end{array}
\right)
}_{\displaystyle{m+n}}.
\]
\end{adefin}

The next theorem gives an alternate expression for the resultant. We will only use it for the implication that $Res(f,g)\neq 0$ if $f$ and $g$ are relatively prime.

\begin{athm}[{\cite[Proposition 8.3]{Lang}}]\label{thm:resprime}
Let $R$ be a subring of a field $K$, and let $f,g\in R[x]$, with $f = a_nx^n + a_{n-1}x^{n-1} + \hdots + a_0$ and $g = b_mx^m + b_{m-1}x^{m-1} + \hdots + b_0$. Then $Res(f,g) = a_n^mb_m^n\prod_{i=1}^n\prod_{j=1}^m(\alpha_i - \beta_j)$, where the $\alpha_i$ and $\beta_j$ are the roots of $f$ and $g$ in an algebraic closure of $K$. Thus $Res(f,g) = 0$ if and only if $f$ and $g$ have a common root, and if $f$ and $g$ are relatively prime, $Res(f,g)\neq 0$.
\end{athm}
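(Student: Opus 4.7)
The plan is to establish the product formula first in a universal (generic) setting, and then specialize. Write $f = a_n \prod_{i=1}^n (x-\alpha_i)$ and $g = b_m \prod_{j=1}^m (x-\beta_j)$ in $\bar{K}[x]$, an algebraic closure of $K$. Introduce the generic polynomials $F = A \prod_{i=1}^n (x - T_i)$ and $G = B \prod_{j=1}^m (x - U_j)$ in the polynomial ring $Z = \mathbb{Z}[A, B, T_1,\dots,T_n, U_1,\dots,U_m][x]$, whose coefficients are the elementary symmetric functions in the $T_i$ (resp.\ $U_j$) times $A$ (resp.\ $B$). Then $\mathrm{Res}(F,G)$ is a polynomial in $Z$, and specializing $A \mapsto a_n$, $B \mapsto b_m$, $T_i \mapsto \alpha_i$, $U_j \mapsto \beta_j$ recovers $\mathrm{Res}(f,g)$, since the Sylvester determinant is a polynomial in the coefficients and commutes with such substitutions.

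Next I would show the product formula holds generically. The key observation is that $\mathrm{Res}(F,G)$ vanishes whenever some $T_i$ equals some $U_j$: setting $T_{i_0} = U_{j_0}$ makes the column vectors of the Sylvester matrix linearly dependent, because $(x - T_{i_0})$ divides both $F$ and $G$, so one can find nonzero $\phi, \psi$ with $\deg \phi < m$, $\deg \psi < n$ and $F\phi + G\psi = 0$ (namely $\phi = F/(x-T_{i_0})$ times a constant and $\psi = -G/(x-T_{i_0})$ times a constant), which shows the linear map encoded by the Sylvester matrix has a kernel. Since $Z$ is a UFD and $\bigl(T_{i_0} - U_{j_0}\bigr)$ is irreducible, each factor $(T_i - U_j)$ divides $\mathrm{Res}(F,G)$, and the factors are pairwise coprime, so the full product $\prod_{i,j}(T_i - U_j)$ divides $\mathrm{Res}(F,G)$.

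Then I would pin down the remaining factor by a degree and leading-coefficient comparison. Reading off the Sylvester matrix, $\mathrm{Res}(F,G)$ is homogeneous of degree $m$ in the coefficients of $F$ and of degree $n$ in the coefficients of $G$; expressed in the $T_i$'s and $U_j$'s, this forces total degree $m$ in each block that depends on a single $T_i$ and degree $n$ in each single $U_j$, and forces an overall $A^m B^n$. This matches $A^m B^n \prod_{i,j}(T_i - U_j)$ exactly. The constant scalar between them can be fixed by plugging in a single easy instance (for example, take $n = m = 1$ so that $\mathrm{Res}(Ax - AT_1, Bx - BU_1) = AB(T_1 - U_1)$ by direct $2\times 2$ determinant computation, giving scalar $1$). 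Specializing back yields $\mathrm{Res}(f,g) = a_n^m b_m^n \prod_{i,j}(\alpha_i - \beta_j)$.

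The two consequences follow immediately: $\mathrm{Res}(f,g) = 0$ iff some $\alpha_i = \beta_j$, iff $f$ and $g$ share a common root in $\bar K$; and if $f$ and $g$ are relatively prime in $R[x] \subseteq K[x]$, then they share no common factor of positive degree in $K[x]$ and hence no common root in $\bar K$, so $\mathrm{Res}(f,g) \neq 0$. The main obstacle I expect is the bookkeeping in Step~2: carefully justifying that $\prod_{i,j}(T_i - U_j)$ (and not just each factor separately) divides the resultant, and then making the degree count rigorous enough to pin down the leading coefficient — the divisibility and coprimality argument in the polynomial ring $Z$ is the only part that uses anything beyond elementary linear algebra.
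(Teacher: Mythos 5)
Your proposal follows essentially the same strategy as the paper's: show that each factor $(\alpha_i - \beta_j)$ divides the resultant, compare degrees to see the quotient is a constant, then evaluate at a special case to fix the constant. The genuine difference is one of rigor. The paper's proof says ``if $\alpha_i = \beta_j$ for any $i,j$ then $Res(f,g)=0$, so $(\alpha_i - \beta_j)$ divides $Res(f,g)$,'' then talks about degree and divisibility of $Res(f,g)$ as a polynomial in the $\alpha_i$ and $\beta_j$. But the $\alpha_i$ and $\beta_j$ there are fixed elements of a field, so that divisibility statement is not literally meaningful; the paper is implicitly treating the roots as indeterminates without saying so. Your proposal fixes exactly this by explicitly moving to the generic ring $Z = \mathbb{Z}[A,B,T_1,\dots,T_n,U_1,\dots,U_m]$, doing the divisibility argument there where $Z$ is a UFD and the $T_i - U_j$ are pairwise non-associate irreducibles, and then specializing at the end. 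That is the standard clean way to make this argument precise, and it is the better version. One other mild difference: the paper first extracts the B\'ezout-type identity $p f + q g = Res(f,g)$ via Cramer's rule (which it then reuses in the proof of Lemma~\ref{lem:henselhelper}), and you don't need or derive that relation for this theorem.

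There is a small slip you should correct: in exhibiting an element of the kernel of the Sylvester map $(\phi,\psi)\mapsto F\phi+G\psi$ when $T_{i_0}=U_{j_0}$, you wrote $\phi = F/(x-T_{i_0})$ and $\psi = -G/(x-T_{i_0})$, which gives $F\phi + G\psi = (F^2 - G^2)/(x-T_{i_0}) \neq 0$ and the wrong degrees. The correct choice is $\phi = G/(x-T_{i_0})$ (degree $m-1 < m$) and $\psi = -F/(x-T_{i_0})$ (degree $n-1 < n$), which makes $F\phi + G\psi = 0$ as intended. With that fix, the argument is sound; the degree comparison (homogeneity of weight $mn$ in the $T_i,U_j$, degree $m$ in the coefficients of $F$ and $n$ in those of $G$) pins down the cofactor as $cA^mB^n$ for an integer $c$, and evaluating at $m=n=1$ gives $c=1$ as you say.
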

\begin{proof}

Consider the linear equations
\begin{eqnarray*}
x^{m-1}f(x) &=& a_nx^{m+n-1}+a_{n-1}x^{m+n-2}+\hdots +a_0x^{m-1}\\
x^{m-2}f(x) &=& a_nx^{m+n-2}+a_{n-1}x^{m+n-3}+\hdots +a_0x^{m-2}\\
\vdots && \vdots \\
f(x) &=& a_nx^{n}+a_{n-1}x^{n-1}+\hdots +a_0\\
x^{n-1}g(x) &=& b_mx^{m+n-1}+b_{n-1}x^{m+n-2}+\hdots +b_0x^{n-1}\\
x^{n-2}g(x) &=& b_mx^{m+n-2}+b_{n-1}x^{m+n-3}+\hdots +b_0x^{n-2}\\
\vdots && \vdots\\
g(x) &=& b_mx^{m}+b_{m-1}x^{m-1}+\hdots +b_0.
\end{eqnarray*}

Let $C$ be the vector on the left, $(x^{m-1}f(x), x^{m-2}f(x), \hdots, g(x))$. Let $C_1,\hdots,C_{m+n}$ be the vectors of coefficients of $x$, with powers of $x$ aligned. So $C_1 = (a_n,0,\hdots,0,b_m,0,\hdots,0)$, the coefficients of $x^{m+n-1}$, $C_2 = (a_{n-1}, a_n, 0,\hdots,0,b_{m-1},b_m,0,\hdots,0)$, the coefficients of $x^{m+n-2}$, etc. Note that these vectors are the columns of the resultant matrix.

The linear equations can then be expressed as $C=C_1x^{n+m-1} + \hdots + C_{m+n}x^0 $, and the right side of this equality is just the resultant matrix multiplied by the column vector $(x^{n+m-1},\hdots,x^0)$.

If we replace the $(m+n)^{th}$ column of the resultant matrix with the column vector $C$, Cramer's rule tells us that 
\[
\frac{det(C_1,\hdots,C_{m+n-1},C)}{det(C_1,\hdots,C_{m+n})} = x^0 = 1,
\]
since $x^0$ is the $(m+n)^{th}$ entry of $(x^{n+m-1},\hdots,x^0)$. By $det(v_1,\hdots,v_n)$, we mean the determinant of the matrix with columns $v_1,\hdots,v_n$.

So $Res(f,g) = det(C_0,\hdots,C_{m+n}) = det(C_0,\hdots,C_{m+n-1},C)$. Computing this determinant, we find that every term contains a factor of $f(x)$ or $g(x)$ from the column $C$. Grouping the terms divisible by $f$ and those divisible by $g$, we find that there are polynomials $p(x), q(x)\in R[x]$ such that $p(x)f(x) + q(x)g(x) = Res(f,g)$. 

Suppose that $f$ and $g$ have a common root $\alpha$ in an algebraic closure of $K$. Substituting $\alpha$ for $x$ in the equation above, we see that $Res(f,g) = 0$.

Now in the algebraic closure, we can factor $f$ and $g$ as $f=a_n\prod_{i=1}^n (x-\alpha_i)$ and $g = b_m\prod_{j=1}^m (x-\beta_j)$. Comparing the coefficients of powers of $x$, we obtain the following expressions for the coefficients:
\begin{eqnarray*}
a_n &=& a_n\\
a_{n-1} &=& -a_n(\alpha_1 + \hdots + \alpha_n)\\
\vdots && \vdots\\
a_0 &=& (-1)^na_n(\alpha_1\alpha_2\hdots\alpha_n)
\end{eqnarray*}
and similarly for the $b_j$. In this way, we can view the coefficients $a_i$ and $b_j$ as symmetric polynomials $(-1)^ia_nS_i(\alpha_1,\hdots,\alpha_n)$ and $(-1)^jb_mT_j(\beta_1,\hdots,\beta_m)$, where $deg(S_i) = n-i$ and $deg(T_j) = m-j$. 

Now computing the resultant, we see that 
\[
Res(f,g)=
\left|
\begin{array}{cccccccc}
a_nS_n & \hdotsfor{2} & (-1)^na_0S_0 & 0 & \hdotsfor{2} & 0\\
0 & a_nS_n & \hdotsfor{2} & (-1)^na_0S_0 & 0 & \hdots & 0 \\
\vdots & \ddots & \ddots & \ddots  & \ddots &  \ddots & \ddots &  \vdots \\
\vdots & \ddots & \ddots & \ddots  & \ddots &  \ddots & \ddots &  \vdots \\
0 & \hdotsfor{2} & 0 & a_nS_n & \hdotsfor{2} & (-1)^na_0S_0 \\
b_mT_m & \hdotsfor{3} & (-1)^mb_0T_0 & 0 & \hdots & 0 \\
0 & b_mT_m & \hdotsfor{3} & (-1)^mb_0T_0 & \ddots & 0 \\
\vdots & \ddots & \ddots & \ddots & \ddots &  \ddots & \ddots  & \vdots \\
0 & \hdots & 0 & b_mT_m & \hdotsfor{3} & (-1)^mb_0T_0 \\
\end{array}
\right|.
\]
The first $m$ rows have a factor of $a_n$, and the next $n$ rows have a factor of $b_m$, so factoring them out, 
\[
Res(f,g)=
a_n^mb_m^n\left|
\begin{array}{cccccccc}
S_n & \hdotsfor{2} & (-1)^nS_0 & 0 & \hdotsfor{2} & 0\\
0 & S_n & \hdotsfor{2} & (-1)^nS_0 & 0 & \hdots & 0 \\
\vdots & \ddots & \ddots & \ddots  & \ddots &  \ddots & \ddots &  \vdots \\
\vdots & \ddots & \ddots & \ddots  & \ddots &  \ddots & \ddots &  \vdots \\
0 & \hdotsfor{2} & 0 & S_n & \hdotsfor{2} & (-1)^nS_0 \\
T_m & \hdotsfor{3} & (-1)^mT_0 & 0 & \hdots & 0 \\
0 & T_m & \hdotsfor{3} & (-1)^mT_0 & \ddots & 0 \\
\vdots & \ddots & \ddots & \ddots & \ddots &  \ddots & \ddots  & \vdots \\
0 & \hdots & 0 & T_m & \hdotsfor{3} & (-1)^mT_0 \\
\end{array}
\right|.
\]

Computing this determinant as the sum of products of one element from each row and column, we see that as a polynomial in the $\alpha_i$ and $\beta_j$, $Res(f,g)$ has degree $mn$. Terms in the sum of degree $mn$ come from, for instance, picking all of the $S_0$ and all of the $T_m$, or picking all of the $T_0$ and all of the $S_n$. No terms of greater degree can be produced.

But if $\alpha_i = \beta_j$ for any $0\leq i\leq n$ and $0\leq j\leq m$, $Res(f,g) = 0$, so $(\alpha_i-\beta_j)$ divides $Res(f,g)$. Thus $\prod_{i=1}^n\prod_{j=1}^m(\alpha_i - \beta_j)$ divides $Res(f,g)$, but both are polynomials of degree $mn$, so they differ only by a constant factor. By plugging in values for the $\alpha_i$ and $\beta_j$, it is easy to see that this constant factor is $a_n^mb_m^n$. This completes the proof.
\end{proof}

Our application of the resultant in the proof of Hensel's lemma uses the following result.

\begin{alem}[{\cite[Ch. 4 Sec. 3 Lemma]{Borevich}}]\label{lem:henselhelper}
Let $R$ be a subring of a field $K$, and let $g,h\in R[x]$ with $deg(g) = m$, $deg(h) = n$. If $\rho = Res(g,h)\neq 0$, then for all $l\in R[x]$ such that $deg(l)\leq m+n-1$, there exist $\phi,\psi\in R[x]$ with $deg(\phi)\leq n-1$, $deg(\psi)\leq m-1$ such that $g\phi + h\psi = \rho l$. 

\end{alem}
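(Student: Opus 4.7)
The plan is to reinterpret the equation $g\phi + h\psi = \rho l$ as a square linear system in the unknown coefficients of $\phi$ and $\psi$, whose coefficient matrix is (up to row/column reordering) the Sylvester matrix appearing in Definition~\ref{def:res}, and then solve it by Cramer's rule over the fraction field $K$ of $R$.

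Concretely, I would write $\phi = \phi_0 + \phi_1 x + \cdots + \phi_{n-1}x^{n-1}$ and $\psi = \psi_0 + \psi_1 x + \cdots + \psi_{m-1}x^{m-1}$ with undetermined coefficients, and expand $g\phi + h\psi$, collecting terms by powers of $x$. This defines an $R$-linear map $T : R^{m+n} \to R^{m+n}$ sending the vector $v = (\phi_0,\ldots,\phi_{n-1},\psi_0,\ldots,\psi_{m-1})$ to the coefficient vector of $g\phi+h\psi$ in $R^{m+n}$ (indexed by $x^{m+n-1},\ldots,x^0$). The rows of the matrix $A$ of $T$ are precisely the coefficient vectors of $x^j g$ for $0 \leq j \leq n-1$ followed by those of $x^j h$ for $0 \leq j \leq m-1$, which is exactly the Sylvester matrix of $g$ and $h$ up to a permutation of rows and columns; in particular $\det A = \pm \rho \neq 0$. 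Since $\deg l \leq m+n-1$, the coefficient vector of $\rho l$ is $\rho \mathbf{l}$, where $\mathbf{l} \in R^{m+n}$ is the coefficient vector of $l$, and the desired conclusion becomes $A v = \rho \mathbf{l}$.

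By Cramer's rule over $K$, the unique solution is $v_i = \det(A_i)/\det(A)$, where $A_i$ is $A$ with its $i$-th column replaced by $\rho \mathbf{l}$. By multilinearity of the determinant in the $i$-th column, $\det(A_i) = \rho \det(B_i)$, where $B_i$ is $A$ with its $i$-th column replaced by $\mathbf{l}$. Hence $v_i = \pm \det(B_i)$. Since the entries of $B_i$ all lie in $R$ (they are either coefficients of $g$, of $h$, of $l$, or zero), each $v_i$ lies in $R$, and so the recovered polynomials $\phi$ and $\psi$ lie in $R[x]$ with the required degree bounds.

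The only real subtlety is the bookkeeping needed to match the matrix of $T$ with the Sylvester matrix of Definition~\ref{def:res} up to a signed permutation, so that we can genuinely equate $\det A$ with $\pm\rho$; once that is in hand, the passage to $R$-rational solutions via Cramer's rule is essentially automatic, and crucially exploits the hypothesis that $R$ sits inside a field so that the quotient $\det(A_i)/\det(A)$ makes sense before we observe that it simplifies to an element of $R$.
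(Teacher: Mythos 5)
Your proof is correct and takes essentially the same route as the paper: recast $g\phi + h\psi = \rho l$ as an $(m+n)\times(m+n)$ linear system in the unknown coefficients of $\phi$ and $\psi$, identify the coefficient matrix as (up to transpose and a signed permutation) the Sylvester matrix whose determinant is $\pm\rho \neq 0$, solve over the fraction field $K$, and observe that the factor of $\rho$ cancels so the solution lands in $R$ with the required degree bounds. The only cosmetic difference is that you state this via Cramer's rule and multilinearity of the determinant in the replaced column, while the paper phrases the same cancellation through the cofactor formula $M^{-1} = \frac{1}{\rho}C^{T}$ — these are two expressions of the same identity. One small slip in your bookkeeping: the \emph{columns} (not rows) of the matrix of $T$ in the standard convention are the coefficient vectors of $x^{j}g$ and $x^{j}h$, so the matrix you describe is really the transpose of the Sylvester matrix; since transposition preserves the determinant, this does not affect your argument.
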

\begin{proof}
Let 
\begin{eqnarray*}
g &=& g_{m+n-1}x^{m+n-1} + \hdots + g_0,\\
h &=& h_{m+n-1}x^{m+n-1} + \hdots + h_0,\\ 
\phi &=& \phi_{m+n-1}x^{m+n-1} + \hdots + \phi_0,\\ 
\psi &=& \psi_{m+n-1}x^{m+n-1} + \hdots + \psi_0,\text{and}\\
l &=& l_{m+n-1}x^{m+n-1} + \hdots + l_0,
\end{eqnarray*}
where we set all excess coefficients to $0$. The values of the $g_j$, $h_j$, and $l_i$ are given. We must find values for the $\phi_k$ and $\psi_k$ such that for all $0\leq i\leq m+n-1$, $\sum_{j+k=i}g_j\phi_k + \sum_{j+k=i}h_j\psi_k = \rho l_i$, that is, $g\phi + h\psi = \rho l$.

This is a system of $m+n$ linear equations in $m+n$ variables, the $\phi_k$ and $\psi_k$. The corresponding matrix, $M$, is the transpose of the resultant matrix for $g$ and $h$. The determinant of this matrix is $Res(g,h) = \rho \neq 0$, so this system has a solution.

Moreover, according to the cofactor formula for the inverse,
\[
M^{-1} = \frac{1}{|M|}C^T = \frac{1}{\rho}C^T,
\]
where $C$ is the cofactor matrix of $M$. Solving $M\left(\begin{array}{c}\phi_k\\\psi_k\end{array}\right) = \rho(l_i)$ for the $\phi_k$ and $\psi_k$, we find $\left(\begin{array}{c}\phi_k\\\psi_k\end{array}\right) = M^{-1}\rho\left(\begin{array}{c}l_i\end{array}\right) = C^T\left(\begin{array}{c}l_i\end{array}\right)\in R^{m+n}$, so all the $\phi_k,\psi_k$ are elements of $R$, and thus $\phi,\psi\in R[x]$. 
\end{proof}

\bibliography{thesisbib}{}
\bibliographystyle{alpha}

\end{document}